\documentclass[11pt]{article}

\usepackage[margin=1in,a4paper]{geometry}
\setlength{\parskip}{3.5pt}

\usepackage{amssymb,amsmath}
\usepackage{amsthm} 
\usepackage{graphicx} 
\usepackage{tikz} 
\usepackage{xcolor}
\usepackage{microtype}
\usepackage{esint}
\usepackage[colorlinks,linkcolor=blue, urlcolor  = blue,citecolor=blue,pagebackref,hypertexnames=false, breaklinks]{hyperref}

\numberwithin{equation}{section}
\setcounter{tocdepth}{2} 

\usepackage[shortlabels]{enumitem}
\setlist[enumerate,1]{wide, labelindent=0pt,label={\upshape(\roman*)}}
\usepackage{comment}

\usepackage{mathtools}
\usepackage{float}
\mathtoolsset{showonlyrefs}

\usepackage[nottoc,notlot,notlof]{tocbibind}

\usepackage{etoolbox}
\patchcmd{\thebibliography}
  {\settowidth}
  {\setlength{\itemsep}{0pt plus 0.1pt}\settowidth}
  {}{}
\apptocmd{\thebibliography}
  {\small}
  {}{}
 
%\usepackage[nottoc,notlot,notlof]{tocbibind}
%\modulolinenumbers[5]

 % angular brackets for projection

 % round brackets
 % curly brackets
 % edgy brackets

%\newcommand{\comment}[1]
%{\vskip.3cm
%\fbox{
%\parbox{0.93\linewidth}{\footnotesize #1}}
%\vskip.3cm
%}

    \def\XXint#1#2#3{{\setbox0=\hbox{$#1{#2#3}{\int}$}
    \vcenter{\hbox{$#2#3$}}\kern-.5\wd0}}

\newcommand{\vast}{\bBigg@{4}}
\newcommand{\Vast}{\bBigg@{5}}
\makeatother

\newcommand{\R}{\mathbb{R}}

\newcommand{\set}[1]{\left\{#1\right\}}

\newcommand{\cN}{\mathcal{N}}

%%%%%%%%%%%%%%%%%%%%%%%%%%%%%%%%%%%%%%%
%%%%%%%%%%%%%%% Samy's macro %%%%%%%%%%%%%%%%%
%%%%%%%%%%%%%%%%%%%%%%%%%%%%%%%%%%%%%%%%

\theoremstyle{plain}
\newtheorem{thm}{Theorem}[section]
\newtheorem{lem}[thm]{Lemma}
\newtheorem{cor}[thm]{Corollary}
\newtheorem{prop}[thm]{Proposition}

\newtheorem{example}[thm]{Example}
\newtheorem{assump}[thm]{Assumption}
\newtheorem{notation}[thm]{Notation}

\theoremstyle{definition}
\newtheorem{defn}[thm]{Definition}

\theoremstyle{remark}
\newtheorem{remark}[thm]{Remark}
\newcommand{\bremark}{\begin{remark} \em}
\newcommand{\eremark}{\end{remark} }
\usepackage{color}

\hbadness=\maxdimen

%%%%%%%%%%%%%%%%%%%%%%%

\begin{document}

%\begin{frontmatter}

\title{Parabolic Anderson model in bounded domains of recurrent metric measure spaces}

\author{F. Baudoin
\footnote{Partly funded by NSF DMS-2247117 and the Simons Foundation} 
, L. Chen\footnote{Partly funded by the Simons Foundation \#853249}, C.H. Huang, C. Ouyang\footnote{Partly funded by the Simons Foundation \#851792},  S. Tindel\footnote{Partly funded by NSF DMS-1952966 and NSF DMS-2153915} 
, J. Wang\footnote{Partly funded by NSF DMS-2246817}}

\maketitle

\begin{abstract}
A metric measure  space equipped with a Dirichlet form is called recurrent if its Hausdorff dimension is less than its walk dimension. In bounded domains of such spaces we study the parabolic Anderson models
\[
\partial_{t} u(t,x) = \Delta u(t,x) + \beta u(t,x) \, \dot{W}_\alpha(t,x)
\]
where the noise $W_\alpha$ is white in time and colored in space when $\alpha >0$ while for $\alpha=0$ it is also white in space. Both Dirichlet and Neumann boundary conditions are considered. Besides proving existence and uniqueness in the It\^o sense we also get precise $L^p$ estimates for the moments and   intermittency properties of the solution as a consequence. Our study reveals new exponents which are intrinsically associated to the geometry of the underlying space and the results for instance apply in metric graphs or fractals like the Sierpi\'nski gasket for which we prove scaling invariance properties of the models. 

\end{abstract}

\tableofcontents

\section{Introduction}

This article originates in two different threads which have been very active in the last decades. Namely on the one hand models from   mathematical physics involving fractals have drawn interest in the recent literature, see for instance \cite{balsam2023density} and references therein. On the other hand many fascinating connections between parabolic Anderson models \cite{Da, Dalang-Quer, Hu15, HuNualart} and directed polymer models \cite{CY,CSZ, La10, MRT, RT} on spaces such as $\mathbb{R}^d$, $\mathbb{Z}^d$, or graphs have been established. Growth and weak/strong disorder properties for polymers have now been obtained either in discrete or continuous setting. Our aim in this paper is to bring those two worlds together, initiating a theory of parabolic Anderson and related models on general metric measure spaces equipped with a Dirichlet form. 

There has been a recent spur in the study of stochastic heat equations and polymer measures on geometric structures. In a discrete setting one should probably go back to \cite{CSZ}, which focuses on disorder properties for polymers defined on infinite graphs. In continuous spaces, properties of the stochastic heat equation in sub-Riemannian spaces such as Heisenberg groups have been examined in \cite{BCHOTW,BOTW}. One should also mention the recent preprint \cite{hairer2023regularity} dealing with regularity structures in non-flat settings. The current contribution fits in this global picture, our aim being to quantify the interaction between geometric and random environments for a rich class of models.

As mentioned above, we are focusing here on general metric measure spaces. In that class of spaces we are particularly interested in post critically finite (p.c.f.) fractals, such as the Sierpi\'nski gasket and other nested fractals, which are limits of graphs. For an overview of the wealth of probabilistic and  analytic techniques on p.c.f. fractals, we refer to the classical works \cite{Barlow, Kigami1993, Lindstrom}. Post critically finite fractals are compelling examples of geometric structures for two main reasons. First they lead to a wide variety of exponents for diffusions, heat equations and polymers. We will see some of these features below. Next they are limits of graphs, which means that corresponding limits of discrete objects can also be investigated for stochastic PDEs and polymers. We plan to pursue this line  of investigation in future works.

Let us specify briefly our geometric setting (see Section \ref{sec: Prelim} for more details). Namely, in the present paper, we study fractional parabolic Anderson models in bounded domains of recurrent metric measure spaces.  A recurrent metric measure space $(X,d,\mu)$ is a metric measure space  that supports a Brownian motion whose heat kernel $p_t(x,y)$ has sub-Gaussian estimates of the form
\begin{align}\label{subgauss intro}
p_{t}(x,y) \simeq  t^{-d_h/d_w}\exp\biggl(-C\Bigl(\frac{d(x,y)^{d_w}}{t}\Bigr)^{\frac{1}{d_w-1}}\biggr) \, ,
\end{align}
with $d_h<d_w$. The parameter $d_h$ is the Hausdorff dimension of the metric space $(X,d)$ and the parameter $d_w \ge 2$ is the so-called walk dimension. Intuitively $d_h$ controls the volume growth rate of balls of small radii (i.e., $\mu (B(x,r)) \simeq r^{d_h}$),  while $d_w$ controls the escape rate of such balls by the Brownian motion (i.e., $\mathbf{E}_x(d(x,B_t))\simeq t^{1/d_w}$). For instance, in the Euclidean space $(\mathbb R^n,d_{\mathrm{euc.}}, dx)$ one has $d_h=n$ and $d_w=2$. In the Sierpi\'nski gasket fractal, one has $d_h=\frac{\ln 3}{\ln 2}$ and $d_w=\frac{\ln 5}{\ln 2}$, see \cite{Barlow}. The condition $d_h <d_w$ that we assume throughout this paper implies that the Brownian motion is a strongly recurrent Markov process which admits (H\"older continuous) local times, see \cite{Barlow} and references therein.

We now turn to a brief description of our stochastic model of interest (see Section \ref{sec: Dirichlet PAM}  for a more rigorous definition). In a  bounded open domain $U$ of a recurrent metric space $(X,d,\mu)$, we study the parabolic Anderson model
\begin{align}\label{PAM intro}
\partial_{t} u(t,x) = \Delta u(t,x) + \beta u(t,x) \, \dot{W}_\alpha(t,x),
\end{align}
where $\Delta$ is the Laplacian on $U$ and where the family of Gaussian noises $W_\alpha$ is white in time and colored in space when $\alpha >0$, while for $\alpha=0$ it is also white in space. The parameter $\beta >0$ controls the intensity of the noise in the equation. Solutions are understood in the It\^o-Skorohod sense. Both Dirichlet and Neumann boundary conditions on the boundary of $U$ for the Laplacian $\Delta$ are considered. In order to describe our main findings, let us delve deeper into the structure of our noise. Namely the spatial covariance of our noise is similar to the one introduced in \cite{BaudoinChen, BaudoinLacaux}, as well as the covariance on Heisenberg groups handled in \cite{BCHOTW,BOTW}. Roughly speaking, the covariance function of $\dot{W}_\alpha$ is given by
$$\mathbf{E}\left[\dot{W}_\alpha(t,x)\dot{W}_\alpha(s,y)\right]=\delta_0(t-s) \, G_{2\alpha}(x,y),$$
where $\delta_0$ is the Dirac delta at $0\in\mathbb{R}$ and $G_{2\alpha}$ is the kernel of the operator $(-\Delta)^{-2\alpha}$ (notice that the definition of the Laplace operator, its powers $(-\Delta)^{-2\alpha}$ as well as estimates on $G_{2\alpha}$ will be spelled out in Section \ref{sec: Prelim}). 
 In particular, one can distinguish three regimes:
\begin{enumerate}
\setlength\itemsep{.05in}

\item 
For $0<\alpha<d_h/2d_w$, the noise is distributional with spatial covariance diagonal singularity of order $ \frac{1}{d(x,y)^{d_h-2\alpha d_w}}$.

\item 
For $\alpha=d_h/2d_w$, we have a log-correlated field which is distributional with spatial covariance diagonal singularity of order $|\ln d(x,y)|$.

\item 
For $\alpha >d_h/2d_w$, the noise $W_\alpha([0,t),x)$ can be defined pointwise and is H\"{o}lder continuous.
\end{enumerate}
Observe that in the sequel we will mostly focus on cases where $W_\alpha$ is distributional, corresponding to (i) and (ii) above.

With this brief description of $W_\alpha$ in hand, our first objective is to find conditions on $\alpha$ so that equation \eqref{PAM intro} admits a unique solution. Our main theorem in this direction can be informally summarized as follows (the reader is referred to Theorem \ref{existence PAM diri} in the Dirichlet case and Theorem~\ref{existence unique neuma}  in the Neumann case for a more rigorous version including the regularity assumptions on the domain under consideration).

\begin{thm}\label{th: intro existence-uniqueness}
On any bounded open domain $U$ of $(X,d,\mu)$, with either Neumann or Dirichlet boundary conditions, equation \eqref{PAM intro} admits a unique solution in the It\^{o} sense for any value $\alpha\in[0,\infty)$. In addition, for $\alpha>d_h/2d_w$,  the solution admits a Feynman-Kac representation.
\end{thm}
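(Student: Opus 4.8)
The plan is to solve \eqref{PAM intro} via its mild (Duhamel) formulation, with the stochastic integral understood in the It\^o--Skorohod sense, and to construct the solution as a Wiener chaos expansion whose convergence is governed by the sub-Gaussian heat kernel bounds. First set up the functional framework of Section~\ref{sec: Prelim}: let $(\lambda_k,\varphi_k)_{k\ge0}$ be the eigenpairs of $-\Delta$ on $U$ with Dirichlet or Neumann conditions (in the Neumann case one works on the orthogonal complement of the constants, so that $(-\Delta)^{-2\alpha}$ and its kernel $G_{2\alpha}$ are well defined), write the heat semigroup kernel as $p_t^U(x,y)=\sum_k e^{-\lambda_k t}\varphi_k(x)\varphi_k(y)$ and $G_{2\alpha}(x,y)=\sum_k \lambda_k^{-2\alpha}\varphi_k(x)\varphi_k(y)$, and recall the Weyl-type bounds $\lambda_k\simeq k^{d_w/d_h}$ together with the estimates on $G_{2\alpha}$ that follow from \eqref{subgauss intro}. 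A solution in the It\^o sense is an adapted square-integrable random field $u$ for which $(s,y)\mapsto p_{t-s}^U(x,y)u(s,y)$ is Skorohod integrable and
\[
u(t,x)=\int_U p_t^U(x,y)\,u_0(y)\,d\mu(y)+\beta\int_0^t\!\!\int_U p_{t-s}^U(x,y)\,u(s,y)\,W_\alpha(ds,dy).
\]

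Inserting the ansatz $u(t,x)=\sum_{n\ge0}I_n\big(f_n(\cdot\,;t,x)\big)$ into this identity produces the usual recursion expressing $f_n$ as the symmetrization of an iterated space--time convolution of $n$ heat kernels along the time simplex $\{0<s_1<\dots<s_n<t\}$ acting on $p_{s_1}^U u_0$. By the Wiener isometry $\mathbf{E}\big[u(t,x)^2\big]=\sum_n n!\,\|\widetilde f_n\|_{\mathcal H^{\otimes n}}^2$, where $\mathcal H=L^2([0,T])\otimes\mathcal H_0$ and $\mathcal H_0$ is the completion of $C(U)$ under the bilinear form with kernel $G_{2\alpha}$. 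The heart of the matter is a one-step space--time estimate: each time $G_{2\alpha}$ couples two consecutive spatial heat kernels one picks up, after using orthonormality of the $\varphi_k$ and the Weyl law, a factor controlled by
\[
\mathrm{Tr}\!\left(e^{-2s\Delta}(-\Delta)^{-2\alpha}\right)=\sum_k\lambda_k^{-2\alpha}e^{-2\lambda_k s}\;\lesssim\;1+s^{\,2\alpha-d_h/d_w}\qquad(s\downarrow0).
\]
This is precisely where recurrence enters: since $d_h<d_w$ one has $2\alpha-d_h/d_w>-1$ for \emph{every} $\alpha\ge0$, so the per-step time integral $\int_0^\cdot\big(1+s^{2\alpha-d_h/d_w}\big)\,ds$ converges --- the metric measure analogue of Dalang's condition, valid in all three regimes (i)--(iii), including the space--time white noise case $\alpha=0$.

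Iterating this bound over the time simplex and invoking the generalized beta/Gamma-function estimate of the Dalang--Hu--Nualart theory gives
\[
n!\,\|\widetilde f_n\|_{\mathcal H^{\otimes n}}^2\;\le\;\frac{C^n\,t^{\,n(1+2\alpha-d_h/d_w)}}{\Gamma\!\big(n(1+2\alpha-d_h/d_w)+1\big)}\,,
\]
whose exponent $1+2\alpha-d_h/d_w$ is strictly positive; hence the chaos series converges in $L^2(\Omega)$ for all $(t,x)$ and defines $u(t,x)$, and Gaussian hypercontractivity upgrades this to the $L^p$ moment bounds announced in the introduction. A stochastic Fubini argument shows $u$ solves the mild equation, giving existence; uniqueness follows because any It\^o-sense solution has a chaos expansion whose kernels must obey the same recursion and hence agree. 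Finally, for $\alpha>d_h/2d_w$ the estimates on $G_{2\alpha}$ show $W_\alpha(\cdot,x)$ is function-valued and (H\"older) continuous, so one may define the exponential Feynman--Kac functional built from the Brownian motion $B$ on $U$ (killed on $\partial U$ for Dirichlet, reflected for Neumann), its stochastic line integral $\int_0^t W_\alpha(dr,B_{t-r})$, and the It\^o (Wick) renormalization $-\tfrac{\beta^2}{2}\int_0^t G_{2\alpha}(B_r,B_r)\,dr$; since $d_h<d_w$ the motion $B$ has jointly continuous local times, the renormalization term is a.s.\ finite and the exponential has all moments, and a Feynman--Kac verification identifies this field as a solution of \eqref{PAM intro}, hence by uniqueness with the chaos solution.

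The step I expect to be the main obstacle is the uniform control of $p_t^U$ and $G_{2\alpha}$ up to the boundary of $U$: securing the two-sided heat kernel and Green-kernel bounds and the eigenfunction/Weyl estimates on a general bounded domain --- which is what forces the mild regularity hypotheses on $U$ in Theorems~\ref{existence PAM diri} and~\ref{existence unique neuma} --- handling separately the zero Neumann eigenvalue, and making the generalized beta/Gamma estimate effective when $1+2\alpha-d_h/d_w$ is small (i.e.\ $\alpha$ near $0$). For the Feynman--Kac part, the delicate point is the verification that the exponential functional genuinely satisfies the It\^o equation, which relies on a stochastic Fubini/Clark--Ocone computation and on the joint continuity of the local times of the (possibly reflected) Brownian motion on the fractal domain.
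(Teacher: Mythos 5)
Your proposal is correct and follows the same overall architecture as the paper: mild It\^o--Skorohod formulation, Wiener chaos expansion (Proposition \ref{uni-SHE}), a per-step estimate on the fractional heat kernel whose integrability near $s=0$ for \emph{every} $\alpha\ge 0$ is exactly the recurrence condition $d_h<d_w$, and, for $\alpha>d_h/2d_w$, a Feynman--Kac formula with Wick renormalization as in \eqref{FK regular}. The one genuinely different ingredient is how you sum the chaos series: you iterate the per-step bound over the time simplex and invoke the beta/Gamma-factorial estimate of the Dalang--Hu--Nualart theory, getting $n!\,\|\tilde f_n\|^2\le C^n t^{n\theta}/\Gamma(n\theta+1)$ with $\theta=1+2\alpha-d_h/d_w>0$, whereas the paper runs a renewal-type recursion with an exponential weight $e^{-\rho t}$ (Lemma \ref{lem:small t}, giving $\cN_k(\rho)\le\hat\Psi(\rho)^k$), which is geometric once $\rho$ exceeds a critical $\rho_c(\beta)$ and therefore feeds directly into the sharp Lyapunov exponents $\Theta_\alpha(\beta)$ of Theorems \ref{existence PAM diri} and \ref{existence unique neuma}; for existence and uniqueness both routes work, yours giving convergence for all $t,\beta$ in one stroke, the paper's buying the precise growth rates used later. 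Three small caveats to your write-up: (a) the per-step quantity is a supremum over the base point, $\sup_x\|(-\Delta_U)^{-\alpha}p^U_s(x,\cdot)\|_{L^2}^2$, not a trace --- the paper gets it from the on-diagonal bound $p^{D,U}_{2(s+t)}(x,x)$ in Lemma \ref{estimate frac pt}, which avoids any eigenfunction sup-norm/Weyl input, and your stated bound $1+s^{2\alpha-d_h/d_w}$ misses the logarithmic correction at the critical value $\alpha=d_h/2d_w$ (harmless, since the log is still integrable); (b) in the Feynman--Kac step no local times are needed, because for $\alpha>d_h/2d_w$ the kernel $G_{2\alpha}$ is bounded (Proposition \ref{estimate G}, case (iii) applied to $2\alpha$), so the Wick correction $\int_0^t G_{2\alpha}(B_s,B_s)\,ds$ is trivially finite and the paper verifies the formula by mollifying the noise rather than by a Clark--Ocone argument; (c) for Neumann conditions the paper does not project out the constants but instead shifts the kernel, working with $G^{N,U}_{2\alpha}+C_U\ge 0$ (Definition \ref{sobo_def_N}, Remark \ref{rmk regarding C_U}), which is what lets the positivity argument of Lemma \ref{lem:small t} carry over verbatim in Theorem \ref{existence unique neuma}.
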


\noindent
We point out and stress that $\alpha$ can be $0$ in Theorem \ref{th: intro existence-uniqueness}, which means that the space-time white noise is admissible for our equation. This is a consequence of our assumption $d_h<d_w$, which implies that the Brownian motion on  $X$ is strongly recurrent. Now observe that this condition  can be rewritten in terms of  the so-called spectral dimension $d_s$, which measures a  growth rate  for the eigenvalues of the Laplace operator $\Delta$ on $(X,d,\mu)$.  Specifically the relation $d_h<d_w$ can also be stated as $d_s<2$,  where we recall that $d_s=\frac{2d_h}{d_w}$. As one can see from  Theorem~\ref{th: intro existence-uniqueness}, the space-time white noise is a proper driving noise whenever $d_s<2$. However, for $d_s>2$ we expect that some conditions on $\alpha$ will have to be imposed in order to solve \eqref{PAM intro}. In fact based on our previous considerations in \cite[Section 1]{BOTW}, we expect the condition on $\alpha$ to be
$$\alpha>\frac{d_s}{4}-\frac{1}{2}=\frac{1}{2}\left(\frac{d_h}{d_w}-1\right).$$
This type of hypothesis will be analyzed in a future work.

Once existence and uniqueness are proved, we turn our attention to estimating Lyapounov exponents associated with the moments of the solution. In this context it is worth separating Dirichlet from Neumann boundary conditions, which lead to quite different asymptotic behaviors in~\eqref{PAM intro}.

\noindent
\textbf{(a)} \emph{Dirichlet boundary conditions.}
Whenever equation~\eqref{PAM intro} is specified with Dirichlet boundary conditions and bounded initial conditions, our main upper bound for the large time asymptotics of the $p$-moments of order $p\ge 2$ can be read as
\begin{align}\label{upper bound diri intro}
\limsup_{t \to +\infty} \frac{\ln \mathbf E(u(t,x)^p)}{t} \le \frac{p}{2}   \Theta_\alpha (\beta \sqrt{p-1})-p\lambda_1 , \quad x \in U,
\end{align}
where $\lambda_1>0$ is the first Dirichlet eigenvalue of $U$. %{\color{red}Jing: do we assume discrete spectrum here? that requires $U$ to be almost bounded} {\color{orange}F: U is always bounded throughout the paper. }
Furthermore, recalling that the spectral dimension $d_{s}$ is defined by  $d_s=\frac{2d_h}{d_w}$, the quantity $\Theta_\alpha$ in~\eqref{upper bound diri intro} is a continuous increasing function such that:
\begin{enumerate}
\item If $0\le \alpha<\frac{d_{s}}{4}$, then $\Theta_\alpha (\beta) \sim_{\beta \to 0} C\beta^2 $ and  $\Theta_\alpha (\beta) \sim_{\beta \to +\infty} C\beta^{\frac{4}{2(1+2\alpha)-d_s}} $.
\item If $\alpha=\frac{d_{s}}{4}$, then  $\Theta_\alpha (\beta) \sim_{\beta \to 0} C\beta^2 $ and $\Theta_\alpha (\beta) \sim_{\beta \to +\infty} C\beta^2 (\ln \beta)^2 $.

\item If $\alpha>\frac{d_{s}}{4}$, then $\Theta_\alpha (\beta)=C\beta^2$.
\end{enumerate}
While a variety of conclusions can be drawn from~\eqref{upper bound diri intro}, let us focus on weak/strong disorder type interpretations according to the parameter $\beta$ in~\eqref{PAM intro}. Indeed, the upper bound~\eqref{upper bound diri intro} illustrates competing effects. The dissipative effect of the Dirichlet semigroup controlled by $\lambda_1$ induces an exponential decay for $u(t,x)$, while the noise whose intensity is controlled by $\beta$ induces an exponential growth. This competing effect is further illustrated by the following lower bound proved in Theorem~\ref{thm:p-moment-lower-Diri}:
\begin{align}\label{lower bound diri intro}
\liminf_{t \to +\infty} \frac{\ln \mathbf E(u(t,x)^p)}{t} \ge C  \beta^2  p(p-1)  -p\lambda_1 (A) , \quad x \in A,
\end{align}
where $A\subset U$ is an open set that does not intersect the boundary of $U$ and $\lambda_1(A)$ is the first Dirichlet eigenvalue for $-\Delta_{A}$. In particular, for any such fixed domain $A$ there exist  critical parameters $\beta_{c,1}>0$ and $\beta_{c,2}>0$  depending on $A$, $p$ and $\alpha$ such that uniformly on $A$ we have 
\begin{equation*}
\limsup_{t \to +\infty} \frac{\ln \mathbf E(u(t,x)^p)}{t} <0  \text{ if } \beta < \beta_{c,1},
\quad\text{and}\quad
\liminf_{t \to +\infty} \frac{\ln \mathbf E(u(t,x)^p)}{t} 
>0  \text{ if } \beta > \beta_{c,2}.
\end{equation*}
For the parabolic Anderson model on a bounded interval, this interesting competing effect between the driving noise $W_\alpha$ and the Dirichlet semigroup was already pointed out in \cite{FoondunNualart}. We also note that the rates we obtain recover results from \cite{MR3359595} for bounded intervals in the white noise case, see also \cite{MR2480553}. 

Remarkably, the lower bound \eqref{lower bound diri intro} can be substantially improved in the range $0<\alpha < \frac{d_s}{4}$ for the large $\beta$ regime. More precisely, we obtain the following lower bounds whose exponents in (i) below match the upper bound \eqref{upper bound diri intro}:
\begin{enumerate}
\item
For $0<\alpha <\frac{d_{s}}{4}$, we have
$\liminf\limits_{\beta \to +\infty}\liminf\limits_{t \to +\infty} 
%\frac{1}{\beta^{\frac{4}{2(1+2\alpha)-d_s}} }
\beta^{-\frac{4}{2(1+2\alpha)-d_s}} 
\frac{\ln \mathbf E(u(t,x)^p)}{t} \ge C p(p-1).
$

\item
For $\alpha =\frac{d_{s}}{4}$, we get
$
\liminf\limits_{\beta \to +\infty}\liminf\limits_{t \to +\infty} \,
%\frac{1}{\beta^{2} \ln \beta}
\, (\beta^{2} \ln \beta)^{-1}
\frac{\ln \mathbf E(u(t,x)^p)}{t} \ge C p(p-1).
$
\end{enumerate}
Those bounds quantify both the strong disorder effect and the intermittency phenomenon for large values of $\beta$.

\noindent
\textbf{(b)} \emph{Neumann boundary conditions.}
For the solution $u(t,x)$ associated with Neumann boundary conditions, the heat semigroup is conservative and there is therefore nothing to counteract the excitability effect induced by the noise. In that case we find that
\[
\limsup_{t \to +\infty} \frac{\ln \mathbf E(u(t,x)^p)}{t} \le \frac{p}{2}   \widehat\Theta_\alpha (\beta \sqrt{p-1}),
\]
where $\widehat\Theta_\alpha$ is a continuous increasing function  which is comparable to $\Theta_\alpha$ when $\beta \to 0$ or $\beta \to +\infty$. The corresponding lower bounds on $p$-th order moments are given by
\[
\liminf_{t \to +\infty} \frac{\ln \mathbf E(u(t,x)^p)}{t} \ge C  \beta^2  p(p-1),
\]
which seems to indicate a strong disorder regime as soon as $\beta>0$ and is due again to the fact that we are considering spectral dimensions $d_{s}<2$. Observe that we have   improvements on those lower bounds as before for the large $\beta$'s regime in the range $0 < \alpha \le \frac{d_s}{4}$.

Concerning the exponents in both the Dirichlet and Neumann boundary case, let us note that in the context of a locally compact Hausdorff Abelian group $G$ a near-dichotomy for the (expected square) energy of the solution $u$ was discovered in \cite{KK2015} when the driving noise is white in both the time and space variable. Roughly speaking, the dichotomy says that, for large $\beta$ (equivalently for low temperatures),  the energy of the solution behaves like $\exp\{C \beta^2\}$ when $G$ is discrete (e.g. $G=\mathbf{Z}^d$ or $(\mathbf{Z}/n\mathbf{Z})^d $) and is bounded from below by $ \exp\{C \beta^4\}$ when $G$ is connected (e.g. $G=\mathbf{R}$ or $[0,1])$. Our results suggest that in our setting in the range $0 \le \alpha < \frac{d_s}{4}$ the energy behaves for large $\beta$  like $\exp\left\{C \beta^{\frac{4}{2(1+2\alpha)-d_s}} \right\}$ which demonstrates a rich variety of exponents for the class of models we are considering and illustrates their dependency in terms of both the noise regularity and the geometric environment.

Chaos expansions, Feynman-Kac representations and martingale type arguments are widely used tools in the study of parabolic Anderson models in Euclidean spaces. We will use all three of them in the current contribution, according to the values of the parameters in~\eqref{PAM intro} and the kind of bound we are dealing with. Namely
our strategy to prove the upper bound \eqref{upper bound diri intro} is to make use of the solution chaos expansion, together with the equivalence of $L^p$ norms on chaos of fixed orders. 
For the lower bound \eqref{lower bound diri intro}, we need to divide our analysis in two cases. If $\alpha >0$ we use a Feynman-Kac representation of the moments, while for $\alpha =0$ in the case $p=2$ we use a rather direct martingale type argument based on the mild formulation of~\eqref{PAM intro}. Those basic ingredients are then combined with thorough analytic estimates involving the heat kernel~\eqref{subgauss intro}. Notice that when $\alpha=0$, the Feynman-Kac representation for moments would involve the construction and study of mutual intersection local times for independent Brownian motions in recurrent metric measure spaces. This is a task which is interesting in its own right, and deferred to a later work.

The paper is organized as follows. In Section \ref{sec: Prelim}, we first present the general framework of a recurrent metric measure space $(X,d,\mu)$, equipped with a Dirichlet form with generator $\Delta$ whose heat kernel satisfies the sub-Gaussian estimates \eqref{subgauss intro}. Given a bounded open set $U \subset X$, one can restrict the Dirichlet form to $U$ using either Dirichlet or Neumann boundary condition and under mild regularity assumptions on the domain  the restricted Dirichlet form has a heat kernel with sub-Gaussian type estimates, see Lierl \cite{Lierl} for the Dirichlet boundary case and Murugan \cite{murugan2023heat} for the Neumann boundary case. This allows us to define the fractional operator $(-\Delta_U)^{-\alpha}$ where $\Delta_U$ is the Laplacian on $U$ with either Dirichlet or Neumann boundary condition. Estimates for the kernel of this operator are then obtained. In that setting, the driving noise to equation \eqref{PAM intro} is then defined as
\[
W_\alpha (t,x)=(-\Delta_U)^{-\alpha}W_0(t,x),
\]
where $W_0(t,x)$ is a space-time white noise on $U$ and the expression $(-\Delta_U)^{-\alpha}W_0(t,x)$ is understood in a weak $L^2$ sense which is made precise in the text. In order to show the scope of our results, we then present in detail three examples of spaces to which our results apply: bounded intervals of the real line, metric graphs and the Sierpi\'nski gasket. In the case of Sierpi\'nski gasket we exhibit in Section \ref{sec: property intermittency} a scale invariance property of the parabolic Anderson models. In Section \ref{sec: Dirichlet PAM}, we study equation \eqref{PAM intro} with Dirichlet boundary condition and prove the above mentioned results. Section \ref{sec: Neumann PAM} deals with the Neumann boundary case.

\section{Preliminaries}\label{sec: Prelim}

This section is devoted to recall basic facts about metric spaces equipped with a Dirichlet form. We also prove some crucial heat kernel estimates and  give some details about powers of the Laplace operator and associated fractional Riesz kernels in this context, with different types of boundary conditions. Then we shall describe three examples of relevant applications of our abstract setting.

\subsection{Recurrent metric measure spaces with sub-Gaussian heat kernel estimates}

Our considerations will be based on a locally compact complete geodesic metric measure space  $(X,d,\mu)$, where $\mu$ is a Radon measure supported on $X$. We will denote $\mathrm{diam}(X) \in (0,+\infty]$ the diameter of $X$, which can possibly be infinite.

The space $(X,d,\mu)$ is equipped with a Dirichlet form. That is  a densely defined closed symmetric form $(\mathcal{E},\mathcal{F}:=\rm{dom}\,\mathcal{E})$  on $L^2(X,\mu)$. A function $v\colon X\to\mathbb{R}$ is called a normal contraction of the function $u$ if for $\mu$-almost every $x,y \in X$,
\[
| v(x)-v(y)| \le |u(x) -u(y)|, \qquad\text{and}\qquad |v(x)| \le |u(x)|.
\]
The form $(\mathcal{E},\mathcal{F})$ is called a Dirichlet form if it is Markovian, that is, it has the property that if $u \in \mathcal{F}$ and $v$ is a normal contraction of $u$ then $v \in \mathcal{F}$ and $\mathcal{E}(v,v) \le \mathcal{E} (u,u)$.
Some basic properties of Dirichlet forms are collected in~\cite[Theorem 1.4.2]{FOT}. In particular, we note that $\mathcal{F} \cap L^\infty(X,\mu)$ is an algebra and $\mathcal F$ is a Hilbert space equipped with the $\mathcal{E}_1$-norm defined as
\begin{equation}\label{e-E1}
\|f\|_{\mathcal{E}_1}:=\left( \| f \|_{L^2(X,\mu)}^2 + \mathcal{E}(f,f) \right)^{1/2}.
\end{equation} 

In the sequel it will be convenient to express Dirichlet forms as integrals. This is usually true in the so-called \emph{regular} setting that we proceed to recall now. Denoting by $C_c(X)$ the space of continuous functions with compact support in $X$, we recall that a core for $(\mathcal{E},\mathcal{F})$ is a subset $\mathcal{C}\subseteq C_c(X) \cap \mathcal{F}$ which is dense in $C_c(X)$ in the supremum norm and dense in $\mathcal{F}$ in the $\mathcal{E}_1$-norm.

\begin{defn} 
A Dirichlet form $(\mathcal{E},\mathcal{F})$ on $L^2(X,\mu)$ is called regular if it admits a core.
\end{defn}

\noindent
In addition, given the regular Dirichlet form $(\mathcal{E},\mathcal{F})$, for every $u,v\in \mathcal F\cap L^{\infty}(X)$ one can define the energy measure $\Gamma (u,v)$ through the formula
 \begin{align}\label{def: Gamma}
\int_X\phi\, d\Gamma(u,v)=\frac{1}{2}[\mathcal{E}(\phi u,v)+\mathcal{E}(\phi v,u)-\mathcal{E}(\phi, uv)], \quad\text{for\ all}\  \phi\in \mathcal F \cap C_c(X).
\end{align}
The latter definition of $\Gamma(u,v)$ can be extended to any $u,v\in \mathcal F$ by truncation. According to Beurling-Deny's theorem~\cite{FOT}, for regular Dirichlet forms the quantity $\Gamma(u,v)$ is a signed Radon measure for any $u,v\in \mathcal F$ and $\mathcal E(u,v)$ can be decomposed as
\[
\mathcal E(u,v)=\int_X d\Gamma(u,v).
\]
For $u \in \mathcal{F}$, $\Gamma(u,u)$ is called  the energy measure of $u$.
Another useful property of Dirichlet form, which will be invoked in this paper, is locality. We recall the definition below for further use.

\begin{defn}\label{def:strong-local}(\cite[p.6]{FOT}) 
A Dirichlet form $(\mathcal{E},\mathcal{F})$ is called  \emph{strongly local} if for any $u,v\in\mathcal{F}$ with compact supports such that $u$ is constant in a 
neighborhood of the support of $v$, it holds that $\mathcal{E}(u,v)=0$. 
\end{defn}

Related to a Dirichlet form $(\mathcal{E},\mathcal{F})$, there is a notion of generator and semigroup. The generator $\Delta$ is defined using an integration by part formula, namely $\Delta$ is defined on a dense domain $\mathrm {dom}(\Delta)$ such that for every $f\in \mathcal F$ and $g \in \mathrm {dom}(\Delta)$,
\[
\mathcal E (f,g)=-\int_X f \Delta g \, d\mu.
\]
We  denote $\{P_{t}\}_{t \ge 0}$ the self-adjoint  semigroup on $L^2(X,\mu)$ associated with the Dirichlet space $(X,\mu,\mathcal{E},\mathcal{F})$. Using spectral theory this semigroup can be defined as $P_t=e^{t\Delta}$;  see~\cite[Section 1.4]{FOT}) for further details. 

With those general notions in hand, let us summarize the assumptions on our metric space and its Dirichlet form that will prevail throughout the paper.  Note that they are not independent (some may be derived from combinations of the others) and the list is chosen for comprehension rather than minimality.
\begin{assump}\label{hyp:dirichlet-form}
Let $(X,d,\mu)$ be the locally compact complete geodesic metric measure space introduced above, together with its Dirichlet form $(\mathcal{E},\mathcal{F})$. In the sequel we assume the following:
\begin{enumerate}
\item The measure $\mu$ is Ahlfors $d_h$-regular, i.e., there exist $c_{1},c_{2},d_h\in(0,\infty)$ such that  for any $x\in X$ and any $r\in\bigl(0,\mathrm{diam}(X) \bigr)$ we have
$
c_{1}r^{d_h}\leq\mu\bigl(B(x,r)\bigr)\leq c_{2}r^{d_h} .
$
\item $\mathcal{E}$ is a regular and strongly local Dirichlet form.

\item The semigroup $\{ P_t \}_{t\ge 0}$ is stochastically complete, i.e., $P_t1=1$ for every $t \ge 0$.

\item The semigroup $\{P_t\}_{t\ge 0}$ has a continuous heat kernel $p_t(x,y)$ satisfying sub-Gaussian estimates, 
%We also assume that there is a heat kernel with sub-Gaussian estimates:  $\{P_t\}$ has a continuous heat kernel $p_t(x,y)$ satisfying, 
that is, for some $c_{3},c_{4}, c_5, c_6 \in(0,\infty)$ and $d_w\in [2,+\infty)$,
 \begin{equation}\label{eq:subGauss-upper}
 c_{5}t^{-d_h/d_w}\exp\biggl(-c_{6}\Bigl(\frac{d(x,y)^{d_w}}{t}\Bigr)^{\frac{1}{d_w-1}}\biggr) 
 \le p_{t}(x,y)\leq c_{3}t^{-d_h/d_w}\exp\biggl(-c_{4}\Bigl(\frac{d(x,y)^{d_w}}{t}\Bigr)^{\frac{1}{d_w-1}}\biggr)
 \end{equation}
 for $\mu\!\times\!\mu$-a.e.\ $(x,y)\in X\times X$ and each $t\in\bigl(0,\mathrm{diam}(X)^{1/d_w})$. 
 %{\color{blue}Cheng-Question: here $t$ is assumed be to bounded above by the diameter of $X$. What happens for larger $t$? }{\color{orange}{Fabrice: If $\mathrm{diam}(X)$ is finite then one can use Theorem  \ref{estimate_heat_neumann} with $U=X$}}
 \item We have $d_h<d_w$.
 
 \end{enumerate}
 \end{assump}
\noindent
Observe that in Assumption \ref{hyp:dirichlet-form}, the parameter $d_h$ is the Hausdorff dimension and the parameter $d_w$ is called the walk dimension. Also note that when $d_w=2$, which is possible in our setting, we simply have Gaussian estimates.

A fundamental theorem in the theory of Dirichlet forms, see \cite[Theorem 7.2.1 and Theorem 7.2.2]{FOT}, is that in such a setting one can construct a Brownian motion $(B_t)_{t \ge 0}$ on $X$ as the continuous Hunt process with semigroup $P_t$. Otherwise stated, for every bounded and Borel function $f$ and $x \in X$ we have
\[
\mathbf{E}_x( f(B_t) )=P_t f(x).
\]
%{\color{red}Jing: do we distinguish the expected value for the BM and for the environment?}{\color{orange}{F: There is no environment}}
Notice that the condition $d_h<d_w$ in Assumption \ref{hyp:dirichlet-form} is known as the recurrence condition.  It implies in particular (see Corollary 3.29 in \cite{Barlow}) that the Brownian motion $(B_t)_{t \ge 0}$ is a strongly recurrent Markov process: for every $x,y \in X$,
\[
\mathbf{P}_x( \exists \, t \ge 0 \text{ such that } B_t =y)=1.
\]
The recurrence condition also implies that functions in the domain $\mathcal{F}$ have a continuous version. We note that our assumptions put us in the framework of Barlow's fractional spaces, which were introduced and studied in \cite{Barlow}.

\subsection{Dirichlet boundary conditions}\label{Dirichlet boundary}

This section specializes our setting to Dirichlet boundary conditions on a bounded set $U$. We first derive some bounds on the related semigroup.  Then we will define fractional powers of the Laplacian on $U$.

\subsubsection{Heat semigroup with Dirichlet boundary conditions}\label{sec-semi-g}
One of our goals in this note is to deal with stochastic heat equations on bounded domains of our metric space $X$. Let us thus consider a non-empty bounded open set $U$ whose closure $\overline{U}$ is compact. We will localize the notions of Dirichlet form and related objects to functions defined on $U$.
The domain of our localized Dirichlet form will be called $\mathcal{F}(U)$. It is defined as
\begin{align}\label{def: F(U)}
\mathcal{F}(U)=\left\{ u\in L^2(U,\mu); \text{there\ exists}\ f\in\mathcal{F}\ \text{such\ that} \ f|_{U}=u\ \text{a.e.}\right\}.
\end{align}
Notice that for every $u\in \mathcal{F}(U)$ the localized energy measure in \eqref{def: Gamma} is simply given by $d\Gamma (u,u)=d\Gamma (f,f)$.

In order to introduce Dirichlet boundary conditions, we now restrict our Dirichlet form. That is, define
\[
\mathcal{F}_c (U)
=
\left\{ u \in \mathcal{F}(U);\text{ the essential support of } u \text{ is compact in } U \right\}.
\]
Then the domain $\mathcal{F}^D(U)$ will be the closure of $\mathcal{F}_c (U)$ for the norm $\mathcal{E}_{1}$ defined by~\eqref{e-E1}.  For $u,v \in \mathcal{F}^D(U)$ we set
\begin{align}\label{def: D Dirichlet form}
\mathcal{E}^D_U(u,v):=\mathcal{E}(u,v).
\end{align}
Notice that $(\mathcal{E}^D_U,\mathcal{F}^D(U)) $ is a Dirichlet form and the functions in $\mathcal{F}^D(U)$ satisfy the Dirichlet boundary condition. The generator $\Delta_{D,U}$ of the Dirichlet form $(\mathcal{E}^D_U,\mathcal{F}^D(U)) $ is called the Dirichlet Laplacian on $U$.  We refer to \cite{MR2807275} for more details on this construction, which we stress does not require any \textit{smoothness} assumption on the boundary of $U$. In particular, see~\cite[Theorem 4.4.2]{FOT},  the semigroup $P^{D,U}_t$ with generator $\Delta_{D,U}$ can be represented for $x \in U$ as
\begin{align}\label{def: Dirichlet semigroup}
P^{D,U}_t f(x)=\mathbf{E}_x ( f(B_t) 1_{t<T_U}) \, ,
\end{align}
where $B_t$ is a Brownian motion on $X$ and $T_U$ is the hitting time of the complement of $U$ by $B_t$.

The construction  of $P^{D,U}$ in \eqref{def: Dirichlet semigroup} is quite general. However, proper bounds on the heat kernel $p^{D,U}$ corresponding to $P^{D,U}$ are available when $U$ is an inner uniform domain. We now define this type of domain. To begin with, we consider a connected set $U$. Then $U$ admits an inner metric:
\begin{align}\label{def: inner metric}
d_U(x,y)=\inf \left\{ \mathrm{length}(\gamma); \, 
\gamma: [0,1] \to U \text{ rectifiable, }
 \gamma(0)=x \text{ and } \gamma(1)=y \right\}.
\end{align}

The following straightforward lemma will be useful.

\begin{lem}\label{Lem:Equa-BU-B}
  Let $U \subset X$ be an open set. Assume that $x \in U$ and $r >0$ are such that $B(x,r) \subset U$. Then we have  $B_U(x,r)=B(x,r)$, where $B_U(x,r)$ denotes the ball with center $x$ and radius $r$ for the distance $d_U$.  
\end{lem}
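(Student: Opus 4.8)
The plan is to prove this by a straightforward double inclusion, using the hypothesis $B(x,r) \subset U$ to control the inner paths.

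\medskip

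\noindent\textbf{Proof sketch.} First, the inclusion $B_U(x,r) \subseteq B(x,r)$ is immediate: for any $y \in U$, any rectifiable curve $\gamma$ joining $x$ to $y$ inside $U$ is in particular a curve in $X$, so $d(x,y) \le \mathrm{length}(\gamma)$, and taking the infimum over such curves gives $d(x,y) \le d_U(x,y)$. Hence $d_U(x,y) < r$ forces $d(x,y) < r$, i.e. $B_U(x,r) \subseteq B(x,r)$.

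\medskip

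\noindent For the reverse inclusion $B(x,r) \subseteq B_U(x,r)$, fix $y \in B(x,r)$, so $d(x,y) =: \rho < r$. Since $(X,d)$ is a complete geodesic space (Assumption recalled at the start of Section~\ref{sec: Prelim}), there is a minimizing geodesic $\gamma : [0,1] \to X$ from $x$ to $y$ with $\mathrm{length}(\gamma) = \rho$. The key point is that this geodesic stays inside $U$: for every $t$, using that $\gamma$ is a geodesic and $\rho < r$,
\[
d(x,\gamma(t)) \le \mathrm{length}(\gamma|_{[0,t]}) \le \mathrm{length}(\gamma) = \rho < r,
\]
so $\gamma(t) \in B(x,r) \subset U$. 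Therefore $\gamma$ is an admissible curve in the definition \eqref{def: inner metric} of $d_U$, whence $d_U(x,y) \le \mathrm{length}(\gamma) = \rho < r$, i.e. $y \in B_U(x,r)$. Combining the two inclusions gives $B_U(x,r) = B(x,r)$.

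\medskip

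\noindent\textbf{Main obstacle.} There is essentially no obstacle here; the only subtlety worth flagging is the existence of a length-minimizing geodesic between $x$ and $y$ contained in $X$, which is exactly what the standing assumption that $(X,d)$ is a \emph{complete geodesic} metric space provides (via the Hopf--Rinow-type theorem for length spaces). Once a minimizing geodesic is available, the convexity-type estimate $d(x,\gamma(t)) \le \mathrm{length}(\gamma|_{[0,t]})$ confines it to $B(x,r)$ and the argument closes. If one wanted to avoid invoking geodesics, one could instead take a sequence of rectifiable curves in $X$ from $x$ to $y$ whose lengths approach $d(x,y)$; for lengths close enough to $\rho < r$ each such curve is trapped in $B(x,r) \subset U$ by the same estimate, giving $d_U(x,y) \le \rho$ in the limit.
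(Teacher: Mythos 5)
Your proof is correct and follows essentially the same route as the paper: the inclusion $B_U(x,r)\subseteq B(x,r)$ from $d\le d_U$, and for the converse a geodesic from $x$ to $y$ which, since its length equals $d(x,y)<r$, stays inside $B(x,r)\subset U$ and hence witnesses $d_U(x,y)\le d(x,y)$. The only difference is that you spell out the containment of the geodesic in $B(x,r)$ via the estimate $d(x,\gamma(t))\le\mathrm{length}(\gamma|_{[0,t]})$, which the paper leaves implicit.
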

\begin{proof}
Since $d \le d_U$ we have  always have $B_U(x,r) \subset B(x,r)$. Now, assume that $B(x,r) \subset U$ and let $y \in B(x,r)$. Since the space $(X,d)$ is assumed to be geodesic, there exists a geodesic $\gamma$ connecting $x$ to $y$. This geodesic is included in $B(x,r) \subset U$ so that $d_U(x,y) \le \mathrm{length}( \gamma)=d(x,y) \le r$.
\end{proof}
We denote by $\widetilde{U}$ the completion of an open set $U$ for $d_U$. With those preliminary notions in hand, we can now define inner uniform domains.

\begin{defn}\label{Def: inner uni domain}
Consider an open, connected, bounded set $U$ whose closure is compact. Let $c \in (0,1)$, $C>1$. Let $\gamma: [\alpha,\beta] \to U$ be a rectifiable curve. We say that $\gamma$ is a $(c,C)$ inner  uniform curve in $U$ if the following two conditions are satisfied:
\begin{enumerate}
\item For all $t \in [\alpha,\beta]$, $d_U( \gamma(t), \widetilde{U} \setminus U) \ge c \min \{ d_U(\gamma(\alpha),\gamma(t)), d_U(\gamma(t),\gamma(\beta)) \}$,
\item $\mathrm{length}(\gamma) \le C d_U (\gamma(\alpha),\gamma(\beta))$.
\end{enumerate}
Then the domain $U$ is called a $(c,C)$ inner uniform domain if any two points in $U$ can be joined by a $(c,C)$ inner uniform curve in $U$. The domain $U$ is  simply called inner uniform if there exist $c \in (0,1),C>1$ such that $U$ is $(c,C)$ inner uniform. 
\end{defn}
%{\color{red}Jing: later we repetiedly use the notion of inner uniform domain withour specify the parameters. Should we add explain its definition as well?}{\color{orange}{F: added a sentence}}

For example, in  Euclidean spaces, any bounded convex domain is inner uniform.  Any bounded domain with piecewise smooth boundary with a finite number of singularities and non-zero interior angle at each of the singularities is also inner uniform, see Example 3.5 in \cite{MR3170207}. The class of inner uniform domains is actually very wide and also includes sets with very rough boundaries like  the Koch snowflake, see Section 6.5 in \cite{MR2807275}. We refer to \cite{MR2807275,Lierl, MR3170207} for examples and further  discussions about inner uniform domains. In our setting of recurrent metric measure spaces, further examples of (inner) uniform domains will be addressed in sections \ref{interval example}, \ref{metric graph section} and \ref{section gasket}.

Furthermore, for inner uniform domains we have the following upper and lower bounds on the heat kernel that follow from \cite[Theorem 3.3]{Lierl}.

%{\color{red}Samy: below we should say something about lower bound for $\Phi_1$.}
\begin{thm}\label{estimate_heat_diri}
Let $U$ be an inner uniform domain as introduced in Definition \ref{Def: inner uni domain}. Then  the Dirichlet form $(\mathcal{E}^D_U,\mathcal{F}^D(U)) $ given by \eqref{def: D Dirichlet form} is a strongly local regular Dirichlet form which admits a continuous symmetric heat kernel $p_t^{D,U}(x,y)$. Moreover, if $\Phi_1$ is the first eigenfunction of $-\Delta_{D,U}$ and $\lambda_1>0$ is the corresponding principal eigenvalue, then the Dirichlet heat kernel $p_t^{D,U}(x,y)$ satisfies the sub-Gaussian upper estimate
\begin{equation}\label{eq:subGauss-upperD}
p_{t}^{D,U}(x,y)\leq\frac{ c_{1}e^{-\lambda_1 t}}{\min (1,t^{\frac{d_h}{d_w}})}\!\exp\biggl(\!-c_{2}\Bigl(\frac{d(x,y)^{d_w}}{t}\Bigr)^{\frac{1}{d_w-1}}\!\biggr),
\end{equation}
as well as the sub-Gaussian lower estimate
\begin{equation}\label{eq:subGauss-lowerD}
p_{t}^{D,U}(x,y)\geq \frac{c_{3}\Phi_1(x) \Phi_1(y)e^{-\lambda_1t}} {\min (1,t^{\frac{d_h}{d_w}})}\!\exp\biggl(\!-c_{4}\Bigl(\frac{d_U(x,y)^{d_w}}{t}\Bigr)^{\frac{1}{d_w-1}}\!\biggr) ,
\end{equation}
for every \ $(x,y)\in U \times U$ and $t\in\bigl(0,+\infty)$, where we recall that $d_U$ is defined by \eqref{def: inner metric}.

\end{thm}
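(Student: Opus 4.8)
This is a localization statement: it couples the ambient sub-Gaussian estimates of Assumption~\ref{hyp:dirichlet-form} with the Dirichlet heat kernel bounds of Lierl on inner uniform domains, and then rewrites the latter so as to display the spectral decay $e^{-\lambda_1 t}$. I would organize the argument in three steps, plus a remark on where the work actually lies. \textbf{Step 1: the localized form is a strongly local regular Dirichlet form.} By \eqref{def: D Dirichlet form} the form $\mathcal{E}^D_U$ is the restriction of $\mathcal{E}$ and $\mathcal{F}^D(U)$ is the $\mathcal{E}_1$-closure of $\mathcal{F}_c(U)$, so the Markovian property and strong locality are inherited from $(\mathcal{E},\mathcal{F})$, and symmetry of the form yields symmetry of the kernel once the latter exists. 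The substantive point is regularity, i.e.\ the existence of a core. I would use the regularity and strong locality of the ambient form together with the recurrence hypothesis $d_h<d_w$ (which forces every element of $\mathcal{F}$ to have a continuous version) to check that $\mathcal{F}_c(U)\cap C_c(U)$ is simultaneously dense in $\mathcal{F}^D(U)$ for $\mathcal{E}_1$ and dense in $C_c(U)$ for the supremum norm; this is the standard construction of the Dirichlet-boundary part of a regular form, for which I would point to \cite{MR2807275,FOT}.

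\textbf{Step 2: quote Lierl's theorem on the inner uniform domain.} The Ahlfors $d_h$-regularity and the two-sided bounds \eqref{eq:subGauss-upper} place $(X,d,\mu,\mathcal{E},\mathcal{F})$ in the regime of volume doubling together with the scale-invariant Poincar\'e and cutoff Sobolev inequalities at the space-time scale $\Psi(r)=r^{d_w}$ (equivalently, the parabolic Harnack inequality at that scale), which is exactly the setting in which \cite[Theorem~3.3]{Lierl} operates; since $U$ is inner uniform in the sense of Definition~\ref{Def: inner uni domain}, one lifts $(\mathcal{E}^D_U,\mathcal{F}^D(U))$ to the completion $\widetilde U$ endowed with the inner metric $d_U$ and applies that theorem. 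It produces a continuous symmetric kernel $p_t^{D,U}$ obeying two-sided estimates of the shape
\[
p_t^{D,U}(x,y)\;\simeq\;\frac{h(x)\,h(y)}{\bigl(V_U(x,t^{1/d_w})\,V_U(y,t^{1/d_w})\bigr)^{1/2}}\,\exp\!\Bigl(-c\bigl(\tfrac{d_U(x,y)^{d_w}}{t}\bigr)^{\frac{1}{d_w-1}}\Bigr),
\]
where $V_U$ is the inner-ball volume and $h$ is the profile function of the domain; on the bounded domain $U$ one identifies $h$ with the first Dirichlet eigenfunction $\Phi_1$ up to constants, and $\Phi_1>0$ on $U$ since the sub-Gaussian estimates make $P_t^{D,U}$ positivity improving.

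\textbf{Step 3: rewrite in the stated form.} Let $D:=\sup_{x,y\in U}d_U(x,y)$, finite because $U$ is bounded, and split $t\le D^{d_w}$ from $t>D^{d_w}$. For the upper bound \eqref{eq:subGauss-upperD}: on $t\le D^{d_w}$ the semigroup representation \eqref{def: Dirichlet semigroup} gives $p_t^{D,U}\le p_t$, so \eqref{eq:subGauss-upper} applies and $e^{-\lambda_1 t}\simeq 1$ there; on $t>D^{d_w}$ the exponential in $d(x,y)$ is bounded below by a constant, and factoring $P_t^{D,U}=P_1^{D,U}P_{t-2}^{D,U}P_1^{D,U}$ and combining the ultracontractive bound $\|P_1^{D,U}\|_{1\to\infty}<\infty$ (a by-product of the small-time estimate) with $\|P_{t-2}^{D,U}\|_{2\to 2}\le e^{-\lambda_1(t-2)}$ produces the factor $e^{-\lambda_1 t}$; note $\lambda_1>0$ since $U$ is bounded. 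For the lower bound \eqref{eq:subGauss-lowerD} I would feed the display of Step~2 into the same split, using Lemma~\ref{Lem:Equa-BU-B} and Ahlfors regularity (together with the boundedness of $U$) to replace $V_U(x,t^{1/d_w})$ by $\min(1,t^{d_h/d_w})$ up to constants, using $h\ge c\Phi_1$ to produce the factor $\Phi_1(x)\Phi_1(y)$, keeping the $d_U$-exponential as it stands, and using the ground-state lower bound for $P_t^{D,U}$ on the bounded domain in the large-time regime.

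\textbf{Where the work lies.} The genuine content is in Step~2: checking that Assumption~\ref{hyp:dirichlet-form} matches precisely the hypotheses of \cite[Theorem~3.3]{Lierl} at the non-Gaussian scale $\Psi(r)=r^{d_w}$, and tracking how the inner-metric volumes $V_U$ collapse to powers of $t$ through Ahlfors regularity and Lemma~\ref{Lem:Equa-BU-B}. Once Step~2 is in place, the $e^{-\lambda_1 t}$ repackaging in Step~3 and the core construction in Step~1 are routine.
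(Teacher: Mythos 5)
Your proposal is correct and follows essentially the same route as the paper: quote Lierl's Theorem~3.3 for the localized form and its kernel, use $p_t^{D,U}\le p_t$ together with \eqref{eq:subGauss-upper} for the small-time upper bound and spectral theory (equivalently your ultracontractivity/spectral-gap factorization) for large times, and obtain the lower bound from Lierl's profile estimate \eqref{lower bound diri 2} combined with Ahlfors regularity and Lemma~\ref{Lem:Equa-BU-B} in small times and the ground state in large times. The paper presents exactly this small-time/large-time splitting in the remark following the theorem, so your argument matches it in both structure and ingredients.
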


\begin{remark}
The proof of the upper bound \eqref{eq:subGauss-upperD} in small times comes from the upper bound in \eqref{eq:subGauss-upper}, combined with the estimate
 $$p_t^{D,U}(x,y) \le p_t(x,y).$$ 
 Notice that this last relation is a straightforward consequence of \eqref{def: Dirichlet semigroup}. In large times, relation~\eqref{eq:subGauss-upperD} follows from spectral theory, see \eqref{eq:HKexpansion-diri} below. The lower bound  \eqref{eq:subGauss-lowerD} in small times follows from the stronger result
 \begin{align}\label{lower bound diri 2}
 p_{t}^{D,U}(x,y)\geq \frac{c_{3}\Phi_1(x) \Phi_1(y)} {\mu (B_U (x,t^{1/d_w})) }\!\exp\biggl(\!-c_{4}\Bigl(\frac{d_U(x,y)^{d_w}}{t}\Bigr)^{\frac{1}{d_w-1}}\!\biggr)
 \end{align}
 %{\color{red}missing a $e^{-\lambda_1t}$ on the RHS?}{\color{orange}{F: No, it is the small time estimate}}
which is stated in \cite[Theorem 3.3]{Lierl}. Here $B_U$ denotes the inner  ball, i.e., the ball in $\tilde U$ for the distance $d_U$. Since $d \le d_U $ we have $B_U (x,t^{1/d_w}) \subset B (x,t^{1/d_w})$ so that owing to our Assumption~\ref{hyp:dirichlet-form}, the Ahlfors regularity of $\mu$ implies that $\mu(B_U (x,t^{1/d_w}))\le c t^{d_h/d_w}$.
In large times, the lower bound  \eqref{eq:subGauss-lowerD} follows from spectral theory, see \eqref{eq:HKexpansion-diri} below.
\end{remark}

\begin{remark}\label{rmk: positivity of first ef}
It follows from \cite[Lemma 7.13]{MR3170207} that the first eigenfunction $\Phi_1$ is positive on $U$. Since it is moreover continuous,  if $U$ is an inner uniform domain and $A\subset U$ is such that $\overline A \subset U$, then
\begin{equation}\label{eq: positivity of first ef}
\hat{m}_A:=\inf\left\{\Phi_1(y); y\in A\right\}>0.
\end{equation}
This information will feature prominently in the application of \eqref{eq:subGauss-lowerD}.
\end{remark}

For the remainder of this subsection, we assume that $U$ is inner uniform so that the above heat kernel estimates \eqref{eq:subGauss-upperD} and \eqref{eq:subGauss-lowerD} are valid.  The following corollaries of  Theorem~\ref{estimate_heat_diri} will be useful later.

\begin{cor}\label{lem:hk-lower-spectral}
Let $U\subset X$ be an inner uniform domain given as in Definition \ref{Def: inner uni domain}. Let $x\in U$ and $r>0$ such that $B(x,2r) \subset U$. There exists a constant $c>0$ (depending on $r$ and $x$) such that for every $t\ge 0$ and $y\in B(x,r/2)$, the semigroup $P^{D,U}$ represented by \eqref{def: Dirichlet semigroup} verifies
\begin{align}\label{hk-lower-spectral}
\left(P_t^{D,U}1_{B(x,r)}\right)\!(y)\ge c e^{-\lambda_1 t}.
\end{align}
\end{cor}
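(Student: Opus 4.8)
The goal is to show that $P_t^{D,U}1_{B(x,r)}(y) \ge c\, e^{-\lambda_1 t}$ uniformly for $y \in B(x,r/2)$ and $t \ge 0$, where $c$ may depend on $x$ and $r$. The natural strategy is to integrate the lower heat kernel bound \eqref{eq:subGauss-lowerD} over the ball $B(x,r)$, but the exponential factor $\exp(-c_4(d_U(y,z)^{d_w}/t)^{1/(d_w-1)})$ degenerates as $t \to 0$, so a single application of \eqref{eq:subGauss-lowerD} will not give a bound that is uniform down to $t=0$. The plan is therefore to treat small and large times separately and then patch them together.

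\emph{Small times.} Fix $y \in B(x,r/2)$ and $z \in B(x,r)$; by Lemma \ref{Lem:Equa-BU-B} and the inclusion $B(x,2r)\subset U$ one has $d_U(y,z) = d(y,z) \le 3r/2$ (geodesics between such points stay in $B(x,2r)\subset U$). Restrict the integral over $B(x,r)$ to the smaller ball $B(y,r/2) \subset B(x,r)$ where $d_U(y,z) \le r/2$; on this set the exponential in \eqref{eq:subGauss-lowerD} is bounded below by $\exp(-c_4((r/2)^{d_w}/t)^{1/(d_w-1)})$, which for $t \le t_0$ (some fixed threshold, say $t_0 = r^{d_w}$) is at least a fixed positive constant times $t^{d_h/d_w}/t^{d_h/d_w}$... more precisely, for $t \in (0,t_0]$ we use instead the cleaner consequence that $P_t^{D,U}1_{B(y,r/2)}(y) \to 1$ as $t \to 0$: indeed by \eqref{def: Dirichlet semigroup}, $P_t^{D,U}1_{B(y,r/2)}(y) = \mathbf{P}_y(B_t \in B(y,r/2),\, t < T_U) \to 1$ by right-continuity of Brownian paths and $B(y,r/2) \subset B(x,r) \subset U$ open. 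Hence there is $t_0 > 0$ and $c_0 > 0$ with $P_t^{D,U}1_{B(x,r)}(y) \ge P_t^{D,U}1_{B(y,r/2)}(y) \ge c_0$ for all $t \in (0,t_0]$ and all $y \in B(x,r/2)$ — one should check this threshold can be chosen uniformly in $y$, which follows because the estimate only depends on $\mathrm{dist}(y, X\setminus U) \ge 3r/2$. Since $e^{-\lambda_1 t} \le 1$, this yields \eqref{hk-lower-spectral} on $(0,t_0]$ with constant $c_0$.

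\emph{Large times.} For $t \ge t_0$, apply \eqref{eq:subGauss-lowerD} directly: for $y \in B(x,r/2)$,
\[
P_t^{D,U}1_{B(x,r)}(y) = \int_{B(x,r)} p_t^{D,U}(y,z)\, d\mu(z) \ge \int_{B(x,r)} \frac{c_3 \Phi_1(y)\Phi_1(z) e^{-\lambda_1 t}}{\min(1, t^{d_h/d_w})}\, e^{-c_4(d_U(y,z)^{d_w}/t)^{1/(d_w-1)}}\, d\mu(z).
\]
For $t \ge t_0$ one has $\min(1,t^{d_h/d_w}) \le \max(1, t_0^{-d_h/d_w})\cdot \ldots$ — more simply $1/\min(1,t^{d_h/d_w}) \ge \min(1, t_0^{-d_h/d_w}) > 0$ is not what we want; rather note $\min(1,t^{d_h/d_w})=1$ for $t \ge 1$, so for $t \ge \max(t_0,1)$ this factor is just $1$, and for $t_0 \le t \le 1$ it is bounded below by $t_0^{d_h/d_w}$, hence in all cases $1/\min(1,t^{d_h/d_w}) \ge \min(1, t_0^{d_h/d_w})$ wait — we need a \emph{lower} bound on $1/\min(\cdot)$, i.e.\ an \emph{upper} bound on $\min(1,t^{d_h/d_w})$, which is trivially $\le 1$. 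So $1/\min(1,t^{d_h/d_w}) \ge 1$. Next, $d_U(y,z) \le 3r/2$ and $t \ge t_0$ give $(d_U(y,z)^{d_w}/t)^{1/(d_w-1)} \le ((3r/2)^{d_w}/t_0)^{1/(d_w-1)} =: K$, so the exponential is $\ge e^{-c_4 K}$. Using $\Phi_1(y) \ge \hat m_{B(x,r/2)} > 0$ (Remark \ref{rmk: positivity of first ef}, applicable since $\overline{B(x,r/2)} \subset B(x,r) \subset U$) and $\int_{B(x,r)} \Phi_1(z)\, d\mu(z) =: m_1 > 0$ (a fixed positive number), we get $P_t^{D,U}1_{B(x,r)}(y) \ge c_3\, \hat m_{B(x,r/2)}\, m_1\, e^{-c_4 K}\, e^{-\lambda_1 t}$ for all $t \ge t_0$ and $y \in B(x,r/2)$.

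\emph{Conclusion.} Set $c := \min\{c_0\, e^{-\lambda_1 t_0},\ c_3\, \hat m_{B(x,r/2)}\, m_1\, e^{-c_4 K}\}$. On $(0,t_0]$ we have $P_t^{D,U}1_{B(x,r)}(y) \ge c_0 \ge c_0 e^{-\lambda_1 t_0} \ge c_0 e^{-\lambda_1 t} \cdot$... since $e^{-\lambda_1 t} \ge e^{-\lambda_1 t_0}$ for $t \le t_0$ we instead write $c_0 \ge (c_0 e^{-\lambda_1 t_0}) e^{\lambda_1 t_0} \ge (c_0 e^{-\lambda_1 t_0}) e^{\lambda_1 t} \ge$... cleaner: for $t \le t_0$, $P_t^{D,U}1_{B(x,r)}(y) \ge c_0 = c_0 e^{-\lambda_1 t} e^{\lambda_1 t} \ge c_0 e^{-\lambda_1 t}$ since $e^{\lambda_1 t} \ge 1$. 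And on $[t_0,\infty)$ we have the bound from the large-time step. Taking the smaller of the two prefactors gives \eqref{hk-lower-spectral} for all $t \ge 0$ and $y \in B(x,r/2)$. The main obstacle, as anticipated, is the small-time degeneracy of \eqref{eq:subGauss-lowerD}; the fix is to bypass the heat kernel entirely for $t \le t_0$ and use the probabilistic representation \eqref{def: Dirichlet semigroup} together with path right-continuity, being careful that the threshold $t_0$ and the lower constant can be chosen uniformly over $y \in B(x,r/2)$ — which works because all these points are at inner distance at least $3r/2$ from the boundary of $U$.
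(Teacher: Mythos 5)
Your large-time step (for $t\ge t_0$) is essentially the paper's own argument: integrate \eqref{eq:subGauss-lowerD} over $B(x,r)$, bound the exponential below by a constant since $d_U(y,z)\le 3r/2$ while $t\ge t_0$, and absorb $\Phi_1$ via Remark \ref{rmk: positivity of first ef}; that part is fine. Where you genuinely diverge from the paper is the small-time regime, and this is where there is a gap. You replace the heat-kernel estimate by the probabilistic representation \eqref{def: Dirichlet semigroup} and the fact that $\mathbf{P}_y\bigl(B_t\in B(y,r/2),\,t<T_U\bigr)\to 1$ as $t\to 0$. That convergence is indeed true for each \emph{fixed} $y$ by path continuity, but the corollary needs a threshold $t_0$ and a constant $c_0$ that are \emph{uniform} over $y\in B(x,r/2)$, and your justification — ``it follows because the estimate only depends on $\mathrm{dist}(y,X\setminus U)\ge 3r/2$'' — does not establish this. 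The distance to $U^c$ only addresses the event $\{t<T_U\}$; the binding requirement is that the process started at $y$ has not moved distance $r/2$ by time $t$, uniformly in the starting point. For a general Hunt process, pointwise convergence of $\mathbf{P}_y(\tau_{B(y,r/2)}>t)$ to $1$ does not by itself give uniformity; what one actually needs is a quantitative exit-time estimate of the type $\sup_{y}\mathbf{P}_y\bigl(\tau_{B(y,\rho)}\le t\bigr)\le C\exp\bigl(-c(\rho^{d_w}/t)^{1/(d_w-1)}\bigr)$. Such an estimate does follow from the sub-Gaussian bounds in Assumption \ref{hyp:dirichlet-form} (it is a standard consequence, e.g.\ in Barlow's framework), but it is neither stated in the paper nor proved or cited by you, so as written the small-time step is incomplete.

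The paper avoids this issue entirely: for $t^{1/d_w}\le r/2$ it invokes the sharper small-time lower bound \eqref{lower bound diri 2}, restricts the integral from $B_U(x,r)$ to $B_U(y,r/2)$ and then to $B_U(y,t^{1/d_w})$, where the sub-Gaussian exponential is bounded below by a constant, and the ratio $\mu(B_U(y,t^{1/d_w}))/\mu(B_U(y,t^{1/d_w}))$ (together with Ahlfors regularity and Lemma \ref{Lem:Equa-BU-B}) produces a uniform constant $c_8$, whence $(P_t^{D,U}1_{B(x,r)})(y)\ge c_8\ge c_8 e^{-\lambda_1 t}$. To repair your proof, either substitute this argument for your small-time step, or supply (with proof or a precise citation) the uniform exit-time estimate implied by \eqref{eq:subGauss-upper}; with that in hand, your patching of the two regimes and the final choice of constant are correct.
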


\begin{proof}
We first notice that since we assume $B(x,2r) \subset U$, we have from Lemma \ref{Lem:Equa-BU-B}, $B(x,r)=B_U(x,r)$ and $B(x,r/2)=B_U(x,r/2) $; this will be used without further notice in the proof below.
 From the heat kernel lower bound in Theorem \ref{estimate_heat_diri} and the strict positivity and continuity of $\Phi_1$, one has for $y\in B_U(x,r/2)$, 
 \begin{align}\label{Cor 2.7 MS 1}
 \left(P_t^{D,U}1_{B_U(x,r)}\right)\!(y) \ge \int_{B_U(x,r)}\frac{c_{3}  e^{-\lambda_1t}} {\min (1,t^{\frac{d_h}{d_w}})}\!\exp\biggl(\!-c_{4}\Bigl(\frac{d_U(z,y)^{d_w}}{t}\Bigr)^{\frac{1}{d_w-1}}\!\biggr) d\mu(z).
 \end{align} 
 Assume first $t^{1/d_w} \ge r/2 $. Then it is readily checked that for every  $y\in B_U(x,r/2)$ and $z\in B_U(x,r)$ we have
 \begin{equation}\label{Cor 2.7 MS 2}
 \left(\frac{d_U(z,y)^{d_w}}{t}\right)^{\frac{1}{d_w-1}}\leq c_5.
\end{equation}
Plugging this inequality into \eqref{Cor 2.7 MS 1} we obtain that for every $y\in B_U(x,r/2)$,
  \begin{align}\label{Cor 2.7 MS 3}
 \left(P_t^{D,U}1_{B_U(x,r)}\right)(y) \ge c_6  e^{-\lambda_1t} .
 \end{align} 
 Assume now $t^{1/d_w} \le r/2$. We have then from \eqref{lower bound diri 2} for every  $y\in B_U(x,r/2)$
 \begin{equation}\label{Cor 2.7 MS 4}
 \begin{split}
 \left(P_t^{D,U}1_{B_U(x,r)}\right)(y) \ge& 
 \int_{B_U(x,r)}\frac{c_5} {\mu (B_U (y,t^{1/d_w}))}\!\exp\biggl(\!-c_{4}\Bigl(\frac{d_U(z,y)^{d_w}}{t}\Bigr)^{\frac{1}{d_w-1}}\!\biggr) d\mu(z) \\
  \ge& 
  \int_{B_U(y,r/2)}\frac{c_5} {\mu (B_U (y,t^{1/d_w}))}\!\exp\biggl(\!-c_{4}\Bigl(\frac{d_U(z,y)^{d_w}}{t}\Bigr)^{\frac{1}{d_w-1}}\!\biggr) d\mu(z) \\
  \ge& 
  \int_{B_U(y,t^{1/d_w})}\frac{c_5} {\mu (B_U (y,t^{1/d_w}))}\!\exp\biggl(\!-c_{4}\Bigl(\frac{d_U(z,y)^{d_w}}{t}\Bigr)^{\frac{1}{d_w-1}}\!\biggr) d\mu(z).
%  & \ge c_6
\end{split}
 \end{equation}
 Now if $z\in B_U(y,t^{1/d_w})$, one can see that the exponential term in the right hand-side of \eqref{Cor 2.7 MS 4} is lower bounded by a constant.
 %(similarly to \eqref{Cor 2.7 MS 2}). 
 % In addition, owing to our Assumption~\ref{hyp:dirichlet-form}, the Ahlfors regularity of $\mu$ implies that $\mu(B(y,t^{1/d_w}))\geq c_7\, t^{d_h/d_w}$. Eventually, since we have assumed that $B_U(x,r)\subset U$ and since $d_U$ defined by \eqref{def: inner metric} dominates the distance $d$, we have $B(y,t^{1/d_w}) \subset B_U(y,t^{1/d_w})$.  %In addition, from Lemma 3.9 in \cite{MR2807275}, the measure $\mu$ on $(\tilde U, d_U)$ satisfies the volume doubling condition
  Reporting these considerations into \eqref{Cor 2.7 MS 4}, we obtain
 \begin{align}\label{Cor 2.7 MS 5}
  \left(P_t^{D,U}1_{B_U(x,r)}\right)(y) \ge c_8.
  \end{align}
  Gathering \eqref{Cor 2.7 MS 3} and \eqref{Cor 2.7 MS 5}, the proof of our claim \eqref{hk-lower-spectral} is achieved.
 \end{proof}

The general bound \eqref{hk-lower-spectral} can be generalized to a wider class of domains satisfying a cone type condition.  We state this property in a separate corollary.

\begin{cor}\label{lem:hk-lower-spectral no cusp}
Under the same conditions as in Corollary \ref{lem:hk-lower-spectral}, let $A \subset U$ be an open set such that $\overline A \subset U$. Assume that there exist $\delta >0$, $r_0>0$ such that for all $y \in A$,
    \begin{align}\label{no cusp}
    \mu (B_U(y,r) \cap A) \ge \delta r^{d_h},\quad \text{for\ all}\ \ 0 \le r \le r_0.
    \end{align}
 Then, there exist  constants $c_1,c_2>0$ (depending on $\delta$ and $r_0$) such that for every $t\ge 0$ and $y\in A$, 
 %the semi-group $P_t^{D,U}$ in \eqref{def: Dirichlet semigroup} fulfills 
\begin{align}\label{hk-lower-spectral no cusp}
\left(P_t^{D,U}1_{A}\right)(y)\ge c_1 e^{-\lambda_1 t}.
\end{align}
%Furthermore, recalling that $p_t^{D,U}$ designates the kernel in Theorem \ref{estimate_heat_diri}, we have
and
\begin{align}\label{jku}
\int_A p_t^{D,U} (y,z)^2 d\mu (z) \ge c_2 \frac{e^{-2\lambda_1 t}}{t^{d_h/d_w}}.
\end{align}
\end{cor}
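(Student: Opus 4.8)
The plan is to deduce both inequalities from the local heat-kernel lower bound, handling separately the regimes $t^{1/d_w} \le r_0$ (small time) and $t^{1/d_w} \ge r_0$ (large time).

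\emph{Step 1: Small-time bound.} Fix $y \in A$ and suppose first that $t^{1/d_w} \le r_0$. From the near-diagonal lower bound \eqref{lower bound diri 2} and the positivity of $\Phi_1$ (Remark \ref{rmk: positivity of first ef}, giving $\Phi_1 \ge \hat m_A>0$ on $\overline A$), together with the Ahlfors upper bound $\mu(B_U(y,t^{1/d_w})) \le c\, t^{d_h/d_w}$ from Assumption \ref{hyp:dirichlet-form}, we get
\[
p_t^{D,U}(y,z) \ge \frac{c\, \hat m_A^2}{t^{d_h/d_w}} \exp\!\biggl(\!-c_4 \Bigl(\tfrac{d_U(y,z)^{d_w}}{t}\Bigr)^{\frac{1}{d_w-1}}\!\biggr), \qquad z \in U.
\]
Restricting the integral of $p_t^{D,U}(y,z)$ over $A$ to $z \in B_U(y,t^{1/d_w}) \cap A$, the exponential is bounded below by a constant, and by the cone condition \eqref{no cusp} (applied with $r=t^{1/d_w} \le r_0$) the set $B_U(y,t^{1/d_w}) \cap A$ has $\mu$-measure at least $\delta\, t^{d_h/d_w}$; hence $\bigl(P_t^{D,U}1_A\bigr)(y) \ge c_1$. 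For \eqref{jku}, integrate $p_t^{D,U}(y,z)^2$ over the same set $B_U(y,t^{1/d_w}) \cap A$: each factor is at least $c\hat m_A^2 t^{-d_h/d_w}$ times a constant, so $\int_A p_t^{D,U}(y,z)^2\,d\mu(z) \ge c\, \delta\, t^{d_h/d_w} \cdot t^{-2d_h/d_w} = c_2\, t^{-d_h/d_w}$, as desired (the $e^{-2\lambda_1 t}$ factor being irrelevant for $t$ bounded).

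\emph{Step 2: Large-time bound.} For $t^{1/d_w} \ge r_0$, one uses the genuine large-time estimate \eqref{eq:subGauss-lowerD}: since $d_U(y,z)$ stays bounded on $\overline A \times \overline A$ (a compact subset of $U$), the exponential and the factor $\min(1,t^{d_h/d_w})^{-1}=t^{-d_h/d_w}\wedge 1$ are both bounded below for $t$ away from $0$, whence $p_t^{D,U}(y,z) \ge c\, \hat m_A^2\, e^{-\lambda_1 t}$ for all $y,z \in \overline A$ and $t \ge r_0^{d_w}$. Integrating over $A$ (which has positive finite measure, e.g. $\mu(A) \ge \delta r_0^{d_h}$ by \eqref{no cusp}) gives $\bigl(P_t^{D,U}1_A\bigr)(y) \ge c_1 e^{-\lambda_1 t}$; squaring and integrating gives $\int_A p_t^{D,U}(y,z)^2\,d\mu(z) \ge c\, e^{-2\lambda_1 t} \ge c_2\, e^{-2\lambda_1 t}/t^{d_h/d_w}$ since $t^{-d_h/d_w}$ is bounded above on $t \ge r_0^{d_w}$. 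Combining Steps 1 and 2 (and absorbing $e^{-\lambda_1 t} \ge e^{-\lambda_1 r_0^{d_w}}$ into the constant in the small-time range) yields \eqref{hk-lower-spectral no cusp} and \eqref{jku} uniformly in $t \ge 0$ and $y \in A$.

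\emph{Main obstacle.} The one genuinely delicate point is matching the $t$-dependence across the two regimes into the single stated form: in small time the bound is governed by the on-diagonal rate $t^{-d_h/d_w}$ coming from $\mu(B_U(y,t^{1/d_w}))^{-1}$, while in large time it decays like $e^{-2\lambda_1 t}$; one must check these are mutually consistent (each dominates the other in its own regime up to constants) so that the product form $e^{-2\lambda_1 t}/t^{d_h/d_w}$ in \eqref{jku} is a valid lower bound throughout. The cone condition \eqref{no cusp} is what makes the small-time argument work — without it the slice $B_U(y,t^{1/d_w}) \cap A$ could be too thin near $\partial A$ — and verifying that $d_U$ is bounded on $\overline A \times \overline A$ (using $\overline A \subset U$ compact and $U$ inner uniform, so inner distances between points of $\overline A$ are realized by curves staying in a fixed compact subset of $U$) is the other routine-but-necessary ingredient.
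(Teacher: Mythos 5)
Your proof is correct and follows essentially the same route as the paper: the same split at $t=r_0^{d_w}$, the small-time bound via the heat-kernel lower estimate, positivity of $\Phi_1$ on $\overline A$ and the cone condition \eqref{no cusp} applied to $B_U(y,t^{1/d_w})\cap A$, and the large-time bound via the boundedness of the sub-Gaussian factor on a set of finite $d_U$-diameter (you also supply the \eqref{jku} computation the paper leaves to the reader). The only blemish is the harmless slip writing $\min(1,t^{d_h/d_w})^{-1}=t^{-d_h/d_w}\wedge 1$ instead of $\max(1,t^{-d_h/d_w})$, which does not affect the argument since you only use that this factor is at least $1$.
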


\begin{proof}
 Similarly to \eqref{Cor 2.7 MS 1}, from the heat kernel lower bound in Theorem \ref{estimate_heat_diri} and Remark~\ref{rmk: positivity of first ef}, one has for $y\in A$, 
 \begin{align*}
 (P_t^{D,U}1_{A})(y) \ge \int_{A}\frac{c_{1}  e^{-\lambda_1t}} {\min (1,t^{\frac{d_h}{d_w}})}\!\exp\biggl(\!-c_{2}\Bigl(\frac{d_U(z,y)^{d_w}}{t}\Bigr)^{\frac{1}{d_w-1}}\!\biggr) d\mu(z).
 \end{align*} 
 Let us assume first that $0< t \le r_0^{d_w}$, where $r_0$ is the constant in relation \eqref{no cusp}. In that case we first get the trivial lower bound 
 \begin{align*}
 (P_t^{D,U}1_{A})(y) & \ge \int_{B_U(y,t^{1/d_w}) \cap A}\frac{c_{1}  e^{-\lambda_1t}} {\min (1,t^{\frac{d_h}{d_w}})}\!\exp\biggl(\!-c_{2}\Bigl(\frac{d_U(z,y)^{d_w}}{t}\Bigr)^{\frac{1}{d_w-1}}\!\biggr) d\mu(z) .
  \end{align*}
  Next on $B_U(y,t^{1/d_w})$ it is readily checked that $d_U(z,y)^{d_w}\le t$. Hence one can bound the sub-Gaussian term above by $e^{-c_2}$. Therefore we obtain 
   \begin{align*}
 (P_t^{D,U}1_{A})(y) \ge c_3 \frac{\mu (B_U(y,t^{1/d_w}) \cap A)}{\min (1,t^{\frac{d_h}{d_w}})} \, e^{-\lambda_1t}
  \ge c_4 e^{-\lambda_1t},
 \end{align*}
 where we have invoked condition \eqref{no cusp} for the last inequality. 
 On the other hand, if $t\ge r_0^{d_w}$ then our general bound \eqref{Cor 2.7 MS 3} already implies $(P_t^{D,U}1_{A})(y) \ge c_5 e^{-\lambda_1t}$  so that the conclusion \eqref{hk-lower-spectral no cusp} easily follows. The proof of the estimate \eqref{jku} is similar and left to the reader.
\end{proof}
\begin{remark}
    The condition \eqref{no cusp} can be checked in many examples and essentially means that the set $A$ has a nice boundary (no \textit{cusps}). For instance, if $X=\mathbb{R}$ and $U=(0,1)$ then any subinterval $A=(a,b) \subsetneq U$ satisfies \eqref{no cusp} with $\delta=1/2$. In the Sierpi\'nski gasket example which is introduced with notations in Section \ref{section gasket} one can see that if $U=K\setminus V_0$, then  any finite union of sets of the form $\mathfrak{f}_{i_1} \circ \cdots \circ \mathfrak{f}_{i_n} (U)$ satisfies \eqref{no cusp}.
\end{remark}

We now state a matching upper bound for \eqref{jku}, valid on inner uniform domains. 

\begin{cor}\label{lem:L2bound}
Consider an inner uniform domain $U\subset X$, as given in Definition \ref{Def: inner uni domain}. Let $p^{D,U}$ be  the kernel introduced in Theorem \ref{estimate_heat_diri}.  Then there exists a constant $C>0$ such that for all $x\in U$,
\begin{align}\label{L2bound}
\int_U p_t^{D,U}(x,y)^2 d\mu(y)\le
\left\{ \begin{aligned}
& C t^{-\frac{d_h}{d_w}}, &0<t<1,\\
& C e^{-2\lambda_1 t}, &t\ge 1.
%t^{-\frac{2d_h}{d_w}}, &t\ge 1.
\end{aligned}\right.
\end{align}
Therefore, if  $0<\gamma < \lambda_1$, there exists a constant $C>0$ such that for every $t>0$, $x \in U$,
\[
\int_U p_t^{D,U}(x,y)^2 d\mu(y) \le C  \frac{e^{-2\gamma t}}{t^\frac{d_h}{d_w}}.
\]
\end{cor}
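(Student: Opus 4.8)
\textbf{Proof proposal for Corollary \ref{lem:L2bound}.} The plan is to split the time integral into a small-time regime $0 < t < 1$ and a large-time regime $t \ge 1$, handle each using the appropriate form of the sub-Gaussian upper bound from Theorem \ref{estimate_heat_diri}, and then interpolate between the two pieces to get the final unified estimate. For the small-time estimate, I would start from the upper bound \eqref{eq:subGauss-upperD}, which for $0 < t < 1$ reads $p_t^{D,U}(x,y) \le c_1 t^{-d_h/d_w} e^{-\lambda_1 t}\exp(-c_2(d(x,y)^{d_w}/t)^{1/(d_w-1)})$. Squaring gives $p_t^{D,U}(x,y)^2 \le c_1^2 t^{-2d_h/d_w} e^{-2\lambda_1 t}\exp(-2c_2(d(x,y)^{d_w}/t)^{1/(d_w-1)})$. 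Integrating in $y$ over $U \subset X$ and using Ahlfors $d_h$-regularity (Assumption \ref{hyp:dirichlet-form}(i)), the standard layer-cake / annular decomposition argument gives $\int_X \exp(-2c_2(d(x,y)^{d_w}/t)^{1/(d_w-1)})\, d\mu(y) \le C t^{d_h/d_w}$, since the Gaussian-type tail integrates against a measure with polynomial volume growth of order $d_h$. Combining, $\int_U p_t^{D,U}(x,y)^2 d\mu(y) \le C t^{-2d_h/d_w} \cdot t^{d_h/d_w} = C t^{-d_h/d_w}$ for $0 < t < 1$, which is the first line of \eqref{L2bound}.

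For the large-time estimate I would use the semigroup and spectral-theory structure rather than the pointwise bound directly. Write $\int_U p_t^{D,U}(x,y)^2 d\mu(y) = p_{2t}^{D,U}(x,x)$ by the Chapman–Kolmogorov identity and symmetry of the kernel. Then for $t \ge 1$ apply the upper bound \eqref{eq:subGauss-upperD} at time $2t \ge 2 \ge 1$, where $\min(1, (2t)^{d_h/d_w}) = 1$ and the diagonal value $d(x,x) = 0$ kills the exponential, yielding $p_{2t}^{D,U}(x,x) \le c_1 e^{-2\lambda_1 t}$. This gives the second line of \eqref{L2bound}. (Alternatively one may use the spectral expansion $p_t^{D,U}(x,y) = \sum_k e^{-\lambda_k t}\Phi_k(x)\Phi_k(y)$ together with the small-time bound at $t = 1$ to propagate decay, but the Chapman–Kolmogorov route is cleanest.)

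For the final claim, fix $0 < \gamma < \lambda_1$. For $0 < t < 1$ we have $t^{-d_h/d_w} = \frac{e^{-2\gamma t}}{t^{d_h/d_w}} e^{2\gamma t} \le e^{2\gamma} \frac{e^{-2\gamma t}}{t^{d_h/d_w}}$, so the small-time bound $C t^{-d_h/d_w}$ is dominated by $C' \frac{e^{-2\gamma t}}{t^{d_h/d_w}}$. For $t \ge 1$ we have $e^{-2\lambda_1 t} = e^{-2\gamma t} e^{-2(\lambda_1 - \gamma)t} \le e^{-2(\lambda_1-\gamma)} e^{-2\gamma t} \le C'' \frac{e^{-2\gamma t}}{t^{d_h/d_w}}$, using $t^{d_h/d_w} \le$ a constant times $e^{2(\lambda_1-\gamma)t}$ — more simply, $e^{-2\lambda_1 t} \le e^{-2\gamma t} \le \frac{e^{-2\gamma t}}{t^{d_h/d_w}}$ on $t\ge1$ since $t^{d_h/d_w}\ge 1$ there. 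Either way both regimes are bounded by a constant times $\frac{e^{-2\gamma t}}{t^{d_h/d_w}}$, proving the last display.

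The main obstacle is the small-time volume/tail computation $\int_X \exp(-c(d(x,y)^{d_w}/t)^{1/(d_w-1)})\, d\mu(y) \lesssim t^{d_h/d_w}$: one has to decompose $X$ into dyadic annuli $\{2^{j-1}t^{1/d_w} \le d(x,y) < 2^j t^{1/d_w}\}$, bound the measure of each by $c_2 (2^j t^{1/d_w})^{d_h}$ via Ahlfors regularity, bound the exponential on each annulus by $\exp(-c' 2^{jd_w/(d_w-1)})$, and sum the resulting fast-decaying series — keeping track that $\mathrm{diam}(X)$ may be finite so the annuli eventually terminate (which only helps). This is entirely standard but is the one place where the geometry genuinely enters; the rest is bookkeeping with exponentials.
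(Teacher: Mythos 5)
Your proposal is correct, but it is heavier than the paper's argument in the small-time regime. The paper handles both regimes at once via the identity $\int_U p_t^{D,U}(x,y)^2\,d\mu(y)=p_{2t}^{D,U}(x,x)$ (symmetry plus the semigroup property, as in \eqref{eq-chap-kol}), and then simply reads off the on-diagonal value of the upper bound \eqref{eq:subGauss-upperD}: the factor $\min(1,(2t)^{d_h/d_w})^{-1}$ gives $Ct^{-d_h/d_w}$ for $t<1$ and the factor $e^{-2\lambda_1 t}$ gives the bound for $t\ge 1$, with no volume integration needed. You use exactly this route for $t\ge 1$, but for $t<1$ you instead square the pointwise bound and integrate the sub-Gaussian tail against the Ahlfors-regular measure via dyadic annuli; that computation is standard and correct (and it is the kind of estimate that is genuinely needed elsewhere, e.g.\ in bounding Riesz kernels), but here it re-derives what Chapman--Kolmogorov gives for free, and it discards the extra $e^{-2\lambda_1 t}$ factor that the on-diagonal bound retains (harmless, since that factor is comparable to $1$ on $(0,1)$). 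One small slip: in your parenthetical ``more simply'' remark for $t\ge 1$, the inequality $e^{-2\gamma t}\le e^{-2\gamma t}/t^{d_h/d_w}$ is backwards, since $t^{d_h/d_w}\ge 1$ there; however your primary argument just before it, namely $e^{-2\lambda_1 t}=e^{-2\gamma t}e^{-2(\lambda_1-\gamma)t}$ together with the boundedness of $t^{d_h/d_w}e^{-2(\lambda_1-\gamma)t}$ on $[1,\infty)$, is correct, so the final unified estimate stands.
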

\begin{proof}
As mentioned in Theorem \ref{estimate_heat_diri}, the kernel $p_t^{D,U}$ is symmetric. Hence the semigroup property yields
\begin{align}\label{eq-chap-kol}
   \int_U p_t^{D,U}(x,y)^2 d\mu(y) =p_{2t}^{D,U}(x,x). 
\end{align}
The result therefore easily follows from the upper bound \eqref{eq:subGauss-upperD}.
\end{proof}

The upper bound in Corollary \ref{estimate_heat_diri} implies an exponential decay for the semigroup $P^{D,U}_t$, which we label for further use.

\begin{cor}\label{bound_pt_diri_ini}
Let $P_t^{D,U}$ be the semigroup defined by \eqref{def: Dirichlet semigroup} for an inner uniform domain $U$. Then there exists a constant $C>0$ such that for every $t \ge 0$, and $u_0 \in L^\infty(U,\mu)$,
\[
\left\| P^{D,U}_t u_0 \right\|_\infty \le C  e^{-\lambda_1 t} \|u_0\|_\infty.
\]
\end{cor}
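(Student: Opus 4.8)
The plan is to deduce the $L^\infty$ decay of $P^{D,U}_t$ directly from the pointwise kernel bound \eqref{eq:subGauss-upperD} established in Theorem \ref{estimate_heat_diri}. Writing $P^{D,U}_t u_0(x) = \int_U p^{D,U}_t(x,y) u_0(y)\, d\mu(y)$ and taking absolute values, we get
\[
\left| P^{D,U}_t u_0(x) \right| \le \|u_0\|_\infty \int_U p^{D,U}_t(x,y)\, d\mu(y),
\]
so the whole problem reduces to showing that $\sup_{x\in U}\int_U p^{D,U}_t(x,y)\, d\mu(y) \le C e^{-\lambda_1 t}$ for all $t\ge 0$.

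For the mass bound I would split into small and large times. For $t\ge 1$, plug in \eqref{eq:subGauss-upperD} with $\min(1,t^{d_h/d_w})=1$, drop the (nonpositive-exponent) sub-Gaussian factor by bounding it by $1$, and use that $\mu(U)<\infty$ since $\overline U$ is compact and $\mu$ is Radon; this gives $\int_U p^{D,U}_t(x,y)\,d\mu(y) \le c_1 \mu(U) e^{-\lambda_1 t}$. For $0<t<1$ we cannot ignore the sub-Gaussian factor, but here the point is simply that $P^{D,U}_t$ is sub-Markovian: from the probabilistic representation \eqref{def: Dirichlet semigroup} one has $0\le P^{D,U}_t 1 (x) = \mathbf{E}_x(1_{t<T_U}) \le 1$, hence $\int_U p^{D,U}_t(x,y)\,d\mu(y)\le 1$, and on the compact interval $t\in[0,1]$ we have $1 \le e^{\lambda_1} e^{-\lambda_1 t}$, so $\int_U p^{D,U}_t(x,y)\,d\mu(y)\le e^{\lambda_1}e^{-\lambda_1 t}$. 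Taking $C = \max(c_1\mu(U), e^{\lambda_1})$ covers both regimes and finishes the estimate. (Alternatively, for small $t$ one can integrate the sub-Gaussian upper bound against $\mu$ using Ahlfors regularity to get a bound of order $t^{-d_h/d_w}\cdot t^{d_h/d_w}=O(1)$, via the standard layer-cake/annulus decomposition $\int_U e^{-c(d(x,y)^{d_w}/t)^{1/(d_w-1)}}d\mu(y) \lesssim t^{d_h/d_w}$; but the sub-Markovian argument is cleaner and avoids that computation.)

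There is no serious obstacle here: the only mild subtlety is making sure the bound is uniform in $x\in U$ and holds for \emph{all} $t\ge 0$ rather than just large $t$, which is exactly why the small-time regime needs the separate (trivial) sub-Markovian argument rather than the naive use of \eqref{eq:subGauss-upperD}, whose prefactor $t^{-d_h/d_w}$ blows up as $t\to 0$. Collecting the two cases yields
\[
\left\| P^{D,U}_t u_0 \right\|_\infty \le C e^{-\lambda_1 t}\|u_0\|_\infty \qquad \text{for all } t\ge 0,
\]
which is the claim.
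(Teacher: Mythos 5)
Your proof is correct. The overall architecture matches the paper's: split at $t=1$, handle small times by the trivial sub-Markovian bound coming from the probabilistic representation \eqref{def: Dirichlet semigroup} (the paper writes $\|P^{D,U}_t u_0\|_\infty \le \|u_0\|_\infty$ for $0\le t\le 1$ exactly as you do, absorbing the missing factor $e^{-\lambda_1 t}$ into the constant), and extract the exponential decay from the heat kernel estimates for large times. The one place you diverge is the large-time regime: the paper applies Cauchy--Schwarz, $|P^{D,U}_t u_0(x)|^2 \le \|u_0\|_{L^2(U,\mu)}^2 \int_U p^{D,U}_t(x,y)^2\,d\mu(y)$, and then invokes the already-proved $L^2$ bound of Corollary \ref{lem:L2bound} (itself obtained from symmetry and the on-diagonal estimate), whereas you integrate the pointwise upper bound \eqref{eq:subGauss-upperD} directly, bounding the sub-Gaussian factor by $1$ and using $\mu(U)<\infty$. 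Both are valid and rest on the same input, Theorem \ref{estimate_heat_diri}; your route is marginally more direct since it avoids the $L^2$ detour, while the paper's has the small advantage of recycling a corollary it needs elsewhere anyway. No gaps.
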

\begin{proof}
We divide again the proof into a small time and large time estimate. That is for $0\leq t\leq1$, one simply resorts to \eqref{def: Dirichlet semigroup} in order to write
\begin{align}\label{cor2.11 MS1}
\left\| P^{D,U}_t u_0 \right\|_\infty \le \|u_0\|_\infty.
\end{align}
For $t >1$, we first apply the Cauchy-Schwarz inequality to get
\begin{align}\label{cor2.11 MS2}
    \left|P^{D,U}_t u_0(x)\right|^2 \le  \|u_0 \|_{L^2(U,\mu)}^2 \int_U p^{D,U}_t(x,y)^2 d\mu(y).
   %  \le C  e^{-2\lambda_1 t} \|u_0 \|_{\infty}^2.
\end{align}
Next in order to estimate the right hand-side of \eqref{cor2.11 MS2} we recall (see Definition \ref{Def: inner uni domain}) that $U$ is bounded. Hence $\mu(U)<\infty$ and $\|u_0\|_{L^2(U,\mu)}\leq C\|u_0\|_\infty$. Furthermore the integral $\int_U p^{D,U}(x,y)^2\mu(dy)$ is upper bounded thanks to \eqref{L2bound} for $t\geq1$. Gathering this information into \eqref{cor2.11 MS2} we end up with
\begin{align}\label{cor2.11 MS3}
\left|P_t^{D,U} u_0(x)\right|^2\leq C e^{-2\lambda_1 t} \|u_0\|_\infty^2.
\end{align}
We now conclude our proof by putting together the estimate \eqref{cor2.11 MS1} for $0\leq t<1$ and the bound \eqref{cor2.11 MS3} for $t\geq1$.
\end{proof}
%{\color{red}Jing: the upper bound estimates (cor 2.10 and 2.11) do not need inform uniform condition, right?}

\subsubsection{Fractional Laplacian} As mentioned in the introduction, the correlation of our noise $W_\alpha$ is generated by the operator $(-\Delta)^{-2\alpha}$. We now recall how to define this operator through spectral theory. We first introduce a set of notation for the spectrum of $\Delta_{D,U}$.
\begin{notation}\label{notation: spectrum}
Let $U$ be an inner uniform domain and recall that $\Delta_{D,U}$ is the generator of the Dirichlet form defined by \eqref{def: D Dirichlet form}. The operator $-\Delta_{D,U}$ admits a discrete spectrum 
$\{\lambda_j; j\geq1\}, \text{with}\ \lambda_1>0\  \text{and}\ \lambda_j\leq \lambda_{j+1},$
together with an orthonormal basis $\{\Phi_j; j\geq1\}$ of eigenfunctions such that $\Phi_j\in \mathcal{D}(\Delta_{D,U})$ and $\Delta_{D,U} \Phi_j =-\lambda_j \Phi_j.$
\end{notation}%{\color{red}I feel this notation could be presented earlier, before we use the fact that bounded domains have discrete spectrum and positive $\Phi_1$.}
Let us also label a lemma which will ease our estimates on the covariance function.
\begin{lem}
Under the condition of Notation \ref{notation: spectrum}, the Dirichlet heat kernel $p_t^{D,U}$ introduced in Theorem \ref{estimate_heat_diri} admits the following convergent spectral expansion for all $t>0$ and $x,y\in U$:
\begin{equation}\label{eq:HKexpansion-diri}
p_t^{D,U}(x,y)=\sum_{j=1}^{+\infty} e^{-\lambda_j t} \Phi_j(x) \Phi_j(y).
\end{equation}
Moreover, for every $\alpha>d_h/d_w$ we have
\begin{align}\label{eq:series_eigen-Diri}
\sum_{j=1}^{+\infty} \frac{1}{\lambda_j^{\alpha}} <+\infty.
\end{align}

\end{lem}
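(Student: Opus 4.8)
The plan is to establish both claims from the sub-Gaussian heat kernel estimates of Theorem~\ref{estimate_heat_diri} together with standard spectral theory for the self-adjoint operator $-\Delta_{D,U}$. For the spectral expansion \eqref{eq:HKexpansion-diri}: since $-\Delta_{D,U}$ has discrete spectrum with orthonormal eigenbasis $\{\Phi_j\}$ (Notation~\ref{notation: spectrum}), spectral calculus gives $P_t^{D,U}f=\sum_j e^{-\lambda_j t}\langle f,\Phi_j\rangle \Phi_j$ in $L^2(U,\mu)$, so the kernel of $P_t^{D,U}$ is formally $\sum_j e^{-\lambda_j t}\Phi_j(x)\Phi_j(y)$; the point is to justify that this series converges and actually represents the continuous kernel $p_t^{D,U}$ pointwise. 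First I would observe that by the semigroup property and symmetry, $p_t^{D,U}(x,x)=\int_U p_{t/2}^{D,U}(x,y)^2\,d\mu(y)=\sum_j e^{-\lambda_j t}\Phi_j(x)^2$ by Parseval (the last equality since $y\mapsto p_{t/2}^{D,U}(x,y)$ lies in $L^2$ with Fourier coefficients $e^{-\lambda_j t/2}\Phi_j(x)$), and this sum is finite and bounded uniformly on compacts by \eqref{eq:subGauss-upperD}. Then for general $x,y$ the Cauchy--Schwarz inequality $\bigl(\sum_j e^{-\lambda_j t}|\Phi_j(x)\Phi_j(y)|\bigr)^2\le p_t^{D,U}(x,x)\,p_t^{D,U}(y,y)$ shows the series in \eqref{eq:HKexpansion-diri} converges absolutely, and matching it with the $L^2$-convergent expansion of $P_t^{D,U}$ (plus continuity of both sides in $(x,y)$) identifies the limit with $p_t^{D,U}(x,y)$.

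For the summability \eqref{eq:series_eigen-Diri} of $\sum_j \lambda_j^{-\alpha}$ when $\alpha>d_h/d_w$, the natural route is through the trace of the heat semigroup. Setting $t=1$ in the diagonal expansion just derived and integrating over $U$ gives
\[
\sum_{j=1}^{+\infty} e^{-\lambda_j} = \int_U p_1^{D,U}(x,x)\,d\mu(x) \le C\mu(U) < \infty,
\]
using the upper bound \eqref{eq:subGauss-upperD} at $t=1$ (which gives $p_1^{D,U}(x,x)\le c_1 e^{-\lambda_1}$, a constant) and the boundedness of $U$. This already forces $\lambda_j\to\infty$. To get the polynomial lower bound on the $\lambda_j$, I would instead use the small-time diagonal bound $p_t^{D,U}(x,x)\le C t^{-d_h/d_w}$ for $0<t<1$ from \eqref{eq:subGauss-upperD}: integrating over $U$ yields $\sum_j e^{-\lambda_j t}\le C\mu(U)\,t^{-d_h/d_w}$ for all $t\in(0,1)$. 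A standard Tauberian/Karamata argument (or a direct elementary estimate: for fixed $j$, $e^{-\lambda_j t}\le \sum_k e^{-\lambda_k t}\le C t^{-d_h/d_w}$, optimized over $t$, gives $\lambda_j\gtrsim$ a power of $j$ after summing $N e^{-\lambda_N t}\le C t^{-d_h/d_w}$ and choosing $t=1/\lambda_N$) shows $\lambda_j \ge c\, j^{d_w/d_h}$ for $j$ large. Then $\sum_j \lambda_j^{-\alpha}\lesssim \sum_j j^{-\alpha d_w/d_h}$, which converges precisely when $\alpha d_w/d_h>1$, i.e. $\alpha>d_h/d_w$.

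The main obstacle is the rigorous justification that the abstract $L^2$-spectral kernel coincides with the continuous kernel $p_t^{D,U}$ of Theorem~\ref{estimate_heat_diri} and that \eqref{eq:HKexpansion-diri} holds as a genuine pointwise (not merely $L^2\times L^2$) identity; this requires knowing that the eigenfunctions $\Phi_j$ have continuous versions (which follows since they lie in $\mathcal{F}^D(U)\subset\mathcal F$ and the recurrence hypothesis forces functions in $\mathcal F$ to be continuous, as noted after Assumption~\ref{hyp:dirichlet-form}) and a uniform control via the diagonal bound. Everything else — Parseval, Cauchy--Schwarz, the trace computation, and the Tauberian deduction of the eigenvalue growth rate — is routine once the diagonal estimate $p_t^{D,U}(x,x)\le C\min(1,t^{d_h/d_w})^{-1}e^{-\lambda_1 t}$ from \eqref{eq:subGauss-upperD} is in hand.
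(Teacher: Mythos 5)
Your proposal is correct and follows essentially the same route as the paper, which simply cites the classical spectral expansion of heat kernels on bounded domains (Barlow, Theorem 3.44) for \eqref{eq:HKexpansion-diri} and diagonal heat-kernel estimates combined with Tauberian-type arguments for \eqref{eq:series_eigen-Diri}. You have merely written out the standard details behind those citations (Parseval plus Cauchy--Schwarz with the on-diagonal bound for the expansion, and the heat-trace bound $\sum_j e^{-\lambda_j t}\le C t^{-d_h/d_w}$ optimized at $t=1/\lambda_N$ to get $\lambda_N\gtrsim N^{d_w/d_h}$), and these details are sound.
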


\begin{proof}
The decomposition \eqref{eq:HKexpansion-diri} is classical for heat kernels on bounded spaces, see Theorem 3.44 in \cite{Barlow}. The upper bound~\eqref{eq:series_eigen-Diri} stems from diagonal estimates of $p_t^{D,U}$ together with Tauberian type theorems (see \cite{MR0793651}).
\end{proof}

%Let $0<\lambda_1\le \lambda_2\le  \cdots \le \lambda_j \le \cdots$ be the eigenvalues of the Dirichlet Laplacian $-\Delta_{D,U}$ and let  $(\Phi_j)_{j\ge 1} \subset  \mathcal{D} (\Delta_{D,U})$ be an orthonormal basis of $L^2(U,\mu)$ such that
%\[
%\Delta_{D,U} \Phi_j =-\lambda_j \Phi_j.
%\]

%
%We also note that from \cite[Lemma 4.1(iii)]{FukushimaShima} $\mathcal{S}(K) \subset \mathcal D_0$. We consider then on $\mathcal{S}(K)$ the topology defined by the family of norms
%\[
%\| f \|_k = \| (-\Delta)^{k} f \|_{L^2(K,\mu)}, \quad k \ge 0.
%\]
%From \cite[Theorem 2]{Pietsch}, thanks to \eqref{series_eigen},  $\mathcal{S}(K)$ is a Fr\'echet nuclear space. The dual space of $\mathcal{S}(K)$  (for the latter topology) will be denoted $\mathcal{S}'(K)$.
%

The preliminaries above allow for a proper definition of negative power of the Laplace operator.
\begin{defn}[Fractional Laplacians]\label{FLaplacian}
We still work under the setting of Notation~\ref{notation: spectrum}, and let $\alpha \ge 0$. For $f \in L^2(U,\mu)$, the fractional Laplacian $(-\Delta_{D,U})^{-\alpha}$ acting on $f$ is defined as
\begin{align}\label{frac Laplacian spectrum}
(-\Delta_{D,U})^{-\alpha} f(x)  =\sum_{j=1}^{+\infty} \frac{1}{\lambda_j^\alpha} \left( \int_U  \Phi_j(y) f(y) d\mu(y)\right)\Phi_j(x).
\end{align}
\end{defn}

\begin{remark}\label{Rmk:frac Lapl D}From the definition \eqref{frac Laplacian spectrum}, it is clear that $(-\Delta_{D,U})^{-\alpha}: L^2(U,\mu) \to L^2(U,\mu)$ is a bounded operator. More precisely, for a generic $f\in L^2(U,\mu)$ one has 
\[
\|(-\Delta_{D,U})^{-\alpha}f\|_{ L^2(U,\mu)}^2=\sum_{j=1}^\infty\lambda_j^{-2\alpha}\langle f,\Phi_j\rangle_{L^2(U,\mu)}^2\le 
\lambda_1^{-2\alpha}\sum_{j=1}^\infty\langle f,\Phi_j\rangle_{L^2(U,\mu)}^2 =\lambda_1^{-2\alpha}\|f\|_{L^2(U,\mu)}^2 .
\]
Therefore we get the upper bound  $\|(-\Delta_{D,U})^{-\alpha}\|_{ L^2(U,\mu) \to L^2(U,\mu)}\le \lambda_1^{-\alpha}$. 
\end{remark}

The existence of a kernel for $(-\Delta_{D,U})^{-\alpha}$ is a consequence of functional calculus considerations. We now make this notion more precise following~\cite{BaudoinChen}. To begin with, we introduce the space of Schwartz functions on $U$.
\begin{defn}\label{def:a1}
Let $\{\Phi_n; n\geq 1\}$ be the set of eigenfunctions introduced in Notation~\ref{notation: spectrum}. Then we define the Schwartz space of test functions on $U$ as
\[
\mathcal{S}_{D}(U)= \left\{ f \in L^2(U,\mu);  
\text{ for all } k \in \mathbb N,  \lim_{n \to +\infty} n^k \left| \int_U  \Phi_n(y) f(y) d\mu(y) \right| =0 \right\}.
\]
\end{defn}

The following lemma, borrowed from \cite[Lemma 2.9]{BaudoinChen} and \cite[Corollary 2.11]{BaudoinChen}, identifies a kernel for the operator $(-\Delta_{D,U})^{-\alpha}$.

\begin{lem}\label{Lem: kernel frac Lapl D} Under the setting of Notation \ref{notation: spectrum}, let $(-\Delta_{D,U})^{-\alpha}$ be the operator given in Definition \ref{FLaplacian}. Then $(-\Delta_{D,U})^{-\alpha}$ admits a kernel $G^{D,U}_\alpha$ which can be expressed as
%Let $\alpha>0$. The Dirichlet fractional Riesz kernel of index $\alpha$ on the domain $U$ is defined as
\begin{align}\label{kernel for frac Laplacian}
G_\alpha^{D,U}(x,y)=\frac{1}{\Gamma(\alpha)}\int_0^{+\infty} t^{\alpha-1}p_t^{D,U}(x,y) dt, \quad x,y \in U, x\neq y.
\end{align}
where $p_t^{D,U}$ is the heat kernel in Theorem \ref{estimate_heat_diri}.  More specifically, for $f\in \mathcal{S}_{D}(U)$ and $y\in U$ we have
\begin{align}\label{frac Laplacian in kernel}
  (-\Delta_{D,U})^{-\alpha} f (y)= \int_U f(x)  G^{D,U}_{\alpha}(x,y) d\mu(x) .
\end{align}
 For $f,g \in \mathcal{S}_{D}(U)$ we can also write
\begin{align}\label{frac Hilbert norm in kernel}
 \int_{U} (-\Delta_{D,U})^{-\alpha} f (-\Delta_{D,U})^{-\alpha} g d\mu= \int_U \int_U f(x) g(y) G^{D,U}_{2\alpha}(x,y) d\mu(x) d\mu(y).
\end{align}
\end{lem}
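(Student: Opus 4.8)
The plan is to establish the three displayed formulas \eqref{kernel for frac Laplacian}, \eqref{frac Laplacian in kernel} and \eqref{frac Hilbert norm in kernel} in order, the key analytic input being the subordination identity $\lambda^{-\alpha} = \frac{1}{\Gamma(\alpha)}\int_0^\infty t^{\alpha-1} e^{-\lambda t}\,dt$, valid for $\lambda > 0$ and $\alpha > 0$. First I would verify that the integral in \eqref{kernel for frac Laplacian} converges and defines a bona fide function of $(x,y)$ with $x \neq y$: splitting $\int_0^\infty = \int_0^1 + \int_1^\infty$, the large-time piece is controlled using the spectral expansion \eqref{eq:HKexpansion-diri} together with the exponential decay $e^{-\lambda_1 t}$ (or directly by \eqref{eq:subGauss-upperD}), while the small-time piece uses the sub-Gaussian upper bound \eqref{eq:subGauss-upperD}: since $x\neq y$, the factor $\exp(-c_2(d(x,y)^{d_w}/t)^{1/(d_w-1)})$ beats the polynomial blow-up $t^{-d_h/d_w}$ as $t\to 0$, so $\int_0^1 t^{\alpha-1} p_t^{D,U}(x,y)\,dt < \infty$ for every $\alpha > 0$. (For $\alpha \le d_h/d_w$ the integral diverges on the diagonal, which is why $x\neq y$ is imposed.)

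Next, to identify this function as the kernel of $(-\Delta_{D,U})^{-\alpha}$ on $\mathcal S_D(U)$, I would take $f \in \mathcal S_D(U)$, write $(-\Delta_{D,U})^{-\alpha} f$ via its spectral series \eqref{frac Laplacian spectrum}, insert $\lambda_j^{-\alpha} = \frac{1}{\Gamma(\alpha)}\int_0^\infty t^{\alpha-1} e^{-\lambda_j t}\,dt$, and exchange the sum over $j$ with the $t$-integral. The rapid decay of the Fourier-type coefficients $\langle f,\Phi_j\rangle$ built into the definition of $\mathcal S_D(U)$, combined with at-most-polynomial growth of $\lambda_j$ and of $\|\Phi_j\|_\infty$ (from diagonal heat-kernel bounds, as used for \eqref{eq:series_eigen-Diri}), makes this interchange legitimate by dominated convergence / Fubini. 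After the exchange the inner sum is exactly $\sum_j e^{-\lambda_j t}\Phi_j(x)\langle f,\Phi_j\rangle = (P_t^{D,U}f)(x)$, and then $\frac{1}{\Gamma(\alpha)}\int_0^\infty t^{\alpha-1}(P_t^{D,U}f)(x)\,dt = \int_U f(y)\,\bigl(\frac{1}{\Gamma(\alpha)}\int_0^\infty t^{\alpha-1} p_t^{D,U}(x,y)\,dt\bigr)d\mu(y)$ by Fubini on the product, yielding \eqref{frac Laplacian in kernel} with $G_\alpha^{D,U}$ as in \eqref{kernel for frac Laplacian}.

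Finally, for \eqref{frac Hilbert norm in kernel} I would use self-adjointness of $(-\Delta_{D,U})^{-\alpha}$ on $L^2(U,\mu)$ (clear from \eqref{frac Laplacian spectrum}) together with the semigroup/composition law $(-\Delta_{D,U})^{-\alpha}(-\Delta_{D,U})^{-\alpha} = (-\Delta_{D,U})^{-2\alpha}$: for $f,g \in \mathcal S_D(U)$,
\[
\int_U (-\Delta_{D,U})^{-\alpha}f \;(-\Delta_{D,U})^{-\alpha}g \,d\mu = \int_U f\;(-\Delta_{D,U})^{-2\alpha}g\,d\mu,
\]
and then applying \eqref{frac Laplacian in kernel} with exponent $2\alpha$ (noting $\mathcal S_D(U)$ is stable under $(-\Delta_{D,U})^{-\alpha}$, or simply expanding $(-\Delta_{D,U})^{-2\alpha}g$ directly in the kernel form) gives the double-integral expression against $G_{2\alpha}^{D,U}$. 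One should remark that $2\alpha$ may fail to exceed $d_h/d_w$, so $G_{2\alpha}^{D,U}$ is genuinely singular on the diagonal; this is harmless here because it is only integrated against the smooth, rapidly-decaying pair $f,g$, and the finiteness of the double integral follows from the $L^2 \to L^2$ boundedness recorded in Remark \ref{Rmk:frac Lapl D}.

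I expect the main obstacle to be the justification of the interchange of summation and integration in the middle step — i.e., controlling the series $\sum_j \lambda_j^{-\alpha}\,e^{-\lambda_j t}|\langle f,\Phi_j\rangle|\,|\Phi_j(x)|$ uniformly enough in $t$ near $0$ (where $e^{-\lambda_j t}$ provides no decay) to apply Fubini; this is precisely where the Schwartz-class hypothesis on $f$ and polynomial bounds on $\lambda_j, \|\Phi_j\|_\infty$ must be marshalled carefully. Everything else is bookkeeping with the sub-Gaussian estimate \eqref{eq:subGauss-upperD} and Tonelli/Fubini.
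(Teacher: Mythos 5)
Your proposal is correct, and in fact the paper does not prove this lemma at all: it is borrowed verbatim from \cite[Lemma 2.9, Corollary 2.11]{BaudoinChen}, so the only comparison available is with the standard argument, which is exactly the one you give (subordination identity $\lambda^{-\alpha}=\Gamma(\alpha)^{-1}\int_0^\infty t^{\alpha-1}e^{-\lambda t}\,dt$ fed into the spectral expansion, then Fubini, then self-adjointness plus $(-\Delta_{D,U})^{-\alpha}(-\Delta_{D,U})^{-\alpha}=(-\Delta_{D,U})^{-2\alpha}$ for the bilinear identity).

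Two small remarks on the step you flag as the main obstacle. First, the sum/integral interchange is less delicate than you suggest: after taking absolute values everything is nonnegative, and Tonelli gives
\begin{align}
\int_0^{+\infty} t^{\alpha-1}\sum_{j\ge 1} e^{-\lambda_j t}\,|\Phi_j(x)|\,|\langle f,\Phi_j\rangle|\,dt
=\Gamma(\alpha)\sum_{j\ge 1}\lambda_j^{-\alpha}\,|\Phi_j(x)|\,|\langle f,\Phi_j\rangle|,
\end{align}
which is finite because the rapid decay of $\langle f,\Phi_j\rangle$ beats the polynomial bounds $\|\Phi_j\|_\infty\le C\lambda_j^{d_h/2d_w}$ (from $\Phi_j=e^{\lambda_j s}P^{D,U}_s\Phi_j$ with $s=1/\lambda_j$ and the on-diagonal bound in \eqref{eq:subGauss-upperD}) and the Weyl-type lower bound on $\lambda_j$ implicit in \eqref{eq:series_eigen-Diri}. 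Second, for the final Fubini steps the relevant observation is not the $L^2\to L^2$ boundedness of Remark \ref{Rmk:frac Lapl D} but rather that elements of $\mathcal{S}_D(U)$ are bounded (same rapid-decay argument), combined with $\sup_{y}\int_U G_{2\alpha}^{D,U}(x,y)\,d\mu(x)<\infty$, which follows from $\int_U p_t^{D,U}(x,y)\,d\mu(x)\le Ce^{-\lambda_1 t}$ (Corollary \ref{bound_pt_diri_ini}) and the integrability of $t^{2\alpha-1}$ at $0$; this is what makes the double integral against the diagonally singular kernel $G_{2\alpha}^{D,U}$ absolutely convergent. With these adjustments your outline is a complete proof.
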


\begin{remark}
Notice that owing to the sub-Gaussian upper estimate \eqref{eq:subGauss-upperD}, the integral~\eqref{kernel for frac Laplacian} defining $G_\alpha^{D,U}$ is always convergent for $\alpha>0$ and $x\not=y$.
\end{remark}
  
In order to get optimal conditions for the existence-uniqueness of the stochastic PDE~\eqref{PAM intro}, it is important to have precise bounds on the kernel $G_\alpha^{D,U}$. The proposition below takes care of this step.

\begin{prop}\label{estimate G}
For an inner uniform domain $U$, let $G^{D,U}_\alpha$ be the kernel defined by \eqref{kernel for frac Laplacian}. Then the following bounds hold true:
	\begin{enumerate}
		\item If  $0<\alpha <d_h/d_w$, there exist  constants $c,C >0$ such that for every  $x,y \in U$, with $x \neq y$,
		\begin{align}\label{two sided bounds G 1}
		c  \frac{ \Phi_1(x)\Phi_1(y)}{d_U(x,y)^{d_h-\alpha d_w}} \le  G_\alpha^{D,U}(x,y)  \le  \frac{C}{d(x,y)^{d_h-\alpha d_w}}.
		\end{align}
		\item If $\alpha = d_h/d_w$, there exist  constants $c,C >0$ such that for every  $x,y \in U$, with $x \neq y$
		\begin{align}\label{two sided bounds G 2}
		c \,\Phi_1(x) \Phi_1(y) \max(1, | \ln d_U(x,y) |) \le G_\alpha^{D,U}(x,y)  \le C \max(1,  | \ln d(x,y) |).
		\end{align}
		\item If  $\alpha > d_h/d_w$, there exist  constants $c,C >0$ such that for every  $x,y \in U$, 
		\begin{align}\label{two sided bounds G 3}
		c \, \Phi_1(x) \Phi_1(y) \le  G_\alpha^{D,U}(x,y) \le  C  .
		\end{align}
	\end{enumerate}

\end{prop}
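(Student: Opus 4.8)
The plan is to estimate the integral representation
\[
G_\alpha^{D,U}(x,y)=\frac{1}{\Gamma(\alpha)}\int_0^{+\infty} t^{\alpha-1}p_t^{D,U}(x,y)\,dt
\]
by splitting the time integral at a suitable scale and plugging in the two-sided sub-Gaussian bounds from Theorem~\ref{estimate_heat_diri}. For the upper bound, I would use $p_t^{D,U}(x,y)\le \frac{c_1 e^{-\lambda_1 t}}{\min(1,t^{d_h/d_w})}\exp\bigl(-c_2(d(x,y)^{d_w}/t)^{1/(d_w-1)}\bigr)$, and for the lower bound the analogous estimate with $\Phi_1(x)\Phi_1(y)$, $d_U$ in place of $d$, and matching constants. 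Writing $\rho=d(x,y)$ (resp.\ $d_U(x,y)$), the key substitution is $t=\rho^{d_w}u$, which turns the integral over $t\in(0,\rho^{d_w})$ into $\rho^{d_w\alpha-d_h}$ times a convergent dimensionless integral $\int_0^1 u^{\alpha-1-d_h/d_w}\exp(-c u^{-1/(d_w-1)})\,du$; here convergence at $u=0$ is automatic because the stretched-exponential beats any power, and this is precisely the term producing the singular factor $d(x,y)^{-(d_h-\alpha d_w)}$ in case (i), and similarly $|\ln d(x,y)|$ in case (ii) when $\alpha=d_h/d_w$ (the power becomes $u^{-1}$ near $u=1^-$ after absorbing the exponential, giving a logarithm). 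The complementary piece $t\ge \rho^{d_w}$ contributes $\int_{\rho^{d_w}}^\infty t^{\alpha-1}\min(1,t^{d_h/d_w})^{-1}e^{-\lambda_1 t}\,dt$; because of the exponential decay from the Dirichlet eigenvalue this is bounded when $\alpha\le d_h/d_w$ (and when $\rho$ is small the dominant contribution near $t\sim\rho^{d_w}$ again matches the stated order), while for $\alpha>d_h/d_w$ it is simply a finite constant, giving case (iii).

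The main subtlety I anticipate is bookkeeping the behavior for $\rho$ not small, i.e.\ $\rho$ comparable to $\diam(U)$: here $\rho^{d_w}$ is order one, so one cannot pretend the whole mass sits in small times. In that regime one uses instead that $U$ is bounded, hence $\rho$ is bounded above, so $d(x,y)^{-(d_h-\alpha d_w)}$ (in case (i), where the exponent $d_h-\alpha d_w>0$) is bounded below by a positive constant, and the upper bound follows by crudely dropping the exponential and integrating $t^{\alpha-1}\min(1,t^{d_h/d_w})^{-1}e^{-\lambda_1 t}$ over all of $(0,\infty)$, which converges exactly because $\alpha<d_h/d_w$ forces $\alpha-1-d_h/d_w>-1$ near $0$ — wait, more carefully, near $t=0$ the integrand is $t^{\alpha-1-d_h/d_w}$ which is integrable iff $\alpha>d_h/d_w$, so for $\alpha<d_h/d_w$ one genuinely needs the exponential cutoff $\exp(-c_2(\rho^{d_w}/t)^{1/(d_w-1)})$ in the small-$t$ part and must keep it; this is handled by the first substitution above and causes no real difficulty, but it is the point where care is needed. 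For the lower bound one restricts the $t$-integral to a convenient window (say $t\in(\rho^{d_w},2\rho^{d_w})$ or, when $\rho$ is bounded below, a fixed window $t\in(1,2)$) on which the heat kernel lower bound is, up to the factor $\Phi_1(x)\Phi_1(y)$, bounded below by a constant times $\rho^{-(d_h-\alpha d_w)}$, and then integrates.

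For the logarithmic case $\alpha=d_h/d_w$, after the substitution the small-time contribution is $\int_0^1 u^{-1}\exp(-c u^{-1/(d_w-1)})\,du$, which converges, so it is actually the \emph{large}-time part $\int_{\rho^{d_w}}^{\const} t^{-1}\,dt\asymp |\ln \rho|$ (cut off at scale $O(1)$ by the exponential $e^{-\lambda_1 t}$) that generates the logarithm; one just needs $\rho\le\diam(U)$ so the logarithm is positive, whence the $\max(1,|\ln d(x,y)|)$ formulation. Finally, in all three cases the factor $\Phi_1(x)\Phi_1(y)$ in the lower bounds is inherited directly from \eqref{eq:subGauss-lowerD}, and one should note that \eqref{eq:subGauss-lowerD} has $d_U$ rather than $d$ in the exponent, which is why the lower bounds in \eqref{two sided bounds G 1}--\eqref{two sided bounds G 3} are stated with $d_U$ while the upper bounds use $d$; since $d\le d_U$ this is consistent. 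The argument is essentially a collection of elementary one-variable integral estimates, so I expect no serious obstacle beyond organizing the small-$\rho$ versus bounded-$\rho$ dichotomy cleanly.
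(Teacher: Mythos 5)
Your proposal is correct and follows essentially the same route as the paper: plug the two-sided sub-Gaussian bounds of Theorem \ref{estimate_heat_diri} into the subordination integral \eqref{kernel for frac Laplacian}, rescale by $t=d(x,y)^{d_w}u$ (resp.\ $d_U(x,y)^{d_w}u$ for the lower bounds, which is where the $\Phi_1(x)\Phi_1(y)$ factor and the $d_U$ come from), and use the boundedness of $U$ and the $e^{-\lambda_1 t}$ decay to control the regime where $d(x,y)$ is comparable to the diameter. The only cosmetic difference is that the paper refers the upper bounds to \cite[Proposition 2.6]{BaudoinLacaux} and only details the lower bounds (restricting the $t$-integral to $(0,1)$, or to $(1,\infty)$ when $\alpha>d_h/d_w$, rather than to your windows around $\rho^{d_w}$), which amounts to the same elementary estimates.
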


\begin{proof}
The proof of the upper estimates is similar to the proof of \cite[Proposition 2.6]{BaudoinLacaux} for the Neumann fractional Riesz kernel and we omit the details here. We will thus handle the lower bounds in \eqref{two sided bounds G 1}, \eqref{two sided bounds G 2} and \eqref{two sided bounds G 3}, focusing on the three regimes for $\alpha$.

\noindent
{\textit{Case $0<\alpha<d_h/d_w$}}.  We plug relation \eqref{eq:subGauss-lowerD} into the definition \eqref{kernel for frac Laplacian} of $G^{D,U}_\alpha$. Lower bounding integrals over $(0,\infty)$ by integrals over $(0,1)$ and writing $\min(1, t^{d_h/d_w})=t^{d_h/d_w}$ for $t\in(0,1)$, we get
%Next we prove the lower estimates by using the heat kernel lower bound \eqref{eq:subGauss-lowerD}. If $0<\alpha <d_h/d_w$, one has
\begin{equation}\label{eq:RK-lower-less1}
G_\alpha^{D,U}(x,y)
\ge c_3\,\Phi_1(x)\Phi_1(y)\int_0^1 t^{\alpha-1-\frac{d_h}{d_w}} \!\exp\biggl(\!-c_{4}\Bigl(\frac{d_U(x,y)^{d_w}}{t}\Bigr)^{\frac{1}{d_w-1}}\!\biggr) dt.
%\ge c  \frac{ \Phi_1(x)\Phi_1(y)}{d_U(x,y)^{d_h-\alpha d_w}}.    
\end{equation}
We then perform the elementary change of variable $r=t/d_U(x,y)^{d_w}$ in the integral above. We end up with
\begin{equation}\label{eq:RK-lower-less}
G_\alpha^{D,U}(x,y)\ge c  \frac{ \Phi_1(x)\Phi_1(y)}{d_U(x,y)^{d_h-\alpha d_w}},
\end{equation}
which is the lower bound in \eqref{two sided bounds G 1}.

\noindent
{\textit{Case $\alpha> d_h/d_w$}}. We proceed as in the previous case. However, we now bound integrals on $(0,\infty)$ by integrals on $(1,\infty)$ and write $\min(1,t^{d_h/d_w})=1$ for $t\in(1,\infty)$. We easily get
\begin{equation}\label{eq:RK-lower-ge}
G_\alpha^{D,U}(x,y)
\ge c_3\,\Phi_1(x)\Phi_1(y)\int_1^{\infty} t^{\alpha-1} e^{-\lambda_1t} dt
\ge c\, \Phi_1(x)\Phi_1(y).
\end{equation}
That is the lower bound in  \eqref{two sided bounds G 3} is achieved.

\noindent
{\textit{Case $\alpha= d_h/d_w$}}. As in the previous steps, we plug the lower bound \eqref{eq:subGauss-lowerD} into \eqref{kernel for frac Laplacian}. Restricting the integral to $(0,1)$ and using the relation $\alpha=d_h/d_w$ we discover that
\begin{align*}
    G_\alpha^{D,U}(x,y)\ge c_3 \,\Phi_1(x)\Phi_1(y)\int_0^1 t^{-1} \!\exp\biggl(\!-c_{4}\Bigl(\frac{d_U(x,y)^{d_w}}{t}\Bigr)^{\frac{1}{d_w-1}}\!\biggr) dt.
\end{align*}
Next the change of variable $u=t/d_U(x,y)^{d_w}$ yields
\begin{align}\label{Prop2.19 MS1}
    G_\alpha^{D,U}(x,y)\geq c_3 \,\Phi_1(x)\Phi_1(y)\int_0^{1/d_U(x,y)^{d_w}}  \!\exp\biggl(\!-c_{4}\Bigl(\frac{1}{u}\Bigr)^{\frac{1}{d_w-1}}\!\biggr) \frac{du}{u}.
 \end{align}
 In order to avoid the singularity at $0$ in the integral above, let us introduce the diameter $\widetilde R$ of $U$ with respect to the distance $d_U$. Then from \eqref{Prop2.19 MS1} one sees that
\begin{align}
    G_\alpha^{D,U}(x,y)
     &\ge c_3\,\Phi_1(x)\Phi_1(y) \int_{1/(2\widetilde R)^{d_w}}^{1/d_U(x,y)^{d_w}}\!\exp\biggl(\!-c_{4}\Bigl(\frac{1}{u}\Bigr)^{\frac{1}{d_w-1}}\!\biggr) \frac{du}{u}
    \\ &\ge c\,\Phi_1(x)\Phi_1(y)
    \int_{1/(2\widetilde R)^{d_w}}^{1/d_U(x,y)^{d_w}} \frac{du}{u}
    \\ &= c\,d_w\,\Phi_1(x)\Phi_1(y)(\ln (2\widetilde R)-\ln d_U(x,y)).\label{eq:RK-lower-ge2}
\end{align}
When $x,  y$ are close to eah other, the dominant term in the right hand-side of \eqref{eq:RK-lower-ge2} is $c\, d_w\,\Phi_1(x)\Phi_1(y)|\ln d_U(x,y)|$, while for $x, y$ further apart a trivial lower bound is of the form $c\,\Phi_1(x)\Phi_1(y)$. We thus conclude the proof of \eqref{two sided bounds G 2}.
%This estimate implies the lower bound in \eqref{two sided bounds G 2} when $x, y$ are close to each other while \eqref{eq:RK-lower-ge} implies the same lower bound when $x,y$ are away from each other. %, gives the lower bound for the case $\alpha= d_h/d_w$. 
%\hb{(I still need details for this step. I don't think~\eqref{eq:RK-lower-ge} helps here)}. 
%We thus conclude the proof.
%{\color{orange}F: It is trivial that
%\[
%\ln (2\widetilde R)-\ln d_U(x,y) \ge c \max(1,  | \ln d_U(x,y) |)
%\] }
\end{proof}

The following estimate is the main technical lemma we will be using to estimate solutions of the stochastic heat equation.

\begin{lem}\label{estimate frac pt}
  Let $\alpha \ge 0 $, and consider an inner uniform domain $U$. Recall that the operator $(-\Delta_{D,U})^{-\alpha}$ in Definition \ref{FLaplacian} admits the kernel $G_\alpha^{D,U}$ given by \eqref{kernel for frac Laplacian}. Then there exists a constant $C$ such that for every $t>0$
  \begin{align}\label{est frac pt}
 \sup_{x \in U} \|(-\Delta_{D,U})^{-\alpha}p_t^{D,U}(x,\cdot)\|_{L^2(U,\mu)}  \le
 \begin{cases}
C e^{-\lambda_1 t} (1+t^{\alpha-\frac{d_h}{2d_w}}), \quad 0 \le \alpha < \frac{d_h}{2d_w}, \\
C e^{- \lambda_1 t} |\ln \left(\min (1/2,t)\right)|, \quad \alpha=\frac{d_h}{2d_w}, \\
C e^{-\lambda_1  t}, \quad \alpha >\frac{d_h}{2d_w}.
 \end{cases}
  \end{align}
\end{lem}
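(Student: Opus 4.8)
The natural strategy is to reduce everything to the two-sided control of the Riesz kernel $G_{2\alpha}^{D,U}$ obtained in Proposition \ref{estimate G} together with the heat kernel bounds of Theorem \ref{estimate_heat_diri}. First I would write, using the self-adjointness of $(-\Delta_{D,U})^{-\alpha}$ and the kernel representation \eqref{frac Hilbert norm in kernel},
\[
\|(-\Delta_{D,U})^{-\alpha}p_t^{D,U}(x,\cdot)\|_{L^2(U,\mu)}^2
=\int_U\int_U p_t^{D,U}(x,y)\,p_t^{D,U}(x,z)\,G_{2\alpha}^{D,U}(y,z)\,d\mu(y)\,d\mu(z).
\]
Alternatively, and this is cleaner, one uses the spectral expansion: since $p_t^{D,U}(x,\cdot)=\sum_j e^{-\lambda_j t}\Phi_j(x)\Phi_j$, one gets
\[
\|(-\Delta_{D,U})^{-\alpha}p_t^{D,U}(x,\cdot)\|_{L^2(U,\mu)}^2
=\sum_{j\ge 1}\lambda_j^{-2\alpha}e^{-2\lambda_j t}\Phi_j(x)^2
=\int_0^{\infty}\frac{r^{2\alpha-1}}{\Gamma(2\alpha)}\,p_{2t+r}^{D,U}(x,x)\,dr,
\]
where the last identity comes from $\lambda_j^{-2\alpha}=\frac{1}{\Gamma(2\alpha)}\int_0^\infty r^{2\alpha-1}e^{-\lambda_j r}\,dr$ and then summing the heat kernel diagonal series \eqref{eq:HKexpansion-diri}. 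This reduces the whole lemma to estimating $\int_0^\infty r^{2\alpha-1}p_{2t+r}^{D,U}(x,x)\,dr$ by means of the on-diagonal upper bound in \eqref{eq:subGauss-upperD}, namely $p_s^{D,U}(x,x)\le c\,e^{-\lambda_1 s}/\min(1,s^{d_h/d_w})$.

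Next I would carry out the resulting one-dimensional integral estimate. Substituting $s=2t+r$, one has to bound $\int_{2t}^{\infty}(s-2t)^{2\alpha-1}\,\frac{e^{-\lambda_1 s}}{\min(1,s^{d_h/d_w})}\,ds$, and after pulling out $e^{-2\lambda_1 t}$ (from $e^{-\lambda_1 s}=e^{-2\lambda_1 t}e^{-\lambda_1(s-2t)}$ and $(s-2t)^{2\alpha-1}\le$ something manageable, or more simply bounding $e^{-\lambda_1 s}\le e^{-2\lambda_1 t}$ on $s\ge 2t$ when $2\alpha-1\ge 0$, and being slightly more careful when $2\alpha-1<0$) the problem splits into a region $s\in(2t, \max(2t,1))$ where $\min(1,s^{d_h/d_w})=s^{d_h/d_w}$ contributes the singular behavior, and a region $s\ge 1$ where the exponential decay takes over. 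The singular region produces $\int_{2t}^{1}(s-2t)^{2\alpha-1}s^{-d_h/d_w}\,ds$ (when $2t<1$), which behaves like a constant when $2\alpha-1-d_h/d_w>-1$, i.e. $\alpha>d_h/2d_w$; like $|\ln(\min(1/2,t))|$ when $\alpha=d_h/2d_w$; and like $t^{2\alpha-d_h/d_w}$ (plus a constant) when $\alpha<d_h/2d_w$. Taking square roots then yields exactly the three cases of \eqref{est frac pt}, with $e^{-\lambda_1 t}$ coming out of $\sqrt{e^{-2\lambda_1 t}}$. The uniformity in $x$ is automatic since the diagonal bound \eqref{eq:subGauss-upperD} is uniform in $x\in U$; in the spectral-series approach one uses in addition $\sum_j e^{-\lambda_j s}\Phi_j(x)^2=p_s^{D,U}(x,x)$, again uniform.

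The main obstacle—though a mild one—is the careful bookkeeping of the convergence of $\int_0^\infty r^{2\alpha-1}p_{2t+r}^{D,U}(x,x)\,dr$ near $r=0$ when $0\le\alpha<1/2$ (so that $r^{2\alpha-1}$ is itself singular) and simultaneously tracking whether $2t$ is below or above $1$; one must treat the degenerate value $\alpha=0$ separately, where the statement just reduces to $\|p_t^{D,U}(x,\cdot)\|_{L^2}^2=p_{2t}^{D,U}(x,x)\le Ce^{-2\lambda_1 t}(1+t^{-d_h/d_w})$, which is precisely Corollary \ref{lem:L2bound}. A small additional point is to make sure that, in the case $2t\ge 1$, the singular region is empty and one simply gets the pure exponential bound $Ce^{-\lambda_1 t}$, consistent with all three cases since $t^{\alpha-d_h/2d_w}$ and $|\ln\min(1/2,t)|$ are then bounded. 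I would organize the write-up as: (1) the kernel/spectral identity; (2) insertion of \eqref{eq:subGauss-upperD}; (3) the elementary integral computation split by the sign of $2\alpha-1-d_h/d_w$ and by whether $t\lessgtr 1/2$; (4) taking square roots.
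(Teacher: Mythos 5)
Your argument is correct, and it reaches the bound by a slightly different route than the paper. The paper works at the level of the kernel: it writes $(-\Delta_{D,U})^{-\alpha}p_t^{D,U}(x,y)=\frac{1}{\Gamma(\alpha)}\int_0^{\infty}s^{\alpha-1}p_{t+s}^{D,U}(x,y)\,ds$ (kernel representation \eqref{kernel for frac Laplacian} plus the semigroup property), then applies Minkowski's integral inequality, uses $\|p_{t+s}^{D,U}(x,\cdot)\|_{L^2(U,\mu)}=p_{2(t+s)}^{D,U}(x,x)^{1/2}$ as in \eqref{eq-chap-kol}, inserts the on-diagonal bound from \eqref{eq:subGauss-upperD}, and finishes with the same one-dimensional calculus you describe. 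You instead compute the \emph{squared} norm exactly via Parseval in the eigenbasis and the subordination identity with exponent $2\alpha$,
\begin{equation}
\|(-\Delta_{D,U})^{-\alpha}p_t^{D,U}(x,\cdot)\|_{L^2(U,\mu)}^2=\sum_{j\ge 1}\lambda_j^{-2\alpha}e^{-2\lambda_j t}\Phi_j(x)^2=\frac{1}{\Gamma(2\alpha)}\int_0^{\infty}r^{2\alpha-1}p_{2t+r}^{D,U}(x,x)\,dr,
\end{equation}
which is legitimate (Tonelli with nonnegative terms, and the convergent expansion \eqref{eq:HKexpansion-diri}), and then insert the same on-diagonal bound. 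Both proofs rest on identical inputs (the diagonal estimate, uniform in $x$, plus elementary calculus split at $t\lessgtr 1/2$ and according to the sign of $2\alpha-d_h/d_w$), and your handling of the degenerate case $\alpha=0$ via Corollary \ref{lem:L2bound} matches the paper's. What your variant buys is the avoidance of Minkowski's inequality and, as a by-product, a marginally sharper estimate in the critical case $\alpha=\frac{d_h}{2d_w}$: the squared norm is $O\bigl(e^{-2\lambda_1 t}(1+|\ln\min(1/2,t)|)\bigr)$, so after taking square roots you get $|\ln\min(1/2,t)|^{1/2}$ rather than $|\ln\min(1/2,t)|$, which of course still implies \eqref{est frac pt} since the logarithm is bounded below by $\ln 2$. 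The paper's kernel-level formulation, on the other hand, is the one it recycles verbatim for the Neumann case (where one subtracts $1/\mu(U)$); your spectral computation would adapt there too, since the constant mode is excluded from the Neumann expansion.
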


\begin{proof}
  The case $\alpha=0$ follows from Corollary \ref{lem:L2bound}, so we assume $\alpha >0$. From relation~\eqref{kernel for frac Laplacian} and the semigroup property for $p_t$, one has for every $t>0$ 
\begin{align}
(-\Delta_{D,U})^{-\alpha}p_t^{D,U}(x,y)&=\int_U G_\alpha^{D,U}(y,z)p_t^{D,U}(x,z) d\mu(z) \\
 &=\frac{1}{\Gamma(\alpha)}\int_0^{+\infty}\int_U s^{\alpha -1}p_s^{D,U}(y,z)p_t^{D,U}(x,z) d\mu(z)ds \\
 &=\frac{1}{\Gamma(\alpha)}\int_0^{+\infty} s^{\alpha-1}p_{s+t}^{D,U}(x,y) ds. \label{Lem2.20 MS 1}
\end{align}
%For a measure $\nu$ on $\mathbb{R}_+$, the measure $\mu$ on $U$ and a function $\varphi$ defined on $\mathbb{R}_+\times U$, Minkowski's inequality can be read as
%\begin{align*}
%\left\|\int_{\mathbb{R}_+}\varphi(s,\cdot)\nu(ds)\right\|_{L^2(\mu)}
%\leq\int_{\mathbb{R}_+}\|\varphi(s,\cdot)\|_{L^2(\mu)}\nu(ds).
%\end{align*}
Applying Minkowski's inequality to the right hand-side of \eqref{Lem2.20 MS 1}, we get
\begin{align}\label{eq-est-del-pt}
   \|(-\Delta_{D,U})^{-\alpha}p_t^{D,U}(x,\cdot)\|_{L^2(U,\mu)}  \leq \frac{1}{\Gamma(\alpha)}\int_0^{+\infty} s^{\alpha-1}\| p_{s+t}^{D,U}(x,\cdot)\|_{L^2(U,\mu)} ds. 
\end{align}
We now invoke our identity \eqref{eq-chap-kol} and the bound \eqref{eq:subGauss-upperD} in order to estimate the right hand-side above. This easily yields
\begin{equation}\label{eq:DeltaHK}
\begin{split}
\|(-\Delta_{D,U})^{-\alpha}p_t^{D,U}(x,\cdot)\|_{L^2(U,\mu)}  
% & \le \frac{1}{\Gamma(\alpha)}\int_0^{+\infty} s^{\alpha-1}\| p_{s+t}^{D,U}(x,\cdot)\|_{L^2(U,\mu)} ds \\
 & \le \frac{1}{\Gamma(\alpha)}\int_0^{+\infty} s^{\alpha-1} p_{2(s+t)}^{D,U}(x,x)^{1/2} ds \\
 &\le C e^{-\lambda_1 t} \int_0^{+\infty} s^{\alpha-1} e^{-\lambda_1 s}  \max \left(1,  \frac{1 }{(t+s)^{d_h/2d_w}} \right) ds.
\end{split}
\end{equation}
Inequality \eqref{est frac pt} is then achieved thanks to some elementary calculus considerations.
\end{proof}
To conclude the section we give the definition of the Sobolev space with reproducing kernel $G_{2\alpha}^{D,U}$.

\begin{defn}[Sobolev Space $\mathcal{W}_D^{-\alpha}(U)$] \label{sobo_def_D}
For $\alpha>0$, we define the Sobolev space $\mathcal{W}_D^{-\alpha}(U)$ as the completion of $ \mathcal{S}_{D}(U)$ with respect to the inner product
\begin{align}\label{Sobo_norm_D}
\left\langle  f ,g \right\rangle_{\mathcal{W}_D^{-\alpha}(U)}=\int_U \int_U f(x) g(y) G^{D,U}_{2\alpha}(x,y) d\mu(x) d\mu(y)= \int_{U} \left[(-\Delta_{D,U})^{-\alpha} f\right]\left[ (-\Delta_{D,U})^{-\alpha} g\right] d\mu,
\end{align}
where the kernel $G^{D,U}_{2\alpha}$ is defined by \eqref{kernel for frac Laplacian}.
\end{defn}
 
\subsection{Neumann boundary conditions}\label{Neumann boundary}
In this section we turn to the Neumann boundary case. For further details about the Neumann boundary conditions in the general setting of Dirichlet spaces, we refer to \cite{Chen-Dirichlet} and \cite{{MR2807275}}. We will mimic the structure of Section \ref{Dirichlet boundary}, by first studying a notion of Neumann semigroup and then defining fractional powers of the Laplacian.

\subsubsection{Heat semigroup with Neumann boundary conditions}
As in Section \ref{Dirichlet boundary}, we consider a non-empty bounded open set $U\subset X$, whose closure $\overline{U}$ is compact. The domain $\mathcal{F}(U)$ is still defined by \eqref{def: F(U)}. However, instead of restricting the domain to $\mathcal{F}_c(U)$ like in the Dirichlet case, we directly denote
\begin{align}\label{D form Neumann}
\mathcal{E}^N_U(u,u)=\int_U d\Gamma(u,u),\quad\text{for}\ \  u\in\mathcal{F}(U),
\end{align}
where $\Gamma$ is the energy measure defined by \eqref{def: Gamma}.  In the Euclidean case, if $U$ is a domain with a smooth boundary, $\mathcal{E}_U^N$ yields the usual Neumann Dirichlet form, see for instance \cite{Chen-Dirichlet}. In our general setting, as opposed to the Dirichlet case,  $(\mathcal{E}^N_U,\mathcal{F}(U)) $ is not necessarily a strongly local regular Dirichlet form. In order to ensure this property we shall restrict our class of domains as below.

%In general the bilinear form $(\mathcal{E}^N_U,\mathcal{F}(U)) $ need not be a strongly local regular Dirichlet form. If it is, $(\mathcal{E}^N_U,\mathcal{F}(U)) $ is referred to as the Neumann Dirichlet form on the domain $U$ and the associated diffusion process is called the reflected Brownian motion. We refer to \cite{MR2807275,murugan2023heat} for details about this construction.

\begin{defn}\label{def: uniform domain} As in Definition \ref{Def: inner uni domain}, consider an open, connected bounded set $U$ whose closure is compact. Let $c, C$ be two constants with $c\in(0,1)$ and $C>1$. We say that a rectifiable curve 
 $\gamma: [\alpha,\beta] \to U$ is a $(c,C)$ uniform curve in $U$ if the following two conditions are satisfied:
\begin{enumerate}
\item For all $t \in [\alpha,\beta]$, $d( \gamma(t), \partial U) \ge c \min \{ d(\gamma(\alpha),\gamma(t)), d(\gamma(t),\gamma(\beta)) \}$,
\item $\mathrm{length}(\gamma) \le C d (\gamma(\alpha),\gamma(\beta))$.
\end{enumerate}
Then the domain $U$ is called a $(c,C)$  uniform domain if any two points in $U$ can be joined by a $(c,C)$ uniform curve in $U$.
\end{defn}

Since $d \le d_U$, it is easy to check that a uniform domain is always inner uniform, see for instance Section 3.1.2 in \cite{MR2807275}. The converse is not true in general. For instance, a disc with one slit is an example of a inner uniform domain which is not uniform. In Euclidean space, any bounded convex domain is uniform.

The main advantage of uniform domains is that the Dirichlet form $\mathcal{E}^N_U$ becomes strongly local regular and admits a continuous  heat kernel with sub-Gaussian estimates. We spell out this theorem below.

\begin{thm}[\protect{\cite[Theorem 2.8]{murugan2023heat}}]\label{estimate_heat_neumann}
  Assume that $U$ is a  uniform domain as introduced in Definition \ref{def: uniform domain}. Then the form  $(\mathcal{E}^N_U,\mathcal{F}(U)) $ defined by \eqref{D form Neumann} is a strongly local regular Dirichlet form which admits a continuous heat kernel $p_t^{N,U}(x,y)$. Moreover, the Neumann heat kernel $p_t^{N,U}(x,y)$ satisfies the upper estimate
\begin{equation}\label{eq:subGauss-upperN}
p_{t}^{N,U}(x,y)\leq\frac{ c_{1}}{\min(1,t^{\frac{d_h}{d_w}})}\!\exp\biggl(\!-c_{2}\Bigl(\frac{d(x,y)^{d_w}}{t}\Bigr)^{\frac{1}{d_w-1}}\!\biggr) \, ,
\end{equation}
and the lower estimate
\begin{equation}\label{eq:loweGauss-upperN}
p_{t}^{N,U}(x,y)\geq \frac{c_{3}} {\min(1,t^{\frac{d_h}{d_w}})}\!\exp\biggl(\!-c_{4}\Bigl(\frac{d(x,y)^{d_w}}{t}\Bigr)^{\frac{1}{d_w-1}}\!\biggr) \, ,
\end{equation}
for every \ $(x,y)\in U \times U$ and $t\in\bigl(0,+\infty)$.

\end{thm}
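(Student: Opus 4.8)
Since the statement is quoted verbatim from \cite[Theorem 2.8]{murugan2023heat}, in the paper one simply cites it; let me nonetheless indicate the route one would take to prove it. The backbone is the stability theory for sub-Gaussian heat kernel bounds: on a metric measure Dirichlet space the two-sided estimate \eqref{eq:subGauss-upperN}--\eqref{eq:loweGauss-upperN} is equivalent to the conjunction of volume doubling, a scaled Poincar\'e inequality at walk scale $d_w$, and a cutoff Sobolev (equivalently, generalized capacity) inequality at scale $d_w$; see Grigor'yan--Telcs and Barlow--Bass--Kumagai, and \cite{MR2807275} for the adaptation to domains. So the plan is: (a) check that $(\mathcal{E}^N_U,\mathcal{F}(U))$ is a strongly local regular Dirichlet form, and then (b) verify the three structural conditions on $(U,d,\mu|_U,\mathcal{E}^N_U)$ by transferring them from the ambient space $X$, which satisfies them by Assumption \ref{hyp:dirichlet-form}.

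For step (a): strong locality of $\mathcal{E}^N_U$ would be inherited from that of $\mathcal{E}$ through the energy-measure representation \eqref{D form Neumann}, since $\Gamma(u,u)$ charges no set on which $u$ is locally constant. For regularity one must exhibit a core, and this is where the uniform domain hypothesis first enters: I would construct a bounded linear extension operator $E\colon\mathcal{F}(U)\to\mathcal{F}$ with $(Ef)|_U=f$, of Whitney--Jones type but adapted to the Dirichlet form (reflection across the boundary organized along a Whitney decomposition of $U$ compatible with the uniform curves of Definition \ref{def: uniform domain}), and then take the image under restriction of a core of $\mathcal{E}$ on $X$, using $E$ to control its $\mathcal{E}_1$-closure; this produces a subalgebra of $C_c(\overline U)\cap\mathcal{F}(U)$ dense in sup norm and in the $\mathcal{E}_1$-norm.

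For step (b): volume doubling for $\mu|_U$ is immediate from the Ahlfors $d_h$-regularity of Assumption \ref{hyp:dirichlet-form}, together with the fact that $\mu(B(x,r)\cap U)\simeq r^{d_h}$ uniformly in $x\in U$ for a uniform domain. The Poincar\'e inequality at scale $d_w$ on balls of $U$ would be obtained from the corresponding inequality on $X$ (itself a consequence of \eqref{eq:subGauss-upper} via the same stability theory) by a chaining argument along the uniform curves, combined with the extension operator $E$ near $\partial U$; the cutoff Sobolev inequality is transferred analogously, the cutoffs being restrictions to $U$ of cutoffs on $X$. Once these hold, the characterization theorem yields a continuous symmetric heat kernel $p_t^{N,U}$ satisfying \eqref{eq:subGauss-upperN}--\eqref{eq:loweGauss-upperN}. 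The absence of an exponential prefactor, in contrast with the Dirichlet bound \eqref{eq:subGauss-upperD}, reflects conservativeness: constants belong to $\mathcal{F}(U)$, so $P_t^{N,U}1=1$, and for $t\ge 1$ the two-sided estimate then forces $p_t^{N,U}(x,y)\simeq 1$.

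The hard part will be the geometric half of step (b): building the extension operator $E$ with a sharp energy estimate and, through it, the Poincar\'e and cutoff Sobolev inequalities up to the boundary. The delicate point is to decompose $U$ into Whitney pieces adapted to the metric $d$ and the uniform-curve condition, build reflection-type extensions on each piece, and glue them while controlling the total energy \emph{without losing the $d_w$-scaling} --- this is precisely the technical core of \cite{murugan2023heat}, paralleling Jones's extension theorem and its Dirichlet-form analogue for inner uniform domains in \cite{MR2807275} and \cite{Lierl}.
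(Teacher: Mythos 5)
Your proposal is correct: the paper offers no proof of this statement, quoting it directly from \cite[Theorem 2.8]{murugan2023heat}, exactly as you note. Your sketch of the underlying argument (strong locality and regularity of $(\mathcal{E}^N_U,\mathcal{F}(U))$ via an extension operator for uniform domains, then transfer of volume doubling, Poincar\'e and cutoff Sobolev inequalities at scale $d_w$ and appeal to the stability characterization of sub-Gaussian bounds) faithfully describes the route taken in that reference, so there is nothing to compare beyond the citation.
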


Notice that under the conditions of Theorem \ref{estimate_heat_neumann}, the form  $(\mathcal{E}^N_U,\mathcal{F}(U)) $ is referred to as Neumann Dirichlet form on $U$. The related diffusion process is the reflected Brownian motion. 
We now  close this section by stating a lemma about the $L^2$ norm of the heat kernel, which is similar to Corollary \ref{lem:L2bound}.

%For the remainder of this subsection, we assume that $U$ is  uniform. The following result is immmediate thanks to Theorem \ref{estimate_heat_neumann} and the fact that for every $x\in U$
%\[
%\int_U p_t^{N,U}(x,y)^2 d\mu(y)=p_{2t}^{N,U}(x,x).
%\]

\begin{lem}\label{lem:L2bound-N} Under the same conditions as for Theorem \ref{estimate_heat_neumann}, there exists a constant $C>0$ such that for all $x\in K$,
\[
\int_U p_t^{N,U}(x,y)^2 d\mu(y)\le
\left\{ \begin{aligned}
& C t^{-\frac{d_h}{d_w}}, &0<t<1,\\
& C , &t\ge 1.
%t^{-\frac{2d_h}{d_w}}, &t\ge 1.
\end{aligned}\right.
\]
\end{lem}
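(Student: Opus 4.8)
The plan is to mimic the proof of Corollary \ref{lem:L2bound}, using the semigroup (Chapman--Kolmogorov) identity together with the Neumann upper estimate \eqref{eq:subGauss-upperN}. Since the Neumann heat kernel is continuous and, by construction, symmetric (the Dirichlet form $\mathcal{E}^N_U$ is symmetric, so $P^{N,U}_t$ is a self-adjoint semigroup on $L^2(U,\mu)$), we may write
\[
\int_U p_t^{N,U}(x,y)^2\, d\mu(y) = \int_U p_t^{N,U}(x,y)\, p_t^{N,U}(y,x)\, d\mu(y) = p_{2t}^{N,U}(x,x).
\]
Thus the whole statement reduces to an on-diagonal bound for $p^{N,U}$ at time $2t$.

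Next I would simply insert $x=y$ and the diagonal value $d(x,x)=0$ into the upper estimate \eqref{eq:subGauss-upperN}, which gives
\[
p_{2t}^{N,U}(x,x) \le \frac{c_1}{\min(1,(2t)^{d_h/d_w})}.
\]
For $0<t<1$ one has $\min(1,(2t)^{d_h/d_w}) \ge c\, t^{d_h/d_w}$ (absorbing the factor $2^{d_h/d_w}$ into the constant), hence $p_{2t}^{N,U}(x,x)\le C t^{-d_h/d_w}$; for $t\ge 1$ one has $\min(1,(2t)^{d_h/d_w})=1$, hence $p_{2t}^{N,U}(x,x)\le c_1$. Combining the two ranges yields exactly the claimed two-sided piecewise bound, uniformly in $x\in U$ (indeed the bound does not depend on $x$ at all, so the supremum over $x$ is harmless).

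There is essentially no obstacle here: unlike the Dirichlet case in Corollary \ref{lem:L2bound}, there is no exponential decay $e^{-2\lambda_1 t}$ to recover for large times, because the Neumann semigroup is conservative ($P^{N,U}_t 1 = 1$) and its bottom eigenvalue is $0$, so the best large-time on-diagonal bound is the constant one. The only mild point to be careful about is the symmetry of $p^{N,U}_t$, which is guaranteed by Theorem \ref{estimate_heat_neumann} stating that the kernel is continuous and associated with the symmetric Dirichlet form $(\mathcal{E}^N_U,\mathcal{F}(U))$; with that in hand the Chapman--Kolmogorov step is immediate and the rest is the elementary calculus above. (One may also note in passing that since $\mu(U)<\infty$, the $L^2$ bound for $t\ge 1$ could alternatively be deduced from $p_t^{N,U}(x,\cdot)$ being a probability density, but the diagonal estimate is cleaner.)
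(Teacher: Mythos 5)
Your proof is correct and follows exactly the argument the paper uses for the Dirichlet analogue (Corollary \ref{lem:L2bound}): symmetry plus the semigroup property reduce the integral to $p_{2t}^{N,U}(x,x)$, and the on-diagonal bound from \eqref{eq:subGauss-upperN} gives the two regimes; the paper states Lemma \ref{lem:L2bound-N} without proof precisely because this is the intended routine adaptation. Your remark that no exponential decay is available for $t\ge 1$ (conservativeness, bottom eigenvalue $0$) correctly explains the only difference from the Dirichlet case.
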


\subsubsection{Fractional Laplacian}
In the case of Neumann boundary conditions, the fractional Laplacian is constructed very similarly to the Dirichlet case. We first summarize the spectral properties of the Laplacian in the proposition below.

\begin{prop}\label{prop:Sepctral decomp Neumann}Consider a uniform domain $U$ like in Definition \ref{def: uniform domain} and the Dirichlet form $\mathcal{E}_U^N$ defined by~\eqref{D form Neumann}. The generator of $\mathcal{E}_U^N$ is denoted by $\Delta_{N,U}$ and called the Neumann Laplacian. Then:
\begin{enumerate}
\item 
The operator $-\Delta_{N,U}$ admits a discrete spectrum $\{\nu_j; j\geq0\}$ with $\nu_0=0$ and $0<\nu_1\le \nu_2\le  \cdots \le \nu_j \le \cdots$. The corresponding eigenfunctions are such that $\Psi_0=\mu(U)^{-1}$ and $\{\Psi_j; j\geq1\}$ form an orthonormal basis of 
$
L^2_0(U,\mu)=\left\{ f\in L^2(U,\mu); \   \int_U fd\mu=0\right\}.
$
%Moreover for all $j\geq0$ we have $\Psi_j\in\mathcal{D}(\Delta_{N,U})$.

\item 
The Neumann heat kernel $p_t^{N,U}$ of Theorem \ref{estimate_heat_neumann} admits a convergent series expansion for all $t>0$ and $x,y\in U$: 
\begin{equation}\label{eq:HKexpansion-Neum}
p_t^{N,U}(x,y)=\frac{1}{\mu(U)}+\sum_{j=1}^{+\infty} e^{-\nu_j t} \Psi_j(x) \Psi_j(y).
\end{equation}

\item
For every $\alpha> d_h/d_w$, we have  $\sum_{j=1}^{+\infty} \frac{1}{\nu_j^{\alpha}} <+\infty.
$
\end{enumerate}

\end{prop}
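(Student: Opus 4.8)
The plan is to mirror the Dirichlet case treated in Notation~\ref{notation: spectrum} and the subsequent lemma, since Proposition~\ref{prop:Sepctral decomp Neumann} is the exact Neumann analogue. Part (i) is essentially a restatement of the general spectral theorem: since $U$ is a uniform domain, Theorem~\ref{estimate_heat_neumann} gives a strongly local regular Dirichlet form $(\mathcal{E}^N_U,\mathcal{F}(U))$ on the bounded set $U$ with a continuous heat kernel. Because $\overline U$ is compact and $\mu$ is Ahlfors $d_h$-regular, $\mu(U)<\infty$ and the embedding $\mathcal{F}(U)\hookrightarrow L^2(U,\mu)$ is compact (this follows from the sub-Gaussian bounds \eqref{eq:subGauss-upperN}–\eqref{eq:loweGauss-upperN}, exactly as in Barlow's framework), so $-\Delta_{N,U}$ has discrete spectrum with eigenvalues accumulating only at $+\infty$. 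The constant function is in $\mathcal{F}(U)$ and, by strong locality, $\mathcal{E}^N_U(1,1)=\int_U d\Gamma(1,1)=0$, so $0$ is an eigenvalue with eigenfunction $\Psi_0=\mu(U)^{-1/2}$ after normalization (the paper writes $\Psi_0=\mu(U)^{-1}$; I'd keep that convention); connectedness of $U$ forces the kernel of $\mathcal{E}^N_U$ to be one-dimensional, hence $\nu_1>0$, and the remaining $\{\Psi_j\}_{j\ge1}$ form an orthonormal basis of $L^2_0(U,\mu)$.

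For part (ii), I would use that the semigroup $P^{N,U}_t=e^{t\Delta_{N,U}}$ is Hilbert–Schmidt for $t>0$ (since $p^{N,U}_t\in L^2(U\times U,\mu\times\mu)$ by Lemma~\ref{lem:L2bound-N}), so spectral theory gives the $L^2$-convergent expansion $p_t^{N,U}(x,y)=\sum_{j\ge0}e^{-\nu_j t}\Psi_j(x)\Psi_j(y)$ with $\Psi_0\Psi_0=\mu(U)^{-1}$; continuity of $p^{N,U}_t$ together with the semigroup property and Mercer-type arguments upgrades this to pointwise convergence for each fixed $t>0$ and $x,y\in U$, exactly as invoked for \eqref{eq:HKexpansion-diri} via Theorem~3.44 in \cite{Barlow}. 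For part (iii), I would argue via the on-diagonal bound: from \eqref{eq:HKexpansion-Neum} one has $\sum_{j\ge1}e^{-\nu_j t}\Psi_j(x)^2 = p_t^{N,U}(x,x)-\mu(U)^{-1}\le C t^{-d_h/d_w}$ for small $t$ by Lemma~\ref{lem:L2bound-N} (writing $\int_U p^{N,U}_t(x,y)^2 d\mu(y)=p^{N,U}_{2t}(x,x)$). Integrating in $x$ over $U$ gives the trace bound $\sum_{j\ge1}e^{-\nu_j t}\le C t^{-d_h/d_w}$ for $t\in(0,1)$; a standard Tauberian theorem (see \cite{MR0793651}) then yields the eigenvalue counting estimate $\#\{j:\nu_j\le\lambda\}\lesssim \lambda^{d_h/d_w}$, equivalently $\nu_j\gtrsim j^{d_w/d_h}$, whence $\sum_j \nu_j^{-\alpha}<\infty$ whenever $\alpha d_w/d_h>1$, i.e. $\alpha>d_h/d_w$.

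The only genuine subtlety — and the step I'd expect to need the most care — is the passage from $L^2$-convergence to \emph{pointwise} convergence of the spectral expansion in (ii), since $U$ is merely a uniform domain in a possibly fractal space and the eigenfunctions need not be smooth. I would handle this exactly as in the Dirichlet case: the recurrence assumption $d_h<d_w$ guarantees functions in $\mathcal{F}$ (hence the eigenfunctions) have continuous versions, and the continuity of $p^{N,U}_t$ plus the positivity and semigroup structure let one run the classical argument (Mercer's theorem adapted to the non-compact-operator setting via $p_t=p_{t/2}\ast p_{t/2}$) to get local uniform, in particular pointwise, convergence; this is precisely what \cite[Theorem~3.44]{Barlow} provides in Barlow's fractional-space framework, which \eqref{eq:subGauss-upperN}–\eqref{eq:loweGauss-upperN} place us in. Everything else is routine and parallels the Dirichlet development verbatim.
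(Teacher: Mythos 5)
Your proof is correct and follows essentially the route the paper intends: the paper states this proposition without proof, and its Dirichlet analogue is handled in two lines by citing Barlow's Theorem 3.44 for the heat kernel expansion and Tauberian-type theorems for the eigenvalue series, which is exactly the argument you flesh out (Hilbert--Schmidt/ultracontractivity for discreteness of the spectrum, strong locality and irreducibility for the simple zero eigenvalue, the semigroup property plus continuity for pointwise convergence, and the on-diagonal trace bound for part (iii)). Your side remark is also right that the orthonormal normalization is $\Psi_0=\mu(U)^{-1/2}$, consistent with the constant term $1/\mu(U)$ in the expansion, the paper's $\mu(U)^{-1}$ being a harmless slip.
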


%Let $\nu_0=0<\nu_1\le \nu_2\le  \cdots \le \nu_j \le \cdots$ be the eigenvalues of the Neumann Laplacian $-\Delta_{N,U}$ and let  $(\Psi_j)_{j\ge 1} \subset  \mathcal{D} (\Delta_{N,U})$ be an orthonormal basis of $L_0^2(U,\mu)$ such that
%\[
%\Delta_{N,U} \Psi_j =-\nu_j \Psi_j,
%\]
%where $L_0^2(U,\mu)=\left\{ f \in L^2(U,\mu), \int_U f d\mu=0 \right\}$.
%As in the Dirichlet case, the Neumann heat kernel $p_t^{N,U}(x,y)$ admits then a  convergent spectral expansion:
%\begin{equation}\label{eq:HKexpansion-Neum}
%p_t^{N,U}(x,y)=\frac{1}{\mu(U)}+\sum_{j=1}^{+\infty} e^{-\nu_j t} \Psi_j(x) \Psi_j(y).
%\end{equation}

%From the heat kernel on diagonal estimate and Tauberian theorem, we have
%\begin{align}\label{eq:series_eigen-Neum}
%\sum_{j=1}^{+\infty} \frac{1}{\nu_j^{\alpha}} <+\infty
%\end{align}
%if and only if $\alpha>\frac{d_h}{d_w}$.
%
%The Dirichlet heat kernel $p_t^D(x,y)$ admits a uniformly convergent spectral expansion:
%\begin{equation}\label{eq:HKexpansion}
%p_t^D(x,y)=\sum_{j=1}^{+\infty} e^{-\lambda_j t} \Phi_j(x) \Phi_j(y).
%\end{equation}

%
%We also note that from \cite[Lemma 4.1(iii)]{FukushimaShima} $\mathcal{S}(K) \subset \mathcal D_0$. We consider then on $\mathcal{S}(K)$ the topology defined by the family of norms
%\[
%\| f \|_k = \| (-\Delta)^{k} f \|_{L^2(K,\mu)}, \quad k \ge 0.
%\]
%From \cite[Theorem 2]{Pietsch}, thanks to \eqref{series_eigen},  $\mathcal{S}(K)$ is a Fr\'echet nuclear space. The dual space of $\mathcal{S}(K)$  (for the latter topology) will be denoted $\mathcal{S}'(K)$.
%
We now define the fractional powers of the Laplacian along the same lines as Definition~\ref{FLaplacian}. Notice however that this definition avoids the eigenvalue $\nu_0=0$, which would create some singularities.

\begin{defn}[Neumann Fractional Laplacians]\label{FLaplacian-N} Consider the same setting as for Proposition~\ref{prop:Sepctral decomp Neumann}, and let $\alpha$ be a strictly positive parameter. For $f \in L^2(U,\mu)$  the fractional Laplacian $(-\Delta_{N,U})^{-\alpha}$ on $f$ is defined as
\[
(-\Delta_{N,U})^{-\alpha} f  =\sum_{j=1}^{+\infty} \frac{1}{\nu_j^\alpha} \Psi_j \int_U  \Psi_j(y) f(y) d\mu(y).
\]
\end{defn}

\begin{remark} Exactly as for Remark \ref{Rmk:frac Lapl D}, the fractional Laplacian $(-\Delta_{N,U})^{-\alpha}: L^2(U,\mu) \to L_0^2(U,\mu)$ is a bounded operator. More precisely, one has % one has \\
$\|(-\Delta_{N,U})^{-\alpha}\|_{ L^2(U,\mu) \to L^2(U,\mu)}\le \nu_1^{-\alpha}$ as  in Remark \ref{Rmk:frac Lapl D}.
\end{remark}

In order to discuss the kernels for the operators $(-\Delta_{N,U})^{-\alpha}$, let us introduce a space of test functions adapted to the Neumann condition context.  
  
\begin{defn}\label{def:Neumann kernel} Consider a uniform domain $U$. Let $\{\Psi_n; n\geq1\}$ be the family of eigenfunctions introduced in Proposition \ref{prop:Sepctral decomp Neumann}. We define the Neumann-Schwartz space of test functions on $U$ as
\[
\mathcal{S}_{N}(U)= \left\{ f \in L^2(U,\mu),  \text{for\ all}\  k \in \mathbb N,  \lim_{n \to +\infty} n^k \left| \int_U  \Psi_n(y) f(y) d\mu(y) \right| =0 \right\}.
\]
\end{defn}
\noindent
We can now identify a kernel for $(-\Delta_{N,U})^{-\alpha}$, similarly to Lemma \ref{Lem: kernel frac Lapl D}. The proof is similar to~\cite{BaudoinChen} and omitted. 

\begin{lem}
Let $U$ be an inner domain and $\alpha>0$. Recall that the operator $(-\Delta_{N,U})^{-\alpha}$ is introduced in Definition \ref{FLaplacian-N}. Then $(-\Delta_{N,U})^{-\alpha}$ admits a kernel $G_\alpha^{N,U}$ given by 
\begin{align}\label{kernel frac Lapl N}
G_\alpha^{N,U}(x,y)=\frac{1}{\Gamma(s)}\int_0^{+\infty} t^{\alpha-1}\left( p_t^{N,U}(x,y)-\frac{1}{\mu(U)}\right) dt, \quad x,y \in U, x\neq y,
\end{align}
where $p_t^{N,U}$ is the heat kernel in Theorem \ref{estimate_heat_neumann}. As in Lemma \ref{Lem: kernel frac Lapl D}, this means that for $f\in\mathcal{S}_N(U)$ and $y\in U$ we have
\[
  (-\Delta_{N,U})^{-\alpha} f (y)= \int_U f(x)  G^{N,U}_{\alpha}(x,y) d\mu(x) .
\]
For $f$ and $g$ in $\mathcal{S}_N(U)$, it also holds that
\[
 \int_{U} (-\Delta_{N,U})^{-\alpha} f (-\Delta_{N,U})^{-\alpha} g d\mu= \int_U \int_U f(x) g(y) G^{N,U}_{2\alpha}(x,y) d\mu(x) d\mu(y).
\]
\end{lem}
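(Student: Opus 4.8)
The plan is to follow the Dirichlet argument of \cite{BaudoinChen} essentially line by line, the only genuinely new ingredient being the zero eigenvalue $\nu_0=0$ of $-\Delta_{N,U}$, which is precisely what forces the subtraction of $\mu(U)^{-1}$ in \eqref{kernel frac Lapl N}. Write $\bar f:=\mu(U)^{-1}\int_U f\,d\mu$ for the $L^2(U,\mu)$-orthogonal projection of $f$ onto the constants (the $\nu_0$-eigenspace), and let $\{P_t^{N,U}\}_{t\ge 0}$ be the Neumann heat semigroup of Theorem \ref{estimate_heat_neumann}. Applying spectral calculus termwise to the expansion in Proposition \ref{prop:Sepctral decomp Neumann} and using $\nu_j^{-\alpha}=\Gamma(\alpha)^{-1}\int_0^{\infty} t^{\alpha-1}e^{-\nu_j t}\,dt$ for $j\ge 1$, one obtains the $L^2(U,\mu)$-valued identity
\[
(-\Delta_{N,U})^{-\alpha}f=\frac{1}{\Gamma(\alpha)}\int_0^{\infty} t^{\alpha-1}\bigl(P_t^{N,U}f-\bar f\bigr)\,dt ,
\]
valid for every $f\in L^2(U,\mu)$; the Bochner integral converges because $\|P_t^{N,U}f-\bar f\|_{L^2(U,\mu)}\le 2\|f\|_{L^2(U,\mu)}$ for all $t$ and $\le e^{-\nu_1 t}\|f\|_{L^2(U,\mu)}$ by the spectral gap $\nu_1>0$, so $t^{\alpha-1}$ times it is integrable on $(0,\infty)$ since $\alpha>0$. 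Now by definition of the heat kernel, $P_t^{N,U}f(y)-\bar f=\int_U\bigl(p_t^{N,U}(x,y)-\mu(U)^{-1}\bigr)f(x)\,d\mu(x)$; substituting this and exchanging the $t$- and $x$-integrations by Fubini gives, for $f\in\mathcal{S}_N(U)$, the pointwise formula $(-\Delta_{N,U})^{-\alpha}f(y)=\int_U f(x)\,G_\alpha^{N,U}(x,y)\,d\mu(x)$ with $G_\alpha^{N,U}$ as in \eqref{kernel frac Lapl N}.

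To justify the Fubini step and the very convergence of the integral \eqref{kernel frac Lapl N} at a fixed pair $x\neq y$, split at $t=1$. For $t\ge 1$, a standard argument using \eqref{eq:HKexpansion-Neum} and the spectral gap (e.g.\ $p_t^{N,U}(x,x)-\mu(U)^{-1}\le e^{-\nu_1(t-1)}\sup_z\bigl(p_1^{N,U}(z,z)-\mu(U)^{-1}\bigr)$ together with Cauchy--Schwarz) yields $|p_t^{N,U}(x,y)-\mu(U)^{-1}|\le Ce^{-\nu_1 t}$, so $t^{\alpha-1}$ times it is integrable on $(1,\infty)$ uniformly in $x,y$. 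For $0<t<1$, bound $|p_t^{N,U}(x,y)-\mu(U)^{-1}|\le p_t^{N,U}(x,y)+\mu(U)^{-1}$: the constant part contributes $\mu(U)^{-1}/\alpha$, while for the heat kernel part the sub-Gaussian estimate \eqref{eq:subGauss-upperN} and the change of variables $r=t/d(x,y)^{d_w}$ used in the proof of Proposition \ref{estimate G} bound $\int_0^1 t^{\alpha-1}p_t^{N,U}(x,y)\,dt$ by a Riesz-type quantity of the order of $d(x,y)^{-(d_h-\alpha d_w)}$ (resp.\ $|\ln d(x,y)|$ or a constant when $\alpha=d_h/d_w$ or $\alpha>d_h/d_w$); in all three regimes this is locally integrable in $x$, uniformly in $y$, since $\alpha>0$. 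This is exactly the Neumann counterpart of Proposition \ref{estimate G}, proved as in \cite[Proposition 2.6]{BaudoinLacaux}. Hence $G_\alpha^{N,U}(x,y)$ is finite for $x\neq y$, all the Fubini exchanges above are legitimate, and $G_\alpha^{N,U}$ is symmetric because $p_t^{N,U}$ is.

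For the quadratic identity, take $f,g\in\mathcal{S}_N(U)$. Self-adjointness of $(-\Delta_{N,U})^{-\alpha}$ on $L^2(U,\mu)$, immediate from Definition \ref{FLaplacian-N}, together with the composition rule $(-\Delta_{N,U})^{-\alpha}\circ(-\Delta_{N,U})^{-\alpha}=(-\Delta_{N,U})^{-2\alpha}$ on $L^2(U,\mu)$, gives
\[
\int_U (-\Delta_{N,U})^{-\alpha}f\,(-\Delta_{N,U})^{-\alpha}g\,d\mu=\int_U f\,(-\Delta_{N,U})^{-2\alpha}g\,d\mu ;
\]
applying the kernel formula of the first paragraph with exponent $2\alpha>0$ to $(-\Delta_{N,U})^{-2\alpha}g$, then one more Fubini and the symmetry of $G_{2\alpha}^{N,U}$, produces the right-hand side $\int_U\int_U f(x)g(y)\,G_{2\alpha}^{N,U}(x,y)\,d\mu(x)\,d\mu(y)$. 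I expect the only delicate point to be the integrability bookkeeping in the second paragraph — combining the spectral gap $\nu_1>0$ at large times with the sub-Gaussian/Riesz estimate at small times so that every interchange of sum and integral is absolutely convergent; the rapid decay of the spectral coefficients built into the definition of $\mathcal{S}_N(U)$, together with the polynomial eigenfunction bound $\|\Psi_j\|_\infty\lesssim\nu_j^{d_h/2d_w}$ that follows from the on-diagonal form of \eqref{eq:subGauss-upperN}, is what makes $\mathcal{S}_N(U)$ the natural class here. Everything else is a transcription of the corresponding computation in \cite{BaudoinChen}.
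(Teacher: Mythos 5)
Your proof is correct and follows exactly the route the paper intends: the paper omits the argument, saying it is "similar to \cite{BaudoinChen}", and your spectral-calculus/subordination derivation (with the subtraction of the constant mode coming from $\nu_0=0$, the large-time spectral-gap bound and the small-time sub-Gaussian/Riesz estimate to justify Fubini, then self-adjointness and the composition rule for the quadratic identity) is precisely that adaptation to the Neumann setting. No gaps worth flagging beyond routine bookkeeping you already acknowledge.
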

We now state some useful bounds for the fractional Laplacian kernel. We sketch the proof of the lower bound for lack of a proper reference.

%We note that from the sub-Gaussian upper estimate \eqref{eq:subGauss-upperD} {\color{red}you meant \eqref{eq:subGauss-upperN}, right?} and the spectral expansion, the integral defined $G_\alpha^{N,U}(x,y)$ is always convergent for $\alpha>0$ and $x\neq y$.

\begin{prop}\label{Control-G-N}Consider a uniform domain $U$. For $\alpha>0$, let $G_\alpha^{N,U}$ be the kernel given by~\eqref{kernel frac Lapl N}. Then the following bounds hold true:
\begin{enumerate}
\item 
If $0<\alpha<d_h/d_w$,  there exist three finite positive constants $C_1,C_2,C_3$ such that for any $x,y\in U$, $x\ne y$,
\begin{align}\label{est: Neumann G 1}
C_1 d(x,y)^{\alpha d_w-d_h} -C_2\le G^{N,U}_\alpha (x,y) \le C_3 d(x,y)^{\alpha d_w-d_h} .
\end{align}
\item For $\alpha=d_h/d_w$, there exist three finite positive constants $C_1,C_2,C_3$ such that for any $x,y\in U$ such that $x\ne y$, 
\begin{align}\label{est: Neumann G 2}
-C_1 \ln d(x,y) -C_2\le G^{N,U}_\alpha (x,y) \le C_3 \max(\left| \ln d(x,y)\right|,1).
\end{align}
\item
If $\alpha > d_h/d_w$, there exist  finite positive constants $C_1,C_2$ such that for any $x,y\in U$, 
\begin{align}\label{est: Neumann G 3}
-C_1 \le G^{N,U}_\alpha (x,y) \le C_2 .
\end{align}

\end{enumerate} 

\end{prop}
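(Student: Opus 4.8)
The plan is to take the upper bounds essentially verbatim from \cite[Proposition~2.6]{BaudoinLacaux} (equivalently, to reprove them exactly as the upper estimates in Proposition~\ref{estimate G}): using $p_t^{N,U}(x,y)-1/\mu(U)\le p_t^{N,U}(x,y)$, the part of \eqref{kernel frac Lapl N} over $(0,1)$ is dominated via the sub-Gaussian upper bound \eqref{eq:subGauss-upperN} and the change of variables $t=d(x,y)^{d_w}r$, which produces the singular factor $d(x,y)^{\alpha d_w-d_h}$ in case~(i), $|\ln d(x,y)|$ in case~(ii), and a bounded quantity in case~(iii); the part over $(1,\infty)$ contributes only a bounded term thanks to the exponential decay recorded below. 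So the real work is in the lower bounds, which are more subtle because $p_t^{N,U}(x,y)-1/\mu(U)$ is sign-indefinite (for large $t$ it is comparable to $e^{-\nu_1 t}\Psi_1(x)\Psi_1(y)$, whose sign is not controlled).

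First I would split the integral in \eqref{kernel frac Lapl N} at $t=1$ and dispose of the tail $\int_1^\infty$. The key estimate is the uniform bound
\[
\sup_{x,y\in U}\bigl|p_t^{N,U}(x,y)-1/\mu(U)\bigr|\le C\,e^{-\nu_1 t},\qquad t\ge 1,
\]
which follows from the spectral expansion \eqref{eq:HKexpansion-Neum}, Cauchy--Schwarz, and the monotonicity $\nu_j\ge\nu_1$ for $j\ge1$: indeed $\sum_{j\ge1}e^{-\nu_j t}\Psi_j(x)^2\le e^{-\nu_1(t-1)}\sum_{j\ge1}e^{-\nu_j}\Psi_j(x)^2=e^{-\nu_1(t-1)}\bigl(p_1^{N,U}(x,x)-1/\mu(U)\bigr)$, and $p_1^{N,U}(x,x)\le c_1$ by the diagonal case of \eqref{eq:subGauss-upperN}. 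Consequently $\int_1^{\infty}t^{\alpha-1}\bigl(p_t^{N,U}(x,y)-1/\mu(U)\bigr)\,dt\ge -C$ uniformly in $x,y$, a term which can be absorbed into the additive constant in all three regimes.

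Next I would treat the small-time part $\int_0^1 t^{\alpha-1}\bigl(p_t^{N,U}(x,y)-1/\mu(U)\bigr)\,dt$. The contribution of $-1/\mu(U)$ is just $-1/(\alpha\mu(U))$, again absorbed into the additive constant. For the remaining piece I would insert the sub-Gaussian lower bound \eqref{eq:loweGauss-upperN} (with $\min(1,t^{d_h/d_w})=t^{d_h/d_w}$ on $(0,1)$) and run the change of variables $t=d(x,y)^{d_w}r$, exactly as in Proposition~\ref{estimate G}. When $0<\alpha<d_h/d_w$ this gives $c\,d(x,y)^{\alpha d_w-d_h}\int_0^{1/d(x,y)^{d_w}}r^{\alpha-1-d_h/d_w}e^{-c_4 r^{-1/(d_w-1)}}\,dr$; since $U$ is bounded the upper limit exceeds $1/\diam(U)^{d_w}>0$, and the exponential makes the integrand integrable at $r=0$, so the last integral is $\ge c'>0$ and we obtain the lower bound $C_1 d(x,y)^{\alpha d_w-d_h}$. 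When $\alpha=d_h/d_w$ the same substitution produces $c\int_0^{1/d(x,y)^{d_w}}e^{-c_4 u^{-1/(d_w-1)}}\frac{du}{u}$, and restricting to $u\in\bigl((2\diam(U))^{-d_w},\,d(x,y)^{-d_w}\bigr)$, where the exponential is bounded below, gives at least $c''\bigl(\ln(2\diam(U))-\ln d(x,y)\bigr)$, i.e.\ the required lower bound of the form $-C_1\ln d(x,y)-C_2$; this is the verbatim computation of the case $\alpha=d_h/d_w$ in Proposition~\ref{estimate G}. When $\alpha>d_h/d_w$ the small-time piece is nonnegative, so combined with the tail estimate we get $G_\alpha^{N,U}(x,y)\ge-C_1$.

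The hard part is the first step: unlike the Dirichlet kernel, which is the positive integral of a positive heat kernel, here one must control the sign-indefinite quantity $p_t^{N,U}(x,y)-1/\mu(U)$ in large time, and crucially the control must be \emph{uniform} over $x,y\in U$ so that the whole tail can be swept into a single additive constant. Once that uniform exponential decay is in hand, the remaining steps are the same Tauberian change-of-variables bookkeeping already carried out for the Dirichlet kernel in Proposition~\ref{estimate G}.
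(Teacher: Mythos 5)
Your proof is correct and follows essentially the same route as the paper: split the kernel \eqref{kernel frac Lapl N} at $t=1$, absorb the large-time tail into a uniform additive constant, and handle the small-time piece with the Neumann lower heat-kernel bound \eqref{eq:loweGauss-upperN} and the same change of variables used for the Dirichlet kernel in Proposition \ref{estimate G}. The only (minor) difference is that you prove the uniform exponential decay of $p_t^{N,U}(x,y)-1/\mu(U)$ for $t\ge 1$ directly from the spectral expansion \eqref{eq:HKexpansion-Neum} via Cauchy--Schwarz and the diagonal bound at $t=1$, whereas the paper simply cites \cite[Lemma 2.3]{BaudoinLacaux} for the boundedness of the tail integral; your argument is a valid self-contained proof of that cited fact.
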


\begin{proof}
The upper estimates are proved in \cite[Proposition 2.6]{BaudoinLacaux} and we omit the details here.
Similarly as in the proof of Proposition \ref{Control-G-N}, we show the lower estimates by using the Neumann heat kernel lower bound in Theorem \ref{estimate_heat_neumann}. To this aim, we decompose the integral defining $G_\alpha^{N,U}$ as
\begin{align}\label{prop 2.30 mid 1}
G_\alpha^{N,U}(x,y)
&=\int_0^1 t^{\alpha-1}\left(p_t^{N,U}(x,y)-\frac1{\mu(U)}\right)dt+\int_1^\infty t^{\alpha-1}\left(p_t^{N,U}(x,y)-\frac1{\mu(U)}\right)dt
\\ &:=\widetilde{G}_\alpha^{N,U}(x,y)+\hat{G}_\alpha^{N,U}(x,y).
\end{align}
\begin{comment}
\begin{align}\label{prop 2.30 mid 1}
G_\alpha^{N,U}(x,y)=\widetilde{G}_\alpha^{N,U}(x,y)+\hat{G}_\alpha^{N,U}(x,y),
\end{align}
with
\[
\widetilde{G}_{\alpha}^{N,U}(x,y)=\int_0^1 t^{\alpha-1}\left(p_t(x,y)-\frac1{\mu(U)}\right)dt,
\]
and
\[
\hat{G}_\alpha^{N,U}=\int_1^\infty t^{\alpha-1}\left(p_t(x,y)-\frac1{\mu(U)}\right)dt.
\]    
\end{comment}
In addition, it is proved in  \cite[Lemma 2.3]{BaudoinLacaux} that there exists a constant $C>0$ such that 
\begin{equation}\label{eq:GN-1-infty}
\left|\hat{G}_\alpha^{N,U}(x,y)\right|<C, \quad \text{for\ all}\  x,y\in U.   
\end{equation}
Our lower bounds in \eqref{est: Neumann G 1}-\eqref{est: Neumann G 2}-\eqref{est: Neumann G 3} will thus be obtained through the analysis of $\widetilde{G}_\alpha^{N,U}$. We now separate our considerations according to the value of $\alpha$.

\noindent
\emph{Case $0<\alpha<d_h/d_w$.} \ We resort to relation \eqref{eq:loweGauss-upperN} in order to write
\begin{align}\label{case 1 mid 1}
\widetilde G_\alpha^{N,U}(x,y)
\ge c_3\int_0^1 t^{\alpha-1-\frac{d_h}{d_w}} \!\exp\biggl(\!-c_{4}\Bigl(\frac{d(x,y)^{d_w}}{t}\Bigr)^{\frac{1}{d_w-1}}\!\biggr) dt -\int_0^1 t^{\alpha-1}\frac{1}{\mu(U)}dt.
%\\ &\ge C_1d(x,y)^{\alpha d_w-d_h}-C_2.    
\end{align}
Then an elementary change of variable (along the same lines as \eqref{eq:RK-lower-less1}-\eqref{eq:RK-lower-less}), plus the fact that $\int_0^1 t^{\alpha-1}dt$ is a convergent integral, show that 
\begin{align}\label{case 1 mid 2}
\widetilde G_\alpha^{N,U}(x,y) \ge C_1d(x,y)^{\alpha d_w-d_h}-C_2.
\end{align}

\noindent
\emph{Case $\alpha= d_h/d_w$.}  Just like in the previous case, we separate the exponential term from the constant term in the expression of $\widetilde{G}^\alpha_{N,U}$ we get
% and let $R=\mathrm{diam}(U)$, then
\begin{align*}
    \widetilde G_\alpha^{N,U}(x,y)
    \ge c_3\int_0^1 t^{-1} \!\exp\biggl(\!-c_{4}\Bigl(\frac{d(x,y)^{d_w}}{t}\Bigr)^{\frac{1}{d_w-1}}\!\biggr) dt -C_2.
%    \\ 
% &\ge c\,   \int_{1/R^{d_w}}^{1/d(x,y)^{d_w}}\!\exp\biggl(\!-c_{4}\Bigl(\frac{1}{u}\Bigr)^{\frac{1}{d_w-1}}\!\biggr) \frac{du}{u}  -\int_0^1 t^{\alpha-1}\frac{1}{\mu(U)}dt
%    \\ &\ge c\,
%    \int_{1/R^{d_w}}^{1/d(x,y)^{d_w}} \frac{du}{u} -\frac{c_3}{\Gamma(s)} \,\int_0^1 t^{\alpha-1}\frac{1}{\mu(U)}dt
%    \\ &\ge -C_1\ln d(x,y)-C_2.
\end{align*}
Next perform the change of variable $u=t/d_U(x,y)^{d_w}$ and introduce the diameter $R=\mathrm{diam}(U)$, as we did in \eqref{Prop2.19 MS1}-\eqref{eq:RK-lower-ge2}. We end up with 
\begin{align*}
    \widetilde G_\alpha^{N,U}(x,y)
    %\ge c_3\int_0^1 t^{-1} \!\exp\biggl(\!-c_{4}\Bigl(\frac{d(x,y)^{d_w}}{t}\Bigr)^{\frac{1}{d_w-1}}\!\biggr) dt -C_2.
%    \\ 
\ge c\,   \int_{1/R^{d_w}}^{1/d(x,y)^{d_w}}\!\exp\biggl(\!-c_{4}\Bigl(\frac{1}{u}\Bigr)^{\frac{1}{d_w-1}}\!\biggr) \frac{du}{u}  -C_2.
%    \\ &\ge c\,
%    \int_{1/R^{d_w}}^{1/d(x,y)^{d_w}} \frac{du}{u} -\frac{c_3}{\Gamma(s)} \,\int_0^1 t^{\alpha-1}\frac{1}{\mu(U)}dt
%    \\ &\ge -C_1\ln d(x,y)-C_2.
\end{align*}
We now trivially bound the exponential term by $1$ in order to get
\begin{align}
    \widetilde G_\alpha^{N,U}(x,y)
   \ge -C_1\ln d(x,y)-C_2.\label{case 2 mid1}
\end{align}

\noindent
\emph{Case $\alpha> d_h/d_w$.} \ If $\alpha>d_h/d_w$, we simply lower bound the exponential term in \eqref{case 1 mid 1} by $0$. This allows to write
\begin{equation}\label{case 3 mid1}
\widetilde G_\alpha^{N,U}(x,y)
\ge -\frac{1}{\mu(U)} \int_0^1 t^{\alpha-1} dt=-C_2.
\end{equation}

\noindent
\emph{Conclusion.} 
Combining  \eqref{prop 2.30 mid 1}, \eqref{eq:GN-1-infty}, \eqref{case 1 mid 2}, \eqref{case 2 mid1} and \eqref{case 3 mid1}, we have achieved our claims.
\end{proof}

%Similarly as the Dirichlet boundary condition case, we  consider the following Schwartz type space of test functions
%\[
%\mathcal{S}_{N}(U)= \left\{ f \in L^2(U,\mu),  \forall k \in \mathbb N,  \lim_{n \to +\infty} n^k \left| \int_U  \Psi_n(y) f(y) d\mu(y) \right| =0 \right\}.
%\]

%The following Lemmas can be proved as in \cite{BaudoinChen}.

%\begin{lem}\label{lemma frac lapla 3}
%Let $\alpha > 0$. For $f \in \mathcal{S}_{N}(U)$ and $y \in U$,
%\[
%  (-\Delta_{N,U})^{-\alpha} f (y)= \int_U f(x)  G^{N,U}_{\alpha}(x,y) d\mu(x) .
%\]
%\end{lem}

%\begin{lem}
%Let $\alpha > 0$. For $f,g \in \mathcal{S}_{N}(U)$,
%\[
% \int_{U} (-\Delta_{N,U})^{-\alpha} f (-\Delta_{N,U})^{-\alpha} g d\mu= \int_U \int_U f(x) g(y) G^{N,U}_{2\alpha}(x,y) d\mu(x) d\mu(y).
%\]

%\end{lem}

We now state a useful bound for the Neumann heat kernel which mirrors Lemma \ref{estimate frac pt}.

\begin{lem}\label{estimate frac pt neumann}
  Let $\alpha \ge 0 $ and $0<\gamma<\nu_1 $. Recall that the operator $(-\Delta_{N,U})^{-\alpha}$ is given in Definition \ref{FLaplacian-N}. Then there exists a constant $C$ such that for every $t>0$
  \[
 \sup_{x \in U} \|(-\Delta_{N,U})^{-\alpha}p_t^{N,U}(x,\cdot)\|_{L^2(U,\mu)}  \le
 \begin{cases}
C  \max (1,t^{-\frac{d_h}{2d_w}}),\quad \alpha=0, \\
C e^{-\gamma t} t^{\alpha-\frac{d_h}{2d_w}}, \quad 0<\alpha<\frac{d_h}{2d_w},
\\
C e^{-\gamma t} (|\ln t|+1), \quad \alpha=\frac{d_h}{2d_w}, \\
C e^{-\gamma t}, \quad \alpha >\frac{d_h}{2d_w}.
 \end{cases}
  \]
  \end{lem}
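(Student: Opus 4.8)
The plan is to follow exactly the same strategy as in the proof of Lemma~\ref{estimate frac pt}, adapting it to the Neumann setting. The case $\alpha=0$ is immediate from Lemma~\ref{lem:L2bound-N}: indeed, by the semigroup property and symmetry of $p_t^{N,U}$, one has $\int_U p_t^{N,U}(x,y)^2\,d\mu(y)=p_{2t}^{N,U}(x,x)$, and the bound on this diagonal term given by Lemma~\ref{lem:L2bound-N} yields $\|p_t^{N,U}(x,\cdot)\|_{L^2(U,\mu)}\le C\max(1,t^{-d_h/2d_w})$ uniformly in $x$. For $\alpha>0$, I would first establish the analogue of the key identity \eqref{Lem2.20 MS 1}. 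Using the kernel representation \eqref{kernel frac Lapl N} and the semigroup property for $p_t^{N,U}$ one gets, for every $t>0$ and $x,y\in U$,
\[
(-\Delta_{N,U})^{-\alpha}p_t^{N,U}(x,y)=\frac{1}{\Gamma(\alpha)}\int_0^{+\infty}s^{\alpha-1}\left(p_{s+t}^{N,U}(x,y)-\frac{1}{\mu(U)}\right)ds,
\]
the subtraction of $1/\mu(U)$ arising because $(-\Delta_{N,U})^{-\alpha}$ annihilates constants and projects onto $L_0^2(U,\mu)$; one must check that the integral converges at $s=\infty$, which it does precisely thanks to the exponential decay $p_{s+t}^{N,U}(x,y)-1/\mu(U)=\sum_{j\ge1}e^{-\nu_j(s+t)}\Psi_j(x)\Psi_j(y)$ coming from \eqref{eq:HKexpansion-Neum}.

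Next I would apply Minkowski's integral inequality to pass the $L^2(U,\mu)$-norm (in the $y$ variable) inside the $s$-integral, obtaining
\[
\|(-\Delta_{N,U})^{-\alpha}p_t^{N,U}(x,\cdot)\|_{L^2(U,\mu)}\le\frac{1}{\Gamma(\alpha)}\int_0^{+\infty}s^{\alpha-1}\left\|p_{s+t}^{N,U}(x,\cdot)-\frac{1}{\mu(U)}\right\|_{L^2(U,\mu)}ds.
\]
The inner norm is then controlled by splitting: for $s+t\le 1$ one uses the small-time bound from Lemma~\ref{lem:L2bound-N}, namely $\|p_{s+t}^{N,U}(x,\cdot)\|_{L^2}\le C(s+t)^{-d_h/2d_w}$ plus the harmless bounded term $1/\mu(U)$; for $s+t\ge 1$ one uses \eqref{eq:HKexpansion-Neum} to write $p_{s+t}^{N,U}(x,\cdot)-1/\mu(U)=\sum_{j\ge1}e^{-\nu_j(s+t)}\Psi_j(x)\Psi_j(\cdot)$, so that by orthonormality $\|p_{s+t}^{N,U}(x,\cdot)-1/\mu(U)\|_{L^2}^2=\sum_{j\ge1}e^{-2\nu_j(s+t)}\Psi_j(x)^2\le e^{-2\gamma(s+t)}\sum_{j\ge1}e^{-2(\nu_j-\gamma)(s+t)}\Psi_j(x)^2\le C e^{-2\gamma(s+t)}$, where the last step uses $\gamma<\nu_1$ together with the diagonal bound $\sum_{j\ge1}e^{-2(\nu_j-\gamma)}\Psi_j(x)^2=p_2^{N,U}(x,x)-1/\mu(U)\le C$ uniformly in $x$ (valid after first shrinking so that $s+t\ge 1$). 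Combining the two regimes gives $\|p_{s+t}^{N,U}(x,\cdot)-1/\mu(U)\|_{L^2}\le C e^{-\gamma(s+t)}\max(1,(s+t)^{-d_h/2d_w})$ for all $s,t>0$.

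Substituting this into the Minkowski bound reduces the statement to the elementary scalar estimate
\[
\int_0^{+\infty}s^{\alpha-1}e^{-\gamma(s+t)}\max\!\left(1,(s+t)^{-\frac{d_h}{2d_w}}\right)ds\le
\begin{cases}
C\max(1,t^{-\frac{d_h}{2d_w}}),&\alpha=0,\\
Ce^{-\gamma t}t^{\alpha-\frac{d_h}{2d_w}},&0<\alpha<\frac{d_h}{2d_w},\\
Ce^{-\gamma t}(|\ln t|+1),&\alpha=\frac{d_h}{2d_w},\\
Ce^{-\gamma t},&\alpha>\frac{d_h}{2d_w},
\end{cases}
\]
which is handled by the same calculus considerations as at the end of the proof of Lemma~\ref{estimate frac pt}: pull out $e^{-\gamma t}$, split the $s$-integral at $s=t$ (or at $s=1$), and observe that near $s=0$ the factor $s^{\alpha-1}(s+t)^{-d_h/2d_w}\sim t^{-d_h/2d_w}s^{\alpha-1}$ is integrable when $\alpha>0$ and produces the stated power $t^{\alpha-d_h/2d_w}$ in the subcritical range, a logarithm in the critical range, and a constant when $\alpha>d_h/2d_w$ since then the worst behaviour comes from the region $s\gtrsim t$. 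The only genuine point requiring care — and the one I expect to be the main obstacle — is the $s+t\ge 1$ tail estimate: unlike the Dirichlet case, where \eqref{eq:subGauss-upperD} already carries a clean $e^{-\lambda_1 t}$ factor, here the bare Neumann heat kernel bound \eqref{eq:subGauss-upperN} gives only $O(1)$ for large times (the semigroup is conservative), so one genuinely needs to subtract the stationary mode $1/\mu(U)$ and extract the spectral decay $e^{-\gamma t}$ by hand from the eigenfunction expansion \eqref{eq:HKexpansion-Neum}, using that $\gamma<\nu_1$ is strictly below the spectral gap; this is why the statement involves an arbitrary $\gamma<\nu_1$ rather than $\nu_1$ itself.
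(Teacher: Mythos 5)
Your strategy coincides with the paper's own proof: the case $\alpha=0$ via Lemma \ref{lem:L2bound-N}, the identity $(-\Delta_{N,U})^{-\alpha}p_t^{N,U}(x,y)=\frac{1}{\Gamma(\alpha)}\int_0^{\infty}s^{\alpha-1}\bigl(p_{s+t}^{N,U}(x,y)-\mu(U)^{-1}\bigr)ds$, Minkowski's inequality, and the small-time/large-time splitting in which the expansion \eqref{eq:HKexpansion-Neum} together with the slack $\gamma<\nu_1$ produces the exponential decay — this is exactly \eqref{eq:DeltaHK-N} and the discussion following it, so no new route is taken.

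One step, however, does not hold as written. In the range $0<\alpha<\frac{d_h}{2d_w}$ your reduced scalar inequality $\int_0^{\infty}s^{\alpha-1}e^{-\gamma(s+t)}\max\bigl(1,(s+t)^{-\frac{d_h}{2d_w}}\bigr)ds\le Ce^{-\gamma t}t^{\alpha-\frac{d_h}{2d_w}}$ fails for $t\ge 1$: there the left-hand side equals $\Gamma(\alpha)\gamma^{-\alpha}e^{-\gamma t}$, while the right-hand side carries the additional factor $t^{\alpha-\frac{d_h}{2d_w}}$ with a strictly negative exponent, so no constant $C$ works uniformly in $t$. The repair is exactly the mechanism you point to in your last sentence but do not actually use: run the whole estimate with an intermediate exponent $\gamma'\in(\gamma,\nu_1)$ in the tail bound $\|p_{s+t}^{N,U}(x,\cdot)-\mu(U)^{-1}\|_{L^2(U,\mu)}\le Ce^{-\gamma'(s+t)}$, which yields $\sup_{x\in U}\|(-\Delta_{N,U})^{-\alpha}p_t^{N,U}(x,\cdot)\|_{L^2(U,\mu)}\le Ce^{-\gamma' t}\max\bigl(1,t^{\alpha-\frac{d_h}{2d_w}}\bigr)$, and then absorb the polynomial factor through $e^{-(\gamma'-\gamma)t}t^{\frac{d_h}{2d_w}-\alpha}\le C$ on $[1,\infty)$ (for $t\le 1$ the stated bound is immediate since $e^{-\gamma t}$ is bounded below there). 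Two cosmetic points: the $\alpha=0$ row of your scalar display is vacuous since $s^{\alpha-1}$ is not integrable at $0$ (that case was settled separately), and the identity $\sum_{j\ge1}e^{-2(\nu_j-\gamma)}\Psi_j(x)^2=p_2^{N,U}(x,x)-\mu(U)^{-1}$ misses a harmless factor $e^{2\gamma}$.
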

  
\begin{remark}
    It will be apparent from our proof that the parameter $\gamma<\nu_1$ is used in order to produce some convergent integrals $\int_1^\infty e^{-(\nu_1-\gamma)s}ds$. Hence our decay rate is not as sharp as the $e^{-\lambda_1 t}$ we obtained in \eqref{eq:RK-lower-ge2} for the Dirichlet case. However, Lemma \ref{estimate frac pt neumann} is only applied in Theorem \ref{existence unique neuma}, for which optimal decay rates are not necessary. 
\end{remark}

\begin{proof}
This proof is an adaption of the proof of Lemma \ref{estimate frac pt} to the Neumann context. First the case $\alpha=0$ follows from Lemma \ref{lem:L2bound-N}, so we assume $\alpha >0$. Next owing to \eqref{kernel frac Lapl N} and applying the semigroup property similarly to \eqref{Lem2.20 MS 1} we deduce for every $t>0$ 
\begin{align*}
(-\Delta_{N,U})^{-\alpha}p_t^{N,U}(x,y)&=\int_U G_\alpha^{N,U}(y,z)p_t^{N,U}(x,z) d\mu(z) \\
 &=\frac{1}{\Gamma(\alpha)}\int_0^{+\infty}\int_U s^{\alpha -1}\left(p_s^{N,U}(y,z)-\frac{1}{\mu(U)} \right)p_t^{N,U}(x,z) d\mu(z)ds \\
 &=\frac{1}{\Gamma(\alpha)}\int_0^{+\infty} s^{\alpha-1}\left(p_{s+t}^{N,U}(x,y) -\frac{1}{\mu(U)} \right)ds.
\end{align*}
We now proceed to apply Minkowski's inequality along the same lines as \eqref{eq-est-del-pt}. This yields
\begin{align}\label{eq:DeltaHK-N}
\|(-\Delta_D)^{-\alpha}p_t^{N,U}(x,\cdot)\|_{L^2(U,\mu)}  
 & \le \frac{1}{\Gamma(\alpha)}\int_0^{+\infty} s^{\alpha-1}\left\| p_{s+t}^{N,U}(x,\cdot) -\frac{1}{\mu(U)}\right\|_{L^2(U,\mu)} ds \notag
 \\
 & = \frac{1}{\Gamma(\alpha)}\int_0^{+\infty} s^{\alpha-1} \left(p_{2(s+t)}^{N,U}(x,x)-\frac{1}{\mu(U)} \right)^{1/2} ds,
\end{align}
where the equality \eqref{eq:DeltaHK-N} follows from the semigroup property and the fact that the Neumann heat semigroup is conservative.

%Then the same arguments as for \eqref{eq:DeltaHK} imply
%\begin{align}\label{eq-neu-pt-del-bd}
%    \|(-\Delta_D)^{-\alpha}p_t^{N,U}(x,\cdot)\|_{L^2(U,\mu)} \le \frac{1}{\Gamma(\alpha)} J_t^\alpha \, ,
%\end{align}
%where we have set $J_t^\alpha=\int_0^{+\infty} s^{\alpha-1} \left(p_{2(s+t)}^{N,U}(x,x)-\frac{1}{\mu(U)} \right)^{1/2} ds.$ 
%Starting from \eqref{eq-neu-pt-del-bd}, 
The proof is finished thanks to elementary arguments (like for Lemma \ref{estimate frac pt}). Let us just mention that the integral in \eqref{eq:DeltaHK-N} has to be split into two pieces, separately corresponding to integrals over $(0,1]$ and $(1,\infty)$. For the integral over $(0,1]$ one resorts to the heat kernel upper bound \eqref{eq:subGauss-upperN}. For the integral over $(1,\infty)$ we use identity \eqref{eq:HKexpansion-Neum}, which is valid for $p^{N,U}_{2(s+t)}-\mu(U)^{-1}$ and gives exponential decay. Notice however that the parameter $\gamma<\nu_1$ is introduced for this step, in order to be left with a convergent integral $\int_1^\infty e^{-(\nu_1-\gamma)s}ds$.% is estimated using the heat kernel upper bound \eqref{eq:subGauss-upperN} and the integral $\int_1^{+\infty}$ is estimated using the spectral expansion \eqref{eq:HKexpansion-Neum}. The details are left to the reader.
\end{proof}

We close this section by giving a definition of Sobolev spaces adapting Definition~\ref{sobo_def_D} to Neumann boundary conditions.
\begin{defn}[Sobolev Space $\mathcal{W}_N^{-\alpha}(U)$] \label{sobo_def_N}
Let $C_U>0$ be an arbitrary constant. For $\alpha>0$, we define the Sobolev space $\mathcal{W}_N^{-\alpha}(U)$ as the completion of $ \mathcal{S}_{N}(U)$ with respect to the inner product
\begin{align}
  \left\langle  f ,g \right\rangle_{\mathcal{W}_N^{-\alpha}(U)}
  =\int_U \int_U f(x) g(y)( G^{N,U}_{2\alpha}(x,y) +C_U ) d \mu(x) d\mu(y) \label{Sobo_norm_N}.
\end{align}
\end{defn}

\begin{remark}\label{rmk regarding C_U}
As mentioned in Definition \ref{sobo_def_N}, the value of the  constant $C_U$ is irrelevant. We note however from Proposition \ref{Control-G-N} that it can be chosen in such a way that $G^{N,U}_{2\alpha}(x,y)+C_U$ is bounded from below by a positive constant.
\end{remark}

 \begin{remark}
 It is readily checked that an alternative way to express the Sobolev norm in \eqref{Sobo_norm_N} is 
 \begin{align*}
  \left\langle  f ,g \right\rangle_{\mathcal{W}_N^{-\alpha}(U)}
=&\int_U \int_U f(x) g(y) G^{N,U}_{2\alpha}(x,y) d\mu(x) d\mu(y) +C_U \int_U f d\mu \int_U g d\mu\\
=& \int_{U} (-\Delta_{N,U})^{-\alpha} f (-\Delta_{N,U})^{-\alpha} g d\mu +C_U \int_U f d\mu \int_U g d\mu. 
\end{align*}
\end{remark}

\subsection{Examples of application}
This section is devoted to an overview of the potential applications of our general metric space setting. We will first look at intervals, then at metric graphs. We shall also mention a very typical application to fractal sets.

\subsubsection{Example 1: Compact intervals}\label{interval example}

The simplest example which fits our framework is the unit interval $U=(0,1) \subset \mathbb{R}$ (or any interval $(a,b)$). Here one starts from the metric space $X=\R$ equipped with the Euclidean distance $d(x,y)=|x-y|$. The Brownian Dirichlet form on $\mathbb R$ is
\[
\mathcal{E}(f,g)=\int_{\mathbb R} f'(x)g'(x) dx, \quad f,g \in W^{1,2} (\mathbb R).
\]
The corresponding heat kernel is known to be
\[
p_t(x,y)=\frac{1}{\sqrt{ 4\pi t}} e^{-\frac{(x-y)^2}{4t} }.
\]
Referring to our formula \eqref{eq:subGauss-upper}, this means that $d_h=1$ and $d_w=2$. Then the two boundary conditions we consider on $(0,1)$ are spelled out as follows.

\noindent
 \textit{(a) Dirichlet boundary conditions.} 
The pair $(\mathcal{E}^D, \mathcal{F}^D((0,1)))$ in \eqref{def: D Dirichlet form} is then given by
\[
\mathcal{E}^D(f,g)=\int_0^1 f'(x)g'(x) dx, \quad f,g \in W^{1,2}_0 ([0,1]),
\]
where $W^{1,2}_0 ([0,1])$ designates the usual Sobolev space of functions vanishing at both $0$ and $1$. 
Functions $f$ in the domain of the Dirichlet Laplacian $\Delta_D=\frac{d^2}{dx^2}$ satisfy the boundary condition $f(0)=f(1)=0$.
An explicit formula is available for the heat kernel $p^{D,(0,1)}$ of Theorem \ref{estimate_heat_diri},
\[
p_t^D(x,y)=\frac{1}{\sqrt{ \pi t}} \sum_{k \in \mathbb{Z}}  \left( e^{-\frac{(x-y+k)^2}{4t} }-e^{-\frac{(x+y+k)^2}{4t} } \right).
\]

\noindent
\textit{(b) Neumann boundary conditions.} 
The Neumann Dirichlet form \eqref{D form Neumann} is given by 
\[
\mathcal{E}^N(f,g)=\int_0^1 f'(x)g'(x) dx, \quad f,g \in  W^{1,2} ([0,1]),
\]
where $W^{1,2} ([0,1])$ is the usual Sobolev space on $[0,1]$.
Smooth functions $f$ in the domain of the Neumann Laplacian $\Delta_N=\frac{d^2}{dx^2}$ satisfy the boundary condition $f'(0)=f'(1)=0$.
 The Neumann heat kernel is given by
\[
p_t^N(x,y)=\frac{1}{\sqrt{ \pi t}}  \sum_{k \in \mathbb{Z}} \left( e^{-\frac{(x-y+k)^2}{4t} }+e^{-\frac{(x+y+k)^2}{4t} } \right).
\]
We note that the parabolic Anderson models with Dirichlet and Neumann boundary condition have been studied in that case by \cite{FoondunNualart}, see also \cite{MR3359595}.

\subsubsection{Example 2: Metric graphs}\label{metric graph section}

For a reference on the general theory of metric graphs, also referred to as cable systems in the literature, we refer to \cite{Pos12}, see also \cite{BK} and references therein. Intuitively, a metric graph is a finite collection of intervals (or edges), for which some of them are connected  at their extremities. More precisely, we use $\mathbf{G}$ to denote a connected metric graph, which is composed of a finite set of vertices $V$, a finite set of (internal) edges $\mathbf{E}$ and a finite set of rays $\mathbf{R}$. For each edge $e\in \mathbf{E} $, there are two endpoints $e^-$ and $e^+$ in $V$ as well as a length $r(e)>0$. Each ray has one associated endpoint $e^-$ in $V$ and the length is infinite.  For $v\in V$, define the set of adjacent edges $\mathbf{E}_v = \set{e\in \mathbf{E} \cup \mathbf{R}~|~v = e^- \text{ or } v = e^{+}}$. %We assume that $\mathbf{E}$ and $\mathbf{R}$ are finite.

 For $e\in\mathbf{E}$ let $I_e = [0,r(e)]$ and if $e\in\mathbf{R}$ then $I_e = [0,\infty)$. In this case the metric graph $\mathbf{G}$  is the set $\cup_{e\in\mathbf{E}\cup\mathbf{R}} I_e$ modulo the equivalence relation which identifies  endpoints of $I_{e_1}$ and $I_{e_2}$ if associated endpoints of $e_1$ and $e_2$ are the same vertex. 
We also define $\Phi_e: I_e\to\mathbf{G}$ to be the projection onto the equivalence classes. For example $\Phi_{e_1}(0) = \Phi_{e_2}(r(e_2))$ if $e^-_1 = e^+_2$. We may think of $I_e$ as subsets of $\mathbf{G}$ and refer to $0 \in I_e$ as $e^-$ and $r(e) \in I_e$ as $e^+$.

Now, we shall introduce some notation concerning the function spaces on $\mathbf{G}$. Define the reference measure $\mu$ on $\mathbf{G}$ to be the Lebesgue measure when restricted to each $I_e$. 
The space $L^2(\mathbf{G}) = L^2(\mathbf{G},\mu)$ is defined as 
\[
L^2(\mathbf{G}) = \lbrace f = (f_e)_{e\in \mathbf{E}\cup\mathbf{R}}\,; f_e \in L^2(I_e) \rbrace,
\]
that is, we identify $L^2(\mathbf{G})$ with $\oplus_e L^2(I_e)$.
%Functions $f\in L^2(\mathbf{G}) = L^2(\mathbf{G},\mu)$ will be denoted as vectors $f = (f_e)_{e\in \mathbf{E}\cup\mathbf{R}}$ where $f_e \in L^2(I_e)$, i.e., $L^2(\mathbf{G}) = \oplus_e L^2(I_e)$. 
Other function spaces have similar vector decompositions, perhaps with boundary conditions. For example, we shall think of  continuous functions $C(\mathbf{G})$ to be the vectors with entries in $ C(I_e)$ where, if $v$ is an element of both $I_{e_1}$ and $I_{e_2}$ then $f_{e_1}(v) = f_{e_2}(v)$. 
We also define the Sobolev space $H^1_0(\mathbf{G})$ as
\[
H^1_0(\mathbf{G})=\lbrace f = (f_e)_{e\in \mathbf{E}\cup\mathbf{R}}\,; \, f_e \in H^1(I_e) , \mbox{ with $f$ continuous at vertices}  \rbrace.
\]
%to be  the functions $f$ such that $f_e \in H^1(I_e)$, i.e., both $f_e$ and $f_e'$ are in $L^2(I_e)$, with the boundary conditions ensuring that $f$ is continuous at vertices.

 \begin{figure}[htb]
  	\centering
  	\includegraphics[height=0.2\textwidth] {{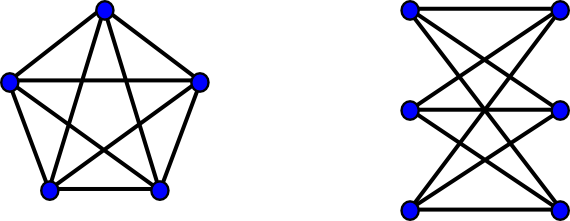}}
  	\caption{Examples of metric graphs (picture from Wikipedia)}
  \end{figure}

\noindent
When it is well defined, we consider $f(v)$ to be the vector $(f_e(v))_{e\in \mathbf{E}_v}$ of values of $f$ (or traces of $f$) at the associated endpoint of $e$. 
Let us now label a definition which will be useful for the definition of Laplacian on $\mathbf{G}$.
\begin{defn}\label{def-Uv}
    For all $v\in V$, we define $U_v:E\to \{-1,1\}$ by setting $U_v(e) = 1$ if $v = e^-$ and $U_v(e) = -1$ if $v = e^+$. In this way, the inward facing normal derivative of $f$ at $v\in V$ along an edge $e$ is $U_v(e)f_e'(v)$. 
\end{defn}

\noindent
Here and later, $f_e'(0)$ or $f'_e(r(e))$ is understood to be the trace of $f'_e$ onto the boundary of the interval $I_e$. %Thus,

With those preliminary notions in hand, we are now ready to define Dirichlet forms on $G$. One defines first  a  derivative operator $d: H^1_0(\mathbf{G}) \to L^2(\mathbf{G})$ by $(df)_e(x) = f'_e(x)$, which we will concisely denote by $df = f'$. Note that, up to a sign, $d$ depends on the orientation of the graph. Then the standard Dirichlet form on $G$ corresponds to the so-called Kirchhoff boundary condition, see \cite{BK}. It is defined by
\[
\mathcal{E}(f,g) = \int f'g' \ d\mu :=\sum_{e \in \mathbf{E}} \int_0^{r(e)}  f'_e(x)g'_e(x) dx+  \sum_{e \in \mathbf{R}} \int_0^{+\infty}  f'_e(x)g'_e(x) dx,
\]
the domain of $\mathcal{E}$ being $H^1_0(\mathbf{G})$. The corresponding operator of $\mathcal{E}$ is $\Delta f=-f''$, with domain
\[
\mathrm{Dom}(\Delta )= \set{f\in H^1_0(\mathbf{G})~|~ \forall ~e, f_e' \in H^1(I_e),\text{ and } \forall ~v\in V,~\sum_{v\in \mathbf{E}_v}U_v(e)f_e'(v)=0},
\]
where we recall that the operator $U_v$ is introduced in Definition \ref{def-Uv}. Notice that $\mathcal{E}$ and $\Delta$ do not
 depend on the orientation of the graph. Moreover, it is known, see \cite{haeseler2011heat}, that the heat kernel $p_t(x,y)$ associated with the Dirichlet form $\mathcal{E}$ satisfies the Gaussian estimates
 \begin{equation}\label{eq:subGauss-upperGr}
 c_{1}t^{-1/2}\exp\biggl(-c_{2}\Bigl(\frac{d(x,y)^{2}}{t}\Bigr)\biggr) 
 \le p_{t}(x,y)\leq c_{3}t^{-1/2}\exp\biggl(-c_{4}\Bigl(\frac{d(x,y)^{2}}{t}\Bigr)\biggr) \, ,
 \end{equation}
 for  $(x,y)\in \mathbf G\times \mathbf G $  $t\in\bigl(0,\mathrm{diam}(\mathbf G)^{1/2}]$, where the distance $d$ is the geodesic distance. Therefore as in Section \ref{interval example} and referring to formula \eqref{eq:subGauss-upper}, we have $d_h=1$ and $d_w=2$.

 %In that context, connected metric subgraphs of $\mathbf G$ are uniform domains.

In case of a metric graph $\mathbf{G}$, our main examples of  subdomains are connected bounded open subsets. Those are easily seen to be uniform domains as in Definition~\ref{def: uniform domain} (and hence they are also inner uniform domains as in Definition \ref{Def: inner uni domain}). Therefore our estimates from Sections \ref{Dirichlet boundary} and~\ref{Neumann boundary} apply to this setting.

\subsubsection{Example 3: Fractals,  Sierpi\'nski gasket}\label{section gasket}
Besides metric graphs, a large class of examples that also fit in our setting is p.c.f. fractals as presented in \cite{Barlow}, see also \cite{Kigami}. For the sake of presentation we illustrate in detail the case of the Sierpi\'nski gasket, which is one of the most popular examples of a p.c.f. fractal. 

One of the classical ways to define the Sierpi\'nski gasket is as follows:
let $V_0=\{p_1, p_2, p_3\}$ be a set of vertices of an equilateral triangle of side 1 in $\mathbb C$. Define 
\begin{align}\label{eq-SK-f}
    \mathfrak f_i(z)=\frac{z-p_i}{2}+p_i, 
\quad\text{for}\quad
i=1,2,3 .
\end{align}
The Sierpi\'nski gasket $K$ (see Figure \ref{fig-SG}) is the unique non-empty compact subset in $\mathbb C$ such that 
\begin{align}\label{eq-SK-K}
K=\bigcup_{i=1}^3 \mathfrak f_i(K).
\end{align}
\begin{figure}[htb]
\centering
    	\includegraphics[trim={60 10 180 60},height=0.20\textwidth]{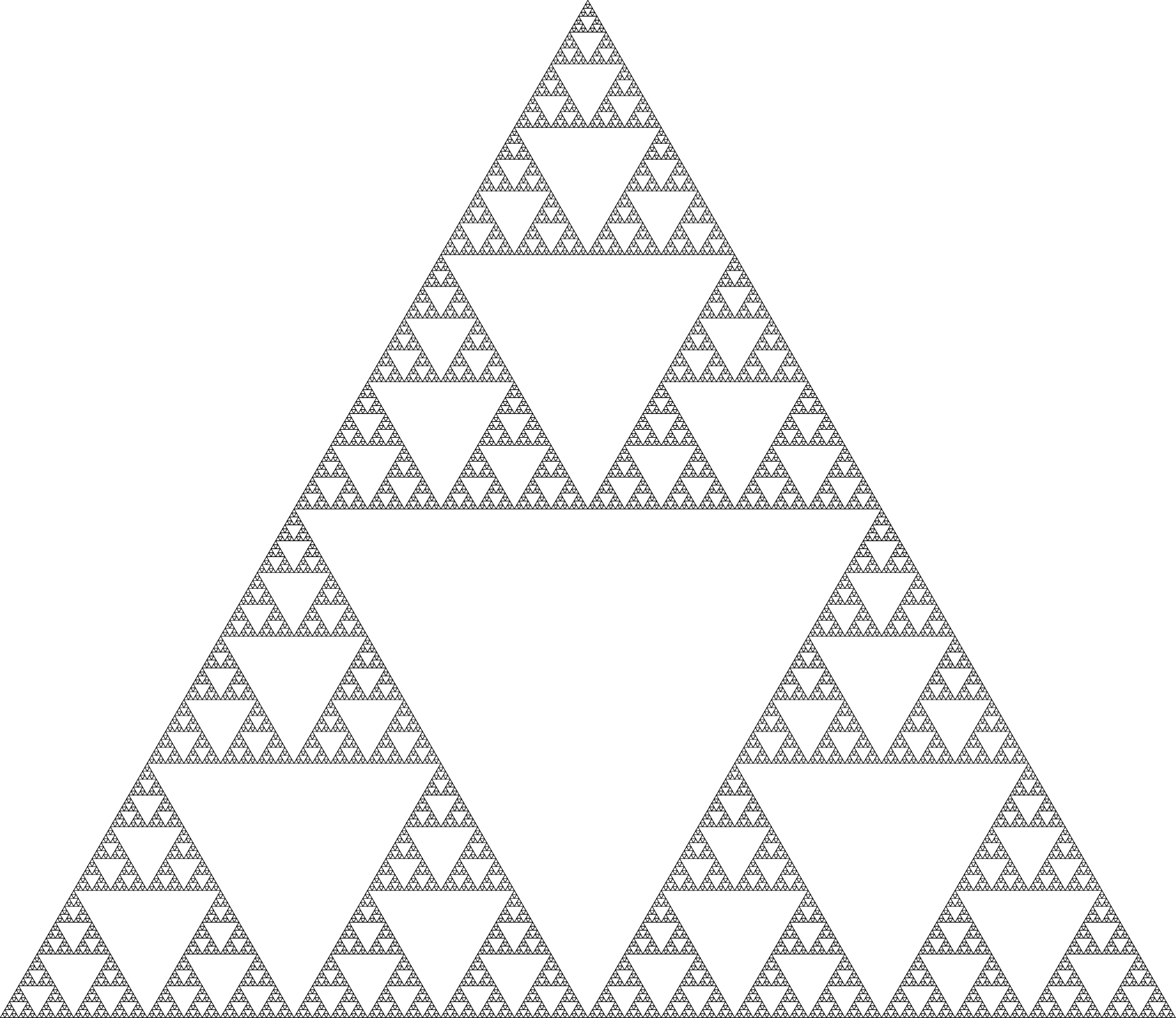}
	\caption{Sierpi\'nski gasket.} \label{fig-SG}
\end{figure}

The set $V_0$ is called the boundary of $K$, we will also denote it by $\partial K$. The Hausdorff dimension of $K$ with respect to the Euclidean metric (denoted $d(x,y)=| x - y |$) is given by $d_h=\frac{\ln 3}{\ln 2}$. A (normalized) Hausdorff measure on $K$ is given by the Borel measure $\mu$ on $K$ such that for every $i_1, \dots, i_n \in \{ 1,2,3 \} $,
\[
\mu \left(  \mathfrak f_{i_1} \circ \cdots \circ \mathfrak f_{i_n}  (K)\right)=3^{-n}.
\]
This measure $\mu$ is $d_h$-Ahlfors regular, i.e., there exist constants $c,C>0$ such that for every $x \in K$ and $r \in [0, \mathrm{diam} (K) ]$,
\begin{equation}\label{eq:Ahlfors}
c r^{d_h} \le \mu (B(x,r)) \le C r^{d_h}.
\end{equation}

%
%Recall that the Hausdorff dimension and the walk dimension of $K$, denoted by $d_h$ and $d_w$ respectively, are  $\log 3/\log 2$ and $\log 5/\log 2$.
%
It will be useful to approximate the gasket $K$ by a sequence of discrete objects. Namely, starting from the set $V_0=\{p_1, p_2, p_3\}$, we define a sequence of sets $\{V_m\}_{m\ge 0}$ inductively by
\begin{align}\label{eq-SG}
V_{m+1}=\bigcup_{i=1}^3 \mathfrak f_i(V_m).
\end{align}
Then we have a natural sequence of Sierpi\'nski gasket graphs (or pre-gaskets) $\{G_m\}_{m\ge 0}$ whose edges have length $2^{-m}$ and whose set of vertices is $V_m$, see Figure \ref{SGgraphs}. Notice that $\#V_m=\frac{3(3^m+1)}2$. We will use the notations $V_*=\cup_{m\ge 0}V_m$ and $V_*^0=\cup_{m\ge 0}V_m\setminus V_0$.

 \begin{figure}[htb]
  	\noindent
  \makebox[\textwidth]{\includegraphics[height=0.2\textwidth] {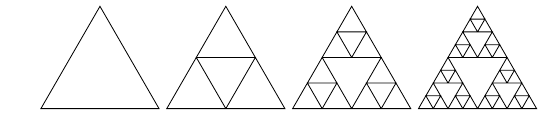}}
  	\caption{Sierpi\'nski gasket graphs $G_0$, $G_1$, $G_2$ and $G_3$}
	   \label{SGgraphs}
  \end{figure}

%\subsection{Dirichlet and Neumann Laplacians}
We are now ready to define Dirichlet forms on the metric space $X=K$ by approximation. 

\noindent
\textit{(a) Neumann boundary conditions.} 
Let $m\ge 1$. For any $f\in \mathbb R^{V_m}$, we consider the quadratic form 
\[
\mathcal E_m(f,f)= \left(\frac53\right)^m\sum_{p,q\in V_m, p\sim q} (f(p)-f(q))^2.
\]
We can then define a resistance form $(\mathcal E, \mathcal F_*)$ on $V_*$ by setting 
\[
\mathcal F_*=\{f\in \mathbb R^{V_*}, \lim_{m\to \infty} \mathcal E_m(f,f)<\infty\}
\]
and for $f\in \mathcal F_*$
\[
\mathcal E(f,f)=\lim_{m\to \infty} \mathcal E_m(f,f).
\]

Each function $f\in \mathcal F_*$ can be uniquely extended into a continuous function defined on $K$. We denote by $\mathcal F$ the set of functions with such extensions. It follows from the book of Kigami~\cite{Kigami} that $(\mathcal E, \mathcal F)$ is a local regular Dirichlet form on $L^2(K,\mu)$. The generator of the Dirichlet form $(\mathcal E, \mathcal F)$, denoted by $\Delta_N$, corresponds to the Laplacian with Neumann boundary condition with the domain  given by
\begin{equation}\label{eq-dom-N}
\mathrm{Dom}(\Delta_N)=\Big\{u\in \mathcal F;  \,
\exists\, f\in L^2(K,\mu) 
\text{ such that } \mathcal E(u,v)=\int_K fvd\mu,\, \forall  v\in \mathcal F
\Big\}.
\end{equation}
In case $u\in \mathrm{Dom}(\Delta_N)$, one sets $\Delta_N u=-f$, where $f$ is the function featuring \eqref{eq-dom-N}.
%{\color{blue}Cheng: Should $f\in\mathcal{F}$, or $f\in L^2(K,\mu)$?}
For more details, we refer to \cite[Theorem 3.4.6]{Kigami} and \cite{Barlow}.

\noindent 
\textit{(b) Dirichlet boundary conditions.} Towards a definition of the Laplace operator with Dirichlet boundary conditions on $K$, define 
\[
\mathcal F_0=\set{f\in \mathcal F, f|_{V_0}=0 }.
\]
Then by \cite[Corollary 3.4.7]{Kigami}, $(\mathcal E, \mathcal F_0)$ is a local regular Dirichlet form on $L^2(K,\mu)$. The generator of $(\mathcal E, \mathcal F_0)$, denoted by $\Delta_D$, is associated with the Dirichlet boundary condition with the domain 
\begin{equation}\label{eq-dom-D}
\mathrm{Dom}(\Delta_D)=\Big\{u\in \mathcal F_0; \,\exists\, f\in L^2(K,\mu)
 \text{ such that } \mathcal E(u,v)=\int_K fvd\mu, \,\forall v\in \mathcal F_0\Big\}.
\end{equation}
Like in the Neumann case, whenever $u\in \mathrm{Dom}(\Delta_D)$ we set $\Delta_Du=-f$.

\noindent 
\textit{(c) Laplacian from approximations.} 
The Dirichlet and Neumann Laplacians can also be realized from the discrete approximation of the gasket, for which we need some notation. First recall that $V_0=\{p_1,p_2,p_3\}$, and that $V_m$ is defined by \eqref{eq-SG}.
For any $p\in V_m$, denote by $V_{m,p}$ the collection of neighbors of $p$ in $V_m$. Then 
\[
\#V_{m,p}=\begin{cases}
4, &\text{ if }p\notin V_0,\\
2, &\text{ if }p\in V_0.
\end{cases}
\]
For any $f\in \mathbb R^{V_m}$, consider the discrete Laplacian on $V_m$ defined by
\begin{align}\label{discrete laplacian}
\Delta_m f(p)=\frac{3}{2} \, 5^m \sum_{q\in V_{m,p}} (f(q)-f(p)), \quad p \in V_m \setminus V_0.
\end{align}
The Laplace operator is now obtained by approximation in the following way.
Let $C(K)$ be the set of continuous functions on $K$. We define
\begin{equation}\label{eq-domain}
\mathcal D=\set{f\in C(K), \text{ there exists }g\in C(K) \text{ such that }\lim_{m\to \infty}\max_{p\in V_m\setminus V_0} |\Delta_m f(p)-g(p)|=0}.
\end{equation}
Then the Kigami Laplacian $\Delta$ on the Sierpi\'nski gasket $K$ is defined by 
\begin{equation}\label{eq:Laplacian}
\Delta f(p)=\lim_{m\to \infty}\Delta_m f(p), \quad f\in \mathcal D.
\end{equation}
Notice that by definition in \eqref{eq-domain}, for $f\in\mathcal{D}$ the  function $\Delta f$ is continuous  on $K$. For any $f\in \mathcal D$ and $p\in V_0$, let us also define the Neumann derivative by 
\[
(df)_p=-\lim_{m\to \infty}\left(\frac{5}{3}\right)^m\sum_{q\in V_{m,p}}(f(q)-f(p)) \, ,
\]
given that the limit converges. 
%We  will denote by $C_0(K)$ and $\mathcal D_0$ respectively the sets of  functions in $C(K)$ and $\D$ which vanish on $\partial K$.
The relation between Kigami and $L^2$ Laplacians is the content of the following theorem.
%The Neumann and Dirichlet Laplacians can be interpreted as follows.
\begin{thm}[\protect{\cite[Theorem 3.7.9]{Kigami}}]\label{Laplacian gasket}
Recall that $\mathrm{Dom}(\Delta_N)$, $\mathrm{Dom}(\Delta_D)$ and $\mathcal{D}$ are respectively defined by \eqref{eq-dom-N}, \eqref{eq-dom-D} and \eqref{eq-domain}.
\begin{enumerate}
    \item If $\mathcal D_N=\{f\in \mathcal D,  (df)_p=0 \text{ for all }p\in V_0 \}$, then $\mathcal D_N=\mathrm{Dom}(\Delta_N) \cap\mathcal D$, $\Delta_N|_{\mathcal D_N}=\Delta|_{\mathcal D_N}$ and $\Delta_N$ is the Friedrichs extension of $\Delta$ on $\mathcal D_N$.
    \item If $\mathcal D_D=\{f\in \mathcal D, f|_{V_0}=0\}$, then $\mathcal D_D=\mathrm{Dom}(\Delta_D) \cap\mathcal D$, $\Delta_D|_{\mathcal D_D}=\Delta|_{\mathcal D_D}$ and $\Delta_D$ is the Friedrichs extension of $\Delta$ on $\mathcal D_D$.
\end{enumerate}
\end{thm}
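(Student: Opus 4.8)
The plan is to reduce everything to a single Gauss--Green (integration by parts) formula on $K$, obtained by passing to the limit in a discrete summation by parts; this formula is the analytic heart, and the two domain identifications together with the Friedrichs statements then follow by bookkeeping.

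First I would prove, for every $m\ge1$ and all $u,v\in\mathbb R^{V_m}$, the discrete identity
\begin{equation}\label{eq:GG-discrete}
\mathcal E_m(u,v)=-\tfrac23\,3^{-m}\!\!\!\sum_{p\in V_m\setminus V_0}\!\!\!\Delta_m u(p)\,v(p)\;+\;\sum_{p\in V_0}(d_m u)_p\,v(p),
\end{equation}
where $(d_m u)_p:=-(5/3)^m\sum_{q\in V_{m,p}}(u(q)-u(p))$; this is a mere rearrangement of $\mathcal E_m(u,v)=(5/3)^m\sum_{p\sim q}(u(p)-u(q))(v(p)-v(q))$ as a sum over vertices, the constant $\tfrac23 3^{-m}$ arising from comparing the weight $(5/3)^m$ in $\mathcal E_m$ with the normalization $\tfrac32 5^m$ in \eqref{discrete laplacian}. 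Next, for $u\in\mathcal D$ and $v\in\mathcal F$, I would let $m\to\infty$ in \eqref{eq:GG-discrete}. Because $\Delta_m u$ converges uniformly to $\Delta u$ by definition of $\mathcal D$ and the atomic measures $\tfrac23 3^{-m}\sum_{p\in V_m\setminus V_0}\delta_p$, of total mass $1-3^{-m}$, converge weakly to the self-similar measure $\mu$, the first sum on the right converges to $\int_K(\Delta u)v\,d\mu$. A separate monotonicity argument ($m\mapsto\mathcal E_m(f|_{V_m},f|_{V_m})$ is non-decreasing for the harmonic structure on $K$) shows that $\mathcal D\subset\mathcal F$ and that the sequences $\big((d_m u)_p\big)_m$ are Cauchy for every $u\in\mathcal D$, so the normal derivative $(du)_p:=\lim_m(d_m u)_p$ is well defined; letting $m\to\infty$ in \eqref{eq:GG-discrete} then yields
\begin{equation}\label{eq:GG-gasket}
\mathcal E(u,v)=-\int_K(\Delta u)\,v\,d\mu+\sum_{p\in V_0}(du)_p\,v(p),\qquad u\in\mathcal D,\ v\in\mathcal F.
\end{equation}

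With \eqref{eq:GG-gasket} in hand the rest is routine. For (i): if $u\in\mathcal D_N$ the boundary sum in \eqref{eq:GG-gasket} vanishes, so $\mathcal E(u,v)=\int_K(-\Delta u)v\,d\mu$ for all $v\in\mathcal F$ with $-\Delta u\in C(K)\subset L^2(K,\mu)$, which is exactly the defining condition \eqref{eq-dom-N}; hence $u\in\mathrm{Dom}(\Delta_N)$ and $\Delta_N u=\Delta u$. Conversely, for $u\in\mathrm{Dom}(\Delta_N)\cap\mathcal D$ I would test $\mathcal E(u,v)=\int_K(-\Delta_N u)v\,d\mu$ against $v\in\mathcal F$ supported away from $V_0$ and compare with \eqref{eq:GG-gasket} to get $\Delta_N u=\Delta u$ $\mu$-a.e. (both continuous and agreeing on a dense open set), then deduce $\sum_{p\in V_0}(du)_p v(p)=0$ for all $v\in\mathcal F$ and conclude $(du)_p=0$ by specializing $v$ to the harmonic ``tent'' functions at $V_0$, i.e. $u\in\mathcal D_N$. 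Part (ii) is the same computation: for $u\in\mathcal D_D$ and $v\in\mathcal F_0$ the boundary term in \eqref{eq:GG-gasket} vanishes since $v|_{V_0}=0$, giving $u\in\mathrm{Dom}(\Delta_D)$ with $\Delta_D u=\Delta u$ via \eqref{eq-dom-D}, while $\mathrm{Dom}(\Delta_D)\cap\mathcal D\subset\mathcal D_D$ is automatic because $\mathrm{Dom}(\Delta_D)\subset\mathcal F_0$ forces $u|_{V_0}=0$. For the Friedrichs statements I would note that $(-\Delta,\mathcal D_N)$ is a nonnegative symmetric operator with $\langle-\Delta u,u\rangle=\mathcal E(u,u)$, so its Friedrichs extension is the self-adjoint operator generated by the closure of the form $\mathcal E$ initially defined on $\mathcal D_N$; it then suffices to check that $\mathcal D_N$ is dense in $(\mathcal F,\|\cdot\|_{\mathcal E_1})$, since then the form closure is exactly $(\mathcal E,\mathcal F)$ and its generator is $\Delta_N$. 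This density holds because finite linear combinations of Neumann eigenfunctions are $\mathcal E_1$-dense in $\mathcal F$ and each eigenfunction $\Psi$ belongs to $\mathcal D_N$: from $\Delta_N\Psi=-\nu\Psi\in C(K)$ and the regularity for the Kigami Laplacian one gets $\Psi\in\mathrm{Dom}(\Delta_N)\cap\mathcal D$, whence $\Psi\in\mathcal D_N$ by (i). The Dirichlet case is identical, with $\mathcal F_0$ --- which is $\mathcal E_1$-closed in $\mathcal F$ because point evaluations are $\mathcal E_1$-continuous on $K$ --- and Dirichlet eigenfunctions in place of their Neumann counterparts.

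I expect the main obstacle to be the derivation of \eqref{eq:GG-gasket}: the weak convergence of the discrete atomic measures to $\mu$, and especially the Cauchy property of the boundary sums $\big((d_m u)_p\big)_m$ for $u\in\mathcal D$ (which simultaneously establishes $\mathcal D\subset\mathcal F$ and legitimizes $(du)_p$), require the fine potential theory of the gasket. A secondary technical input, used in the Friedrichs step, is the regularity fact that a function in $\mathrm{Dom}(\Delta_N)$ (or $\mathrm{Dom}(\Delta_D)$) whose $L^2$-Laplacian is continuous already belongs to $\mathcal D$, i.e. that $\Delta_m f$ then converges uniformly. Both are established in \cite{Kigami}; granting them, the theorem follows as sketched.
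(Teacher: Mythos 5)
The paper offers no proof of this statement: it is quoted directly from \cite{Kigami} (Theorem 3.7.9), so the ``paper's proof'' is just that citation. Your sketch is precisely the standard argument behind that theorem --- discrete summation by parts, passage to the limit to get the Gauss--Green formula $\mathcal E(u,v)=-\int_K(\Delta u)v\,d\mu+\sum_{p\in V_0}(du)_pv(p)$, then the domain identifications and an eigenfunction-density argument for the Friedrichs statements --- and it is correct modulo the two technical inputs you explicitly defer to \cite{Kigami} (existence of the normal derivatives together with $\mathcal D\subset\mathcal F$, and the regularity fact that an $L^2$-domain element with continuous Laplacian lies in $\mathcal D$), which is consistent with how the paper itself handles the result.
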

We finish this section by specifying the applicability of Section \ref{Dirichlet boundary} and \ref{Neumann boundary} to the  Sierpi\'nski gasket context. A brief summary is as below:
\begin{enumerate}
\item The Dirichlet Laplacian $\Delta_D$ in Theorem \ref{Laplacian gasket} generates a heat semigroup $P^D_t$ as given in~\eqref{def: Dirichlet semigroup}. This semigroup admits a kernel $p_t^D$ given by Theorem \ref{estimate_heat_diri}. The upper and lower bounds~\eqref{eq:subGauss-upperD} and~\eqref{eq:subGauss-lowerD} hold true with $d_h=\frac{\ln 3}{\ln 2}\simeq 1.58$ and $d_w=\frac{\ln 5}{\ln 2}\simeq 2.32$. Notice in particular that $d_w > d_h$.
\item The Neumann Laplacian $\Delta_N$ in Theorem \ref{Laplacian gasket} generates a heat semigroup with a kernel $p_t^N$ as in Theorem \ref{estimate_heat_neumann}. The constants $d_h$ and $d_w$ are the same as for the Dirichlet case.

\item The domain $U=K\setminus V_0$ is uniform and besides it, we note that further several explicit examples  of inner uniform and uniform domains in the Sierpi\'nski gasket are discussed  in \cite{Lierl}.
\end{enumerate}
%The Neumann Laplacian $\Delta_N$ generates a strongly continuous Markov semigroup $\{P_t^N\}_{t \ge 0}$ on $L^2(K,\mu)$. This semigroup admits a continuous heat kernel $p_t^N(x,y)$, $t>0$, $x,y \in K$, with respect to the Hausdorff measure $\mu$. It is called the Neumann heat kernel on $K$ and satisfies the estimates in Theorem \ref{estimate_heat_neumann}. The parameter $d_h=\frac{\ln 3}{\ln 2}$ and the parameter $d_w=\frac{\ln 5}{\ln 2}$. Note that $d_h < d_w$. Similarly, the Dirichlet Laplacian $\Delta_D$ generates the Dirichlet heat semigroup $\{P_t^D\}_{t \ge 0}$ on $L^2(K,\mu)$ and the associated Dirichlet heat kernel is denoted by $p_t^D(x,y)$, for $t>0$ and $x,y \in K$. This heat kernel satisfies  the estimates in Theorem \ref{estimate_heat_diri}.

\section{Dirichlet parabolic Anderson model in inner uniform domains}\label{sec: Dirichlet PAM}

In this section we focus on SPDE models with Dirichlet boundary conditions. We will first describe the type of noise we are considering. Then we shall get existence and uniqueness results for the equation and give estimates for the moments of the solution. We label the following hypothesis once for all in the section.
\begin{assump}\label{asump-mms-domain}
    Let $(X, d, \mu)$ be a metric measure space satisfying Assumption \ref{hyp:dirichlet-form}. We consider an inner uniform domain $U$ as in Definition \ref{Def: inner uni domain}.
\end{assump}
\noindent
Notice that since the domain is fixed for the whole section, we will drop  the $U$ from our notation on semigroups, Dirichlet forms, etc. For instance we shall write $\Delta_D$ instead of $\Delta_{D,U}$.

%In this section, we fix once for all a bounded open set $U \subset X$ with compact closure which is inner uniform. Throughout the section we consider the Dirichlet boundary condition. Since $U$ is fixed throughout the section, we simply drop the $U$ from our notations. For instance,  the Dirichlet Laplacian is therefore simply denoted by $\Delta_D$.

\subsection{Fractional noise}
In this section we briefly describe the structure of the Gaussian noise that we consider as the driving noise for our stochastic heat equation. As the definition below shows, it will be a white noise in time with colored covariance in space.

\begin{defn}\label{def-frac-Gaus-field}
Consider a regularity parameter $\alpha>0$ and the following Hilbert space of space-time functions:
\begin{align}\label{eq-hilb}
\mathcal{H}_\alpha=L^2(\mathbb{R}_+, \mathcal{W}_D^{-\alpha}(U)),  
\end{align}
where $\mathcal{W}_D^{-\alpha}(U)$ is the Sobolev space introduced in Definition \ref{sobo_def_D}. On a complete probability space $(\Omega, \mathcal{G},\mathbf{P})$ we define a centered Gaussian family $\{W_\alpha^D(\phi); \phi\in \mathcal{H}_\alpha\}$, whose covariance is given by 
\begin{align}\label{eq-Gau-cov}
\mathbf E\left[ W_{\alpha}^D(\varphi) W_{\alpha}^D(\psi)\right]
=&
\int_{\R_+}\ \left\langle \varphi (t,\cdot) , \psi (t,\cdot)\right\rangle_{\mathcal{W}_D^{-\alpha}(U)}  dt \notag\\
=&
\int_{\R_+} \int_{U^2} \varphi (t,x)  \psi (t,y)G_{2\alpha}^{D,U}(x,y)d\mu(x)d\mu(y) dt
\, ,
\end{align}
for $\varphi$, $\psi$ in $\mathcal{H}_\alpha$. This family is called Dirichlet fractional Gaussian fields.
\end{defn} 

We are working in situations where the stochastic heat equation accommodates a space-time white noise (corresponding to $\alpha=0$ above). Let us label this definition separately. 

\begin{defn}\label{def:st white}
    On the same probability space $(\Omega, \mathcal{G},\mathbf{P})$ as in Definition \ref{def-frac-Gaus-field}, the space-time white noise $\{W_0^D(\phi); \phi\in L^2(\R_+, L^2(U))\}$ is a centered Gaussian family with covariance function
\begin{equation}
    \mathbf E\left( W_{0}^D(\varphi) W_{0}^D(\psi)\right)
=\int_{\R_+}\ \left\langle \varphi (t,\cdot) , \psi (t,\cdot)\right\rangle_{L^2(U)}  dt\ .
\end{equation}  
\end{defn}

\begin{notation}
Since the parameter $\alpha$ will be fixed for the entire section, we will abbreviate the notation $\mathcal{H}_\alpha$ in \eqref{eq-hilb} as $\mathcal{H}$.
\end{notation}

\begin{remark}\label{remk:regularity of the filed}
    Noises whose covariance functions are based on powers of the Laplacian are common in the literature. Let us mention the reference \cite{lodhia2016fractional} for noises defined in $\R^d$, as well as our studies \cite{BCHOTW, BOTW} in Heisenberg groups. Fractional noises on fractals have also been considered in \cite{BaudoinChen}. Therein the following two regimes are exhibited:
\begin{enumerate}
\item For $0\le \alpha \le \frac{d_h}{2d_w}$, the field $W^D_\alpha$ given in Definition \ref{def-frac-Gaus-field} or Definition \ref{def:st white} is a distribution in both time and space.
\item  For $\alpha > \frac{d_h}{2d_w}$, there exists a centered  continuous Gaussian field $(X_{\alpha}^D  (t,x))_{t \ge 0, x \in U}$ with covariance
\[
\mathbf{E} \left( X_{\alpha}^D  (s,y) X_{\alpha}^D  (t,x)  \right)=\min (s,t) G^D_{2\alpha} (x,y),
\]
such that for every $t >0$ and $f \in \mathcal{S}_D(U)$ we have
\begin{align}\label{density field}
W_{\alpha}^D ( 1_{[0,t]} \otimes f)= \int_U X_{\alpha}^D  (t,x) f(x) d\mu (x) \, .
\end{align}

\end{enumerate}
\end{remark}

%where $\varphi,\psi \in \mathcal{H}_0:=L^2(\mathbb{R}_+,L^2(U))$. We assume that our field  $W_{\alpha}^D$ is defined on some complete probability space $(\Omega,\mathcal G)$ and in order to simplify notations, in this section we will  simply denote $\mathcal{H}$ since $\alpha$ is fixed once for all. We note that noises with similar spatial covariance as $W_{\alpha}^D$ were studied in \cite{BaudoinChen}. In particular for $\alpha \le \frac{d_h}{2d_w}$ the field is distributional both in time and space while for $\alpha > \frac{d_h}{2d_w}$ there exists a H\"older continuous field $(X_{\alpha}^D  (x))_{x \in U}$ such that for every $t >0$ and $f \in \mathcal{S}_D(U)$
%\begin{align}\label{density field}
%W_{\alpha}^D ( 1_{[0,t]} \otimes f)= \gamma(t) \int_U X_{\alpha}^D  (x) f(x) d\mu (x)
%\end{align}
%where $(\gamma(t))_{t\ge 0}$ is a Brownian motion on $[0,+\infty)$ independent of $(X_{\alpha}^D  (x))_{x \in U}$.

 \subsection{Parabolic Anderson model and chaos expansion method}
 
 We are now ready to turn to the main object of study in this paper, that is, the stochastic heat equation \eqref{PAM intro}. Specifically, consider two parameters $\alpha, \beta\geq0$ and the noise $W^D_\alpha$ introduced in Definition \ref{def-frac-Gaus-field}. We wish to solve the following parabolic Anderson model with Dirichlet Laplacian:
\begin{equation}\label{eq:pam-D}
\partial_{t} u(t,x) = \Delta_D u(t,x) + \beta u(t,x) \, \dot{W}_\alpha^D(t,x).
\end{equation}
In this section we focus on a proper definition of equation \eqref{eq:pam-D}. Since most of our considerations here are standard, we might skip some details and refer to classical papers on the topic.

%The product $u(t,x) \, \dot{W}_\alpha^D(t,x)$ here is understood as a Wick product. 
Systems like \eqref{eq:pam-D} are usually solved in a mild form. That is, one rewrites our equation of interest  as
\begin{equation}\label{eq:PAM-mild form-Diri}
    u(t,x)=J_0(t,x)+\beta I(t,x),
\end{equation}
where $J_0(t,x)=P_t^Du_0(x)=\int_U p_t^D(x,y)u(0,y)d\mu(y)$ is the solution to the homogeneous heat equation and $I(t,x)$ is the stochastic integral given by
\begin{equation}
    I(t,x)=\int_{[0,t]\times U}p_{t-s}^D(x,y)u(s,y)W_{\alpha}^D(ds,dy).
\end{equation}
Notice that the stochastic integral $I(t,x)$ above should be understood in the It\^{o} sense.  We refer to \cite{Hu15,Walsh} for a proper definition of this integral, as well as \cite{BOTW} in a more geometric context. A solution to \eqref{eq:PAM-mild form-Diri} is then defined as below.

\begin{defn}\label{def of Ito sol}
A process $u=\{u(t,x); (t,x)\in\mathbb{R}_{+}\times U \}$ is called a random field solution of~\eqref{eq:pam-D} in the It\^o-Skorohod sense if the following conditions are met:
\begin{enumerate}
\item $u$ is adapted;

\item $u$ is jointly measurable with respect to $\mathcal{B}(\mathbb{R}_{+}\times U )\otimes \mathcal{G}$; 
\item $\mathbf{E}(I(t,x)^2)<\infty$ for all $(t,x)\in\mathbb{R}_{+}\times U$;
\item The function $(t,x)\to I(t,x)$ is continuous in $L^2(\Omega)$;
\item $u$ satisfies \eqref{eq:PAM-mild form-Diri} almost surely for all $(t,x)\in\mathbb{R}_{+}\times U$.
\end{enumerate}
\end{defn}

Observe that equation \eqref{eq:PAM-mild form-Diri} can classically be solved by Picard iterations schemes combined with standard It\^{o} type estimates for moments of stochastic integrals. However we shall focus here on a chaos expansion methodology, which leads to sharper moment estimates. 
We briefly recall the setting for chaos expansion now. The reader is sent to \cite{BCHOTW,Hu15} for more detailed accounts. Let us first denote by $\mathcal H_k$ the space 
\begin{align}\label{def:H_k}
\mathcal{H}_k=\mathrm{Span}\left(\left\{H_k(W^D_\alpha(\phi));\phi \in \mathcal H, \|\phi\|_{\mathcal H}=1\right\}\right),
\end{align} 
where Span means closure of the linear span in $L^2(\Omega)$, $H_k$ designates the $k$-th Hermite polynomial, $W^D_\alpha$ is the noise in Definition \ref{def-frac-Gaus-field} and $\mathcal{H}$ is the Hilbert space from \eqref{eq-hilb}. The space $\mathcal{H}_k$ is called the $k$-th Wiener chaos. There exists a linear isometry $I_k$ between $\mathcal H^{\otimes k}$ (with modified norm $\sqrt{k!}\|\cdot\|_{\mathcal H^{\otimes k}}$) and $\mathcal H_k$ given by
\[
I_k\big(\phi^{\otimes k}\big)
=
k! \, H_k(W^D_\alpha(\phi)), \quad \text{for any } \phi\in \mathcal H \text{ with }\|\phi\|_{\mathcal H}=1.
\]
Let $u=\{u(t,x); t\ge 0, x\in U\}$ be a random field such that $\mathbf E[u(t,x)^2]<\infty$ for all $t\ge 0$ and $x\in U$.
It is established in \cite{Nualart} that $u(t,x)$ has a Wiener chaos expansion of the form
\begin{equation}\label{eq:chaos expansion}
    u(t,x)=\mathbf E[u(t,x)]+\sum_{k=1}^{\infty} I_k(f_k(\cdot, t,x)),
\end{equation}
%{\color{red}How to see that  $\mathbf{E}(u(t,x))=J_0(t,x)=P_t^Du_0(x)=\int_U p_t^D(x,y)u(0,y)d\mu(y)$}{\color{olive} Apply expectation to \eqref{eq:PAM-mild form-Diri} and use that a stochastic integral has zero expectation}
where $f_k$'s are symmetric elements of $\mathcal H^{\otimes k}$ uniquely determined by $u$ and where the series converges in $L^2(\Omega)$. When $u$ is the solution to equation \eqref{eq:pam-D} according to Definition \ref{def of Ito sol}, by a standard iteration procedure (borrowed from \cite{HuNualart,Hu15}) $u$ admits a chaos expansion as in \eqref{eq:chaos expansion} with $f_k$ given by 
\begin{equation}\label{eq:fk}
f_k(s_1,y_1,\ldots,s_k,y_k, t,x)=\frac{\beta^k}{k!} p^D_{t-s_{\sigma(k)}}(x, y_{\sigma(k)})\cdots  p^D_{s_{\sigma(2)}-s_{\sigma(1)}}(y_{\sigma(2)}, y_{\sigma(1)})  P^D_{s_{\sigma(1)}}u_0(y_{\sigma(1)}),
\end{equation}
where $\sigma$ denotes the permutation of $\{1,2,\cdots, k\}$ such that $0<s_{\sigma(1)}<\cdots <s_{\sigma(k)}<t$. In this setting we then have the following result.
\begin{prop}\label{uni-SHE}
A process $\mathcal{U}=\{u(t,x); (t,x)\in \mathbb{R}_+\times U\}$ solves equation \eqref{eq:PAM-mild form-Diri} in the sense of Definition \ref{def of Ito sol} if and only if for every $(t,x)$ the random variable $u(t,x)$ admits a chaos decomposition \eqref{eq:chaos expansion}-\eqref{eq:fk}, where the family $\{f_k(\cdot, t,x); k\geq1\}$ satisfies 
\begin{equation}\label{finite chaos}
    \sum_{k=1}^{\infty} k! \|f_k(\cdot, t,x)\|_{\mathcal H^{\otimes k}}^2<\infty.
\end{equation}
\end{prop}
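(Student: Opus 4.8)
The plan is to prove the two implications of Proposition~\ref{uni-SHE} separately, following the standard Wiener chaos methodology adapted to our metric measure space setting. First I would show that if $\mathcal{U}$ is a random field solution in the sense of Definition~\ref{def of Ito sol}, then $u(t,x)$ has a chaos expansion with the stated kernels and summability property. Since by hypothesis $\mathbf{E}[u(t,x)^2]<\infty$ for each $(t,x)$, the abstract result of \cite{Nualart} gives a chaos decomposition \eqref{eq:chaos expansion} with symmetric kernels $f_k(\cdot,t,x)\in\mathcal{H}^{\otimes k}$; the content of \eqref{finite chaos} is then exactly the statement that this decomposition converges in $L^2(\Omega)$, i.e. $\mathbf{E}[u(t,x)^2]=\mathbf{E}[u(t,x)]^2+\sum_k k!\|f_k(\cdot,t,x)\|_{\mathcal{H}^{\otimes k}}^2<\infty$, which is automatic. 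The real work is to identify the kernels with the explicit formula \eqref{eq:fk}. To this end I would plug the chaos expansion of $u$ into the mild equation \eqref{eq:PAM-mild form-Diri}, use the relation between the It\^o--Skorohod integral against $W^D_\alpha$ and the chaos expansion (namely, if $v(s,y)=\sum_k I_k(g_k(\cdot,s,y))$ then $\int_{[0,t]\times U} p^D_{t-s}(x,y)v(s,y)\,W^D_\alpha(ds,dy)=\sum_k I_{k+1}(\widetilde{p^D_{t-\cdot}(x,\cdot)\otimes g_k})$ with appropriate symmetrization), and then match chaos components of equal order on both sides. This yields a recursion $f_{k}(\cdot,t,x)=\beta\,\widetilde{p^D_{t-\cdot}(x,\cdot)\otimes f_{k-1}(\cdot,\cdot,\cdot)}$ with $f_0(t,x)=P^D_t u_0(x)$, and unwinding the recursion gives \eqref{eq:fk} after symmetrization (the permutation $\sigma$ in \eqref{eq:fk} is precisely the bookkeeping of the symmetrization over the time variables).

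For the converse, suppose $u(t,x)$ is defined for each $(t,x)$ by the chaos series \eqref{eq:chaos expansion} with $f_k$ given by \eqref{eq:fk} and that \eqref{finite chaos} holds. The summability \eqref{finite chaos} guarantees that the series converges in $L^2(\Omega)$, so $u(t,x)$ is a well-defined element of $L^2(\Omega)$ with $\mathbf{E}[u(t,x)^2]<\infty$; this takes care of condition (iii) of Definition~\ref{def of Ito sol}. Adaptedness (i) is built into the structure of the kernels \eqref{eq:fk}, which are supported on $\{s_1,\dots,s_k<t\}$. Joint measurability (ii) and the $L^2(\Omega)$-continuity (iv) of $(t,x)\mapsto I(t,x)$ follow from continuity of the heat kernel $p^D_t(x,y)$ (Theorem~\ref{estimate_heat_diri}) together with the sub-Gaussian bounds \eqref{eq:subGauss-upperD} and dominated convergence applied term by term in the chaos expansion, using \eqref{finite chaos} to control the tail uniformly. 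Finally, to verify (v) — that $u$ satisfies \eqref{eq:PAM-mild form-Diri} almost surely — I would run the chaos-matching argument of the first part in reverse: compute the chaos expansion of the right-hand side $J_0(t,x)+\beta I(t,x)$ using the kernels \eqref{eq:fk}, observe that the recursion for the $f_k$'s is exactly reproduced, and conclude that the right-hand side has the same chaos expansion as $u(t,x)$, hence equals it in $L^2(\Omega)$ and therefore almost surely.

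The main obstacle I anticipate is the bookkeeping in the chaos-matching step: one must be careful about the symmetrization of the kernels and the precise correspondence between the iterated stochastic integral in the mild formulation and the multiple Wiener integral $I_k$, including the combinatorial factors $k!$ and the role of the time-ordering permutation $\sigma$. This is a well-trodden path in the Euclidean literature (e.g. \cite{Hu15,HuNualart,BOTW}), and nothing in the metric measure space structure changes it — the only inputs needed are that $p^D_t$ is a genuine symmetric heat kernel satisfying the semigroup property and that the noise $W^D_\alpha$ has the covariance \eqref{eq-Gau-cov} — so I would present this step by invoking those references and indicating the adaptation rather than reproducing the full computation. A secondary point requiring attention is that when $\alpha=0$ the relevant Hilbert space is $L^2(\mathbb{R}_+,L^2(U))$ rather than $\mathcal{H}_\alpha$, so the statement and its proof should be read with the convention $\mathcal{W}_D^0(U)=L^2(U)$; the argument is otherwise identical.
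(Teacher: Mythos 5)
Your proposal is correct and follows exactly the route the paper relies on: the paper gives no detailed proof of Proposition \ref{uni-SHE}, but rather invokes the ``standard iteration procedure'' of \cite{HuNualart,Hu15}, i.e.\ the same chaos-matching/Picard recursion identifying the kernels \eqref{eq:fk} and the observation that \eqref{finite chaos} is equivalent to $L^2(\Omega)$-convergence of the expansion. Your handling of both implications (including the $\alpha=0$ convention $\mathcal{W}_D^0(U)=L^2(U)$) is consistent with what the cited references provide, so no gap to report.
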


\noindent
The next section is devoted to proving that \eqref{finite chaos} holds true for all $\alpha\geq0$, which will lead to existence and uniqueness for \eqref{eq:PAM-mild form-Diri}. 

\begin{remark}
In the range $\alpha >{d_h}/{2d_w}$, one can also resort to Feynman-Kac representations for the solutions. More specifically, if $\alpha>d_h/2d_w$ we have seen in Remark \ref{remk:regularity of the filed}  that the field $W^D_\alpha$ is  H\"older continuous in the space variable (see \eqref{density field}).
Hence under mild conditions on the initial condition $u_0$, the unique solution of \eqref{eq:pam-D} is given by the following Feynman-Kac formula with Wick exponential:
\begin{equation}\label{FK regular}
u(t,x)
=\mathbf{E}^B \left( u_0(B^x_t) \exp \left( \beta \int_0^t \dot{W}_\alpha^D(s,B^x_{t-s})ds -\frac{\beta^2}{2} \int_0^t G^D_{2\alpha} (B^x_s,B^x_s) ds \right) {1}_{(t < T)}\right) \, ,
\end{equation}
where $B$ is Brownian motion on $U$ independent from $W^D_\alpha$ started from $x$ and $T$ is its hitting time of the complement of $U$ in $X$. In \eqref{FK regular}, also note that $\mathbf{E}^B$ denotes the expectation computed with respect to $B$ only (later $\mathbf{E}^W$ will designate the expectation with respect to $W$ only). 
Moreover, thanks to \eqref{density field} the integral $\int_0^t \dot{W}_\alpha^D(s,B^x_{t-s})ds$ is understood as a Wiener integral  which is well defined for a fixed realization of $B^x$. The proof of formula \eqref{FK regular} follows from an approximation procedure explained in \cite[Section 3.2]{Hu15}, so we omit details for conciseness (see also the proof of Theorem \ref{th: Feynman-Kac moments}).

\end{remark}

%In the range $0\le \alpha < \frac{d_h}{2d_w}$ the field $W_\alpha^D$ is distributional in the space variable and takes values in a Sobolev space of negative order, see \cite{BaudoinChen}. The threshold case $\alpha = \frac{d_h}{2d_w}$ is special and corresponds to a log-correlated field which is still distributional. The Feynman-Kac formula with Wick exponential is only valid for $\alpha <\frac{d_h}{2d_w} $ and in order to solve the equation for every $\alpha \ge 0$, we classically use its mild formulation.

\subsection{Existence, uniqueness of solutions and  \texorpdfstring{$L^2$}{L2} estimates}

As we have seen in Proposition \ref{uni-SHE},  existence and uniqueness for equation \eqref{eq:PAM-mild form-Diri} boils down to checking relation \eqref{finite chaos}. We start this section by getting some upper bounds in that direction.

%Our first goal in this  section is  to prove that if $u_0 \in L^\infty(U,\mu)$ then \eqref{finite chaos} holds for every $\alpha \ge 0$. We first have  the following estimates.

\begin{lem}\label{lem:small t}
Consider $\alpha\ge 0$ and the noise $W^D_\alpha$ in Definition \ref{def-frac-Gaus-field} and \ref{def:st white}. The space $\mathcal{H}$ is given by \eqref{eq-hilb}. For any $\beta>0$ and $k\geq1$ the function $f_k$  is defined by \eqref{eq:fk}. Consider an additional parameter $\rho>0$, whose exact value will be calibrated later. Then the following holds true: %and let  $u_0 \in L^\infty(U,\mu)$. Let  $\beta\ge 0$ and  $k \ge 1$. 
 
\begin{enumerate}[label={\upshape(\roman*)}]
\item If $0\le \alpha<\frac{d_h}{2d_w}$, then for $t \ge 0$, $x\in U$ we have,
\begin{align}\label{eq-fk-est1}
    \|f_k(\cdot, t,x)\|_{\mathcal H^{\otimes k}}^2\le C^k e^{-2\lambda_1 t} \frac{\beta^{2k}}{k!} \|u_0\|_{\infty}^2 \left(\frac{1}{\rho}+ \frac{1}{\rho^{1+2\alpha -\frac{d_h}{2d_w}}} \right)^k e^{\rho t}.
\end{align}

\item  If $\alpha=\frac{d_h}{2d_w}$, then for $t \ge 0$, $x\in U$  it holds that
\begin{align}\label{eq-fk-est2}
\|f_k(\cdot, t,x)\|_{\mathcal H^{\otimes k}}^2\le  C^k e^{-2\lambda_1 t} \frac{\beta^{2k}}{k!} \|u_0\|_{\infty}^2 \frac{\ln \left(\max (3/2,\rho) \right)^{2k}} {\rho^k} e^{ \rho t}.
\end{align}

\item  If $\alpha>\frac{d_h}{2d_w}$, then for $t \ge 0$, $x\in U$ and $\rho >0$,
\begin{align}\label{eq-fk-est3}
\|f_k(\cdot, t,x)\|_{\mathcal H^{\otimes k}}^2\le  C^k e^{-2\lambda_1 t} \frac{\beta^{2k}}{k!} \|u_0\|_{\infty}^2 \frac{e^{\rho t}}{\rho^k}.
\end{align}

%\item  If $\alpha=\frac{d_h}{2d_w}$, then
\end{enumerate}
The constants $C$ in each estimate above depend on $\alpha$,  $d_h$ and $d_w$.
\end{lem}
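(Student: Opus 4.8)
Since $f_k$ is symmetric and, on the chamber $0<s_1<\dots<s_k<t$, it equals $\tfrac{\beta^k}{k!}$ times the chain
\[
G_k(\mathbf s)(\mathbf y):=p^D_{t-s_k}(x,y_k)\,p^D_{s_k-s_{k-1}}(y_k,y_{k-1})\cdots p^D_{s_2-s_1}(y_2,y_1)\,(P^D_{s_1}u_0)(y_1),
\]
and since the inner product on $\mathcal H^{\otimes k}=L^2\bigl(\mathbb R_+^k,(\mathcal W_D^{-\alpha}(U))^{\otimes k}\bigr)$ is invariant under permuting the pairs $(s_i,y_i)$, I would first write
\[
\|f_k(\cdot,t,x)\|_{\mathcal H^{\otimes k}}^2=\frac{\beta^{2k}}{k!}\int_{0<s_1<\dots<s_k<t}\bigl\|G_k(\mathbf s)\bigr\|_{(\mathcal W_D^{-\alpha}(U))^{\otimes k}}^2\,d\mathbf s .
\]
Setting $\tau_0=s_1$, $\tau_i=s_{i+1}-s_i$ for $1\le i\le k-1$ and $\tau_k=t-s_k$, one has $\tau_i>0$ and $\sum_{i=0}^k\tau_i=t$, which will be used to produce the global factor $e^{-2\lambda_1 t}$.

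\textbf{The ladder estimate.} Bounding $|P^D_{s_1}u_0|\le\|P^D_{\tau_0}u_0\|_\infty$ and using that $p^D_\bullet$ and $G^D_{2\alpha}$ are nonnegative kernels one factors $\|G_k(\mathbf s)\|^2_{(\mathcal W_D^{-\alpha})^{\otimes k}}\le\|P^D_{\tau_0}u_0\|_\infty^2\,\|\widetilde G_k\|^2_{(\mathcal W_D^{-\alpha})^{\otimes k}}$, where $\widetilde G_k$ is the pure heat–kernel chain. The heart of the argument is then the bound
\[
\|\widetilde G_k\|_{(\mathcal W_D^{-\alpha})^{\otimes k}}^2\ \le\ \prod_{i=1}^k\ \sup_{z\in U}\bigl\|(-\Delta_D)^{-\alpha}p^D_{\tau_i}(z,\cdot)\bigr\|_{L^2(U)}^2 ,
\]
which I would prove by induction on $k$. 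Writing the left-hand side as $\int_{U^{2k}}\widetilde G_k(\mathbf y)\prod_i G^D_{2\alpha}(y_i,y_i')\,\widetilde G_k(\mathbf y')\,d\mu^{\otimes k}d\mu^{\otimes k}$ and integrating out the innermost pair $y_1,y_1'$, the semigroup property converts the contribution of the rung $G^D_{2\alpha}(y_1,y_1')$ together with $p^D_{\tau_1}(y_2,y_1)$ and $p^D_{\tau_1}(y_2',y_1')$ into the new rung $(-\Delta_D)^{-2\alpha}p^D_{2\tau_1}(y_2,y_2')$, which is again a nonnegative, positive-definite kernel. Hence the combined ``rung'' kernel $M_j$ produced after peeling off $j$ variables is nonnegative and positive-definite (Schur products and conjugation by $P^D_{\tau_j}$ preserve positive-definiteness), so $M_j(y,y')\le\sup_z M_j(z,z)$ by Cauchy–Schwarz, and $\sup_z M_j(z,z)\le\bigl(\sup_{y,y'}M_{j-1}(y,y')\bigr)\sup_z(-\Delta_D)^{-2\alpha}p^D_{2\tau_j}(z,z)$; combined with the identity $(-\Delta_D)^{-2\alpha}p^D_{2\tau}(z,z)=\|(-\Delta_D)^{-\alpha}p^D_\tau(z,\cdot)\|_{L^2(U)}^2$, iterating down to $M_0\equiv 1$ gives the claim. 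Plugging in the three estimates of Lemma \ref{estimate frac pt} (for $\alpha=0$ using Corollary \ref{lem:L2bound} instead), together with Corollary \ref{bound_pt_diri_ini} for the $u_0$-factor and $e^{-2\lambda_1(\tau_0+\cdots+\tau_k)}=e^{-2\lambda_1 t}$, yields $\|G_k(\mathbf s)\|^2_{(\mathcal W_D^{-\alpha})^{\otimes k}}\le C^k e^{-2\lambda_1 t}\|u_0\|_\infty^2\prod_{i=1}^k g_\alpha(\tau_i)$, where $g_\alpha$ is the square of the relevant right-hand side in Lemma \ref{estimate frac pt}.

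\textbf{Integrating over the simplex.} Changing variables to $(\tau_1,\dots,\tau_k)\in\{\tau_i>0,\ \sum_{i=1}^k\tau_i<t\}$ (Jacobian $1$) and using $\mathbf 1_{\{\sum_i\tau_i<t\}}\le e^{\rho(t-\sum_i\tau_i)}$ for $\rho>0$ decouples the simplex integral:
\[
\int_{0<s_1<\dots<s_k<t}\prod_{i=1}^k g_\alpha(\tau_i)\,d\mathbf s\ \le\ e^{\rho t}\Bigl(\int_0^\infty e^{-\rho\tau}g_\alpha(\tau)\,d\tau\Bigr)^{\!k}.
\]
Because $d_h<d_w$, every power of $\tau$ entering $g_\alpha$ exceeds $-1$, so the one-variable integral is finite; evaluating it through $\int_0^\infty\tau^{\gamma}e^{-\rho\tau}d\tau=\Gamma(1+\gamma)\rho^{-1-\gamma}$ (and an elementary logarithmic computation in the critical case $\alpha=d_h/2d_w$) produces exactly the $\rho$-dependent factors in \eqref{eq-fk-est1}–\eqref{eq-fk-est3}; collecting all constants into $C^k$ completes the proof.

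\textbf{Main difficulty.} The only genuine obstacle is the ladder estimate: each interior heat kernel couples two integration variables, each of which carries a fractional Laplacian, and one has to extract a clean product over $i$ without destroying the sharp short-time behavior of $\|(-\Delta_D)^{-\alpha}p^D_\tau(z,\cdot)\|_{L^2}$. The positive-definiteness/diagonal-domination mechanism, made possible by the semigroup identity $(-\Delta_D)^{-2\alpha}p^D_{2\tau}(z,z)=\|(-\Delta_D)^{-\alpha}p^D_\tau(z,\cdot)\|_{L^2}^2$, is precisely what makes the peeling go through; everything else is bookkeeping and one-dimensional calculus.
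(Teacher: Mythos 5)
Your argument is correct and lands on exactly the same analytic core as the paper, but it extracts the product structure differently. The paper peels off the \emph{latest} time variable and runs an induction on the chaos order: with $N_k(t)=e^{2\lambda_1 t}\sup_{x}M_k(t,x)$ it derives the renewal inequality \eqref{recursive1}, and then gets $N_k(t)\le e^{\rho t}\hat{\Psi}(\rho)^k$ through the weighted supremum $\mathcal N_k(\rho)=\sup_{t\ge0}e^{-\rho t}N_k(t)$ in \eqref{f1}--\eqref{f2}. You instead peel from the \emph{earliest} variable, prove a pointwise ladder bound $\|\widetilde G_k\|^2_{(\mathcal{W}_D^{-\alpha})^{\otimes k}}\le\prod_i\sup_z\|(-\Delta_D)^{-\alpha}p^D_{\tau_i}(z,\cdot)\|^2_{L^2}$, and then decouple the simplex with $\mathbf 1_{\{\sum_i\tau_i<t\}}\le e^{\rho(t-\sum_i\tau_i)}$; the resulting one-variable Laplace integrals are exactly the paper's \eqref{eq-Psi-hat}, so both routes produce $e^{\rho t}\hat\Psi(\rho)^k$ with the same $\Psi$ from Lemma \ref{estimate frac pt} (and Corollary \ref{lem:L2bound} for $\alpha=0$). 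What your route buys is a bound that is pointwise in the time simplex, at the price of the positive-definiteness bookkeeping; note that this can be streamlined: after integrating out $(y_1,y_1'),\dots,(y_{j-1},y_{j-1}')$ the rung $M_{j-1}(y,y')$ is manifestly a Gram kernel, namely the $(\mathcal{W}_D^{-\alpha})^{\otimes(j-1)}$-inner product of the two partial heat-kernel chains ending at $y$ and $y'$, so the diagonal domination $M_{j-1}(y,y')\le\sup_z M_{j-1}(z,z)$ follows from Cauchy--Schwarz without invoking Schur products of integral kernels (which would otherwise need a short finite-point-to-integral positive-definiteness argument). Interestingly, the paper needs the very same step implicitly, since peeling in \eqref{eq-Mk+1} really produces two distinct end points $y_{k+1},y'_{k+1}$ before the supremum defining $N_k$ is taken.

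Two small bookkeeping points. First, your $g_\alpha$ should be the square of the right-hand side of Lemma \ref{estimate frac pt} \emph{with the factor $e^{-\lambda_1\tau}$ stripped off}: those exponentials, together with the one from Corollary \ref{bound_pt_diri_ini} for the $u_0$-factor, are precisely what assemble into $e^{-2\lambda_1 t}$ via $\sum_{i=0}^k\tau_i=t$; keeping them inside $g_\alpha$ as written would count the decay twice (harmless only if corrected in the final display). Second, in case (i) your computation yields the factor $\rho^{-1}+\Gamma(1+2\alpha-\tfrac{d_h}{d_w})\rho^{-(1+2\alpha-d_h/d_w)}$, which agrees with \eqref{eq-Psi-hat} and with the function $F$ used in the proof of Theorem \ref{existence PAM diri}; the exponent $1+2\alpha-\tfrac{d_h}{2d_w}$ appearing in the displayed statement \eqref{eq-fk-est1} is a typo of the paper, so the claim that your factors match it ``exactly'' should be read with that correction.
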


\begin{proof}We will perform most of our computations for the case $\alpha>0$. The case $\alpha=0$ results from simpler considerations and is left to the reader for the sake of conciseness. We now divide the proof in several steps.

\noindent{\it Step 1: Reduction to an integral recursion.}  By symmetry, in order to get \eqref{eq-fk-est1}-\eqref{eq-fk-est3}  we only need to evaluate the $L^2$-norm of $f_k(\cdot,t,x)$ on a particular time simplex $[0,t]^k_<:=\{(s_1,\dots, s_k): 0<s_1<\cdots<s_k<t\}$. For $(s_1,\dots, s_k)\in [0,t]^k_<$ we introduce the following notation:
\begin{align}\label{def:g_k}
g^D_k(s,y,t,x)=p_{t-s_k}^D(x,y_k)\cdots p_{s_2-s_1}^D(y_2, y_1),
\end{align}
where $y=(y_1, \dots, y_k)$ and $s=(s_1, \dots, s_k)$. 
%We assume $\alpha >0$; the case $\alpha=0$ follows the same lines (even simpler) as the case $0< \alpha <\frac{d_h}{2d_w}$. 
By comparing \eqref{def:g_k} and \eqref{eq:fk} it is clear that on the simplex $[0,t]^k_<$ we have
%By the definitions of  $f_k$ and $g^D$ and Corollary \ref{bound_pt_diri_ini}, we have  
\begin{align}\label{f_k H-norm bounds}
f_k(s,y,t,x)=\frac{\beta^k}{k!}g_k^D(s,y,t,x)P_{s_1}^Du_0(y_1).
\end{align}
Hence owing to Corollary \ref{bound_pt_diri_ini} and taking into account all possible orderings of $s_1,\dots,s_k$  we get
\begin{equation}\label{f0}
\|f_k(\cdot, t,x)\|_{\mathcal H^{\otimes k}}^2 \le C \frac{\beta^{2k}}{k!} M_k(t,x)\|u_0\|_{\infty}^2, 
\end{equation}
where $M_k(t,x)$ is given by 
\begin{equation}\label{lemma 3.8 ms 2}
M_k(t, x)=
\int_{[0,t]^k_<}\int_{U^{2k}}g_k^D(s,y,t,x)\prod_{i=1}^k G_{2\alpha}^D(y_i,y_i')g_k^D(s,y',t,x) 
e^{-2\lambda_1 s_1} d\mu(y)d\mu(y')ds.
\end{equation}
We now upper bound $M_k$ defined by \eqref{lemma 3.8 ms 2} thanks to a recursive procedure. To this aim, taking the expression~\eqref{def:g_k} of $g^D_k$ into account, observe that one can write \eqref{lemma 3.8 ms 2} as  %set $N_0(t)=1$ and 
\begin{multline}\label{eq-Mk+1}
e^{2\lambda_1 t}M_{k+1}(t, x)=\int_0^t e^{2\lambda_1 (t-s_{k+1})} 
\int_{U^{2}}  p_{t-s_{k+1}}(x,y_{k+1})\\
\cdot G_{2\alpha}^D(y_{k+1},y'_{k+1}) \, p_{t-s_{k+1}}(x,y'_{k+1})\, e^{2\lambda_1 s_{k+1}}M_k (s_{k+1},y_{k+1}) d\mu(y_{k+1})d\mu(y'_{k+1})\, ds_{k+1},
\end{multline}
where we start the recursion with
$M_0(t,x):=e^{-2\lambda_1 t}$.
In the identity above, we shall bound the term $e^{2\lambda_1 s_{k+1}}M_k (s_{k+1},y_{k+1})$ by $N_{k}(s_{k+1})$, where we define
\begin{equation}\label{eq-Nk}
N_k(t)=e^{2\lambda_1 t} \sup_{x\in U}M_k(t, x).
\end{equation}
Hence resorting to Lemma \ref{Lem: kernel frac Lapl D}, we end up with the following relation:
\begin{equation}\label{recursive1}
N_{k+1}(t)\le \int_0^t \Psi(t-s) N_{k}(s)ds,
\quad\text{with}\quad
\Psi(t)=e^{2\lambda_1 t} \sup_{x\in U}\|(-\Delta_D)^{-\alpha}p_t^D(x,\cdot)\|_{L^2(U,\mu)}^2.
\end{equation}
Notice that one can initiate the recursion \eqref{recursive1} by observing that $N_0(t)=1$.

\noindent{\it Step 2: A renewal procedure.} To study the renewal inequality \eqref{recursive1}  and get the estimates we want to prove, we now appeal to an idea we learned from the proof of Theorem 1.1 in \cite{MR3359595} (see also \cite{MR2480553} for a related circle of ideas). Namely for our additional parameter $\rho >0$ let us denote
\begin{equation}\label{f1}
\cN_k(\rho)=\sup_{t \ge 0} e^{-\rho t}N_k(t).
\end{equation}
From \eqref{recursive1} we infer that
\[
e^{-\rho t} N_{k+1}(t)\le \int_0^t e^{-\rho (t-s)}\Psi(t-s) e^{-\rho s} N_{k}(s)ds 
\le 
\left( \int_0^t e^{-\rho (t-s)}\Psi(t-s) ds \right) \cN_k(\rho).
\]
Therefore we have
\begin{equation}\label{f2}
\cN_{k+1}(\rho) \le \hat{\Psi} (\rho) \, \cN_k(\rho) ,
\quad\text{with}\quad
\hat{\Psi} (\rho)=\int_0^{+\infty} e^{-\rho s} \Psi (s) ds .
\end{equation}
Since $\cN_{0}(\rho)=1$ we thus conclude by induction that $\cN_k(\rho) \le \hat{\Psi} (\rho)^k$.
Hence going back to the definition~\eqref{f1}, for every $t \ge 0$ we obtain
\begin{align}\label{eq-Nk-est}
 N_k(t)\le \hat{\Psi} (\rho)^k  e^{\rho t} .   
\end{align}

Having relation \eqref{eq-Nk} in mind, our bound on $M_k(t)$ is now reduced to an estimate on $\hat{\Psi}$ defined by~\eqref{f2}. We now upper bound the function $\hat{\Psi}$ according to the values of $\alpha$. It follows from Lemma \ref{estimate frac pt}
that the function $\Psi$ defined by \eqref{recursive1} satisfies
\begin{align}\label{eq-Psi}
\Psi(t) \le 
\begin{cases}
C  (1+t^{2\alpha-\frac{d_h}{d_w}}), \quad & 0 \le \alpha < \frac{d_h}{2d_w}, \\
C  |\ln \left(\min (1/2,t)\right)|^2, \quad  &\alpha=\frac{d_h}{2d_w}, \\
C , \quad &\alpha >\frac{d_h}{2d_w},
 \end{cases}
 \end{align}
 so that using  calculus
 \begin{align}\label{eq-Psi-hat}
     \hat{\Psi} (\rho) \le 
\begin{cases}
C  \left(\frac{1}{\rho}+ \frac{\Gamma\left(1+2\alpha -\frac{d_h}{d_w}\right)}{\rho^{1+2\alpha -\frac{d_h}{d_w}}} \right), \quad & 0 \le \alpha < \frac{d_h}{2d_w}, \\
C  \frac{\ln \left(\max (3/2,\rho) \right)^2} {\rho}, \quad &\alpha=\frac{d_h}{2d_w}, \\
\frac{C}{\rho} , \quad & \alpha >\frac{d_h}{2d_w}.
 \end{cases}
 \end{align}
Indeed, the cases $0 \le \alpha < \frac{d_h}{2d_w}$ and $\alpha>\frac{d_h}{2d_w}$ in \eqref{eq-Psi-hat} are easily justified by integrating~\eqref{eq-Psi} with an exponential weight and using basic identities for Gamma functions. Some details  about the case $\alpha=\frac{d_h}{2d_w}$ are provided at the end of the proof.

\noindent{\it Step 3: Conclusion.}  With \eqref{eq-Psi-hat} in hand, we plug this inequality into \eqref{eq-Nk-est} and then \eqref{eq-Nk} and \eqref{f0}. This proves \eqref{eq-fk-est1} for the case $\alpha<\frac{d_h}{2d_w}$. Inequalities \eqref{eq-fk-est2} and \eqref{eq-fk-est3} are proved along the same lines.

\noindent{\it Step 4: Some details for \eqref{eq-Psi-hat}.}
We now give some hints about deriving \eqref{eq-Psi-hat} in the case $\alpha=\frac{d_h}{2d_w}$. Namely if one integrates \eqref{eq-Psi} with the exponential weight $e^{-\rho t}$ we get
\begin{align}\label{eq-Psi-h-est}
\hat{\Psi}(\rho)&\le \int_0^{+\infty} e^{-\rho t} |\ln \left(\min (1/2,t)\right)|^2 dt= \int_0^{1/2} e^{-\rho t} (\ln t)^2 dt +C\int_{1/2}^{+\infty} e^{-\rho t}  dt \notag \\
 &\le \int_0^{1} e^{-\rho t} (\ln t)^2 dt +C\int_{0}^{+\infty} e^{-\rho t}  dt 
  \le  J_{01}+\frac{C}{\rho} \, ,
\end{align} % \int_0^{1} e^{-\rho t} \ln(t)^2 dt
where we have set $J_{01}= \int_0^{1} e^{-\rho t} (\ln t)^2 dt$. We now take care of the term $J_{01}$ by separating the small $\rho$ from the large $\rho$ regime. Namely consider an arbitrary $\rho_0>1$. Then for $\rho\le \rho_0$ we simply bound the term $e^{-\rho t}$ by $1$ in the integral defining $J_{01}$. We derive
\begin{align}\label{eq-int-erho-est}
\int_0^{1} e^{-\rho t} (\ln t)^2 dt \le C \int_0^{1} (\ln t)^2 dt \le C.
\end{align}
Next for $\rho>\rho_0$ we perform the elementary change of variable $u=\rho t$. We obtain
 \[
 \int_0^{1} e^{-\rho t} (\ln t)^2 dt=\frac{(\ln \rho)^2}{\rho} \int_0^\rho e^{-u} du-2 \frac{\ln \rho}{\rho} \int_0^\rho e^{-u} \ln u\,du+\frac{1}{\rho} \int_0^\rho e^{-u} (\ln u)^2du.
 \]
 Bounding all the integrals over $(0,\rho)$ above by (convergent) integrals over $(0,\infty)$, we end up with 
\begin{align}\label{eq-int-erho-bd}
  \int_0^{1} e^{-\rho t} (\ln t)^2 dt\le C\frac{(\ln \rho)^2}{\rho}.
\end{align}
Summarizing our computations, we gather \eqref{eq-int-erho-est} and \eqref{eq-int-erho-bd} in order to obtain 
\[
J_{01}\le C\frac{(\ln \rho)^2}{\rho}.
\]
Reporting this inequality into \eqref{eq-Psi-h-est}, this yields \eqref{eq-Psi-hat}. %Plugging \eqref{eq-Psi-hat} back into \eqref{eq-Nk-est} and then \eqref{eq-Nk}, this proves our claim \eqref{eq-fk-est2}. Inequality \eqref{eq-fk-est1} and \eqref{eq-fk-est3} are proved along the same lines.
\end{proof}

We now state the existence and uniqueness result for the solution together with an estimate of the second moment.

\begin{thm}\label{existence PAM diri}
Let $\alpha \ge 0$ be a positive parameter and $W_\alpha^D$ be the Gaussian noise in Definitions~\ref{def-frac-Gaus-field} and \ref{def:st white}. For $\beta \ge 0$ we consider equation \eqref{eq:pam-D}, that is 
\begin{equation}\label{SHEtheot}
\partial_{t} u(t,x) = \Delta_D u(t,x) + \beta u(t,x) \, \dot{W}_\alpha^D(t,x),
\end{equation}
interpreted in the mild form \eqref{eq:PAM-mild form-Diri}. We assume that the initial condition $u(0,\cdot)=u_0$ lies in $L^\infty(U,\mu)$. Then \eqref{SHEtheot} admits a unique solution given by Definition \ref{def of Ito sol}. This solution moreover satisfies  the following $L^2$-upper bound estimates: for every $x \in U$
\begin{align}\label{eq-lim-u2-ub}
   \limsup_{t \to +\infty} \frac{\ln \mathbf E(u(t,x)^2)}{t} \le    \Theta_\alpha (\beta)-2\lambda_1,
 \end{align}
where $\lambda_1=\lambda_1(U)$ is the first eigenvalue of $-\Delta_D$ and $\Theta_\alpha$ is a continuous increasing function such that
\begin{enumerate}
\item If $0\le \alpha<\frac{d_h}{2d_w}$, then $\Theta_\alpha (\beta) \sim_{\beta \to 0} C\beta^2 $ and  $\Theta_\alpha (\beta) \sim_{\beta \to +\infty} C\beta^{\frac{2d_w}{(1+2\alpha)d_w-d_h}} $.
\item If $\alpha=\frac{d_h}{2d_w}$, then  $\Theta_\alpha (\beta) \sim_{\beta \to 0} C\beta^2 $ and $\Theta_\alpha (\beta) \sim_{\beta \to +\infty} C\beta^2 (\ln \beta)^2 $.

\item If $\alpha>\frac{d_h}{2d_w}$, then $\Theta_\alpha (\beta)=C\beta^2$.
\end{enumerate}
The constant $C>0$ in each estimate above is different and depends on $\alpha$, $d_h$ and $d_w$.
\end{thm}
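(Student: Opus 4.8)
Almost all the work is already done. Proposition~\ref{uni-SHE} tells us that equation~\eqref{eq:PAM-mild form-Diri} has a unique solution in the sense of Definition~\ref{def of Ito sol} precisely when the summability condition~\eqref{finite chaos} holds, while Lemma~\ref{lem:small t} supplies, for every auxiliary parameter $\rho>0$, explicit bounds on $\|f_k(\cdot,t,x)\|_{\mathcal H^{\otimes k}}^2$. The plan is therefore: (1) check~\eqref{finite chaos} and conclude existence and uniqueness; (2) optimize the same family of bounds over $\rho$ to extract the sharp rate~\eqref{eq-lim-u2-ub}; and (3) read off the claimed behaviour of $\Theta_\alpha$ from the way the bound depends on $\rho$.

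I would first fix $\beta\ge0$ (the case $\beta=0$ being classical) together with $t\ge0$ and $x\in U$, and multiply the three estimates~\eqref{eq-fk-est1}--\eqref{eq-fk-est3} by $k!$. In each regime this yields a bound of the shape $k!\,\|f_k(\cdot,t,x)\|_{\mathcal H^{\otimes k}}^2\le \|u_0\|_\infty^2\, e^{(\rho-2\lambda_1)t}\,(C\beta^2 B_\alpha(\rho))^k$, where $C=C(\alpha,d_h,d_w)$ is independent of $k$ and $B_\alpha(\rho)$ is the $\rho$-dependent bracket of the corresponding line of Lemma~\ref{lem:small t}, i.e.\ (up to the multiplicative constant) $\tfrac1\rho+\rho^{-(1+2\alpha-d_h/d_w)}$, $\rho^{-1}\ln^2(\max(3/2,\rho))$, or $\rho^{-1}$; compare~\eqref{eq-Psi-hat}. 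In all three cases $B_\alpha$ is continuous and strictly decreasing on $(0,\infty)$ with $B_\alpha(\rho)\to+\infty$ as $\rho\to0^+$ and $B_\alpha(\rho)\to0$ as $\rho\to+\infty$. Hence, for the fixed $\beta$, I may choose $\rho$ so large that $C\beta^2 B_\alpha(\rho)\le\tfrac12$; summing the geometric series then gives $\sum_{k\ge1}k!\,\|f_k(\cdot,t,x)\|_{\mathcal H^{\otimes k}}^2<\infty$, which is exactly~\eqref{finite chaos}, and Proposition~\ref{uni-SHE} produces the unique solution in the sense of Definition~\ref{def of Ito sol}.

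For the moment estimate I would use that the It\^o integral in~\eqref{eq:PAM-mild form-Diri} is centered, so $\mathbf E[u(t,x)]=P_t^D u_0(x)$ and the chaos isometry gives $\mathbf E(u(t,x)^2)=(P_t^D u_0(x))^2+\sum_{k\ge1}k!\,\|f_k(\cdot,t,x)\|_{\mathcal H^{\otimes k}}^2$. The first term is $\le Ce^{-2\lambda_1 t}\|u_0\|_\infty^2$ by Corollary~\ref{bound_pt_diri_ini}. For the series I define $\Theta_\alpha(\beta)$ to be the unique $\rho>0$ with $C\beta^2 B_\alpha(\rho)=1$ (well defined by the monotonicity above); then for any $\varepsilon>0$ the choice $\rho=\Theta_\alpha(\beta)+\varepsilon$ makes $C\beta^2 B_\alpha(\rho)<1$, so the series is $\le Ce^{(\Theta_\alpha(\beta)+\varepsilon-2\lambda_1)t}\|u_0\|_\infty^2$. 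Adding the two terms, taking logarithms, dividing by $t$, and letting $t\to+\infty$ and then $\varepsilon\to0$ yields~\eqref{eq-lim-u2-ub}. Monotonicity and continuity of $\Theta_\alpha$ are inherited from those of $B_\alpha$, and the asymptotics in (i)--(iii) follow by locating the dominant term of $B_\alpha$: as $\beta\to0$ one has $\Theta_\alpha(\beta)\to0$ and $B_\alpha(\rho)\asymp1/\rho$ near $\rho=0$ in all cases (in (i) since $1+2\alpha-d_h/d_w<1$, in (ii) since $\max(3/2,\rho)=3/2$ for small $\rho$), so $\Theta_\alpha(\beta)\sim C\beta^2$; as $\beta\to+\infty$ one has $\Theta_\alpha(\beta)\to+\infty$, and then regime (iii) gives $\Theta_\alpha(\beta)=C\beta^2$ identically, regime (i) gives $C\beta^2\rho^{-(1+2\alpha-d_h/d_w)}\asymp1$, i.e.\ $\Theta_\alpha(\beta)\sim C\beta^{2d_w/((1+2\alpha)d_w-d_h)}$, and regime (ii) amounts to solving $\rho/\ln^2\rho\asymp\beta^2$.

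The reduction to~\eqref{finite chaos}, the geometric summation, and the passage to the limit are all routine. I expect the only genuinely delicate points to be the asymptotic analysis of $\Theta_\alpha$: in regime~(i) one must carefully track which of the two competing powers of $\rho$ dominates for small versus large $\beta$, and in the critical regime~(ii) one must solve the transcendental relation $\rho/\ln^2\rho\asymp\beta^2$ and justify the bootstrap $\ln\rho\sim2\ln\beta$ that leads to $\Theta_\alpha(\beta)\sim C\beta^2(\ln\beta)^2$.
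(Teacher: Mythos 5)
Your proposal is correct and follows essentially the same route as the paper's proof: it reduces existence--uniqueness to the summability condition \eqref{finite chaos} via Proposition \ref{uni-SHE}, sums the geometric series from Lemma \ref{lem:small t} with $\rho$ beyond the critical value, and defines $\Theta_\alpha(\beta)$ as the root of $C\beta^2 B_\alpha(\rho)=1$ (the paper's $\rho_c=F^{-1}((C\beta^2)^{-1})$), extracting the asymptotics from the small/large-$\rho$ behaviour of $B_\alpha$ exactly as the paper does with $F$.
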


\begin{proof}
The proof is an easy consequence of Lemma \ref{lem:small t}. We will only give details about the case $0\le \alpha<\frac{d_h}{2d_w}$, the other one being treated similarly. Also recall that if $\{f_k;k\ge1\}$ is the family of functions defined by \eqref{eq:fk}, then Proposition \ref{uni-SHE} states that existence-uniqueness is reduced to relation \eqref{finite chaos}:
\begin{equation}\label{f3}
\sum_{k=1}^{\infty} k! \|f_k(\cdot, t,x)\|_{\mathcal H^{\otimes k}}^2<\infty.
\end{equation}
Now observe that for $0\le \alpha<\frac{d_h}{2d_w}$, the function $f_k$ satisfies \eqref{eq-fk-est1} for an arbitrary parameter $\rho>0$. Next we consider the unique real number $\rho_c>0$ such that
\begin{align}\label{eq-rho-c}
   C \beta^2 F(\rho_c)=1,\quad \text{with} \quad F(\rho):=\frac{1}{\rho}+ \frac{1}{\rho^{1+2\alpha -\frac{d_h}{d_w}}},
\end{align}
where $C$ is the same constant as in \eqref{eq-fk-est1}. In the sequel we pick a parameter $\rho$ such that $\rho > \rho_c$. Therefore such a $\rho$ satisfies 
\[
\delta_\rho:= C\beta^2F(\rho)<1.
\]
Hence relation \eqref{eq-fk-est1} can be recast as 
 \begin{align}\label{eq-fk-rc}
\|f_k(\cdot, t,x)\|_{\mathcal H^{\otimes k}}^2 
\le \frac{e^{(\rho-2\lambda_1 )t}}{k!} \|u_0\|_\infty^2 \delta^k_\rho,
 \end{align} 
 from which it is easily seen that relation \eqref{f3} holds true. This finishes the proof of existence and uniqueness.  We now proceed to prove the $L^2$ bounds in our claim \eqref{eq-lim-u2-ub}. To this aim, notice that for the unique solution $u$ to \eqref{SHEtheot} we have
 \[
 \mathbf{E}(u(t,x)^2)=(P_t u_0)^2(x)+\sum_{k=1}^{+\infty} k!\|f_k(\cdot, t,x)\|_{\mathcal H^{\otimes k}}^2.
 \]
Plugging relation \eqref{eq-fk-rc} above and using Corollary \ref{bound_pt_diri_ini} we obtain
 %Therefore the equation \eqref{SHEtheot} has a unique solution which moreover satisfies for every $\rho>\rho_c$
 \begin{equation}\label{eq-u-l2-est}
 \mathbf{E}(u(t,x)^2) 
 \le 
 c \|u_0\|_\infty^2 e^{(\rho-2\lambda_1)t} \sum_{k=0}^\infty \delta_\rho^k 
 \le 
 C_\rho \|u_0\|_\infty^2 e^{(\rho-2\lambda_1)t}.
 \end{equation}
This yields
 \begin{equation}\label{eq:limsup-l2}
 \limsup_{t \to +\infty} \frac{\ln \mathbf E(u(t,x)^2)}{t} \le \rho_c-2\lambda_1.
 \end{equation}
Now recall from \eqref{eq-rho-c} that $\rho_c$ satisfies $C\beta^2F(\rho_c)=1$, so that $\rho_c=F^{-1}((C\beta^2)^{-1})$. Furthermore we have 
\[
\lim_{\rho\to0}\rho F(\rho)=1,\quad \text{and}\quad \lim_{\rho\to\infty}\rho^{1+2\alpha-d_h/d_w}F(\rho)=1.
\]
Inverting those relations and plugging into \eqref{eq:limsup-l2}, the asymptotic behavior 
of $\Theta_\alpha$ in (i) follows. 

Finally we mention that for the case $\alpha=\frac{d_h}{2d_w}$, similar argument as above gives \eqref{eq:limsup-l2}, with the parameter $\rho_c$ satisfying 
\[
C\beta^2\frac{\ln \left(\max (3/2,\rho_c) \right)^2}{\rho_c}=1,
\]
where $C$ is the same constant as in \eqref{eq-fk-est2}. Consider $F(\rho)=\frac{\ln \left(\max (3/2,\rho) \right)^2}{\rho}$. We have   $F(\rho)\sim \frac{(\ln 3/2)^2}{\rho}$ when $\rho \to 0$ and $F(\rho)\sim \frac{(\ln \rho)^2}{\rho}$ when $\rho \to +\infty$. Hence we conclude the proof of (ii).
 \end{proof}

\subsection{\texorpdfstring{$L^p$}{Lp} upper bounds on the moments}

An analysis of the asymptotic behavior of $L^p$ moments usually brings useful information on the parabolic Anderson model. We derive an upper bound on those moments below.

\begin{thm}\label{thm:p-moment-upper-Diri}
Under the same conditions as in Theorem \ref{existence PAM diri}, let $u$ be the unique solution to~\eqref{SHEtheot}. Consider an integer $p>2$ and let $\Theta_\alpha$ be the function in \eqref{eq-lim-u2-ub}. Then for all $x\in U$ the following $L^p$-upper bound holds true:
%Let $\alpha \ge 0$, $\beta \ge 0$ and let $p$ be an integer greater than $2$. Let $\Theta_\alpha$ be the same function as in Theorem \ref{existence PAM diri}. The solution \eqref{SHEtheot} with initial condition $u_0\in L^\infty(U,\mu)$ satisfies  the following $L^p$-upper bound estimate: for every $x \in U$
\begin{align}\label{eq-Lp-ub}
  \limsup_{t \to +\infty} \frac{\ln \mathbf E(u(t,x)^p)}{t} \le \frac{p}{2}   \Theta_\alpha \left(\beta \sqrt{p-1}\right)-p\lambda_1.  
\end{align}
\end{thm}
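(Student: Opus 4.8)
The plan is to promote the $L^2$ estimate of Theorem~\ref{existence PAM diri} to an $L^p$ estimate by means of hypercontractivity (equivalence of $L^p$-norms) on the Wiener chaoses. Recall from Proposition~\ref{uni-SHE} together with \eqref{eq:chaos expansion}--\eqref{eq:fk} that the unique solution admits the expansion $u(t,x)=\mathbf E[u(t,x)]+\sum_{k\ge 1}I_k(f_k(\cdot,t,x))$, convergent in $L^2(\Omega)$, with $\mathbf E[u(t,x)]=P^D_tu_0(x)$ and $f_k$ given by \eqref{eq:fk}. The basic tool is the classical inequality
\begin{equation*}
\|I_k(f_k(\cdot,t,x))\|_{L^p(\Omega)}\le (p-1)^{k/2}\,\|I_k(f_k(\cdot,t,x))\|_{L^2(\Omega)}=(p-1)^{k/2}\sqrt{k!}\,\|f_k(\cdot,t,x)\|_{\mathcal H^{\otimes k}},
\end{equation*}
valid for every real $p\ge 2$, the last equality being the isometry property of $I_k$ recalled above.

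First I would fix an auxiliary parameter $\rho>0$, apply Minkowski's inequality in $L^p(\Omega)$ to the chaos series, bound the zeroth term by $\|P^D_tu_0\|_\infty\le Ce^{-\lambda_1 t}\|u_0\|_\infty$ via Corollary~\ref{bound_pt_diri_ini}, and insert the estimate of Lemma~\ref{lem:small t}. In the regime $0\le\alpha<d_h/2d_w$ that estimate reads $\|f_k(\cdot,t,x)\|_{\mathcal H^{\otimes k}}^2\le C^k e^{-2\lambda_1 t}\tfrac{\beta^{2k}}{k!}\|u_0\|_\infty^2 F(\rho)^k e^{\rho t}$ with $F$ as in \eqref{eq-rho-c}, so that, after the cancellation $\sqrt{k!}\cdot(k!)^{-1/2}=1$ and collecting the powers of $k$, one is left with a geometric series:
\begin{equation*}
\|u(t,x)\|_{L^p(\Omega)}\le C\|u_0\|_\infty\, e^{(\rho/2-\lambda_1)t}\sum_{k\ge 0}\Bigl(\beta\sqrt{p-1}\,\sqrt{C\,F(\rho)}\Bigr)^k.
\end{equation*}
This converges precisely when $C(\beta\sqrt{p-1})^2F(\rho)<1$. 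Hence, repeating verbatim the argument in the proof of Theorem~\ref{existence PAM diri} but with $\beta$ replaced by $\beta\sqrt{p-1}$, I would pick any $\rho>\rho_c$, where $\rho_c$ solves $C(\beta\sqrt{p-1})^2F(\rho_c)=1$, that is $\rho_c=\Theta_\alpha(\beta\sqrt{p-1})$. Summing the series and raising to the power $p$ gives $\mathbf E(u(t,x)^p)\le C_\rho\|u_0\|_\infty^p e^{p(\rho/2-\lambda_1)t}$, whence $\limsup_{t\to+\infty}t^{-1}\ln\mathbf E(u(t,x)^p)\le \tfrac p2\rho-p\lambda_1$; letting $\rho\downarrow\rho_c$ yields \eqref{eq-Lp-ub}. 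The other two regimes $\alpha=d_h/2d_w$ and $\alpha>d_h/2d_w$ follow in the same way, starting from \eqref{eq-fk-est2} and \eqref{eq-fk-est3} respectively and using the corresponding function $F$ from the proof of Theorem~\ref{existence PAM diri}.

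The only step that needs a word of care is ensuring that the chaos series of $u(t,x)$ converges in $L^p(\Omega)$, and not merely in $L^2(\Omega)$, so that the term-by-term Minkowski bound is legitimate. This is automatic once the geometric bound above is in place: it shows $\sum_k\|I_k(f_k(\cdot,t,x))\|_{L^p(\Omega)}<\infty$, so the partial sums form a Cauchy sequence in $L^p(\Omega)$ whose limit must agree with the $L^2(\Omega)$-limit $u(t,x)$. Apart from this, the proof is purely a repackaging of Lemma~\ref{lem:small t} and the computations already performed for Theorem~\ref{existence PAM diri}, so I do not anticipate any real obstacle; the substantive work is contained in the moment bound of Lemma~\ref{lem:small t} itself.
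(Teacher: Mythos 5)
Your proposal is correct and follows essentially the same route as the paper's proof: chaos expansion plus the equivalence of $L^p$ and $L^2$ norms on a fixed Wiener chaos, the bound of Lemma~\ref{lem:small t} with an auxiliary parameter $\rho$, a critical value $\rho_{c,p}$ defined exactly as in \eqref{eq-rho-c-p} (i.e.\ the $\rho_c$ of Theorem~\ref{existence PAM diri} with $\beta$ replaced by $\beta\sqrt{p-1}$), and then the same limiting argument. Your additional remark justifying $L^p$-convergence of the chaos series via summability of the $L^p$-norms is a sensible point of care but does not change the argument.
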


\begin{proof}
As in the proof of Theorem \ref{existence PAM diri}, we will focus on the case $0\le \alpha<\frac{d_h}{2d_w}$. The other regime for $\alpha$ is treated in a similar way and left to the reader for sake of conciseness. 
Next we use the fact that in a fixed Wiener chaos the $L^p$-norm and the $L^2$-norm are equivalent, see the last line of \cite[page 62]{Nualart}. Namely,
\begin{align}\label{eq-C}
    \mathbf E \left( I_k(f_k(\cdot, t,x))^p \right)^{1/p} \le (p-1)^{k/2}\mathbf E \left( I_k(f_k(\cdot, t,x))^2 \right)^{1/2}.
\end{align}
Now in order to estimate the $L^p$-norm of $u(t,x)$ we resort to the decomposition \eqref{eq:chaos expansion} and Corollary \ref{bound_pt_diri_ini}. These allow to write
\[
\|u(t,x)\|_{L^p(\Omega)}  \le C \|u_0\|_\infty e^{-\lambda_1 t}+\sum_{k=1}^{\infty} \| I_k(f_k(\cdot, t,x))\|_{L^p(\Omega)}.
\]
Thus owing to \eqref{eq-C} we have 
\begin{align}\label{eq-u-lp}
\|u(t,x)\|_{L^p(\Omega)} 
   \le C \|u_0\|_\infty e^{-\lambda_1 t}+\sum_{k=1}^{\infty} (p-1)^{k/2}\| I_k(f_k(\cdot, t,x))\|_{L^2(\Omega)} .
  \end{align}
Plugging our estimate \eqref{eq-fk-est1} into \eqref{eq-u-lp} this yields
 \begin{align}\label{eq-u-lp-2}
\|u(t,x)\|_{L^p(\Omega)} 
   \le C \|u_0\|_\infty e^{-\lambda_1 t}+\sum_{k=1}^{\infty}   C^{k/2} (p-1)^{k/2} e^{\frac{\rho t}2-\lambda_1 t} \beta^k \|u_0\|_{\infty} \left(\frac{1}{\rho}+ \frac{1}{\rho^{1+2\alpha -\frac{d_h}{d_w}}} \right)^{k/2}.
 \end{align}
Hence for $p>2$ we define a constant $\rho_{c,p}$ similarly to \eqref{eq-rho-c}, as the unique positive real number satisfying
 \begin{align}\label{eq-rho-c-p}
     C (p-1) \beta^2 \left(\frac{1}{\rho_{c,p}}+ \frac{1}{\rho_{c,p}^{1+2\alpha -\frac{d_h}{d_w}}} \right)=1,
 \end{align}
 where $C$ is the constant in \eqref{eq-fk-est1}. We now pick a parameter $\rho>\rho_{c,p}$. With this value of $\rho$, the proof follows very closely the lines of Theorem \ref{existence PAM diri}. Namely for $\rho>\rho_{c,p}$ and similarly to \eqref{eq-u-l2-est} we obtain 
  \[
\|u(t,x)\|_{L^p(\Omega)}  \le C_\rho \| u_0\|_\infty e^{\frac{\rho t}{2}-\lambda_1 t }.
 \]
 Then we proceed as for the end of the proof for Theorem \ref{existence PAM diri}.
\end{proof}

\subsection{Feynman-Kac representation for the moments}
Inspired by \cite{BOTW, Hu15}, the lower bounds on moments for the solution $u$ to the stochastic heat equation will be obtained through Feynman-Kac formulae for moments. The current section is devoted to prove this type of formula, through a $L^2(\Omega)$-limit procedure relying on the following elementary lemma.

%To get a lower bound on $\mathbf{E}(u(t,x)^p)$, $p \ge 2$, we will use a Feynman-Kac representation.

\begin{lem}\label{lem:L2 convergence}
Let $\{X_n\}_{n\ge1}$ be a sequence of $L^2$-random variables defined on a probability space $(\Omega,\mathcal{F},\mathbf{P})$. We assume that the following double limit exists:
\[
\lim_{m,n\to\infty}\mathbf{E}(X_nX_m)=a.
\]
Then $X_n$ converges in $L^2(\Omega)$.
\end{lem}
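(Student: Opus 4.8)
The plan is to show that the sequence $\{X_n\}$ is Cauchy in $L^2(\Omega)$, since $L^2(\Omega)$ is complete. Write $\|X_n - X_m\|_{L^2(\Omega)}^2 = \mathbf{E}(X_n^2) - 2\mathbf{E}(X_n X_m) + \mathbf{E}(X_m^2)$. The hypothesis gives that the double-indexed net $\mathbf{E}(X_n X_m)$ converges to $a$ as $m,n \to \infty$; this is a genuine double limit (in the Moore–Smith / product-net sense), not merely convergence of iterated limits.

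First I would extract from the double limit that for every $\eps > 0$ there exists $N$ such that $|\mathbf{E}(X_n X_m) - a| < \eps$ for all $m, n \ge N$. Specializing $m = n \ge N$ shows $|\mathbf{E}(X_n^2) - a| < \eps$; in particular the diagonal sequence $\mathbf{E}(X_n^2)$ also converges to $a$. Then for $m, n \ge N$,
\[
\|X_n - X_m\|_{L^2(\Omega)}^2 = \bigl(\mathbf{E}(X_n^2) - a\bigr) - 2\bigl(\mathbf{E}(X_n X_m) - a\bigr) + \bigl(\mathbf{E}(X_m^2) - a\bigr),
\]
so $\|X_n - X_m\|_{L^2(\Omega)}^2 \le |\mathbf{E}(X_n^2) - a| + 2|\mathbf{E}(X_n X_m) - a| + |\mathbf{E}(X_m^2) - a| < 4\eps$. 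Hence $\{X_n\}$ is Cauchy in $L^2(\Omega)$, and by completeness of $L^2(\Omega)$ it converges to some limit $X \in L^2(\Omega)$ (and necessarily $\mathbf{E}(X^2) = a$).

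There is essentially no obstacle here; the only point requiring care is the precise reading of the phrase "the double limit exists," which I would take to mean convergence of the net indexed by $(m,n) \in \N \times \N$ with the product order — equivalently, the $\eps$–$N$ statement above with a single threshold $N$ working simultaneously in both indices. If one only assumed that the two iterated limits $\lim_n \lim_m$ and $\lim_m \lim_n$ exist and agree, the conclusion would be false in general, so it is worth stating explicitly at the start of the proof which notion is in force. With that clarification the argument is the three-line computation above.
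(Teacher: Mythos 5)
Your proof is correct: the polarization identity $\|X_n-X_m\|_{L^2}^2=\mathbf{E}(X_n^2)-2\mathbf{E}(X_nX_m)+\mathbf{E}(X_m^2)$ together with the double limit (read in the product-index, $\eps$--$N$ sense, which is indeed the intended reading) gives the Cauchy property, and completeness of $L^2(\Omega)$ finishes the argument. The paper omits the proof of this lemma as elementary, and yours is exactly the standard argument it has in mind; your remark that agreement of the iterated limits alone would not suffice is a sensible clarification.
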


\noindent
We can now state our Feynman-Kac formula for moments, provided our noise $W_{\alpha}$ is smoother than white noise.

\begin{thm}\label{th: Feynman-Kac moments}
Consider a parameter $\alpha >0$ and let the conditions of Theorem \ref{existence PAM diri} prevail. Then the moments of the unique solution $u$ to \eqref{SHEtheot}  admit the following representation for $t\ge0$, $x\in U$ and $p\ge2$:
\begin{equation}\label{eq-p-mom-fk}
    \mathbf E (u(t,x)^p)=
    \mathbf{E}\left( \prod_{j=1}^p u_0(B^{x,j}_t) \exp\left\{ \beta^2 \sum_{1\le i,k \le p, \, i\not=k}\int_0^tG_{2\alpha}^D(B^{x,i}_s,B^{x,k}_s)ds\right\} 1_{(t<T^j)}\right).
\end{equation}
In formula \eqref{eq-p-mom-fk}, we have used a family $\{B^{x,j}, 1\le j\le p\}$ of independent Brownian motions which are initiated from $x\in U$ and are independent of the noise $W_\alpha$. For $1\le j\le p$ we also denote by $T^j$ the hitting time of $U^c$ in $X$.
\end{thm}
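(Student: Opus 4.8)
The plan is to follow the strategy outlined at the start of the subsection: approximate the noise $W_\alpha^D$ by a sequence of regularized noises, write down the Feynman–Kac formula for the $p$-th moment in the regularized setting (where it is a standard consequence of the chaos-free mild formulation and the Wick-exponential Feynman–Kac representation \eqref{FK regular}), and then pass to the limit using the $L^2(\Omega)$-convergence criterion of Lemma \ref{lem:L2 convergence}. Concretely, I would first introduce a spectral cutoff $W_{\alpha,n}^D$ obtained by keeping only the first $n$ eigenmodes $\Phi_1,\dots,\Phi_n$ of $-\Delta_D$ in the covariance, equivalently replacing the kernel $G_{2\alpha}^D$ by the truncated kernel $G_{2\alpha}^{D,n}(x,y)=\sum_{j=1}^n \lambda_j^{-2\alpha}\Phi_j(x)\Phi_j(y)$, which is bounded and continuous on $U\times U$. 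Let $u_n$ be the unique mild solution of \eqref{eq:pam-D} driven by $W_{\alpha,n}^D$.

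For each fixed $n$, the noise is smooth enough that the Wick-exponential Feynman–Kac formula \eqref{FK regular} holds for $u_n$ (this is exactly the range $\alpha>d_h/2d_w$ situation, only cleaner since $G_{2\alpha}^{D,n}$ is genuinely bounded). Raising to the $p$-th power, introducing $p$ independent Brownian motions $B^{x,1},\dots,B^{x,p}$ independent of $W_{\alpha,n}^D$, and taking the expectation $\mathbf E^W$ in the noise first — using that for jointly Gaussian variables $\mathbf E^W\exp(\sum_j \xi_j)=\exp(\tfrac12\sum_{i,k}\mathbf E^W[\xi_i\xi_k])$ — the Wick corrections $-\tfrac{\beta^2}{2}\int_0^t G_{2\alpha}^{D,n}(B^{x,j}_s,B^{x,j}_s)\,ds$ exactly cancel the diagonal $i=k$ terms, leaving
\begin{equation}\label{eq:fk-n-plan}
\mathbf E(u_n(t,x)^p)=\mathbf E\!\left(\prod_{j=1}^p u_0(B^{x,j}_t)\,\exp\!\Big\{\beta^2\!\!\sum_{1\le i,k\le p,\,i\ne k}\!\!\int_0^t G_{2\alpha}^{D,n}(B^{x,i}_s,B^{x,k}_s)\,ds\Big\}\,\mathbf 1_{(t<T^j)}\right).
\end{equation}
Next I would show $u_n(t,x)\to u(t,x)$ in $L^2(\Omega)$: comparing the chaos expansions \eqref{eq:chaos expansion}–\eqref{eq:fk} for $u_n$ and $u$, the chaos coefficients differ only through $G_{2\alpha}^{D,n}$ versus $G_{2\alpha}^D$ in the $\mathcal H^{\otimes k}$ inner products, and monotone/dominated convergence combined with the summable bounds from Lemma \ref{lem:small t} gives convergence of $\sum_k k!\|f_k^{(n)}-f_k\|^2\to0$. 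Since $u_n(t,x)\to u(t,x)$ in $L^2$ and the $L^p$-norms are controlled uniformly in $n$ by the chaos estimates (the truncated kernels are dominated by $G_{2\alpha}^D$), one upgrades to $L^p(\Omega)$ convergence, hence $\mathbf E(u_n(t,x)^p)\to \mathbf E(u(t,x)^p)$. For the right-hand side of \eqref{eq:fk-n-plan} one invokes Lemma \ref{lem:L2 convergence} (or directly monotone convergence, since $G_{2\alpha}^{D,n}\uparrow G_{2\alpha}^D$ pointwise off the diagonal and is nonnegative up to an additive constant, or $G_{2\alpha}^D\ge c\,\Phi_1\otimes\Phi_1$ by Proposition \ref{estimate G}) to pass to the limit inside the expectation, obtaining \eqref{eq-p-mom-fk}.

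The main obstacle I anticipate is controlling the exponential integrand on the right-hand side in the limit: the off-diagonal integrals $\int_0^t G_{2\alpha}^{D,n}(B^{x,i}_s,B^{x,k}_s)\,ds$ involve the (possibly singular, when $d_h/2d_w<\alpha\le d_h/d_w$ the kernel is bounded, but for general $\alpha>0$ one must be careful) Riesz kernel evaluated along Brownian trajectories, and one needs integrability of $\exp\{\beta^2\sum_{i\ne k}\int_0^t G_{2\alpha}^D(B^{x,i}_s,B^{x,k}_s)ds\}$. Here one uses that $\alpha>0$ and the heat-kernel/Riesz-kernel bounds of Proposition \ref{estimate G} together with a Khas'minskii-type lemma: the exponential moments of $\int_0^t G_{2\alpha}^D(B^i_s,B^k_s)\,ds$ are finite because $G_{2\alpha}^D$ is in the Kato class for the product process $(B^i,B^k)$, a consequence of $\alpha>0$ and $d_h<d_w$ (this is the recurrence assumption doing its work — for $\alpha=0$ one would genuinely need intersection local times, which is why that case is excluded). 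Combining uniform-in-$n$ exponential-moment bounds with the pointwise convergence $G_{2\alpha}^{D,n}\to G_{2\alpha}^D$ (and $L^1_{\mathrm{loc}}$ domination) yields uniform integrability, justifying the passage to the limit and completing the proof.
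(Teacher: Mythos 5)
Your proposal follows the same overall strategy as the paper: regularize the noise, use the Wick-exponential Feynman--Kac representation for the regularized equation, take the expectation in the noise first so that the Wick corrections cancel the diagonal $i=k$ terms, and then pass to the limit. The paper implements this with a different regularization: it mollifies $W_\alpha$ by an approximate identity in time and the Dirichlet heat kernel in space (see \eqref{eq-W-ep-del}), computes the mixed covariances $\mathbf E^W\bigl(Z^{B^1}_{\delta_1,\varepsilon_1}Z^{B^2}_{\delta_2,\varepsilon_2}\bigr)$ explicitly, lets $\delta_i,\varepsilon_i\to 0$ as in \eqref{eq-Z-cov-limit}--\eqref{F-K after limit}, and obtains $L^2(\Omega)$ convergence of $u^{\delta,\varepsilon}(t,x)$ from the double-limit criterion of Lemma \ref{lem:L2 convergence}; finiteness of the limiting Brownian expectation is checked by Taylor-expanding the exponential and invoking Lemma \ref{lem:small t}. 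Your spectral cutoff buys a bounded, explicitly diagonalized covariance and a direct comparison of chaos expansions (with domination, since the spectral projection contracts the relevant norms); the paper's heat-kernel mollification buys essentially trivial limit computations for the covariance evaluated along the Brownian paths, which is precisely the step that is delicate in your scheme.

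Two specific points in your write-up need repair. First, the claim that $G_{2\alpha}^{D,n}\uparrow G_{2\alpha}^{D}$ monotonically off the diagonal is false: the partial sums $\sum_{j\le n}\lambda_j^{-2\alpha}\Phi_j(x)\Phi_j(y)$ are not monotone in $n$ because $\Phi_j(x)\Phi_j(y)$ changes sign, and for small $\alpha$ the spectral series need not converge absolutely pointwise, so monotone convergence cannot be invoked. You must instead prove convergence of $\int_0^t G^{D,n}_{2\alpha}(B^{x,i}_s,B^{x,k}_s)\,ds$ to $\int_0^t G^{D}_{2\alpha}(B^{x,i}_s,B^{x,k}_s)\,ds$ in, say, $L^1$ or $L^2$ of the Brownian motions (this can be done with the killed heat kernel, the moments being controlled by tail sums such as $\sum_{j>n}\lambda_j^{-2\alpha-1}\Phi_j(x)^2$, finite because $2\alpha+1>d_h/d_w$), and then combine this with the uniform exponential-moment bound from your last paragraph to pass to the limit under the expectation. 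Second, Lemma \ref{lem:L2 convergence} concerns $L^2(\Omega)$ convergence of random variables and is what the paper uses to identify the limit of the regularized solutions; it does not by itself justify exchanging limit and expectation on the Brownian-motion side of your truncated moment formula -- there the uniform integrability/Khas'minskii-type argument you sketch is what is actually needed. With these two repairs your route works, though it is technically heavier than the paper's mollification argument.
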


\begin{proof}
The proof of our claim \eqref{eq-p-mom-fk} follows from a standard procedure borrowed from~\cite{Hu15,HuNualart}. We just highlight the main steps below. In the remainder of the proof we write $W$ instead of $W_\alpha$ for the sake of notation. 

First consider a mollifier  $p_\varepsilon(x)$ in space. An example might be provided by $p_\varepsilon(x)=c\,p_\varepsilon^D(x)$, where $p_\varepsilon^D(x)$ is the Dirichlet heat kernel on $U$ and $c$ a normalizing constant such that $c\int_Up_\varepsilon^D(x)dx=1$. Also consider a mollifier $\tilde{p}_\delta(t)$ in time. Then a mollified noise can be defined for $\varepsilon,\delta>0$ as 
\begin{align}\label{eq-W-ep-del}
  W^{\delta,\varepsilon}(t,x)=(\tilde{p}_\delta p_\varepsilon)*{W}=W(\tilde{p}_\delta(t-\cdot)p_\varepsilon(x,\cdot)).  
\end{align}

Now consider equation \eqref{SHEtheot} driven by the smooth noise $W^{\delta,\varepsilon}$, interpreted in the Skorokhod sense. Similarly to \cite[Proposition 5.2]{HuNualart}, one can prove that this equation admits a unique solution $u^{\delta,\varepsilon}$. Moreover this solution can be written through a Feynman-Kac representation as 
\begin{align}\label{eq-F-K smoothed}
u^{\delta,\varepsilon}(t,x)
=\mathbf{E}^B\left(u_0(B^x_t)
\exp\left(\beta Z_{\delta,\varepsilon}^B(t,x)-\frac{\beta^{2}}{2}\mathbf{E}^W[Z^B_{\delta,\varepsilon}(t,x)]^2\right)1_{(t<T)}
%e^{\int_0^tW^{\delta,\varepsilon}(t-s,B_s^x)ds-\frac{1}{2}\mathbf E\left(\int_0^tW^{\delta,\varepsilon}(t-s,B_s^x)ds\right)^2}1_{t<T}
\right).
\end{align}
In equation \eqref{eq-F-K smoothed}, $B$ is a Brownian motion independent of $W$ and $\mathbf{E}^B$ (respectively $\mathbf{E}^W$) denotes the expected value with respect to $B$ (respectively $W$) only. In addition, the random variable $Z_{\delta,\varepsilon}^B(t,x)$ is defined by
\begin{align}\label{eq-Z}
    Z_{\delta,\varepsilon}^B(t,x):= \int_0^tW^{\delta,\varepsilon}(t-s,B_s^x)ds=W\left(\int_0^t\tilde{p}_\delta((t-s)-\cdot)p_\varepsilon(B^x_s,\cdot)ds\right).
\end{align}
We shall now take limits in \eqref{eq-F-K smoothed} in order to get our representation of moments. We will focus on the case $p=2$ for sake of conciseness. The case $p>2$ is similar. 

We shall first take limits as $\delta\to0$ in the random variable $Z_{\delta,\varepsilon}^B(t,x)$. Namely for $\delta_1, \delta_2>0$ and $\varepsilon_1, \varepsilon_2>0$ and two independent Brownian motions $B^1$, $B^2$ initiated from $x\in U$, let us compute $\mathbf{E}^W\big(Z_{\delta_1,\varepsilon_1}^{B^1}(t,x)Z_{\delta_2,\varepsilon_2}^{B^2}(t,x)\big)$ (where we recall that  $\mathbf{E}^W$ designates the expected value with respect to $W$ only). We get
\begin{equation}\label{eq-Z-cov}
\begin{split}
    \mathbf E^W\left(Z_{\delta_1,\varepsilon_1}^{B^1}(t,x)Z_{\delta_2,\varepsilon_2}^{B^2}(t,x) \right)=&\,\mathbf E^W\left(\int_0^tW^{\delta_1,\varepsilon_1}(t-s,B_s^{1})ds\int_0^tW^{\delta_2,\varepsilon_2}(t-s,{B}_s^{2})ds\right)
    \\
    =&\int_{[0,t]^2} \int_{[0,\infty)\times U^2}\tilde{p}_{\delta_1}((t-s_1)-v) \tilde{p}_{\delta_2}((t-s_2)-v) \\
    &\times p_{\varepsilon_1}({B}^{1}_{s_1},z_1)p_{\varepsilon_2}({B}^{2}_{s_2},z_2)G_{2\alpha}^D(z_1,z_2)dvdz_1dz_2 ds_1ds_2,
\end{split}
\end{equation}
where we have resorted to the definition \eqref{eq-W-ep-del} of $ W^{\delta,\varepsilon}$ and formula \eqref{eq-Gau-cov} for the covariance of $W$. Now by smoothness of $p_\varepsilon$ one can easily take limits in \eqref{eq-Z-cov} in order to obtain
\begin{equation}\label{eq-Z-cov-limit}
\begin{split}
   & \lim_{\delta_1,\delta_2\to0}\mathbf E^W\left(Z_{\delta_1,\varepsilon_1}^{B^1}(t,x)Z_{\delta_2,\varepsilon_2}^{B^2}(t,x) \right) \\
   & =\int_{[0,t]\times U^2}p_{\varepsilon_1}({B}^{1}_{v},z_1)p_{\varepsilon_2}({B}^{2}_{v},z_2)G_{2\alpha}^D(z_1,z_2)dvdz_1dz_2.
    \end{split}
\end{equation}
Let us report the information about $Z_{\delta,\varepsilon}^B(t,x)$ into the Feynman-Kac representation \eqref{eq-Z}. To this aim we use the fact that $Z_{\delta,\varepsilon}^B(t,x)$ is a centered Gaussian random variable conditionally on $B$. Moreover, for a  centered Gaussian vector $(Y_1,Y_2)$ in $\R^2$ we have $\mathbf{E}(e^{Y_1}e^{Y_2})=\exp(\frac12 \mathbf{E}(Y_1Y_2))$. Therefore one can compute
\begin{equation}\label{convergence in L^2 second step}
\begin{split}
&\mathbf E \left(u^{\delta_1,\varepsilon_1}(t,x)u^{\delta_2,\varepsilon_2}(t,x)\right)\\
&=\mathbf{E}\left(u_0(B^{1}_t)u_0({B}^{2}_t)
\exp\left(\frac{\beta^{2}}{2}\mathbf E^W[Z_{\delta_1,\varepsilon_1}^{B^1}(t,x)Z_{\delta_2,\varepsilon_2}^{B^2}(t,x)]\right)1_{(t<T_1)}1_{(t<T_2)}\right) \, ,   
\end{split}
\end{equation}
where we notice that the diagonal terms involving $Z_{\delta,\varepsilon}^{B^{i}}$ cancel out with the martingale type term $\mathbf{E}^W[Z^B_{\delta,\varepsilon}(t,x)]^2$ in \eqref{eq-F-K smoothed}.
%{\color{red}where are the terms $\exp(-\frac{1}{2}\mathbf{E}^W[Z^B_{\delta,\varepsilon}(t,x)]^2)$?}

One can now take limits in \eqref{convergence in L^2 second step} as $\delta_1,\delta_2\to0$, like in \eqref{eq-Z-cov-limit}. Then similar limits in $\varepsilon_1,\varepsilon_2$ can be taken, where the arguments follow from \cite{Hu15}. We end up with
\begin{equation}\label{F-K after limit}
\begin{split}
&\lim_{\varepsilon_1,\delta_1,\varepsilon_2,\delta_2\to0}
\mathbf E  \left(u^{\delta_1,\varepsilon_1}(t,x)u^{\delta_2,\varepsilon_2}(t,x)\right) \\ &=\mathbf{E}\left(u_0(B^{1}_t)u_0({B}^{2}_t)
\exp\left(\frac{\beta^{2}}{2}{\int_0^t G_{2\alpha}^D(B_v^{1},{B}_v^{2})dv}\right)1_{(t<T_1)}1_{(t<T_2)}\right).
\end{split}
\end{equation}
In the above, we have used our assumption $\alpha>0$ so that  $G_{2\alpha}^D(B^1_v,B^2_v)$ is well-defined (even though it could blowup when $B^1_v=B^2_v$). Moreover, that the second line of \eqref{F-K after limit} remains finite is a consequence of Lemma \ref{lem:small t} (after Taylor-expanding the exponential).  
Therefore owing to Lemma \ref{lem:L2 convergence} the sequence $\{u^{\delta,\varepsilon}(t,x),\delta, \epsilon>0\}$ converges in $L^2$. Formula \eqref{eq-p-mom-fk} for $p=2$ is then a matter of standard considerations for which we refer again to~\cite{Hu15}. 
\end{proof}

\begin{remark}\label{Rm: positivity of sol}
    From the proof of Theorem \ref{th: Feynman-Kac moments}, we see that $u^{\delta,\varepsilon}$ converges to $u$ in $L^2$ as $\delta, \varepsilon\to 0$. On the other hand, we clearly have $u^{\delta,\varepsilon}\geq0$ from the Feynman-Kac representation~\eqref{eq-F-K smoothed} if one assumes $u_0\geq0.$ Therefore, we conclude that $u\geq0$ if $u_0\geq0$.
\end{remark}

\subsection{Lower bounds for the moments of the solution}\label{sec: lower bound for moments}

With the Feynman-Kac representation \eqref{eq-p-mom-fk} in hand, we are now ready to study the asymptotic behavior of $t\mapsto \mathbf{E}[u(t,x)^p]$ for $p\ge2$. More specifically, we are interested in lower bounds for the Lyapounov exponent $\lim_{t\to\infty}\frac{\ln \mathbf E(u(t,x)^p)}{t} $. Unlike the upper bound case, it will be important to separate the colored case $\alpha>0$ from the white noise case $\alpha=0$ due to differences in methodology.

\subsubsection{The case \texorpdfstring{$\alpha>0$}{alpha}}

We begin this section with a general estimate which gives the correct rate in terms of the temperature parameter $\beta$ in the case of a noise $W^D_\alpha$ which is more regular than white noise.

\begin{thm}\label{thm:p-moment-lower-Diri}
Let $\alpha, \beta$ be two strictly positive parameters. The noise $W^D_\alpha$ is given in Definition \ref{def-frac-Gaus-field}. Let $u$ be the unique solution to equation \eqref{SHEtheot}. In addition, we consider an open subset $A \subset U$ such that $\overline A \subset U$. The initial condition $u_0$ of \eqref{SHEtheot} is assumed to be in $L^\infty(U)$, nonnegative, and such that $u_0|_A \ge \varepsilon$ for a given $\varepsilon>0$. Then for $x\in A$ and  $t\ge0$ we have
\begin{align}\label{eq-up-lb}
    \mathbf E (u(t,x)^p) \ge \varepsilon^p \exp \left( C \beta^2 p(p-1)  t \right) (P^{D,A}_t 1)(x)^p, 
\end{align}
where $C$ is a positive  constant  that  depends on $A$ and $\alpha$ but not $\beta$ or $p$ and $P^{D,A}_t $ is the Dirichlet heat semigroup on $A$.
In particular, for $x$ in $A$
\begin{align}\label{eq-up-lb-asmp}
\liminf_{t \to +\infty} \frac{\ln \mathbf E(u(t,x)^p)}{t} \ge C  \beta^2  p(p-1)  -p\lambda_1 (A),
\end{align}
where $\lambda_1(A)>0$ is the first eigenvalue of $-\Delta_{D,A}$. 
\end{thm}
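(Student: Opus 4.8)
The plan is to read off the lower bound directly from the Feynman--Kac representation \eqref{eq-p-mom-fk} of Theorem \ref{th: Feynman-Kac moments}, which is available here because $\alpha>0$. First I would note that, since $u_0\ge 0$ and the Wick exponential is strictly positive, the integrand on the right-hand side of \eqref{eq-p-mom-fk} is nonnegative, so one may bound $\mathbf{E}(u(t,x)^p)$ from below by the same expectation restricted to the confinement event
\[
\Omega_A:=\bigl\{B^{x,j}_s\in A\ \text{ for all } s\in[0,t] \text{ and all } j=1,\dots,p\bigr\}.
\]
On $\Omega_A$ each Brownian motion stays in $A\subset U$, hence $t<T^j$ for every $j$, and $u_0(B^{x,j}_t)\ge\varepsilon$ since $B^{x,j}_t\in A$.

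The key quantitative input is a uniform positive lower bound for $G^{D}_{2\alpha}$ on $A\times A$. Because $\overline A\subset U$, Remark \ref{rmk: positivity of first ef} gives $\Phi_1\ge\hat{m}_A>0$ on $A$; moreover $A$ is bounded, so $d_U(x,y)$ stays bounded on $A\times A$ and the diagonal singularity in the lower estimates of Proposition \ref{estimate G} never degenerates. Inspecting those estimates in each of the three regimes $2\alpha<d_h/d_w$, $2\alpha=d_h/d_w$, $2\alpha>d_h/d_w$ (all covered since $\alpha>0$) then yields a constant $c_A>0$, depending only on $A$ and $\alpha$ (and the geometric constants $d_h,d_w$), with $G^{D}_{2\alpha}(y,z)\ge c_A$ for every $y,z\in A$. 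Consequently, on $\Omega_A$ the exponent in \eqref{eq-p-mom-fk} is at least $\beta^2\,p(p-1)\,c_A\,t$, since there are $p(p-1)$ ordered pairs $i\ne k$, each contributing $\int_0^t G^{D}_{2\alpha}(B^{x,i}_s,B^{x,k}_s)\,ds\ge c_A t$. Therefore
\[
\mathbf{E}(u(t,x)^p)\ \ge\ \varepsilon^p\, e^{\,c_A\beta^2 p(p-1)t}\,\mathbf{P}(\Omega_A),
\]
and by independence of the $B^{x,j}$ together with \eqref{def: Dirichlet semigroup} applied to $A$ with $f\equiv 1$, one has $\mathbf{P}(\Omega_A)=\prod_{j=1}^p\mathbf{P}_x\bigl(B^{x,j}\text{ stays in }A\text{ up to }t\bigr)=(P^{D,A}_t 1)(x)^p$. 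This gives \eqref{eq-up-lb} with $C=c_A$.

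For the asymptotic statement \eqref{eq-up-lb-asmp} I would take logarithms in \eqref{eq-up-lb}, divide by $t$ and send $t\to\infty$; the constant $\varepsilon^p$ drops out and there remains
\[
\liminf_{t\to\infty}\frac{\ln\mathbf{E}(u(t,x)^p)}{t}\ \ge\ C\beta^2 p(p-1)+p\,\liminf_{t\to\infty}\frac{\ln (P^{D,A}_t 1)(x)}{t}.
\]
The proof is then finished by the bound $\liminf_{t\to\infty}\tfrac1t\ln (P^{D,A}_t 1)(x)\ge-\lambda_1(A)$ for $x\in A$, which follows from the spectral expansion $(P^{D,A}_t 1)(x)=\sum_{j\ge1}e^{-\lambda_j(A)t}\Phi^A_j(x)\int_A\Phi^A_j\,d\mu$ (valid since $-\Delta_{D,A}$ has discrete spectrum on the bounded set $A$), because $e^{\lambda_1(A)t}(P^{D,A}_t 1)(x)\to\Phi^A_1(x)\int_A\Phi^A_1\,d\mu>0$ by strict positivity of the first Dirichlet eigenfunction on $A$; alternatively one may quote the lower bound $(P^{D,A}_t 1)(x)\ge c\,e^{-\lambda_1(A)t}$ obtained exactly as in Corollary \ref{lem:hk-lower-spectral} when $A$ is, say, inner uniform.

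Once the Feynman--Kac formula is granted, this argument is short and largely bookkeeping; the two points requiring care are the uniform positive lower bound $c_A$ for $G^{D}_{2\alpha}$ on $A\times A$ --- this is where the hypothesis $\overline A\subset U$ (so $\Phi_1$ is bounded away from $0$ on $A$) and the boundedness of $A$ (so the diagonal blow-up of $G^{D}_{2\alpha}$ never collapses to $0$) enter essentially --- and the identification of the survival probability of $p$ independent confined Brownian motions with $(P^{D,A}_t 1)(x)^p$. I do not expect a serious obstacle beyond these.
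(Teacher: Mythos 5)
Your proposal is correct and follows essentially the same route as the paper: it starts from the Feynman--Kac moment formula \eqref{eq-p-mom-fk}, restricts to the event that all $p$ Brownian motions stay in $A$ up to time $t$, lower bounds $G^{D}_{2\alpha}$ on $A\times A$ by a positive constant via Proposition \ref{estimate G} and the positivity $\Phi_1\ge\hat m_A>0$ from Remark \ref{rmk: positivity of first ef}, identifies the survival probability with $(P^{D,A}_t 1)(x)^p$ through \eqref{def: Dirichlet semigroup}, and concludes \eqref{eq-up-lb-asmp} by the spectral lower bound $(P^{D,A}_t 1)(x)\ge c\,e^{-\lambda_1(A)t}$. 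No substantive difference from the paper's argument.
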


\begin{proof}

Let us start from the representation \eqref{eq-p-mom-fk} for $\mathbf{E}(u(t,x)^p)$. Let us call $T_A^i$ the hitting time of $A^c$ by the Brownian motion $B^i$. Using the fact that $u_0$ is positive and $1_{(t<T^j)}\ge 1_{(t<T_A^j)}$ we have
\begin{equation}\label{eq-up-1TA}
    \mathbf E (u(t,x)^p)\ge \mathbf{E}\left( \prod_{j=1}^p u_0(B_t^{x,j}) 1_{(t<T_A^j)} \exp\left\{\beta^2\sum_{1\le i,k \le p, i\not=k}\int_0^tG_{2\alpha}^D(B^{x,i}_s,B^{x,k}_s)ds\right\} \right).
   \end{equation}
Moreover since $A$ is a bounded set, the lower bounds in \eqref{two sided bounds G 1}, \eqref{two sided bounds G 2}, \eqref{two sided bounds G 3} imply
\begin{equation}\label{eq-exp-lb}
\exp \left(\beta^2\sum_{1\le i,k \le p,\, i\not=k}\int_0^tG_{2\alpha}^D(B^{x,i}_s,B^{x,k}_s)ds\right) %\prod_{j=1}^p 1_{(t<T_A^j)}
    \ge
    \exp\left(\beta^2c \,\hat{m}^2 p(p-1)t\right),
    %\prod_{j=1}^p 1_{(t<T_A^j)}
\end{equation}
where $\phi_1$ is the first eigenvector for $\Delta_{D,U}$ and $\hat{m}$ is given by $\hat{m}=\inf\{\phi_1(y);y\in A\}$.
Since $\hat{m}=\hat{m}_A>0 $ according to Remark \ref{rmk: positivity of first ef}, we get that there exists $C=C_A$ such that
\begin{equation*}
    \exp \left(\beta^2 \sum_{1\le i,k \le p, \, i\not=k}\int_0^tG_{2\alpha}^D(B^{x,i}_s,B^{x,k}_s)ds\right)\prod_{j=1}^p 1_{(t<T_A^j)}
    \ge
    \exp\left(\beta^2 C p(p-1)t\right)\prod_{j=1}^p 1_{(t<T_A^j)}.
\end{equation*}
Plugging this inequality into \eqref{eq-up-1TA} we end up with the following lower bound, valid for $x\in A$ and $t$ sufficiently large,
\begin{align}
\label{eq-up-lb-d}
    \mathbf E (u(t,x)^p)
    &\ge
    \varepsilon^p \exp \left( C \beta^2 p(p-1)  t \right) \mathbf E \Big( \prod_{j=1}^p 1_{t<T_A^j}\Big) \notag\\
    &=
 \varepsilon^p \exp \left( C \beta^2 p(p-1)  t \right) \left[P^{D,A}_t 1 (x)\right]^p,
\end{align}
where we have invoked \eqref{def: Dirichlet semigroup} for the second identity. This proves our claim \eqref{eq-up-lb}.

In order to prove \eqref{eq-up-lb-asmp}, we argue that since the first eigenfunction of $\Delta_{D,A}$ is 
positive on $A$ (see Remark \ref{rmk: positivity of first ef} again),  it follows from the spectrum expansion that for $x\in A$ and $t$ sufficiently large 
\begin{align}\label{eq-Pt-DA-lb}
    (P_t^{D,A}1)(x) \ge Ce^{-\lambda_1(A)t}.
\end{align}
Therefore gathering \eqref{eq-up-lb-d} and \eqref{eq-Pt-DA-lb} we discover that
\[
 \mathbf E (u(t,x)^p) \ge C\varepsilon^p \exp \left( C \beta^2 p(p-1)  t-p\lambda_1(A)t \right),
\]
from which \eqref{eq-up-lb-asmp} is easily obtained. The proof is now complete. 
\end{proof}

The estimate \eqref{eq-up-lb-asmp} gives the correct order in $\beta$ for a noise $W_\alpha$ in the more regular regime $\alpha>d_h/2d_w$. We can get the following improvement in the range $0<\alpha \le \frac{d_h}{2d_w}$.

\begin{thm}\label{improved lower bound}
As in Theorem \ref{thm:p-moment-lower-Diri}, we consider the noise $W^D_\alpha$ in Definition \ref{def-frac-Gaus-field} with $\alpha>0$. Let $\beta>0$ and let also $u_0$ be a bounded and  nonnegative initial condition. Let $x\in U$, and assume that $u_0$ that is bounded from below by a positive constant on a neighborhood of $x$. The process $u$ is the unique solution to equation \eqref{SHEtheot}. Then one can improve the lower bounds on $\mathbf{E}(u(t,x)^p)$ from Theorem \ref{thm:p-moment-lower-Diri} in the following way:
\begin{itemize}
\item For $0<\alpha <\frac{d_h}{2d_w}$,
\begin{align}\label{eq-lap-lb-1}
    \liminf_{\beta \to +\infty}\liminf_{t \to +\infty}\frac{1}{\beta^{\frac{2d_w}{(1+2\alpha)d_w-d_h}} }\frac{\ln \mathbf E(u(t,x)^p)}{t} \ge C p(p-1).
\end{align}

\item For $\alpha =\frac{d_h}{2d_w}$,
\[
\liminf_{\beta \to +\infty}\liminf_{t \to +\infty}\frac{1}{\beta^{2} \ln \beta}\frac{\ln \mathbf E(u(t,x)^p)}{t} \ge C p(p-1).
\]
\end{itemize}
\end{thm}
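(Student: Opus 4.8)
The plan is to start from the Feynman--Kac representation \eqref{eq-p-mom-fk} of Theorem \ref{th: Feynman-Kac moments} and to exploit the fact that the kernel $G^D_{2\alpha}$ blows up on the diagonal in the regime $0<\alpha\le \frac{d_h}{2d_w}$. Fix $r_0>0$ and $\varepsilon>0$ such that $u_0\ge \varepsilon$ on $B(x,r_0)$ and $B(x,r_0)\subset U$, and for $\delta\in(0,r_0)$ set $A_\delta=B(x,\delta)$. In \eqref{eq-p-mom-fk} I would keep only the contribution of the event $E_{t,\delta}=\{B^{x,j}_s\in A_\delta\text{ for all }s\le t,\ 1\le j\le p\}$. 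On $E_{t,\delta}$ one has $u_0(B^{x,j}_t)\ge \varepsilon$, the indicators $1_{(t<T^j)}$ are all equal to $1$ since $A_\delta\subset U$, and $G^D_{2\alpha}(B^{x,i}_s,B^{x,k}_s)\ge \underline G(\delta):=\inf\{G^D_{2\alpha}(y,z)\,;\,y,z\in A_\delta\}$ for each of the $p(p-1)$ ordered pairs $i\neq k$. Since the $B^{x,j}$ are independent and all started at $x$, this yields
\[
\mathbf E\bigl(u(t,x)^p\bigr)\ \ge\ \varepsilon^p\,\exp\!\bigl(\beta^2 p(p-1)\,\underline G(\delta)\,t\bigr)\,\bigl[\mathbf P_x(t<T_\delta)\bigr]^p,
\]
where $T_\delta$ denotes the hitting time of $A_\delta^{c}$.

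Two auxiliary estimates then feed this inequality. First, Proposition \ref{estimate G} applied with parameter $2\alpha$, together with $d_U=d$ on $A_\delta$ for $\delta$ small (Lemma \ref{Lem:Equa-BU-B}) and the strict positivity of $\Phi_1$ on $\overline{B(x,r_0)}$ (write $\hat m:=\inf_{B(x,r_0)}\Phi_1$, which is positive by Remark \ref{rmk: positivity of first ef}), gives for $\delta$ small
\[
\underline G(\delta)\ \ge\
\begin{cases}
c\,\hat m^2\,\delta^{2\alpha d_w-d_h}, & 0<\alpha<\tfrac{d_h}{2d_w},\\[4pt]
c\,\hat m^2\,\bigl|\ln(2\delta)\bigr|, & \alpha=\tfrac{d_h}{2d_w}.
\end{cases}
\]
Second, a small-ball exit estimate: from the sub-Gaussian bounds \eqref{eq:subGauss-upper} and a chaining argument in the spirit of the proof of Corollary \ref{lem:hk-lower-spectral} (see also \cite{Barlow}), there are a constant $C_0>0$ \emph{independent of $\delta$} and constants $c_\delta>0$ such that $\mathbf P_x(t<T_\delta)\ge c_\delta\, e^{-C_0\delta^{-d_w}t}$ for all $t>0$; equivalently, the first Dirichlet eigenvalue of $A_\delta$ is at most $C_0\delta^{-d_w}$. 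Inserting both facts, taking logarithms, dividing by $t$ and letting $t\to\infty$ yields, for every $\delta\in(0,r_0)$,
\[
\liminf_{t\to\infty}\frac{\ln\mathbf E(u(t,x)^p)}{t}\ \ge\ \beta^2 p(p-1)\,\underline G(\delta)-p\,C_0\,\delta^{-d_w}.
\]

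It then remains to optimize over $\delta$. For $0<\alpha<\frac{d_h}{2d_w}$, I would choose $\delta=K\beta^{-2/((1+2\alpha)d_w-d_h)}$ with a free parameter $K>0$; a direct computation shows that both $\beta^2\delta^{2\alpha d_w-d_h}$ and $\delta^{-d_w}$ are then a constant multiple of $\beta^{2d_w/((1+2\alpha)d_w-d_h)}$, so that after dividing by this power of $\beta$ and letting $\beta\to\infty$ (legitimate since $\delta<r_0$ for $\beta$ large) one is left with $c\,\hat m^2\,p(p-1)\,K^{-(d_h-2\alpha d_w)}-p\,C_0\,K^{-d_w}$. Because $0<d_h-2\alpha d_w<d_w$, fixing $K$ large enough (independently of $p$) makes the first term dominate, which gives \eqref{eq-lap-lb-1} with a constant $C>0$ independent of $p$; optimizing over $K$ one can even replace $p(p-1)$ by the larger quantity $p(p-1)^{d_w/((1+2\alpha)d_w-d_h)}$. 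For $\alpha=\frac{d_h}{2d_w}$ I would take $\delta=\beta^{-2/d_w}$, so that $\underline G(\delta)\ge \frac{c\hat m^2}{d_w}\ln\beta$ for $\beta$ large while $\delta^{-d_w}=\beta^2$; since $\beta^2\ln\beta$ dominates $\beta^2$, dividing by $\beta^2\ln\beta$ and letting $\beta\to\infty$ gives the stated bound.

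The main obstacle is the uniform small-ball exit estimate, namely the bound $\lambda_1(B(x,\delta))\le C_0\delta^{-d_w}$ with $C_0$ independent of $\delta$: this relies on the sub-Gaussian upper bound \eqref{eq:subGauss-upper} together with the standard exit-time tail estimate $\mathbf P_x\bigl(T_{B(x,R)}\le s\bigr)\le C e^{-c(R^{d_w}/s)^{1/(d_w-1)}}$ and the Markov property, and on the fact that small balls are inner uniform with non-degenerating constants. The remaining steps are routine bookkeeping: making sure $\hat m$ does not depend on $\delta$ so that the optimization is clean, and checking that one may take the $\liminf$ over $\delta$ (resp. the supremum over $K$) before letting $\beta\to\infty$.
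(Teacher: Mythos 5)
Your proposal is correct and follows essentially the same route as the paper's proof: the Feynman--Kac representation \eqref{eq-p-mom-fk}, confinement of the $p$ Brownian motions in a small ball of radius $\delta$, the lower bound of Proposition \ref{estimate G} applied with $2\alpha$ (with $\Phi_1$ bounded below near $x$), the exit-time bound $\lambda_1(B(x,\delta))\le C_0\delta^{-d_w}$, and the optimization $\delta\simeq\beta^{-2/((1+2\alpha)d_w-d_h)}$ (resp.\ $\delta=\beta^{-2/d_w}$ when $\alpha=\tfrac{d_h}{2d_w}$). The eigenvalue estimate you flag as the main obstacle is exactly the step the paper handles by citing \cite[Lemma 7.1]{mariano2022spectral}, which derives it from the sub-Gaussian heat kernel bounds, so no new argument is needed there.
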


\begin{proof}
We will only deal with the case $0<\alpha <\frac{d_h}{2d_w}$. The case $\alpha =\frac{d_h}{2d_w}$ is almost identical and left to the reader. As in \eqref{eq-up-1TA} we start by using the Feynman-Kac representation~\eqref{eq-p-mom-fk}. In this representation we lower bound the stopping time $T^j$ by another stopping time $T^j_\varepsilon$ defined in the following way: let $0<\varepsilon <\varepsilon_0<r_0$, where $r_0$ is such that $B(x,r_0) \subset U$ and $\varepsilon_0$ is chosen so that $u_0(y)\ge c_1^{1/p}>0$ on $B(x,\varepsilon_0)$. Then we set
\[
T_\varepsilon^j=\inf \{t \ge 0, B_t^j \notin B(x,\varepsilon) \}.
\]
Since $T^j>T^j_\varepsilon$ we deduce from \eqref{eq-p-mom-fk} that 
\begin{align*}
    \mathbf E (u(t,x)^p)%&=\mathbf{E}\left( \prod_{j=1}^p u_0(B_t^{x,j}) \exp\left\{\beta^2\sum_{i\not=k}^p\int_0^tG_{2\alpha}^D(B^{x,i}_s,B^{x,k}_s)ds\right\} 1_{t<T^j}\right) \\
     &\ge c_1 \mathbf{E}\left( \prod_{j=1}^p  \exp\left\{\beta^2\sum_{1\le i, k\le p,i\not=k}\int_0^tG_{2\alpha}^D(B^{x,i}_s,B^{x,k}_s)ds\right\} 1_{(t<T_\varepsilon^j)}\right).
\end{align*}
%\begin{align}\label{eq-up-lb-j}
%    \mathbf E (u(t,x)^p) \ge C_1^p \mathbf{E}\left( \prod_{j=1}^p  \exp\left\{ c_2 \beta^2\sum_{i\not=k}^p\int_0^t\frac{ds}{d(B_s^{x,i},B_s^{x,k})^{d_h-2\alpha d_w}}\right\} 1_{(t<T_\varepsilon^j)}\right)
%\end{align}
Furthermore one can argue as in \eqref{eq-exp-lb} and use the lower bound in Proposition \ref{estimate G} (applied with $\alpha:=2\alpha$) in order to get the existence of a constant $c_2>0$ such that 
\begin{equation}\label{eq-lb-mid-j}
      \mathbf E (u(t,x)^p) \ge c_1 \mathbf{E}\left( \prod_{j=1}^p  \exp\left\{ c_2 \beta^2\sum_{1\le i,k\le p,i\not=k}\int_0^t\frac{ds}{d_U(B_s^{x,i},B_s^{x,k})^{d_h-2\alpha d_w}}\right\} 1_{(t<T_\varepsilon^j)}\right).
\end{equation}
Next we resort to some explicit lower bounds in \eqref{eq-lb-mid-j}. Namely for $s \le \min_j T^j_\varepsilon $ all the Brownian motions $B^j$ are confined in $B_U(x,\varepsilon)$. Hence we have $d_U(B_s^{x,i},B_s^{x,k}) \le 2 \varepsilon$ for all $i\not=k$. This yields
\begin{align}\label{eq-uplb-surv}
    \mathbf E (u(t,x)^p) \ge c_1 \exp\left( c_3p(p-1) \frac{\beta^2}{\varepsilon^{d_h-2\alpha d_w}}t  \right) \mathbf{P}\left(  T_\varepsilon^1 >t\right)^p.
\end{align}
In addition, using again classical spectral theory arguments we get for $t$ sufficiently large
\[
\mathbf{P}\left(  T_\varepsilon^1 >t\right) \ge c_4 e^{ - \lambda_1 (B(x,\varepsilon)) t}.
\]
Moreover, it is known that the sub-Gaussian bound on the heat kernel  \eqref{eq:subGauss-upper} implies that $\lambda_1 (B(x,\varepsilon))$ is bounded from above by $\frac{c}{ \varepsilon^{d_w}}$ (see \cite[Lemma 7.1]{mariano2022spectral}). We therefore obtain
\[
\mathbf{P}\left(  T_\varepsilon^1 >t\right)\ge c_4 e^{ -c_5  \frac{t}{\varepsilon^{d_w}}}.
\]
Plugging this inequality into \eqref{eq-uplb-surv} we end up with 
\begin{align}\label{eq-lb-est-ell}
    \mathbf E (u(t,x)^p) \ge c_6 \exp \left\{ \left( c_3p(p-1) \frac{\beta^2}{\varepsilon^{d_h-2\alpha d_w}} - p \frac{c_5}{\varepsilon^{d_w}} \right)t \right\}.
\end{align}
We now optimize \eqref{eq-lb-est-ell} over the values of $\varepsilon$. To this aim, recall from Assumption \ref{hyp:dirichlet-form} that $d_h-d_w<0$. Hence we also have $d_h-d_w<2\alpha d_w$ and thus $d_h-2\alpha d_w<d_w$. Therefore one can pick a constant $c_7>0$ large enough so that
\[
\frac{c_3 }{c_7^{d_h-2\alpha d_w}} -  \frac{c_5}{c_7^{d_w}} >0 \, .
\]
Next we choose $\varepsilon$ of the form $c_7\beta^\gamma$ with $\gamma>0$ so that the two terms in the exponent in~\eqref{eq-lb-est-ell} are of the same magnitude. This is achieved whenever
$
\frac{\beta^2}{\varepsilon^{d_h-2\alpha d_w}}\simeq\frac{1}{\varepsilon^{d_w}}$ which yields
$ \varepsilon\simeq
\beta^{-\frac{2}{(1+2\alpha)d_w-d_h}}.
$
Summarizing those considerations we will pick $\varepsilon$ of the form 
\begin{align}\label{choice of epsilon in Laplace}
\varepsilon= \frac{c_7}{\beta^{\frac{2}{(1+2\alpha)d_w-d_h}}},
\end{align}
and notice that for $\beta$ large enough we indeed have $\varepsilon<\varepsilon_0$. Plugging the value \eqref{choice of epsilon in Laplace} into~\eqref{eq-lb-est-ell}, this allows to conclude that for $\beta$ large enough
\[
\mathbf E (u(t,x)^p) \ge c_8 \exp \left\{ c_{9} p(p-1)\beta^{ \frac{2d_w}{(1+2\alpha)d_w-d_h}} t \right\},
\]
which easily implies our claim \eqref{eq-lap-lb-1}. 

Let us briefly give some hints about the case $\alpha =\frac{d_h}{2d_w}$. The only difference with \eqref{eq-lap-lb-1} is that the lower bound in Proposition \ref{estimate G} yields
\[
\mathbf E (u(t,x)^p) \ge c_6 \exp \left\{ \left( -c_3p(p-1) \beta^2  \ln \varepsilon - p \frac{c_5}{\varepsilon^{d_w}} \right)t \right\} \, .
\]
Then in the optimization step we now need to choose $\varepsilon =\frac{1}{\beta^{2/d_w}}$.
\end{proof}

\begin{comment}
    \begin{remark}
In our setting, see \cite{MR2962091}, the sub-Gaussian heat kernel estimates for the heat kernel imply the Faber-Krahn inequality
\[
\lambda_1( B_U(x,r)) \ge \frac{C}{r^{d_w}}.
\]

Therefore  assumption \eqref{reverse Faber Krahn} is a reverse Faber-Krahn inequality and actually means that for small balls around $x$
\[
\lambda_1( B_U(x,r)) \simeq \frac{C}{r^{d_w}}.
\]
This assumption is not too restrictive and can be checked in several examples of interest. For instance, in the real line, the first eigenvalue of any interval $(a,b)$ is $\frac{c}{(b-a)^2}$ so that \eqref{reverse Faber Krahn} immediately holds. In the case of the Sierpi\'nski gasket $K$ and $U=K\setminus V_0$, by action of the self similarities, one sees that \eqref{reverse Faber Krahn} holds when $x$ is a vertex point, i.e.
\[
x\in \bigcup_{n\ge 1} \bigcup_{i_1,\cdots,i_n \in \{1,2,3 \}^n} \mathfrak{f}_{i_1} \circ \cdots \circ \mathfrak{f}_{i_n} (V_0).
\] 
\end{remark}
\end{comment}

\subsubsection{Lower bound for the moments in the case \texorpdfstring{$\alpha=0$}{alpha0}}

Our moment representation Theorem \ref{th: Feynman-Kac moments} was relying on the assumption $\alpha>0$. When $\alpha=0$, a similar representation would require a proper definition of intersection local times for independent Brownian motions on the uniform inner domain $U$. This is clearly out of the scope of the current article and is deferred to a subsequent publication. Our strategy to lower bound moments in the white noise case $\alpha=0$ is thus inspired by a direct analytic method spelled out in \cite{FoondunNualart}, with the caveat that it only works for $p=2$.

\begin{thm}\label{lower bound alpha zero} Let $\beta>0$ and consider an inner uniform domain $U$. Denote by $u$ the unique solution to \eqref{SHEtheot} driven by the white noise $W_0$ in Definition \ref{def:st white}. We assume the initial condition $u_0 $ to be bounded and nonnegative on $U$. Let $x \in U$ and assume that $u_0$ is bounded from below by a positive constant in a neighborhood of $x$. Recall that $\lambda_1=\lambda_1(U)$ designates the first eigenvalue of $-\Delta_{D,U}$. Then $u$ satisfies the following $L^2$-lower bound estimate:
\begin{align}\label{L^2 Lower bound white}
\liminf_{t \to +\infty} \frac{\ln \mathbf E(u(t,x)^2)}{t} \ge \Phi( \beta ^2)    -2\lambda_1,
\end{align}
where $\Phi$ is the inverse function of $y \to c_1 y e^{c_2y^{1/d_w}}$ and $c_1,c_2>0$ are positive constants that do not depend on $\beta$ but  depend on $x$.
\end{thm}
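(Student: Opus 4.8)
The strategy, following the direct method of \cite{FoondunNualart}, is to reduce the problem to a scalar renewal inequality for the second moment; this is also the reason only $p=2$ is reachable. Starting from the mild form \eqref{eq:PAM-mild form-Diri}, $u(t,x)=P^D_tu_0(x)+\beta I(t,x)$, the It\^o isometry for the integral $I$ against $W_0$ gives, for $F(t,x):=\mathbf E[u(t,x)^2]$,
\[
F(t,x)=\bigl(P^D_tu_0(x)\bigr)^2+\beta^2\int_0^t\int_U p^D_{t-s}(x,y)^2\,F(s,y)\,d\mu(y)\,ds ,
\]
which is finite by Theorem~\ref{existence PAM diri} with $\alpha=0$. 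For $p>2$ the same computation produces products of stochastic integrals whose expectation is an intersection local time of independent Brownian motions on $U$, an object not at our disposal here, so no self-contained recursion survives.

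Next I would localize near $x$. Choose $r>0$ with $B(x,2r)\subset U$ and $u_0\ge\varepsilon_0>0$ on $B(x,r)$, put $B:=B(x,r/2)$ and $G(t):=\inf_{y\in B}F(t,y)$. Two ingredients are used. First, since $u_0\ge\varepsilon_0\,1_{B(x,r)}$ and $P^D_t$ preserves positivity, Corollary~\ref{lem:hk-lower-spectral} gives $(P^D_tu_0(y))^2\ge c_0e^{-2\lambda_1t}$ uniformly on $B$. Second, using the sub-Gaussian lower bound \eqref{eq:subGauss-lowerD}, the positivity \eqref{eq: positivity of first ef} of $\Phi_1$ on $\overline B$, and the Ahlfors regularity of $\mu$ (which lets the metric ball $B$ satisfy a corkscrew condition), one gets, for all $y\in B$ and $\tau>0$,
\[
\int_B p^D_\tau(y,z)^2\,d\mu(z)\ge e^{-2\lambda_1\tau}\Psi_0(\tau),
\]
where — \emph{restricting the $z$-integration to a ball of fixed radius $\rho\le r/4$ around $y$} rather than one shrinking with $\tau$ — the function $\Psi_0$ is bounded, is bounded below on a compact $\tau$-interval, and behaves like $\tau^{-2d_h/d_w}e^{-c_1\tau^{-1/(d_w-1)}}$ as $\tau\to0^+$, hence vanishes super-exponentially at the origin. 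Restricting the $y$-integral to $B$ and extracting $\inf_{z\in B}F(s,z)$ gives
\[
G(t)\ge c_0e^{-2\lambda_1t}+\beta^2\int_0^te^{-2\lambda_1(t-s)}\Psi_0(t-s)\,G(s)\,ds .
\]

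The last step is a renewal/Tauberian analysis. The Laplace transform $\widehat\Psi_0(\mu)=\int_0^\infty e^{-\mu\tau}\Psi_0(\tau)\,d\tau$ is finite and strictly decreasing on $(0,\infty)$, with $\widehat\Psi_0(0^+)=+\infty$ and $\widehat\Psi_0(+\infty)=0$, and a Laplace-type estimate on the small-$\tau$ profile of $\Psi_0$ — in which the exponent $1/d_w$ arises from $\min_{\tau>0}\{\mu\tau+c_1\tau^{-1/(d_w-1)}\}\asymp\mu^{1/d_w}$ — yields a clean lower bound $\widehat\Psi_0(\mu)\ge c_1'\mu^{-1}e^{-c_2\mu^{1/d_w}}$. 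Iterating the renewal inequality gives $G\ge g$, where $g$ solves the renewal \emph{equation} $g=c_0e^{-2\lambda_1\cdot}+\beta^2(e^{-2\lambda_1\cdot}\Psi_0)\ast g$, whose Laplace transform $\widehat g(\sigma)=\dfrac{c_0/(\sigma+2\lambda_1)}{1-\beta^2\widehat\Psi_0(\sigma+2\lambda_1)}$ has its rightmost singularity at the simple pole $\sigma^*$ determined by $\beta^2\widehat\Psi_0(\sigma^*+2\lambda_1)=1$. Since $\widehat\Psi_0(\mu)\ge c_1'\mu^{-1}e^{-c_2\mu^{1/d_w}}$ and $\widehat\Psi_0$ is decreasing, one has $\sigma^*+2\lambda_1\ge\mu^*$, where $\mu^*$ solves $\beta^2=c_1\mu^*e^{c_2(\mu^*)^{1/d_w}}$, i.e. $\mu^*=\Phi(\beta^2)$ for $\Phi$ the inverse of $y\mapsto c_1ye^{c_2y^{1/d_w}}$. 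A standard renewal theorem for spread-out kernels then gives $g(t)\sim Ce^{\sigma^*t}$, so that $\mathbf E[u(t,x)^2]=F(t,x)\ge G(t)\ge g(t)$ yields $\liminf_{t\to\infty}\frac{\ln\mathbf E[u(t,x)^2]}{t}\ge\sigma^*\ge\Phi(\beta^2)-2\lambda_1$, which is \eqref{L^2 Lower bound white}.

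I expect the main obstacles to be: (i) the heat-kernel $L^2$-mass lower bound with the stated super-exponential small-time profile, which requires \eqref{eq:subGauss-lowerD} together with a careful geometric argument that the fixed-radius ball around $y$ meets $B$ in $\mu$-measure $\gtrsim\rho^{d_h}$; and (ii) the renewal-theoretic conclusion, where one must check enough regularity of $\widehat\Psi_0$ near $\sigma^*+2\lambda_1$ for the pole to be simple and dominant and for the inversion to yield a genuine $\liminf$ rather than only a $\limsup$ for $t^{-1}\ln G(t)$. The spectral inputs — the $e^{-\lambda_1t}$ decay of $P^D_t$ and the positivity of $\Phi_1$ — are all supplied by Section~\ref{Dirichlet boundary}.
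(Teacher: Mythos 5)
Your proposal is correct and follows essentially the same route as the paper's proof: the mild formulation plus It\^o isometry yields a renewal inequality for $\inf_{y\in B(x,r/2)}\mathbf E(u(t,y)^2)$, the Dirichlet heat kernel lower bound \eqref{eq:subGauss-lowerD} together with the positivity of $\Phi_1$ controls the kernel, and the Laplace-transform pole analysis with the Laplace-method estimate $\hat\mu(\rho)\asymp\rho^{-1}e^{-c\rho^{1/d_w}}$ produces exactly the function $\Phi$ in \eqref{L^2 Lower bound white}. The only (harmless) deviations are cosmetic: you keep a $\tau^{-2d_h/d_w}$ prefactor by integrating over a fixed-radius ball (the paper simply bounds the exponential uniformly over $B(x,r/2)$, avoiding the corkscrew argument), and you conclude via the key renewal theorem where the paper invokes Wiener--Ikehara; also note that the lower bound on $\Psi_0$ holds on all of $[1,\infty)$, which is what actually justifies your claim $\widehat\Psi_0(0^+)=+\infty$.
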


\begin{proof}
Let $x \in U$ and recall that we assume $u_0$ is bounded from below by a positive constant in a neighborhood of $x$. Since $U$ is specified in Definition~\ref{Def: inner uni domain} to be an open set, one can consider $r>0$ such that $B(x,2r) \subset U$ and such that $u_0(y) \ge c >0$ for all $y \in B(x,r)$. We now split the proof in a few steps. 

\noindent{\it Step 1: Deriving a renewal equation.}  Consider $y\in B(x, r)$ with $r$ as above. We compute the second moment of $u(t,y)$ from the mild formulation \eqref{eq:PAM-mild form-Diri} of the equation. Invoking the fact that the term $I(t,x)$ is centered and applying It\^{o}'s isometry with the inner product \eqref{eq-Gau-cov}, we get
    \begin{align}\label{thm 3.19 ms 0}
    \mathbf E\left(u(t,y)^2\right)  =\left(P_t^{D}u_0(y)\right)^2
    + \beta^2 \int_0^t \int_U p_{t-s}^{D}(y,z)^2 \, \mathbf E\left( u(s,z)^2\right)d\mu(z) ds.
    \end{align}
 Since $u_0(z)> c$ on $B(x,r)$, we thus get
 \[
     \mathbf E\left(u(t,y)^2\right) \ge c^2 \left(P_t^{D}1_{B(x,r)}(y)\right)^2
     + \beta^2 \int_0^t \int_{B(x,r/2)} p_{t-s}^{D}(y,z)^2 \, \mathbf E\left( u(s,z)^2\right)d\mu(z) ds.
 \]
 From Lemma \ref{lem:hk-lower-spectral}, this gives that for every $y \in B(x,r/2)$,
\[
\mathbf E\left(u(t,y)^2\right) \ge c^2 e^{-2\lambda_1 t}+ \beta^2 \int_0^t \int_{B(x,r/2)} p_{t-s}^{D}(y,z)^2 \mathbf E\left( u(s,z)^2\right)d\mu(z) ds.
\]
By denoting $\phi(t)= \inf_{y \in B(x,r/2)} \mathbf E\left(u(t,y)^2\right)$, we  get the following inequality
\begin{align}\label{thm 3.19 ms 1}
\phi(t) \ge c^2 e^{-2\lambda_1 t}+ \beta^2 \int_0^t \left( \inf_{y \in B(x,r/2)}\int_{B(x,r/2)} p_{t-s}^{D}(y,z)^2 d\mu(z) \right) \phi(s) ds.
\end{align}
We now invoke the lower bound \eqref{eq:subGauss-lowerD}, together with the fact that the eigenfunction $\Phi_1$ is strictly positive on $B(x,r/2)=B_U(x,r/2)$, in order to get 
\begin{align}\label{thm 3.19 ms 2}
\inf_{y \in B(x,r/2)}\int_{B(x,r/2)} p_{t-s}^{D}(y,z)^2 d\mu(z) & \ge c_1 \exp \left( -2\lambda_1 (t-s) -\frac{c_2}{(t-s)^{\frac{1}{d_w-1}}}\right).
\end{align}

Gathering \eqref{thm 3.19 ms 1} and \eqref{thm 3.19 ms 2}, we thus get the renewal identity
\begin{align}\label{thm 3.19 ms 3}
\psi(t) \ge c_1 +  c_2 \beta^2 \int_0^t \exp\left(-\frac{c_3}{(t-s)^{\frac{1}{d_w-1}}}\right) \psi(s) ds, \quad\mathrm{where}\ \ \psi(t)=e^{2\lambda_1 t} \phi(t).
\end{align}
It is then easily deduced from \eqref{thm 3.19 ms 3} that $\psi (t) \ge \zeta (t)$ where $\zeta (t)$ solves the renewal equation
\begin{align}\label{thm 3.19 ms 4}
\zeta(t) = c_1 + c_2 \beta^2\int_0^t \exp\left(-\frac{c_3}{(t-s)^{\frac{1}{d_w-1}}}\right)  \zeta(s) ds.
\end{align}

\noindent{\it Step 2: Asymptotic behavior of $\zeta$ in the renewal equation.}  Our lower bound \eqref{L^2 Lower bound white} is now reduced to a lower bound on the function $\zeta$ defined by~\eqref{thm 3.19 ms 4}. Furthermore, the estimate for $\zeta$ in \eqref{thm 3.19 ms 4} is based on Laplace transform techniques. That is, set $\hat{\zeta} (\rho)=\int_0^{+\infty} e^{-\rho t} \zeta (t) dt$ for all $\rho>0$. Applying Laplace transform on both sides of \eqref{thm 3.19 ms 4}  yields
\begin{align}\label{thm 3.19 ms 5}
\hat{\zeta}(\rho)=\frac{c_1}{\rho}+ c_2 \beta^2 \hat{\mu}(\rho) \hat{\zeta}(\rho),\quad \mathrm{where}\quad  
\hat{\mu}(\rho)=\int_0^{+\infty} \exp\left(-\rho t-\frac{c_3}{ t^{\frac{1}{d_w-1}}}\right) dt. 
\end{align}
Equation \eqref{thm 3.19 ms 5} is now easily solved algebraically and we end up with
\begin{align}\label{thm 3.19 ms 6}
\hat{\zeta}(\rho)=\frac{c_1}{\rho(1 -c_2\beta^2  \hat{\mu}(\rho)) }\, .
\end{align}
In order to ensure a proper definition for the right hand-side of \eqref{thm 3.19 ms 6}, we just want to make sure that $c_2\beta^2\hat{\mu}(\rho)<1$. This is achieved by monotonicity arguments whenever $\rho>\rho_c$, where $\rho_c$ is the unique positive number such that
\begin{align}\label{thm 3.19 ms 7}
\hat{\mu}(\rho_c)=\frac{1}{c_2\beta^2 }.
\end{align}
In addition, due to the fact that $\hat{\mu}$ is a differentiable function on $\{\rho>0\}$, there exists a constant $c>0$ such that
\begin{align}\label{thm 3.19 ms 7.5}
\lim_{\rho\downarrow \rho_c}(\rho-\rho_c)\hat{\zeta}(\rho)=c.
\end{align}
As explained e.g. in \cite{Korevaar}, the Wiener-Ikahara theorem for Laplace transforms with poles states that \eqref{thm 3.19 ms 7} implies
\begin{align}\label{thm 3.19 ms 8}
\liminf_{t \to +\infty} \frac{\ln \zeta (t)}{t}=\rho_c.
\end{align}

\noindent{\it Step 3: Estimating $\rho_c$.}  In order to estimate $\rho_c$ in \eqref{thm 3.19 ms 8} we go back to its definition \eqref{thm 3.19 ms 7}. Then we estimate $\hat{\mu}(\rho)$ as given in \eqref{thm 3.19 ms 5} in the following two regimes.

\noindent
\emph{(1) Case of a  large $\rho$.} In this situation we use Laplace's method. Namely denote $f_{\rho}(t):= \rho t+{c_3} t^{-\frac{1}{d_w-1}}$. Clearly it has unique global minimum at a point $t^{*}$ of the form $t^*=( \frac{c_3}{(d_w-1)\rho})^{(d_w-1)/d_w}$. Moreover we have
\begin{align}\label{eq-LM-ft*}
 &f_{\rho}(t^*)=C_1\rho^{\frac{1}{d_w}},\quad\text{and}\quad 
 f_{\rho}''(t^*)=C_2\rho^{2-\frac1{d_w}} \, , 
\end{align} 
where $C_1, C_2$ are positive constants that depend on $c_3$ and $d_w$. By Laplace's method we know that $\hat{\mu}(\rho)=\int_0^\infty e^{-f_{\rho}(t)}dt$ is comparable to $e^{-f_{\rho}(t^*)}\sqrt{\frac{2\pi}{|f_{\rho}''(t^*)|}}.$
Plugging  \eqref{eq-LM-ft*} into the above expression we find that for $\rho$ away from 0 we have some constants $c_4,c_5,c_6,c_7>0$ such that
\[
\frac{c_4}{\rho^{1-\frac{1}{2d_w}}} e^{-c_5 \rho^{1/d_w}} \le \hat{\mu}(\rho) \le \frac{c_6}{\rho^{1-\frac{1}{2d_w}}} e^{-c_7 \rho^{1/d_w}}.
\]

\noindent
\emph{(2) Case $\rho$ close to $0$.} For $\rho$ close to $0$, note that 
\begin{align}
  \hat{\mu}(\rho)\ge \int_1^\infty e^{-\rho t}e^{-c_3t^{-\frac{1}{d_w-1}}}dt \ge 
  e^{-c_3}\frac{1}{\rho}e^{-\rho} ,
 \quad\text{and}\quad 
 \hat{\mu}(\rho)\le \int_0^\infty e^{-\rho t}dt=\frac{1}{\rho} .
\end{align}
Therefore as $\rho\to0$, we end up with $\hat{\mu}(\rho) \simeq C\rho^{-1}$.

Summarizing the estimates for small and large $\rho$, we have thus obtained that there exist some positive constants $C_{1},C_{2},C_{3},C_{4}$ such that for all $\rho>0$ we have
\[
\frac{C_1}{\rho} e^{-C_2 \rho^{1/d_w}} \le \hat{\mu}(\rho) \le \frac{C_3}{\rho} e^{-C_4 \rho^{1/d_w}}.
\]
Reporting this bound in \eqref{thm 3.19 ms 7} yields the existence of a constant $C_{5}$ such that
\[
\beta^2 \le C_{5} \rho_c e^{C_2\rho_c^{{1}/{d_w}}} \, 
\quad\Longleftrightarrow\quad
\rho_{c} \ge \Phi(\beta^2) \, , 
\]
where  $\Phi$ denotes the inverse function of $x \to C_{5} x e^{C_2x^{1/d_w}}$. Inserting this inequality in~\eqref{thm 3.19 ms 8} we have
\begin{align}\label{eq-liminf-ge}
  \liminf_{t \to +\infty} \frac{\ln \zeta (t)}{t}=\rho_c\ge \Phi(\beta^2).  
\end{align}
%Finally plugging the above bound back into \eqref{thm 3.19 ms 3} we obtain the desired
%conclusion \eqref{L^2 Lower bound white}.

\noindent{\it Step 4: Conclusion.}  We now conclude in the following way: recall that $\psi(t)\ge \zeta(t)$ and we have seen that $\zeta(t)$ verifies \eqref{eq-liminf-ge}. Therefore $\ln(\psi(t))/t$ verifies \eqref{eq-liminf-ge} as well. In addition, owing to~\eqref{thm 3.19 ms 3}, we have 
\[
\liminf_{t\to\infty} \frac{\phi(t)}{t}=\liminf_{t\to\infty}\frac{\psi(t)}{t}-2\lambda_1\ge \Phi(\beta^2)-2\lambda_1.
\]
Recalling that $\phi(t)=\inf_{y\in B(x,r/2)}\mathbf{E}(u(t,y)^2)$, this proves our claim \eqref{L^2 Lower bound white}.
\end{proof} 

When the boundary of a domain $A$ satisfies a cone-type condition, one can substantially improve the lower bound \eqref{L^2 Lower bound white}. This is summarized in the theorem below.

\begin{thm}\label{lower bound moment cusp}
Under the same conditions as for Theorem \ref{lower bound alpha zero} , let  $A$ be an open subset of $U$  such that $\overline A \subset U$ and such that $A$ satisfies the  hypothesis \eqref{no cusp}. We consider an initial condition $u_0 \in L^\infty(U)$ such that $u_0$ is non negative and $u_0 \ge \varepsilon >0$ on $A$. Then the solution $u$ to \eqref{SHEtheot} is such that for all $p\geq2$ and for all $x \in A$ we have
 \begin{align}\label{improved lower bound cusp}
\liminf_{t \to +\infty} \frac{\ln \mathbf E(u(t,x)^2)}{t} \ge c\beta^{\frac{2d_w}{d_w-d_h}}     -2\lambda_1,
\end{align}
where $c>0$ does not depend on $\beta$.
\end{thm}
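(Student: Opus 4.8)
The plan is to follow the same scheme as Theorem \ref{lower bound alpha zero}, but to exploit the improved lower bound \eqref{jku} from Corollary \ref{lem:hk-lower-spectral no cusp}, which holds precisely because $A$ satisfies the cone-type condition \eqref{no cusp}. First I would start from the mild formulation \eqref{eq:PAM-mild form-Diri} and take second moments, using It\^o's isometry with the white-noise inner product, to obtain the integral identity
\begin{equation*}
\mathbf E\bigl(u(t,y)^2\bigr)=\bigl(P_t^{D}u_0(y)\bigr)^2+\beta^2\int_0^t\int_U p_{t-s}^{D}(y,z)^2\,\mathbf E\bigl(u(s,z)^2\bigr)\,d\mu(z)\,ds.
\end{equation*}
Since $u_0\ge\varepsilon$ on $A$, restricting the spatial integral to $A$ and dropping the first term gives, for $y\in A$,
\begin{equation*}
\mathbf E\bigl(u(t,y)^2\bigr)\ge\varepsilon^2\bigl(P_t^{D}1_A(y)\bigr)^2+\beta^2\int_0^t\int_{A}p_{t-s}^{D}(y,z)^2\,\mathbf E\bigl(u(s,z)^2\bigr)\,d\mu(z)\,ds,
\end{equation*}
and then bounding $\mathbf E(u(s,z)^2)$ below by $\phi_A(s):=\inf_{z\in A}\mathbf E(u(s,z)^2)$ and using \eqref{hk-lower-spectral no cusp} and \eqref{jku} from Corollary \ref{lem:hk-lower-spectral no cusp} yields a renewal inequality
\begin{equation*}
\psi(t)\ge c_1+c_2\beta^2\int_0^t\frac{1}{(t-s)^{d_h/d_w}}\psi(s)\,ds,\qquad\psi(t):=e^{2\lambda_1 t}\phi_A(t).
\end{equation*}

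The crucial point is that the kernel here is $t^{-d_h/d_w}$ rather than the bounded (integrable, exponentially-damped) kernel $\exp(-c_3(t-s)^{-1/(d_w-1)})$ appearing in Theorem \ref{lower bound alpha zero}; since $d_h/d_w<1$ by Assumption \ref{hyp:dirichlet-form}, this kernel is still locally integrable, so the associated renewal equation $\zeta(t)=c_1+c_2\beta^2\int_0^t(t-s)^{-d_h/d_w}\zeta(s)\,ds$ has a solution with $\psi\ge\zeta$. Next I would analyze $\zeta$ by Laplace transform: writing $\hat\zeta(\rho)=\int_0^\infty e^{-\rho t}\zeta(t)\,dt$ and using $\int_0^\infty e^{-\rho t}t^{-d_h/d_w}\,dt=\Gamma(1-d_h/d_w)\rho^{d_h/d_w-1}$, one gets
\begin{equation*}
\hat\zeta(\rho)=\frac{c_1}{\rho\bigl(1-c_2\beta^2\Gamma(1-d_h/d_w)\rho^{d_h/d_w-1}\bigr)},
\end{equation*}
whose rightmost pole $\rho_c$ solves $c_2\beta^2\Gamma(1-d_h/d_w)\rho_c^{d_h/d_w-1}=1$, i.e. $\rho_c=C\beta^{2/(1-d_h/d_w)}=C\beta^{2d_w/(d_w-d_h)}$. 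Applying the Wiener--Ikehara Tauberian theorem (as in \cite{Korevaar}) then gives $\liminf_{t\to\infty}t^{-1}\ln\zeta(t)=\rho_c$, hence $\liminf_{t\to\infty}t^{-1}\ln\psi(t)\ge C\beta^{2d_w/(d_w-d_h)}$.

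Finally, unwinding $\psi(t)=e^{2\lambda_1 t}\phi_A(t)$ and $\phi_A(t)=\inf_{z\in A}\mathbf E(u(t,z)^2)$ gives, for every $x\in A$,
\begin{equation*}
\liminf_{t\to+\infty}\frac{\ln\mathbf E(u(t,x)^2)}{t}\ge C\beta^{\frac{2d_w}{d_w-d_h}}-2\lambda_1,
\end{equation*}
which is \eqref{improved lower bound cusp}. I expect the main obstacle to be the justification of the Tauberian step for a kernel with an algebraic (rather than exponential or bounded) decay: one must check that $\hat\mu(\rho):=\Gamma(1-d_h/d_w)\rho^{d_h/d_w-1}$ is strictly decreasing from $+\infty$ to $0$ on $(0,\infty)$ so that $\rho_c$ is well defined and simple, and that $\rho\mapsto(\rho-\rho_c)\hat\zeta(\rho)$ has a finite nonzero limit at $\rho_c$, so that the Wiener--Ikehara hypotheses are genuinely met; this is where a little care with the differentiability and asymptotics of $\hat\mu$ near $\rho_c$ is required, though it is routine. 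A secondary technical point is to make sure the renewal comparison $\psi\ge\zeta$ is legitimate despite the integrable singularity of the kernel at $0$, which follows from a standard Picard/monotone-iteration argument on $[0,T]$ for each finite $T$.
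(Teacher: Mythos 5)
Your proposal is correct and follows essentially the same route as the paper's own proof: the mild-form second-moment identity, the lower bounds \eqref{hk-lower-spectral no cusp} and \eqref{jku} (valid under the cusp condition \eqref{no cusp}) to produce the renewal inequality with kernel $(t-s)^{-d_h/d_w}$, and the Laplace-transform/Wiener--Ikehara analysis identifying the pole $\rho_c\simeq c\,\beta^{2d_w/(d_w-d_h)}$. Your formula for $\hat\zeta(\rho)$ coincides with the paper's after factoring out $\rho^{d_h/d_w}$, so the argument matches the paper's step for step.
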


\begin{proof}
The expression \eqref{thm 3.19 ms 0} for the second moment of $u(t,y)$ is still valid. Furthermore, we have $u_0(z)\geq\varepsilon 1_A(z)$. Hence we get
    \begin{align*}
    \mathbf E\left(u(t,y)^2\right) %& =\left(P_t^{D}u_0(y)\right)^2+ \beta^2 \int_0^t \int_U p_{t-s}^{D}(y,z)^2 \mathbf E\left( u(s,z)^2\right)d\mu(z) ds \\
     \ge \varepsilon^2\left(P_t^{D}1_A(y)\right)^2+ \beta^2 \int_0^t \int_A p_{t-s}^{D}(y,z)^2 \mathbf E\left( u(s,z)^2\right)d\mu(z) ds.
    \end{align*}
    Similarly to what we did in the proof of Theorem \ref{lower bound alpha zero}, set 
    $$\phi(t)= e^{2\lambda_1 t} \inf_{x \in A} \mathbf E\left(u(t,x)^2\right).$$
    Along the same lines as for \eqref{thm 3.19 ms 1}-\eqref{thm 3.19 ms 2}-\eqref{thm 3.19 ms 3} and resorting to relation \eqref{jku}, which is valid whenever \eqref{no cusp} is fulfilled, we get that $\phi(t)\geq\zeta(t)$ where $\zeta$ solves the renewal equation 
%\[
%\phi(t) \ge c_1 + c_2 \beta^2 \int_0^t \frac{\phi(s)}{(t-s)^{d_h/d_w}}  ds.
%\]
%We deduce from it that $\psi (t) \ge \zeta (t)$ where $\zeta (t)$ solves the renewal equation
\begin{align}\label{eq: zeta}
\zeta(t) = c_1 + c_2 \beta^2  \int_0^t \frac{1}{(t-s)^{d_h/d_w}} \zeta(s) ds.
\end{align}
Next we still follow closely the argument of Theorem \ref{lower bound alpha zero}, the main difference being that now we can compute explicitly the Laplace transform of $t\mapsto t^{-d_h/d_w}$. Namely applying Laplace transform on both sides of \eqref{eq: zeta} we get
\[
\hat{\zeta}(\rho)=\frac{c_1}{\rho}+c_2 \beta^2  \hat{\mu}(\rho) \hat{\zeta}(\rho),\quad\mathrm{where}\ \ \hat{\mu}(\rho)=\int_0^{+\infty} \frac{e^{-\rho t} }{t^{d_h/d_w}}dt =c_3 \rho^{-1+d_h/d_w}.
\]
This equation can be solved algebraically and we get
$$\hat{\zeta}(\rho)=\frac{c_1}{\rho^{d_h/d_w}\left(\rho^{1-d_h/d_w}-\beta^2c_2c_3\right)}.
$$
In particular, the positive pole for $\hat{\zeta}$ is of the form
$$\rho_c=c_4\beta^{\frac{2d_w}{d_w-d_h}}.$$
The same steps as for Theorem \ref{lower bound alpha zero} yield our claim \eqref{improved lower bound cusp}.
\end{proof}

\subsection{ Some properties of the solutions, Intermittency}\label{sec: property intermittency}
As mentioned in the introduction, when considering a Dirichlet type boundary condition in an inner uniform domain, the parabolic Anderson model exhibits an interesting phase transition for the behavior of moments. This is spelled out below. 

\begin{cor}\label{cor-interm}
Let $U$ be an inner uniform domain and $u_0  \in L^\infty(U)$ a non negative initial condition. We consider a noise $W_\alpha$ with $\alpha\ge0$ and the corresponding solution $u$  to the equation~\eqref{SHEtheot}. Let also $p\ge2$ and $x \in U$. Then there exists $\beta_{c,1}>0$ such that for all $\beta \le \beta_{c,1}$, we have 
\begin{align}\label{eq-p-mom-lim}
    \lim_{t \to +\infty} \mathbf{E} ( u(t,x)^p) =0.
\end{align}
If we assume moreover that $\alpha >0$, and that $u_0$ satisfies $u_0>\varepsilon >0$ in a neighborhood of $x$, then there exists $\beta_{c,2}>0$ such that for every $\beta \ge \beta_{c,2}$ \begin{align}\label{eq-p-mom-lim-l}
\lim_{t \to +\infty} \mathbf{E} ( u(t,x)^p) =+\infty.
\end{align}
\end{cor}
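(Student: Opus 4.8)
The statement is essentially a dichotomy that follows by combining the upper bound from Theorem~\ref{thm:p-moment-upper-Diri} with the lower bounds from Theorems~\ref{thm:p-moment-lower-Diri} and~\ref{lower bound alpha zero}. The plan is to read off the two critical values of $\beta$ directly from the Lyapounov exponent estimates already proved.

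For the first assertion \eqref{eq-p-mom-lim}, I would start from the $L^p$-upper bound \eqref{eq-Lp-ub}, namely
\[
\limsup_{t\to+\infty}\frac{\ln\mathbf E(u(t,x)^p)}{t}\le \frac p2\,\Theta_\alpha\!\big(\beta\sqrt{p-1}\big)-p\lambda_1 .
\]
Since $\Theta_\alpha$ is continuous and increasing with $\Theta_\alpha(0)=0$ (see the asymptotics in Theorem~\ref{existence PAM diri}), the quantity $\frac12\Theta_\alpha(\beta\sqrt{p-1})-\lambda_1$ is a continuous increasing function of $\beta$ which is strictly negative at $\beta=0$. Hence there exists $\beta_{c,1}>0$ such that $\frac p2\Theta_\alpha(\beta\sqrt{p-1})-p\lambda_1<0$ for all $\beta\le\beta_{c,1}$ (one can simply take $\beta_{c,1}=\frac{1}{\sqrt{p-1}}\Theta_\alpha^{-1}(2\lambda_1)$, or any smaller positive number). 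For such $\beta$ the exponential rate is negative, so $\mathbf E(u(t,x)^p)\to0$ as $t\to+\infty$, which is \eqref{eq-p-mom-lim}. Note this half works for all $\alpha\ge0$, including the white-noise case, because the upper bound does.

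For the second assertion \eqref{eq-p-mom-lim-l}, under the extra hypotheses $\alpha>0$ and $u_0>\varepsilon$ near $x$, I would invoke Theorem~\ref{thm:p-moment-lower-Diri}. Pick any open $A$ with $x\in A$ and $\overline A\subset U$ and small enough that $u_0|_A\ge\varepsilon$; then \eqref{eq-up-lb-asmp} gives
\[
\liminf_{t\to+\infty}\frac{\ln\mathbf E(u(t,x)^p)}{t}\ge C\beta^2 p(p-1)-p\lambda_1(A),
\]
with $C>0$ independent of $\beta$. The right-hand side is strictly increasing in $\beta$ and tends to $+\infty$, so there is $\beta_{c,2}>0$ (e.g. $\beta_{c,2}=\sqrt{\lambda_1(A)/(C(p-1))}$) such that for every $\beta\ge\beta_{c,2}$ this liminf is strictly positive, forcing $\mathbf E(u(t,x)^p)\to+\infty$; this is \eqref{eq-p-mom-lim-l}.

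There is no real obstacle here — the theorem is a packaging corollary of the moment bounds. The only points requiring a line of care are: making sure the set $A$ in the lower-bound step can indeed be chosen inside the neighborhood of $x$ where $u_0>\varepsilon$ (which is automatic since $U$ is open and $u_0$ is bounded below near $x$), and recording that in the regime where $\alpha=0$ only the $p=2$ lower bound is available, so the divergence statement \eqref{eq-p-mom-lim-l} is correctly stated only under $\alpha>0$; for completeness one could remark that if $\alpha=0$ and $p=2$ a divergence statement still follows from Theorem~\ref{lower bound alpha zero} via $\Phi(\beta^2)-2\lambda_1>0$ for $\beta$ large, but this is outside the scope of the corollary as stated.
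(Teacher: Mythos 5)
Your proposal is correct and follows essentially the same route as the paper: the vanishing of moments comes from the upper bound of Theorem~\ref{thm:p-moment-upper-Diri} together with $\Theta_\alpha(\beta\sqrt{p-1})\to 0$ as $\beta\to 0$, and the divergence comes from Theorem~\ref{thm:p-moment-lower-Diri} applied with $A=B(x,r)$ for $r$ small enough so that $u_0\ge\varepsilon$ on $A$. The only (acknowledged) nuance is that with your explicit choice $\beta_{c,1}=\frac{1}{\sqrt{p-1}}\Theta_\alpha^{-1}(2\lambda_1)$ the rate vanishes at $\beta=\beta_{c,1}$, so one should take a strictly smaller value, exactly as you remark.
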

\begin{proof}
The first assertion \eqref{eq-p-mom-lim} is derived from the upper bound \eqref{eq-Lp-ub} in Theorem \ref{thm:p-moment-upper-Diri} for $p\ge2$ (see \eqref{eq-lim-u2-ub} for the particular case $p=2$). Specifically, identity \eqref{eq-p-mom-lim} is satisfied for a fixed $p$ as long as the right hand side
\[
h_{\alpha,\beta}:= \frac{p}{2}\Theta_\alpha(\beta \sqrt{p-1})-p\lambda_1
\]
verifies $h_{\alpha,\beta}<0$ for $\beta<\beta_{c,1}$. Now it is trivially checked from Theorem \ref{existence PAM diri} that for all $\alpha\ge0$
\[
\lim_{\beta\to0}\Theta_\alpha(\beta\sqrt{p-1})=0.
\]
We thus get our desired conclusion $h_{\alpha,\beta}<0$ for $\beta$ small enough. This ends the proof of~\eqref{eq-p-mom-lim}.

We now turn to the claim \eqref{eq-p-mom-lim-l}. Since we assume that $u_0>\varepsilon$ in a neighborhood of $x$ and $U$
 is an open set, one can work under the conditions of Theorem \ref{thm:p-moment-lower-Diri} with $A=B(x,r)$ for $r$ small enough. Also recall that we are considering $\alpha>0$. Hence the lower bound \eqref{eq-up-lb} holds true. Since the right hand side 
 \begin{align}\label{eq-G}
     G(\beta):= C\beta^2 p(p-1)-p\lambda_1(A)
 \end{align}
of \eqref{eq-up-lb} verifies $\lim_{\beta\to \infty}G(\beta)=\infty$, we clearly have $G(\beta)>0$ for $\beta>\beta_{c,2}$ and a large enough $\beta_{c,2}$. This yields relation \eqref{eq-p-mom-lim-l}.
\end{proof}

We now get closer to the intermittency phenomenon, by proving that large moments of $u(t,x)$ are divergent as $t\to\infty$.
\begin{cor}
Let the conditions of Corollary \ref{cor-interm} prevail. Consider $\alpha > 0$, $\beta > 0$ and $x\in U$ such that $u_0 >\varepsilon >0$ in a neighborhood of $x$. Then there exists $p_0$ such that  for all $p \ge p_0$ we have 
\[
\lim_{t \to +\infty} \mathbf{E} ( u(t,x)^p) =+\infty.
\]
\end{cor}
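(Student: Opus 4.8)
The plan is to read this statement off almost directly from the moment lower bound in Theorem \ref{thm:p-moment-lower-Diri}, which is essentially designed for exactly this purpose. First I would use the hypotheses that $u_0 > \varepsilon > 0$ on a neighborhood of $x$ and that $U$ is open to select a radius $r > 0$ small enough that $\overline{B(x,r)} \subset U$ and $u_0 \ge \varepsilon$ on $A := B(x,r)$. This set $A$ is then frozen for the remainder of the argument; in particular one must fix $A$ \emph{before} reading off the constants, rather than letting the neighborhood shrink with $p$.

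Next, since $\alpha > 0$, I would apply Theorem \ref{thm:p-moment-lower-Diri} with this set $A$ to obtain, for every integer $p \ge 2$ and every $x \in A$,
\begin{align*}
\liminf_{t\to+\infty}\frac{\ln \mathbf E(u(t,x)^p)}{t} \ge C\beta^2 p(p-1) - p\lambda_1(A) = p\bigl(C\beta^2(p-1)-\lambda_1(A)\bigr),
\end{align*}
where $\lambda_1(A) > 0$ is the first eigenvalue of $-\Delta_{D,A}$ and $C > 0$ depends on $A$ and $\alpha$ but not on $p$ or $\beta$ (indeed, in the proof of that theorem $C = c\,\hat m_A^2$ with $\hat m_A = \inf_{y\in A}\Phi_1(y) > 0$, which is independent of $p$). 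With $A$ and $\beta$ now fixed, the right-hand side is a quadratic in $p$ with positive leading coefficient $C\beta^2$, hence strictly positive as soon as $C\beta^2(p-1) > \lambda_1(A)$. Thus it suffices to take any integer $p_0 > 1 + \lambda_1(A)/(C\beta^2)$, and then for every $p \ge p_0$ the exponential growth rate $\delta := p\bigl(C\beta^2(p-1)-\lambda_1(A)\bigr)$ is strictly positive.

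Finally, for such a $p$ the $\liminf$ estimate forces $\ln \mathbf E(u(t,x)^p) \ge \tfrac{\delta}{2}\,t$ for all $t$ large enough, so $\mathbf E(u(t,x)^p) \ge e^{\delta t/2} \to +\infty$, which is the asserted conclusion for $x \in A$, hence in particular at the given point $x$. I do not anticipate any genuine obstacle: once one is careful to fix $A$ first and to note that the constant $C$ of Theorem \ref{thm:p-moment-lower-Diri} is uniform in $p$, everything reduces to an elementary manipulation of a quadratic in $p$.
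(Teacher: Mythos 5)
Your proposal is correct and follows essentially the same route as the paper: the paper's own (sketched) proof also fixes a small ball $A=B(x,r)$ around $x$, invokes Theorem \ref{thm:p-moment-lower-Diri}, and observes that the lower-bound rate $C\beta^2 p(p-1)-p\lambda_1(A)$, viewed as a function of $p$ with $A$, $\beta$ fixed and $C$ independent of $p$, becomes strictly positive for large $p$. Your write-up merely makes explicit the choice $p_0>1+\lambda_1(A)/(C\beta^2)$ and the passage from a positive Lyapunov lower bound to divergence of the moments, which the paper leaves to the reader.
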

\begin{proof}
The proof is very similar to the proof of \eqref{eq-p-mom-lim-l} above. It is based on Theorem~\ref{thm:p-moment-lower-Diri} and the asymptotic behavior of the function $G$ in \eqref{eq-G}, now seen as a function of $p$. Details are left to the reader.
\end{proof}

One way to quantify intermittency for the parabolic Anderson model is to show that in large time the $L^p(\Omega)$-norm of $u(t,x)$ is much larger than the $L^q(\Omega)$-norm whenever $p>q$. We are now able to prove this for inner uniform domains.

\begin{cor}\label{cor-intem-1}
We still work under the conditions of Corollary \ref{cor-interm}, with $\alpha,\beta > 0$,  an inner uniform domain $U$,   and $x \in U$ such that $u_0 >\varepsilon$ in a neighborhood of $x$. Let $q \ge 2$. Then, there exists $p_0>q$ such that for $p \ge p_0$,
\begin{equation}\label{eq-intm-ratio}
    \lim_{t \to +\infty} \frac{\mathbf{E} ( u(t,x)^p)^{1/p}}{\mathbf{E} ( u(t,x)^q)^{1/q} } =+\infty.
\end{equation}
\end{cor}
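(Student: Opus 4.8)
The plan is to combine the lower bound on $\mathbf{E}(u(t,x)^p)$ for $p$ large with the upper bound on $\mathbf{E}(u(t,x)^q)$ in order to show the ratio in \eqref{eq-intm-ratio} diverges. First I would invoke Theorem \ref{thm:p-moment-upper-Diri} (together with Theorem \ref{existence PAM diri} in the case $q=2$) to get, for every $x\in U$,
\[
\limsup_{t \to +\infty} \frac{1}{t} \ln \mathbf{E}(u(t,x)^q)^{1/q} \le \frac{1}{2}\Theta_\alpha\!\left(\beta\sqrt{q-1}\right)-\lambda_1 =: a_q,
\]
a quantity that does not depend on $p$. Next, since $\alpha>0$ and $u_0>\varepsilon$ in a neighborhood of $x$, Theorem \ref{thm:p-moment-lower-Diri} applies with $A=B(x,r)$ for $r$ small enough (so that $\overline{A}\subset U$ and $u_0|_A\ge \varepsilon$), giving
\[
\liminf_{t \to +\infty} \frac{1}{t}\ln \mathbf{E}(u(t,x)^p)^{1/p} \ge C\beta^2(p-1) - \lambda_1(A) =: b_p,
\]
where $C>0$ and $\lambda_1(A)>0$ do not depend on $p$.

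The key observation is then that $b_p \to +\infty$ as $p\to\infty$ (linearly in $p$), while $a_q$ is a fixed finite number. Hence there exists $p_0>q$ such that $b_p > a_q$ for all $p\ge p_0$; fix such a $p$. For this $p$ we have
\[
\liminf_{t\to+\infty}\frac{1}{t}\ln\left(\frac{\mathbf{E}(u(t,x)^p)^{1/p}}{\mathbf{E}(u(t,x)^q)^{1/q}}\right)
\ge b_p - a_q > 0,
\]
using that $\liminf(f-g)\ge \liminf f - \limsup g$ for the ratio's logarithm. Since the exponential growth rate of the ratio is strictly positive, the ratio itself tends to $+\infty$, which is exactly \eqref{eq-intm-ratio}.

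There is essentially no serious obstacle here: the statement is a soft consequence of the sharp two-sided moment bounds already established. The only minor point to be careful about is the choice of the auxiliary domain $A$ — one needs $u_0$ bounded below on $A$ and $\overline{A}\subset U$, which is possible because $U$ is open and $u_0>\varepsilon$ on a neighborhood of $x$ — and the fact that $\lambda_1(A)$, although it may be large if $A$ is small, is a constant independent of $p$, so it is dominated by the term $C\beta^2(p-1)$ for $p$ large. One should also note that $p_0$ depends on $q$, $\beta$, $\alpha$, and the geometry near $x$ (through $\lambda_1(A)$ and $\lambda_1$), consistent with the statement. Details are routine and can be left to the reader, or spelled out in a couple of lines mirroring the proof of \eqref{eq-p-mom-lim-l} in Corollary \ref{cor-interm}.
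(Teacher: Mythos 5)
Your proposal is correct and follows essentially the same route as the paper: take logarithms of the ratio, apply the lower bound of Theorem \ref{thm:p-moment-lower-Diri} with $A=B(x,r)$ to the $p$-th moment and the upper bound of Theorem \ref{thm:p-moment-upper-Diri} to the $q$-th moment, and observe that the term $C\beta^2(p-1)$ dominates for large $p$, making the exponential growth rate of the ratio strictly positive. No gaps.
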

\begin{proof}
Denote by $Q_{p,q}(t,x)$ the ratio $\|u(t,x)\|_{L^p(\Omega)}/\|u(t,x)\|_{L^q(\Omega)}$. Then
\begin{equation*}
    \frac{1}{t}\ln (Q_{p,q}(t,x))=\frac{1}{t}\left(\frac{1}{p}\ln(\mathbf{E} [u(t,x)^p])-\frac{1}{q}\ln(\mathbf{E} [u(t,x)^q]) \right).
\end{equation*}
    According to the lower bound in Theorem \ref{thm:p-moment-lower-Diri} (for a domain $A$ of the form $B(x,r)$ like in Corollary \ref{cor-intem-1}) and the upper bound in Theorem \ref{thm:p-moment-upper-Diri}, we obtain 
\begin{equation}\label{eq-asym-Q}
    \liminf_{t\to\infty}\frac{1}{t}\ln (Q_{p,q}(t,x))\ge
    C\beta^2(p-1)-\lambda_1(A)-\left(\frac{1}{2}\Theta_\alpha(\beta\sqrt{q-1})-\lambda_1\right).
\end{equation}
As a function of $p$, the dominant term in \eqref{eq-asym-Q} is $C\beta^2p$. Therefore if $p$ is large enough we get 
\[
\liminf_{t\to\infty}\frac{1}{t}\ln (Q_{p,q}(t,x))>0.
\]
This is enough to prove \eqref{eq-intm-ratio}.  
\end{proof}

\subsection{Scaling invariance in the Sierpi\'nski gasket  }\label{sec: scaling invariance} 

Up to now we have stated results and bounds which are valid for a wide range of underlying spaces $X$. When particularizing our study to sets with convenient invariances, the stochastic heat equation also exhibits nice scaling properties. In this section we examine the case of the Sierpi\'nski gasket with zero boundary condition at the vertices.

Let us thus go back to the setting of Section \ref{section gasket}, with the functions $\mathfrak f_i$ given by \eqref{eq-SK-f} and the Sierpi\'nski gasket $K$ defined by \eqref{eq-SK-K}. The inner uniform domain we consider here is $U=K\setminus V_0$. For a word $w=(i_1,\dots, i_n)$ in $\{1,2,3\}^n$ we define 
\begin{equation}\label{eq-U-w}
\mathfrak f_w=\mathfrak f_{i_1} \circ \cdots \circ \mathfrak f_{i_n},
\quad\text{and}\quad
    U_w:=\mathfrak f_w(K) \setminus \mathfrak f_w(V_0)=\mathfrak f_w(U).
\end{equation}
Since uniform curves in $U$ are transformed  by $\mathfrak f_w$ into uniform curves in $U_w$, it follows that every $U_w$ is a uniform and hence also a inner uniform  domain. With this additional notation in mind, we can now state our invariance result.

\begin{thm}\label{scaling Sierpinski}
    Let $\alpha, \beta$ be two positive parameters. On the inner uniform domain $U=K\setminus V_0$ we define the Dirichlet Laplace operator $\Delta_{D,U}$ as in Section \ref{sec-semi-g} and the noise $W^{D,U}_\alpha$ through Definition \ref{def-frac-Gaus-field}. We denote by $u$ the unique solution of 
\begin{equation}\label{eq-SDE-u-U}
\partial_{t} u(t,x) = \Delta_{D,U} u(t,x) + \beta u(t,x) \, \dot{W}_\alpha^{D,U}(t,x),\quad\text{for}\quad t\ge0, \,x\in U,
\end{equation}
with initial condition $u(0,\cdot)=1$. For a word $w=(i_1,\dots, i_n)\in \{1,2,3\}^n$ and $U_w$ defined by~\eqref{eq-U-w}, we similarly introduce $\Delta_{D, U_w}$ and $W^{D,U_w}_\alpha$. Furthermore we define $v$ as the unique solution to  
\begin{equation}\label{eq-SDE-v-U}
\partial_{t} v(t,x) = \Delta_{D,U_w} v(t,x) + \beta \left( \frac{5^{\alpha +1/2 }}{3} \right)^n v(t,x) \, \dot{W}_\alpha^{D,U_w}(t,x),\quad\text{for}\quad t\ge0, \,x\in U,
\end{equation}
with initial condition $v(0,\cdot)=1$. Then the following identity in the sense of finite dimensional distributions holds true:
\begin{align}\label{eq-v-u}
    (v(t,x))_{t \ge 0, x \in U_w }\stackrel{\mathrm{(fdd)}}{=} (u(5^n t,\mathfrak f^{-1}_w (x)))_{t \ge 0, x \in U_w}.
\end{align}
\end{thm}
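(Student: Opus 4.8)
The plan is to establish the identity \eqref{eq-v-u} by tracking how each ingredient of the stochastic heat equation transforms under the similarity map $\mathfrak f_w$, and then invoking the uniqueness statement from Theorem \ref{existence PAM diri} to conclude. First I would record the basic scaling relations for the deterministic objects on the gasket. Recall that $\mathfrak f_w = \mathfrak f_{i_1}\circ\cdots\circ\mathfrak f_{i_n}$ is a contraction with ratio $2^{-n}$, so the Euclidean (hence geodesic) distance scales as $d(\mathfrak f_w(x),\mathfrak f_w(y)) = 2^{-n} d(x,y)$. The Hausdorff measure scales as $\mu(\mathfrak f_w(E)) = 3^{-n}\mu(E)$, i.e. for $g$ supported in $U_w$ one has $\int_{U_w} g\, d\mu = 3^{-n}\int_U g\circ\mathfrak f_w\, d\mu$. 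The Dirichlet form on the gasket satisfies the self-similarity identity $\mathcal E(f,f) = \tfrac53\sum_{i}\mathcal E(f\circ\mathfrak f_i, f\circ\mathfrak f_i)$, which iterates to $\mathcal E(f,f) = (5/3)^n \sum_{|w|=n}\mathcal E(f\circ\mathfrak f_w, f\circ\mathfrak f_w)$; restricting to a single cell and using that the energy measure is supported on $U_w$ gives, for $f$ supported in $U_w$, the relation $\mathcal E^{D}_{U_w}(f,f) = (5/3)^n\,\mathcal E^D_U(f\circ\mathfrak f_w, f\circ\mathfrak f_w)$. Combining the energy and the $L^2$ scalings, the Dirichlet Laplacians are intertwined by $\Delta_{D,U_w}(f) = 3^n 5^{-n}\,\big[(\Delta_{D,U}(f\circ\mathfrak f_w))\big]\circ\mathfrak f_w^{-1}$; equivalently, if $\widetilde f = f\circ\mathfrak f_w$ then $(\Delta_{D,U_w}f)\circ\mathfrak f_w = 5^{-n}\,\Delta_{D,U}\widetilde f \cdot 3^n$ — the upshot being a time-rescaling factor $5^n$ in the heat equation, which matches the $5^nt$ appearing in \eqref{eq-v-u}. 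Concretely, this says $p_t^{D,U_w}(x,y) = 3^n\, p_{5^{-n}t}^{D,U}(\mathfrak f_w^{-1}x, \mathfrak f_w^{-1}y)$, a relation one can also read off directly from the eigenvalue scaling $\lambda_j(U_w) = 5^n\lambda_j(U)$ and eigenfunction scaling $\Phi_j^{U_w} = 3^{n/2}\,\Phi_j^U\circ\mathfrak f_w^{-1}$ via the spectral expansion \eqref{eq:HKexpansion-diri}.

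Next I would compute how the covariance kernel $G_{2\alpha}^{D,U_w}$ transforms. Using the definition \eqref{kernel for frac Laplacian}, $G_{2\alpha}^{D,U_w}(x,y) = \frac1{\Gamma(2\alpha)}\int_0^\infty t^{2\alpha-1} p_t^{D,U_w}(x,y)\,dt$, and substituting the heat kernel scaling $p_t^{D,U_w}(x,y) = 3^n p_{5^{-n}t}^{D,U}(\mathfrak f_w^{-1}x,\mathfrak f_w^{-1}y)$ together with the change of variable $t \mapsto 5^n t$ yields
\begin{equation}\label{eq-G-scaling-plan}
G_{2\alpha}^{D,U_w}(x,y) = 3^n\, 5^{2\alpha n}\, G_{2\alpha}^{D,U}\big(\mathfrak f_w^{-1}(x),\mathfrak f_w^{-1}(y)\big).
\end{equation}
The same computation at the level of the operator shows $(-\Delta_{D,U_w})^{-\alpha}$ intertwines with $(-\Delta_{D,U})^{-\alpha}$ with factor $5^{\alpha n}$. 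From \eqref{eq-G-scaling-plan} I would then identify the law of the rescaled noise: define $\widetilde W(t,x) := W_\alpha^{D,U}(5^n t, \mathfrak f_w^{-1}(x))$, or more precisely, for a test function $\varphi$ on $\mathbb R_+\times U_w$, pair it against $W_\alpha^{D,U}$ via the natural pullback. Computing its covariance from \eqref{eq-Gau-cov} and using \eqref{eq-G-scaling-plan} plus the measure and time scalings, one finds that $\widetilde W$ has the same covariance structure as $W_\alpha^{D,U_w}$ up to an explicit deterministic multiplicative constant; tracking the powers of $3$ and $5$ coming from $d\mu(x)d\mu(y) = 3^{-2n}(\cdots)$, the time Jacobian $5^{-n}$, and the kernel factor $3^n 5^{2\alpha n}$, the constant works out so that $\big(\tfrac{3}{5^{\alpha+1/2}}\big)^n W_\alpha^{D,U}(5^n\cdot,\mathfrak f_w^{-1}\cdot) \stackrel{d}{=} W_\alpha^{D,U_w}$ as Gaussian fields (equivalently, $\beta\big(\tfrac{5^{\alpha+1/2}}{3}\big)^n \dot W_\alpha^{D,U_w}$ appearing in \eqref{eq-SDE-v-U} is exactly the renormalized noise that makes the substitution consistent). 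Here the $\sqrt 5 = 5^{1/2}$ comes from the time-whiteness ($\delta_0(5^n(t-s)) = 5^{-n}\delta_0(t-s)$, contributing $5^{-n/2}$ to a standard deviation) and the $5^{\alpha}$ from the spatial kernel.

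The final step is to substitute into the mild formulation. Set $\bar u(t,x) := u(5^n t, \mathfrak f_w^{-1}(x))$ for $x\in U_w$. Starting from the mild equation \eqref{eq:PAM-mild form-Diri} for $u$ with initial condition $1$ (noting $P_t^{D,U}1 \ne 1$ in the Dirichlet case, but the initial datum is still the constant function $1$, which pulls back to the constant function $1$ on $U_w$), I would change variables $y\mapsto\mathfrak f_w(y)$, $s\mapsto 5^n s$ in the heat kernel convolution and in the stochastic integral. The deterministic term becomes $\int_{U} p_{5^n t}^{D,U}(\mathfrak f_w^{-1}x, y)\,d\mu(y)$; rewriting $y = \mathfrak f_w(y')$ and using the heat kernel scaling plus $d\mu(y) = 3^{-n}d\mu(y')$ produces exactly $\int_{U_w} p_t^{D,U_w}(x,y')\,d\mu(y') = (P_t^{D,U_w}1)(x)$, as needed. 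For the stochastic integral, the same changes of variable combined with the noise identification from the previous paragraph convert $\beta\int p^{D,U}u\,dW_\alpha^{D,U}$ into $\beta\big(\tfrac{5^{\alpha+1/2}}{3}\big)^n\int p^{D,U_w}\bar u\,dW_\alpha^{D,U_w}$, which is precisely the stochastic term in the mild formulation of \eqref{eq-SDE-v-U}. Hence $\bar u$ solves the same mild equation as $v$, with the same initial condition; by the uniqueness part of Theorem \ref{existence PAM diri} (applied on the inner uniform domain $U_w$, which is inner uniform as noted after \eqref{eq-U-w}), we conclude $v \stackrel{\mathrm{(fdd)}}{=} \bar u$, which is \eqref{eq-v-u}. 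The main obstacle I anticipate is bookkeeping: getting the exponents of $3$ and $5$ exactly right in the noise renormalization — in particular disentangling the $5^{1/2}$ from time-whiteness versus the $5^\alpha$ from the spatial kernel, and checking that the stochastic integral's It\^o isometry \eqref{eq-Gau-cov} (which is quadratic in the integrand and pairs two copies of $G_{2\alpha}$) transforms with the correct power so that the single factor $\big(\tfrac{5^{\alpha+1/2}}{3}\big)^n$ indeed makes the two mild equations coincide. The rest is a routine, if careful, change-of-variables argument plus invocation of uniqueness.
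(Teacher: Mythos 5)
You follow the same route as the paper: self-similar scaling identities for the Laplacian, the heat kernel and the Riesz kernel on the cell $U_w$, identification in law of the rescaled noise, substitution into the mild formulation, and conclusion by uniqueness on $U_w$ (Theorem \ref{existence PAM diri}). The architecture is therefore fine, but the central scaling identities are written with the wrong exponents, and they contradict the correct eigenvalue and eigenfunction scalings that you state in the same paragraph. Since the energy scales like $(5/3)^n$ and the $L^2(\mu)$-pairing like $3^{-n}$, the generator picks up the factor $(5/3)^n\cdot 3^n=5^n$, that is $(\Delta_{D,U_w}f)\circ\mathfrak f_w=5^n\,\Delta_{D,U}(f\circ\mathfrak f_w)$, not $3^n5^{-n}$; your own relations $\lambda_j(U_w)=5^n\lambda_j(U)$ and $\Phi_j^{U_w}=3^{n/2}\Phi_j^{U}\circ\mathfrak f_w^{-1}$ already force this. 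Consequently the heat kernel satisfies $p_t^{D,U_w}(x,y)=3^n\,p^{D,U}_{5^nt}\bigl(\mathfrak f_w^{-1}x,\mathfrak f_w^{-1}y\bigr)$, with time dilated by $5^n$, not contracted to $5^{-n}t$ as you wrote, and then the change of variables in \eqref{kernel for frac Laplacian} gives
\begin{equation}
G_{2\alpha}^{D,U_w}(x,y)=3^n\,5^{-2n\alpha}\,G_{2\alpha}^{D,U}\bigl(\mathfrak f_w^{-1}x,\mathfrak f_w^{-1}y\bigr),
\end{equation}
with $5^{-2n\alpha}$ and not $5^{+2n\alpha}$: on the smaller cell the spectrum is larger, so negative powers of the Laplacian are smaller, in agreement with the intertwining $((-\Delta_{D,U_w})^{-\alpha}g)\circ\mathfrak f_w=5^{-n\alpha}(-\Delta_{D,U})^{-\alpha}(g\circ\mathfrak f_w)$ of \eqref{eq-Del-U-U-w}.

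This matters because the one step you do not actually carry out, the exact constant in the noise identification, is the entire content of the theorem, and the constant you assert does not follow from the kernel scaling you displayed (with $5^{+2n\alpha}$ there, the exponent of $5$ in the noise identity shifts by $2n\alpha$); as written it looks matched to the statement rather than derived. To close the gap you must do the bookkeeping explicitly, as the paper does: in the mild form the time change $s\mapsto 5^ns$ of the white-in-time noise produces a factor $5^{n/2}$; the heat-kernel relation above produces $3^{-n}$; and the spatial identity in law $W^{D,U_w}_\alpha(g\circ\mathfrak f_w^{-1})\stackrel{(d)}{=}3^{-n/2}5^{-n\alpha}\,W^{D,U}_\alpha(g)$, which follows from the Sobolev-norm scaling $\|h\circ\mathfrak f_w^{-1}\|^2_{\mathcal{W}_D^{-\alpha}(U_w)}=3^{-n}5^{-2n\alpha}\|h\|^2_{\mathcal{W}_D^{-\alpha}(U)}$ of \eqref{iso cell Sierpinski}, converts the stochastic integral at the price of $3^{n/2}5^{n\alpha}$; only then does one invoke uniqueness on the inner uniform domain $U_w$. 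Note that the product of these three factors is $3^{-n/2}\,5^{n(\alpha+1/2)}$, so reconciling it with the coefficient $\beta\,(5^{\alpha+1/2}/3)^n$ appearing in \eqref{eq-SDE-v-U} is precisely the kind of $3$-versus-$\sqrt3$ accounting that cannot be dismissed as routine and deserves to be written out in full. Your treatment of the deterministic term and the final appeal to uniqueness are otherwise in line with the paper's argument.
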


\begin{proof}
We will prove the identity for one single value of $(t,x)$. The corresponding property for finite dimensional distributions is done in the same way and left to the reader. 
We have seen in Theorem \ref{Laplacian gasket} that $\Delta_{D,U}$ could be obtained as the extension of the Kigami Laplacian defined through the limiting procedure \eqref{discrete laplacian}-\eqref{eq-domain}. A similar construction can be performed for $\Delta_{D, U_w}$. Hence it is easily seen from \eqref{discrete laplacian} that for a generic Schwartz function $g$ (see Definition~\ref{def:a1} for the notion of Schwartz function) we have
\begin{align}\label{eq-Del-w-n}
    (\Delta_{D,U_w} g )\circ \mathfrak f_w= 5^n\Delta_{D,U} (g \circ \mathfrak f_w).
\end{align}
Moreover, our representation \eqref{eq:HKexpansion-diri} implies the corresponding relations for heat semigroups and heat kernels, valid for $t\ge0$ and $x,y\in U$:
\begin{align}\label{eq-Pt-U-w}
 (P_t^{D,U_w} g) \circ \mathfrak f_w=P_{5^nt}^{D,U}(g \circ \mathfrak f_w), 
 \quad\text{and}\quad
 p_t^{D,U_w}( \mathfrak f_w(x),\mathfrak f_w(y))=3^n p^{D,U}_{5^n t} (x,y).   
\end{align}
In the same way, one can invoke either \eqref{frac Laplacian spectrum} or \eqref{kernel for frac Laplacian} to see that 
\begin{align}\label{eq-Del-U-U-w}
    ((-\Delta_{D,U_w})^{-\alpha} g )\circ \mathfrak f_w= 5^{-n\alpha}(-\Delta_{D,U})^{-\alpha} (g \circ \mathfrak f_w).
\end{align}
Let us now specify the invariance for the noise $W_\alpha$. To begin with, from \eqref{eq-Del-U-U-w} one deduces that for $h\in L^2(U,\mu)$ and $\alpha\ge0$
\begin{align*}
\int_{U_w} \left[(-\Delta_{D,U_w})^{-\alpha} ( h \circ \mathfrak f^{-1}_w)\right]^2 d\mu 
& =5^{-2n\alpha} \int_{U_w}\left[((-\Delta_{D,U})^{-\alpha}h )\circ \mathfrak f^{-1}_w\right]^2 d\mu .
\end{align*}
Next we apply the scaling invariance of the Hausdorff measure on $K$, which stipulates that $\int_{U_w} f\circ f^{-1}_w d\mu=3^{-n}\int_U fd\mu$. We end up with
\begin{align}\label{eq-int-U-w}
\int_{U_w} \left[(-\Delta_{D,U_w})^{-\alpha} ( h \circ \mathfrak f^{-1}_w)\right]^2 d\mu 
=3^{-n}5^{-2n\alpha} \int_U ((-\Delta_{D,U})^{-\alpha}h )^2 d\mu,
\end{align}
and note that one can recast \eqref{eq-int-U-w} as 
\begin{align}\label{iso cell Sierpinski}
\| h \circ \mathfrak f^{-1}_w \|^2_{\mathcal{W}_D^{-\alpha}(U_w)}=3^{-n}5^{-2n \alpha} \|h  \|^2_{\mathcal{W}_D^{-\alpha}(U)}.
\end{align}
Taking into account the covariance function \eqref{kernel for frac Laplacian} of $W_\alpha^{D,U}$ and $W_\alpha^{D,U_w}$, we get the following equality in distribution for processes:
\begin{equation}\label{eq-cov-W-D-U-w}
    \lbrace W_\alpha^{D,U_w}(g \circ \mathfrak f^{-1}_w ); \, g\in W_D^{-\alpha}(U)\rbrace \stackrel{(d)}{=}
    \lbrace 3^{-n/2}5^{-n \alpha} W_\alpha^{D,U} (g);\,g \in \mathcal{W}_D^{-\alpha}(U)\rbrace.
\end{equation}
We are ready to translate the previous observations into invariances for the stochastic heat equation. That is start from the mild formulation of \eqref{eq-SDE-u-U}:
\[
u(t,x)=(P_t^{D,U}1)(x)+\beta\int_0^t \int_U p^{D,U}_{t-s}(x,y) u(s,y) W^{D,U}_\alpha (ds,dy).
\]
Composing the above relation with $\mathfrak f^{-1}_w$ and applying a Brownian scaling in time we get
\begin{align*}
u(5^n t,\mathfrak f_w^{-1} (x))&=(P_t^{D,U_w}1)(x)+\beta \int_0^{5^nt} \int_U p^{D,U}_{5^nt-s}(\mathfrak f_w^{-1}(x),y) u(s,y) W^{D,U}_\alpha (ds,dy) \\
 & =(P_t^{D,U_w}1)(x)+\beta 5^{n/2} \int_0^{t} \int_U p^{D,U}_{5^nt-5^n s}(\mathfrak f_w^{-1}(x),y) u(5^n s,y) \tilde{W}^{D,U}_\alpha (ds,dy), 
\end{align*}
where $\tilde{W}^{D,U}$ is a field such that $ \tilde{W}^{D,U}=W^{D,U}$ (specifically  $\tilde{W}^{D,U}(t,\cdot)$ is obtained as $5^{n/2}W^{D,U}(t/5^n,\cdot)$).  
We now resort to \eqref{eq-Pt-U-w} in order to write 
\begin{align*}
u(5^n t,\mathfrak f_w^{-1} (x))
 & =(P_t^{D,U_w}1)(x)+\beta 5^{n/2} 3^{-n} \int_0^{t} \int_U p^{D,U_w}_{t-s}( x,\mathfrak f_w(y)) u(5^ns,y) \tilde{W}^{D,U}_\alpha (ds,dy).
\end{align*}
Eventually, thanks to \eqref{eq-cov-W-D-U-w} we get the existence of a field $\tilde{W}^{D,U_w}$
 such that $\tilde{W}^{D,U_w} \stackrel{(d)}{=}{W}^{D,U_w}$ and 
\begin{align*}
u(5^n t,\mathfrak f_w^{-1} (x))
  = &\,(P_t^{D,U_w}1)(x)\\
  &+\beta \left( \frac{5^{\alpha +1/2 }}{3} \right)^n \int_0^{t} \int_{U_w} p^{D,U_w}_{t-s}(x,y) u(5^ns,\mathfrak f_w^{-1} (y)) \tilde{W}^{D,U_w}_\alpha (ds,dy).
\end{align*}
 Otherwise stated, $u(5^n t,\mathfrak f_w^{-1} (x))$ solves equation \eqref{eq-SDE-v-U}. This shows identity \eqref{eq-v-u} and finishes our proof.
\end{proof}

\section{Neumann parabolic Anderson model in uniform domains}\label{sec: Neumann PAM}

In this section we investigate the behavior of a parabolic Anderson model subject to Neumann boundary conditions.  Since many of the computations are similar to the Dirichlet case, we sill just sketch the main differences. To begin with, let us label the hypothesis on the domain $U$ considered below.

\begin{assump}
The domain $U$ is a uniform domain as specified in Definition \ref{def: uniform domain}. On this domain we consider the Neumann Laplace operator from Proposition \ref{prop:Sepctral decomp Neumann}.
\end{assump}

We now proceed to a description of the noise in Section \ref{sec:frac noise-Neumann}, and we state the main results about the stochastic heat equation in Sections \ref{sec: existence-uniqueness-Neumann} to \ref{sec: other properties-Neumann}.

\subsection{Fractional noise}\label{sec:frac noise-Neumann}

The family of noises we consider for the Neumann case are still described as white noises in time, with a spatial covariance based on the Neumann Laplacian.
\begin{defn}\label{def: noise-Neumann}
For $\alpha>0$ the noise $W^N_\alpha$ is introduced similarly to Definition \ref{def-frac-Gaus-field}, with a space $\mathcal{H}_\alpha$ given as
$\mathcal{H}_\alpha=L^2(\mathbb{R}_+, \mathcal{W}_N^{-\alpha}(U))$. Notice that the space $\mathcal{W}_N^{-\alpha}(U)$ is introduced in Definition  \ref{sobo_def_N}, and we suppose that the constant $C_U$ in \eqref{Sobo_norm_N} is such that  $G^{N,U}_{2\alpha}(x,y)+C_U$ is lower bounded by a positive constant (see Remark \ref{rmk regarding C_U}).  The Neumann fractional field $\{W_\alpha^N(\phi); \phi\in \mathcal{H}_{\alpha}\}$ is then the centered isonormal Gaussian family with covariance 
\begin{align}\label{inner product H neumann}
\mathbf E\left[ W_{\alpha}^N(\varphi) W_{\alpha}^N(\psi)\right]
=\int_{\R_+}\ \left\langle \varphi (t,\cdot) , \psi (t,\cdot)\right\rangle_{\mathcal{W}_N^{-\alpha}(U)}  dt,
\end{align}
It is assumed to be defined on a complete probability space $(\Omega, \mathcal{G}, \mathbf{P})$.
\end{defn} 

\begin{remark}For $\alpha=0$, the space-time white noise  $W^N_0$ is in fact defined exactly as in Definition~\ref{def:st white}. Otherwise stated, $W^N_0=W^D_0$. 

\end{remark}

\begin{remark}
As we did in Remark \ref{remk:regularity of the filed} for the Dirichlet case, let us mention that Neumann noises on fractals have been introduced in \cite{BaudoinLacaux}. 
For $\alpha \le {d_h}/{2d_w}$ it had been obtained that $W^N_\alpha$ is a space-time distribution,   while for $\alpha > {d_h}/{2d_w}$  a density property similar to \eqref{density field} holds true. 
\end{remark}

\subsection{Existence, uniqueness and \texorpdfstring{$L^p$}{Lp} upper bounds}\label{sec: existence-uniqueness-Neumann}

We are now in a position to state a result about the $L^p$ moment of the parabolic Anderson model with Neumann Laplacian.

\begin{thm}\label{existence unique neuma}
Let $\alpha, \beta \ge 0$ and let $W^N_\alpha$ be the noise from Definition \ref{def: noise-Neumann}.  We consider the equation 
\begin{equation}\label{SHEtheot-N}
\partial_{t} u(t,x) = \Delta_N u(t,x) + \beta u(t,x) \, \dot{W}_\alpha^N(t,x)
\end{equation}
interpreted in the mild sense as in \eqref{eq:PAM-mild form-Diri}. The initial condition $u_0$ lies in $L^\infty(U,\mu)$. Then the equation admits a unique solution $u$ according to Definition \ref{def of Ito sol}. Moreover, for any integer $p\geq2$, the solution $u$ satisfies the following $L^p$-upper bound estimates: 
\begin{align}\label{L^p upper Neumann}
\limsup_{t \to +\infty} \frac{\ln \mathbf E(u(t,x)^p)}{t} \le \frac{p}{2}   \widehat\Theta_\alpha \left(\beta \sqrt{p-1}\right),
\end{align}
where $\widehat\Theta_\alpha$ is a continuous increasing function whose asymptotic behavior is exactly the same as $\Theta_\alpha$ in Theorem \ref{existence PAM diri}.
\end{thm}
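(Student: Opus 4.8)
The plan is to mirror the Dirichlet treatment (Lemma \ref{lem:small t}, Theorem \ref{existence PAM diri}, Theorem \ref{thm:p-moment-upper-Diri}) and adapt it to the Neumann setting, using the Neumann heat kernel estimates from Theorem \ref{estimate_heat_neumann}, the kernel bounds in Proposition \ref{Control-G-N}, and the key technical estimate Lemma \ref{estimate frac pt neumann}. First I would establish the Neumann analogue of Proposition \ref{uni-SHE}: the solution $u$ to \eqref{SHEtheot-N} in the sense of Definition \ref{def of Ito sol} exists and is unique if and only if its chaos kernels $f_k^N$ — which have the same form as \eqref{eq:fk} but with $p^{N,U}$ and $P^{N,U}$ in place of the Dirichlet objects — satisfy $\sum_k k! \|f_k^N(\cdot,t,x)\|_{\mathcal H^{\otimes k}}^2 < \infty$. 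This is purely formal and follows verbatim from the cited references.

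Next I would prove the Neumann analogue of Lemma \ref{lem:small t}. The recursion is set up exactly as in Step 1 of that proof: writing $g_k^N$ for the product of Neumann heat kernels, bounding $\|f_k^N\|^2$ by $\beta^{2k}/k!$ times an integral $M_k^N(t,x)$ against $\prod_i G_{2\alpha}^{N,U}(y_i,y_i')$, and converting the bound on the initial term $P^{N,U}_{s_1}u_0$ into $\|u_0\|_\infty$ via $\|P_t^{N,U}u_0\|_\infty \le \|u_0\|_\infty$ (the Neumann semigroup is a contraction since it is conservative — no exponential decay here, which is the first structural difference). The renewal inequality becomes $N_{k+1}(t) \le \int_0^t \Psi^N(t-s) N_k(s)\,ds$ with $\Psi^N(t) = \sup_x \|(-\Delta_{N,U})^{-\alpha} p_t^{N,U}(x,\cdot)\|_{L^2}^2$, and Lemma \ref{estimate frac pt neumann} gives $\Psi^N(t) \le C e^{-2\gamma t}(1+t^{2\alpha - d_h/d_w})$ for $0\le\alpha<\frac{d_h}{2d_w}$, $\le C e^{-2\gamma t}(|\ln t|+1)^2$ at $\alpha = \frac{d_h}{2d_w}$, and $\le C e^{-2\gamma t}$ for $\alpha > \frac{d_h}{2d_w}$, for any fixed $\gamma < \nu_1$. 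The renewal argument of Step 2–3 (introducing $\rho>0$, taking $\hat\Psi^N(\rho) = \int_0^\infty e^{-\rho s}\Psi^N(s)\,ds$, concluding $\mathcal N_k(\rho) \le \hat\Psi^N(\rho)^k$) then yields $\|f_k^N(\cdot,t,x)\|^2 \le C^k \frac{\beta^{2k}}{k!}\|u_0\|_\infty^2\, \hat\Psi^N(\rho)^k e^{\rho t}$; the only change from \eqref{eq-fk-est1}–\eqref{eq-fk-est3} is that the prefactor $e^{-2\lambda_1 t}$ is absent. One subtlety: since $\hat\Psi^N(\rho)$ now involves $\int_0^\infty e^{-(\rho+2\gamma)s}\Psi^N(s)\,ds$ rather than $\int_0^\infty e^{-\rho s}\Psi^N(s)\,ds$, the function $F(\rho)$ defining $\rho_c$ is slightly different, but its behavior as $\rho\to 0$ and $\rho\to\infty$ is the same up to constants, which is all that is needed for the claimed asymptotics of $\widehat\Theta_\alpha$.

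Then, exactly as in Theorem \ref{existence PAM diri}, for $\alpha < \frac{d_h}{2d_w}$ I define $\rho_{c}$ by $C\beta^2 F(\rho_c)=1$ where $F(\rho) = \frac{1}{\rho} + \frac{1}{\rho^{1+2\alpha-d_h/d_w}}$ (times bounded factors coming from the $e^{-2\gamma s}$ weight); picking $\rho > \rho_c$ gives $\delta_\rho := C\beta^2 F(\rho) < 1$, hence $\|f_k^N\|^2 \le \frac{e^{\rho t}}{k!}\|u_0\|_\infty^2 \delta_\rho^k$, which makes the chaos series converge and proves existence–uniqueness. For the moment bound, $\mathbf E(u(t,x)^p) \le $ (a constant) $\cdot\, e^{\rho t}$ in the $p=2$ case, and for $p>2$ one uses the equivalence of $L^p$ and $L^2$ norms on a fixed chaos, $\|I_k(f_k^N)\|_{L^p(\Omega)} \le (p-1)^{k/2}\|I_k(f_k^N)\|_{L^2(\Omega)}$, exactly as in Theorem \ref{thm:p-moment-upper-Diri}, replacing $\beta$ by $\beta\sqrt{p-1}$ and defining $\rho_{c,p}$ by $C(p-1)\beta^2 F(\rho_{c,p})=1$. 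Letting $\widehat\Theta_\alpha(\beta) := \rho_c(\beta)$ (where $C\beta^2 F(\rho_c)=1$) and inverting $F$ near $0$ and $\infty$ gives $\widehat\Theta_\alpha(\beta)\sim_{\beta\to 0} C\beta^2$ and $\widehat\Theta_\alpha(\beta) \sim_{\beta\to\infty} C\beta^{2d_w/((1+2\alpha)d_w - d_h)}$, matching $\Theta_\alpha$; the cases $\alpha = \frac{d_h}{2d_w}$ and $\alpha > \frac{d_h}{2d_w}$ are handled identically with $F(\rho) = \frac{\ln(\max(3/2,\rho))^2}{\rho}$ and $F(\rho)=\frac1\rho$ respectively. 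The one genuinely new point to verify carefully — and the place I expect the most bookkeeping — is that Lemma \ref{estimate frac pt neumann} really does give the stated decay, in particular that the subtraction of $\mu(U)^{-1}$ in the kernel \eqref{kernel frac Lapl N} does not spoil the small-$t$ singularity analysis; but since that lemma is already stated and proved in the excerpt, here one only needs to invoke it. I therefore expect no essential obstacle beyond carefully tracking the $\gamma < \nu_1$ substitution and confirming the asymptotic equivalence $\widehat\Theta_\alpha \asymp \Theta_\alpha$.
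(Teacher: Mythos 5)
Your overall architecture (chaos expansion, Neumann analogue of Lemma \ref{lem:small t}, renewal/Laplace-transform argument, hypercontractivity for $p>2$) is indeed the paper's route, but there is a genuine gap in the one place where the Neumann case differs structurally from the Dirichlet case. The noise $W^N_\alpha$ of Definition \ref{def: noise-Neumann} has covariance given by the inner product \eqref{Sobo_norm_N}, whose kernel is $G^{N,U}_{2\alpha}(x,y)+C_U$, not $G^{N,U}_{2\alpha}(x,y)$. Consequently the quantity $M_k(t,x)$ must be written with $\prod_i\bigl(G^{N,U}_{2\alpha}(y_i,y_i')+C_U\bigr)$ (this is exactly \eqref{M_k Neumann} in the paper), and the function entering the renewal inequality is controlled by $\sup_x\bigl\|\bigl[(-\Delta_{N,U})^{-\alpha}+C_U\bigr]p_t^{N,U}(x,\cdot)\bigr\|_{L^2(U,\mu)}$ rather than by Lemma \ref{estimate frac pt neumann} alone. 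This matters twice. First, $G^{N,U}_{2\alpha}$ is not nonnegative (Proposition \ref{Control-G-N} only gives lower bounds of the form $-C$), so the step in which you replace $M_k(s,y_{k+1})$ by its supremum $N_k(s)$ inside the iterated integral is not justified unless the kernel has been made nonnegative by adding $C_U$ (Remark \ref{rmk regarding C_U}); the paper keeps $G^{N,U}_{2\alpha}+C_U\ge 0$ precisely for this reason. Second, the added constant reintroduces the conservative mode: since $P^{N,U}_t1=1$, the $C_U$ contribution to the relevant $L^2$ norm is of order one for all $t$, so the correct bound on $\Psi^N$ has \emph{no} exponential decay, in contrast with the $e^{-\gamma t}$ factor of Lemma \ref{estimate frac pt neumann} that you import verbatim.

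This is not mere bookkeeping. With your (too strong) bound $\Psi^N(t)\le Ce^{-2\gamma t}(1+t^{2\alpha-d_h/d_w})$, the Laplace transform $\hat\Psi^N(\rho)$ stays bounded as $\rho\to0$, so for small $\beta$ the fixed-point equation $C\beta^2(p-1)\hat\Psi^N(\rho_c)=1$ has no positive root and your argument would give $\limsup_{t\to\infty}t^{-1}\ln\mathbf E(u(t,x)^2)\le 0$; this contradicts the Neumann lower bound of Theorem \ref{th: Neumann-lower bound}, which forces strictly positive exponential growth for every $\beta>0$. In the same vein, your claim that the $\rho\to0$ behavior of $\hat\Psi^N$ is "the same up to constants" after the $e^{-2\gamma s}$ substitution is false (one transform diverges like $1/\rho$, the other converges), and it is exactly this $1/\rho$ divergence, restored by the non-decaying $C_U$ term, that produces the correct small-$\beta$ asymptotics $\widehat\Theta_\alpha(\beta)\sim C\beta^2$. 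Once you replace your $\Psi^N$ by the bound on $\bigl\|\bigl[(-\Delta_{N,U})^{-\alpha}+C_U\bigr]p_t^{N,U}(x,\cdot)\bigr\|_{L^2(U,\mu)}$ stated in the paper (polynomial/logarithmic in $t$, without exponential decay), the rest of your argument goes through exactly as in Lemma \ref{lem:small t} and Theorems \ref{existence PAM diri} and \ref{thm:p-moment-upper-Diri}, and yields \eqref{L^p upper Neumann}.
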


\begin{proof} We proceed using chaos expansions like in Theorem \ref{existence PAM diri}. We thus have to check a bound like \eqref{f3}, with $p_t^D$ replaced by $p_t^N$. In order to check \eqref{f3} we then adopt the same strategy as in Lemma \ref{lem:small t}. This leads to estimate a quantity like $M_k(t,x)$ in \eqref{lemma 3.8 ms 2}. Let us just highlight a small difference between the definition of the current $M_k(t,x)$ and the one we had in the Dirichlet case: relation \eqref{lemma 3.8 ms 2} involved the positive kernel $G_{2\alpha}^D$, which should be replaced by $G^N_{2\alpha}$. Now the problem is that $G_{2\alpha}^N$ is not positive anymore. Nevertheless as mentioned in Definition \ref{def: noise-Neumann}, Remark \ref{rmk regarding C_U} entails the existence of a constant $C_U$ such that $G_{2\alpha}+C_U\geq0$. With the definition~\eqref{inner product H neumann} of inner product in $\mathcal{H}$ and similarly to \eqref{lemma 3.8 ms 2} one can express $M_k(t,x)$ as
\begin{equation}\label{M_k Neumann}
\begin{split}
M_k(t, x)= &
\int_{[0,t]^k_<}\int_{U^{2k}}g^N(s,y,t,x) \\
&\times \prod_{i=1}^k (G_{2\alpha}^N(y_i,y_i')+C_U)g^N(s,y',t,x)d\mu(y)d\mu(y')e^{-2\lambda_1 s_1}ds,
\end{split}
\end{equation}
where the function $g^N$ is defined by 
\[
g^N(s,y,t,x)=p_{t-s_k}^N(x,y_k)\cdots p_{s_2-s_1}^N(y_2, y_1).
\]
Starting from \eqref{M_k Neumann}, the proof is completely similar to the proof of Lemma \ref{lem:small t}. We just mention that the bound for $\|(-\Delta_D)^{-\alpha}p_t^D\|_{L^2(U,\mu)}$ now has to be changed into a bound for $\left\|\left[(-\Delta_{N,U})^{-\alpha}+C_U \right]p_t^N\right\|_{L^2(U,\mu)}$. However, it is easy to see from the proof of Lemma~\ref{estimate frac pt neumann} that   
  \[
 \sup_{x \in U} \left\|\left[(-\Delta_{N,U})^{-\alpha}+C_U\right]p_t^{N,U}(x,\cdot)\right\|_{L^2(U,\mu)} \le
 \begin{cases}
C ( t^{\alpha-\frac{d_h}{2d_w}}+1), \quad 0<\alpha<\frac{d_h}{2d_w}
\\
C  |\ln (\max (t,1/2))|, \quad \alpha=\frac{d_h}{2d_w} \\
C ,\quad \alpha >\frac{d_h}{2d_w},
 \end{cases}
  \]
The bound \eqref{L^p upper Neumann} is thus obtained with the same estimates as in Lemma \ref{lem:small t} and Theorem~\ref{existence PAM diri}. Details are left to the patient reader. 
\end{proof}

\subsection{\texorpdfstring{$L^p$}{Lp} lower bounds}
In this section we turn to an analog of the lower bounds in Section \ref{sec: lower bound for moments}. We start with a general lower bound in the case $\alpha>0$.

\begin{thm}\label{th: Neumann-lower bound}
Let  $\beta \ge 0$ and $\alpha > 0$,  and consider the noise $W^N_\alpha$ given in Definition~\ref{def: noise-Neumann}. Let $u$ be the solution to equation \eqref{SHEtheot-N}, where the initial condition $u_0$ is an element of $L^\infty(U,\mu)$  such that $u_0 \ge \varepsilon>0$ on $U$.  Let $p$ be an integer with $p\geq2$. Then  for $x \in U$ and  $t \ge 0$ we have
\begin{align}\label{Neumann-lower bound}
 \mathbf E (u(t,x)^p) \ge \varepsilon^p \exp \left( C \beta^2 p(p-1)  t \right),
\end{align}
where $C$ is a positive  constant. In particular, for $x \in U$ 
\begin{align}\label{Neumann-lower bound asymp}
\liminf_{t \to +\infty} \frac{\ln \mathbf E(u(t,x)^p)}{t} \ge C  \beta^2  p(p-1) .
\end{align}
\end{thm}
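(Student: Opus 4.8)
The plan is to mirror the Dirichlet lower bound argument of Theorem~\ref{thm:p-moment-lower-Diri}, but to exploit the conservativeness of the Neumann semigroup, which removes the dissipative factor $(P_t^{D,A}1)(x)^p$ entirely. The first step would be to establish a Feynman-Kac representation for the $p$-th moment analogous to \eqref{eq-p-mom-fk}, namely
\begin{align*}
\mathbf E (u(t,x)^p)=
\mathbf{E}\left( \prod_{j=1}^p u_0(\bar B^{x,j}_t) \exp\Big\{ \beta^2 \sum_{1\le i,k\le p,\, i\ne k}\int_0^t \big(G_{2\alpha}^{N,U}(\bar B^{x,i}_s,\bar B^{x,k}_s)+C_U\big)\,ds\Big\}\right),
\end{align*}
where now $\{\bar B^{x,j}\}_{1\le j\le p}$ are independent reflected Brownian motions on $U$ (with generator $\Delta_{N,U}$), started from $x$. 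This follows by the same mollification/$L^2(\Omega)$-limit procedure as in the proof of Theorem~\ref{th: Feynman-Kac moments}: one solves the equation driven by the mollified noise $W^{N,\delta,\varepsilon}$, writes the Wick-exponential Feynman-Kac formula, computes the covariance of the relevant Gaussian functionals using \eqref{inner product H neumann}, and passes to the limit using Lemma~\ref{lem:L2 convergence}, with finiteness of the limiting expression guaranteed by the chaos estimates underlying Theorem~\ref{existence unique neuma}. The key simplification is that there is no hitting time $T^j$ and no indicator $1_{(t<T^j)}$, since reflected Brownian motion never leaves $U$.

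The second step is the lower bound on the exponential term. Since $u_0\ge\varepsilon$ everywhere on $U$ and $U$ is bounded, the product $\prod_{j=1}^p u_0(\bar B^{x,j}_t)\ge \varepsilon^p$ deterministically. For the exponent, I would use the lower bound on $G^{N,U}_{2\alpha}$ from Proposition~\ref{Control-G-N}: in each of the three regimes for $\alpha$ one has $G^{N,U}_{2\alpha}(y,z)+C_U\ge c_0>0$ uniformly over $y,z\in U$, because $C_U$ was chosen precisely so that $G^{N,U}_{2\alpha}+C_U$ is bounded below by a positive constant (Remark~\ref{rmk regarding C_U}; here one uses that $U$ has finite diameter so that $-C_1\ln d(x,y)$, $d(x,y)^{\alpha d_w-d_h}$ etc.\ are all bounded below). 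Hence
\begin{align*}
\sum_{1\le i,k\le p,\, i\ne k}\int_0^t \big(G_{2\alpha}^{N,U}(\bar B^{x,i}_s,\bar B^{x,k}_s)+C_U\big)\,ds \ge c_0\, p(p-1)\, t
\end{align*}
surely, so that $\mathbf E(u(t,x)^p)\ge \varepsilon^p \exp(C\beta^2 p(p-1)t)$ with $C=c_0$, which is \eqref{Neumann-lower bound}. Taking logarithms, dividing by $t$ and letting $t\to+\infty$ yields \eqref{Neumann-lower bound asymp} immediately.

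The main obstacle is really the Feynman-Kac representation in the first step rather than the estimates: one must verify that the mollification argument of Theorem~\ref{th: Feynman-Kac moments} carries over verbatim to the Neumann/reflected-Brownian-motion setting, in particular that the reflected Brownian motion on a uniform domain is well behaved enough (it is the Hunt process associated with the strongly local regular Dirichlet form $(\mathcal E^N_U,\mathcal F(U))$ by Theorem~\ref{estimate_heat_neumann}) and that the relevant Wiener integrals $\int_0^t W^{N,\delta,\varepsilon}(t-s,\bar B^x_s)\,ds$ converge with the shifted kernel $G^{N,U}_{2\alpha}+C_U$ in place of $G^{D,U}_{2\alpha}$. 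Since the domain is bounded and the heat kernel bounds \eqref{eq:subGauss-upperN}--\eqref{eq:loweGauss-upperN} and the kernel bounds of Proposition~\ref{Control-G-N} are available, no new analytic difficulty arises; one may also note that, alternatively, for the range $\alpha>d_h/2d_w$ the representation follows directly from the continuous-field formula analogous to \eqref{FK regular}, and for $0<\alpha\le d_h/2d_w$ the mollification argument applies. As in the Dirichlet case, the argument can moreover be improved for large $\beta$ in the range $0<\alpha\le\frac{d_s}{4}$ by localizing the reflected motions to a small ball and using the sub-Gaussian kernel estimates, exactly as in the proof of Theorem~\ref{improved lower bound}; I would include that refinement as a remark or corollary.
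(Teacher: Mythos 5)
Your proposal is correct and follows essentially the same route as the paper: a Feynman–Kac moment representation with independent reflected Brownian motions (obtained by the same mollification argument as in the Dirichlet case), the uniform lower bound $G_{2\alpha}^{N,U}+C_U\ge c_0>0$ from Remark \ref{rmk regarding C_U}, the bound $u_0\ge\varepsilon$ on all of $U$, and conservativeness of the Neumann semigroup (the paper phrases this as $P_t^{N,U}1=1$, you as the absence of any exit-time indicator, which is the same fact).
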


\begin{proof}
From the Feynman-Kac representation of the  moments, which is established along the same lines as in the Dirichlet boundary case, one has
\begin{align*}
    \mathbf E (u(t,x)^p)=\mathbf{E}\left( \prod_{j=1}^p u_0(\hat B_t^{x,j}) \exp\left\{\beta^2\sum_{i\not=k}^p\int_0^t\left(G_{2\alpha}^N(\hat B^{x,i}_s,\hat B^{x,k}_s)+C_U\right)ds\right\} \right),
\end{align*}
where $\hat B$ is the reflected Brownian motion on $U$. Otherwise stated, $\hat{B}$ is the diffusion with generator $\Delta_N$.
Replacing $G_{2\alpha}^D$ by $G_{2\alpha}^N$ in the computations going from \eqref{eq-exp-lb} to \eqref{eq-up-lb-d}, we easily get that
\[
\mathbf E \left(u(t,x)^p\right)\geq \varepsilon^p \exp \left( C\beta^2p(p-1)t\right)\left(P_t^{N,U} 1(x)\right)^p.
\]
In addition, for Neumann boundary conditions we have $P^{N,U}_t1(x)=1$, from which \eqref{Neumann-lower bound} is easily deduced. Relation  \eqref{Neumann-lower bound asymp} is a trivial consequence of \eqref{Neumann-lower bound}.
\end{proof}

\begin{remark}
In the Dirichlet case, we had observed a phase transition for moments in Corollary~\ref{cor-interm}. This phase transition is absent from the Neumann boundary picture. Indeed relation~\eqref{Neumann-lower bound} implies that
$$\lim_{t\to\infty}\mathbf E \left(u(t,x)^p\right)=\infty,\quad\mathrm{for\ all}\ \beta>0.$$
This difference in the asymptotic behavior stems from the fact that $\Delta_{D,U}$ has a strictly negative principal eigenvalue $\lambda_1$.
\end{remark}

As in the Dirichlet case, one can improve the bounds of Theorem \ref{th: Neumann-lower bound} when the noise is rough enough. This is summarized in the theorem below.

\begin{thm}\label{th:Neumann-lower bound improved}
Under the same conditions as in Theorem \ref{th: Neumann-lower bound}, let $u$ be the solution to equation~\eqref{SHEtheot-N}. The initial condition $u_0$ is bounded, non-negative  on $U$ and bounded from below by a positive constant on the neighborhood of a given $x\in U$. Then, for an integer $p \ge 2$,  we have the following lower bounds. 
\begin{comment}
{\color{blue}Let $p\ge 2$.  Let $u_0 \in C(U)$ and $x \in U$ such that $u_0(x)>0$. Assume that there exists $r_0>0$ and $C>0$ such that for every  $0<r<r_0$, $B(x,r) \subset U$ and
\begin{align}\label{reverse Faber Krahn 2}
\lambda_1( B_U(x,r)) \le \frac{C}{r^{d_w}},
\end{align}
where $\lambda_1( B(x,r))$ denotes the first Dirichlet eigenvalue of $B(x,r)$. Then}
\end{comment}
\begin{itemize}
\item For $0<\alpha <\frac{d_h}{2d_w}$,
\[
\liminf_{\beta \to +\infty}\liminf_{t \to +\infty}\frac{1}{\beta^{\frac{2d_w}{(1+2\alpha)d_w-d_h}} }\frac{\ln \mathbf E(u(t,x)^p)}{t} \ge C p(p-1),
\]
\item For $\alpha =\frac{d_h}{2d_w}$,
\[
\liminf_{\beta \to +\infty}\liminf_{t \to +\infty}\frac{1}{\beta^{2} \ln \beta}\frac{\ln \mathbf E(u(t,x)^p)}{t} \ge C p(p-1).
\]
\end{itemize}
\end{thm}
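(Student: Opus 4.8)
\textbf{Proof plan for Theorem \ref{th:Neumann-lower bound improved}.} The strategy is to mirror the proof of Theorem \ref{improved lower bound}, replacing the Dirichlet heat kernel and semigroup by their Neumann counterparts and keeping track of the extra constant $C_U$ that appears in the definition of the Neumann covariance. I focus on the case $0<\alpha<\frac{d_h}{2d_w}$; the boundary case $\alpha=\frac{d_h}{2d_w}$ is handled in exactly the same way with the change of variable adjustment already used in Theorem \ref{improved lower bound}. First I would start from the Feynman-Kac representation of the $p$-th moment established (along the lines of Theorem \ref{th: Feynman-Kac moments}) in the proof of Theorem \ref{th: Neumann-lower bound}, namely
\begin{align}\label{eq:FK-N-improved}
\mathbf E (u(t,x)^p)=\mathbf{E}\left( \prod_{j=1}^p u_0(\hat B_t^{x,j}) \exp\left\{\beta^2\sum_{1\le i,k\le p,\,i\not=k}\int_0^t\left(G_{2\alpha}^N(\hat B^{x,i}_s,\hat B^{x,k}_s)+C_U\right)ds\right\} \right),
\end{align}
where $\hat B^{x,j}$ are independent reflected Brownian motions on $U$ started at $x$. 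Since $G_{2\alpha}^N+C_U\ge 0$ and $u_0$ is bounded below by a positive constant $c_1^{1/p}$ on a ball $B(x,\varepsilon_0)\subset U$, I can restrict each $\hat B^{x,j}$ to stay inside $B(x,\varepsilon)$ up to time $t$, for any $0<\varepsilon<\varepsilon_0<r_0$ with $B(x,r_0)\subset U$, at the cost of introducing the indicator of the event $\{t<\hat T^j_\varepsilon\}$, where $\hat T^j_\varepsilon$ is the exit time of $B(x,\varepsilon)$ by $\hat B^{x,j}$ (note that inside $B(x,\varepsilon_0)\subset U$ the reflected and ordinary Brownian motions coincide, so this exit time behaves as in the Dirichlet case).

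Next I would lower bound the exponent. On the event that all $\hat B^{x,j}$ stay in $B(x,\varepsilon)$, the inner distance satisfies $d_U(\hat B^{x,i}_s,\hat B^{x,k}_s)=d(\hat B^{x,i}_s,\hat B^{x,k}_s)\le 2\varepsilon$, so by the lower bound of Proposition \ref{Control-G-N} (applied with $\alpha$ replaced by $2\alpha$; here $2\alpha<d_h/d_w$) we have $G^{N,U}_{2\alpha}(\hat B^{x,i}_s,\hat B^{x,k}_s)+C_U\ge C_1 d(\hat B^{x,i}_s,\hat B^{x,k}_s)^{2\alpha d_w-d_h}\ge c_2\,\varepsilon^{-(d_h-2\alpha d_w)}$ for an appropriate choice of $C_U$ (chosen as in Remark \ref{rmk regarding C_U} so that $G^{N,U}_{2\alpha}+C_U$ is bounded below by a positive constant, hence the constant $-C_2$ in Proposition \ref{Control-G-N} is absorbed). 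This yields
\begin{align}\label{eq:Nlb-survival}
\mathbf E (u(t,x)^p) \ge c_1 \exp\!\left( c_3\,p(p-1)\,\frac{\beta^2}{\varepsilon^{d_h-2\alpha d_w}}\,t \right) \mathbf{P}\!\left(\hat T^1_\varepsilon>t\right)^p.
\end{align}
Then, exactly as in Theorem \ref{improved lower bound}, I bound the survival probability from below using spectral theory for the Dirichlet Laplacian on $B(x,\varepsilon)$, $\mathbf{P}(\hat T^1_\varepsilon>t)\ge c_4 e^{-\lambda_1(B(x,\varepsilon))t}$, and then $\lambda_1(B(x,\varepsilon))\le c_5\varepsilon^{-d_w}$ by \cite[Lemma 7.1]{mariano2022spectral}. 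Substituting into \eqref{eq:Nlb-survival} gives $\mathbf E(u(t,x)^p)\ge c_6\exp\{(c_3 p(p-1)\beta^2\varepsilon^{-(d_h-2\alpha d_w)}-p c_5\varepsilon^{-d_w})t\}$, and optimizing in $\varepsilon$ as in \eqref{choice of epsilon in Laplace} — using $d_h-2\alpha d_w<d_w$ which follows from $d_h<d_w$ in Assumption \ref{hyp:dirichlet-form} — with $\varepsilon=c_7\beta^{-2/((1+2\alpha)d_w-d_h)}$ (valid for $\beta$ large so that $\varepsilon<\varepsilon_0$) produces $\mathbf E(u(t,x)^p)\ge c_8\exp\{c_9 p(p-1)\beta^{2d_w/((1+2\alpha)d_w-d_h)}t\}$, which is the first claim. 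For $\alpha=\frac{d_h}{2d_w}$, the lower bound of Proposition \ref{Control-G-N} instead gives a factor $-C_1\ln\varepsilon$ (again the additive constant is absorbed by $C_U$), leading to an exponent $-c_3 p(p-1)\beta^2\ln\varepsilon-pc_5\varepsilon^{-d_w}$, optimized by $\varepsilon=\beta^{-2/d_w}$, which yields the $\beta^2\ln\beta$ rate.

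The only genuinely new point compared with Theorem \ref{improved lower bound} is the handling of the constant $C_U$ and the non-positivity of $G^{N,U}_{2\alpha}$, but since we only need a \emph{lower} bound and $C_U$ can be fixed once and for all so that $G^{N,U}_{2\alpha}+C_U\ge 0$ everywhere (Remark \ref{rmk regarding C_U}), this causes no difficulty: the additive constant from Proposition \ref{Control-G-N}(i)--(ii) is nonnegative after this choice and can simply be dropped in the lower bound. A second minor point is that one must know that the Feynman-Kac representation \eqref{eq:FK-N-improved} is indeed valid for Neumann boundary conditions; this is asserted in the proof of Theorem \ref{th: Neumann-lower bound} and follows from the same $L^2(\Omega)$-limiting argument as Theorem \ref{th: Feynman-Kac moments}, with the Dirichlet heat kernel replaced by the Neumann one and with the mollified noise constructed from $G^{N,U}_{2\alpha}+C_U$. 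Granting these two points, the remaining estimates are identical to those in Theorem \ref{improved lower bound} and I would simply refer the reader there for the details, indicating only the substitutions $p^{D}\rightsquigarrow p^{N}$, $P^{D,A}\rightsquigarrow P^{N,U}$, and $G^{D}_{2\alpha}\rightsquigarrow G^{N}_{2\alpha}+C_U$.
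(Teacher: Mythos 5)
Your proposal is correct and follows essentially the same route as the paper: the paper's proof of Theorem \ref{th:Neumann-lower bound improved} simply refers back to Theorem \ref{improved lower bound}, noting that the analogue of \eqref{eq-uplb-surv} still involves the exit time $T^1_\varepsilon$ of a small ball (where reflected and ordinary Brownian motion coincide), and then optimizes $\varepsilon$ in $\beta$ exactly as you do, using the lower bound of Proposition \ref{Control-G-N} and the spectral bound $\lambda_1(B(x,\varepsilon))\le c\varepsilon^{-d_w}$. The only cosmetic remark is that $C_U$ is fixed by Definition \ref{def: noise-Neumann} rather than chosen a posteriori, but this is immaterial since for $\varepsilon$ small the singular term $C_1\varepsilon^{-(d_h-2\alpha d_w)}$ (or $-C_1\ln\varepsilon$) dominates any bounded additive constant, so your absorption step goes through unchanged.
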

\begin{proof}
The proof is very similar to the proof of Theorem \ref{improved lower bound}. In particular, notice that the equivalent of \eqref{eq-uplb-surv} still involves the exit time $T^1_\varepsilon$. We thus end up with \eqref{eq-lb-est-ell} as before. From there, the computations proceed as in Theorem \ref{improved lower bound}.
\end{proof}

We now label the result for the space-time white noise case. Its proof goes along the same lines as for Theorem \ref{lower bound moment cusp}.

\begin{thm}\label{lower bound alpha zero-N}
Let the conditions of Theorem \ref{th: Neumann-lower bound} and \ref{th:Neumann-lower bound improved} prevail. We assume that we are in the white noise case $\alpha=0$. Moreover, the initial condition $u_0$ is assumed to be in $L^\infty(U,\mu)$ and lower bounded by $\varepsilon>0$. Then the solution $u$ to \eqref{SHEtheot-N} satisfies the following $L^2$-lower bound estimate:
\[
\liminf_{t \to +\infty} \frac{\ln \mathbf E(u(t,x)^2)}{t} \ge c \beta^{\frac{2d_w}{d_w-d_h}},\quad \mathrm{for\ all}\ x\in U,
\] 
where $c>0$ is a positive constant that does not depend on $\beta$.
\end{thm}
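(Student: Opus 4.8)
The plan is to mirror the proof of Theorem \ref{lower bound moment cusp}, which treats the analogous Dirichlet estimate \eqref{improved lower bound cusp}, adapting each step to the Neumann setting. The key simplification coming from Neumann boundary conditions is that the heat semigroup is conservative, so the surviving-probability factors and the first Dirichlet eigenvalue $\lambda_1$ that appear in the Dirichlet argument are replaced by trivial factors equal to $1$; conversely, the extra difficulty is that the Neumann kernel $G_{2\alpha}^{N,U}$ is not positive, so one must work with the shifted kernel $G_{2\alpha}^{N,U}+C_U\ge 0$ as in Definition \ref{def: noise-Neumann} and Remark \ref{rmk regarding C_U}.

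First I would start from the mild formulation \eqref{eq:PAM-mild form-Diri} for $u$ with Neumann Laplacian $\Delta_N$ and compute the second moment via It\^o's isometry with the inner product \eqref{inner product H neumann}, obtaining, for $y$ in a neighborhood of $x$,
\[
\mathbf E\left(u(t,y)^2\right)=\left(P_t^{N,U}u_0(y)\right)^2+\beta^2\int_0^t\int_U p_{t-s}^{N,U}(y,z)^2\left(1+C_U'\right)\mathbf E\left(u(s,z)^2\right)d\mu(z)\,ds,
\]
where the extra constant comes from the $C_U$ shift in \eqref{Sobo_norm_N}; absorbing constants, and using $u_0\ge\varepsilon 1_A$ together with the Neumann lower bound \eqref{eq:loweGauss-upperN}, one reduces to a renewal inequality for $\phi(t):=\inf_{x\in A}\mathbf E(u(t,x)^2)$. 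The essential analytic input is the $L^2$ estimate on the Neumann heat kernel: restricting the inner integral to $A$ and invoking \eqref{no cusp}, one shows that for $0<t-s<1$
\[
\inf_{y\in A}\int_A p_{t-s}^{N,U}(y,z)^2\,d\mu(z)\ge \frac{c}{(t-s)^{d_h/d_w}},
\]
which is the Neumann analog of \eqref{jku} (proved by the same argument as in Corollary \ref{lem:hk-lower-spectral no cusp}, using \eqref{eq:loweGauss-upperN} and the Ahlfors regularity of $\mu$; note there is no $e^{-2\lambda_1 t}$ factor here). This yields that $\phi(t)\ge\zeta(t)$ where $\zeta$ solves the renewal equation
\[
\zeta(t)=c_1+c_2\beta^2\int_0^t\frac{1}{(t-s)^{d_h/d_w}}\zeta(s)\,ds.
\]

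Then I would analyze this renewal equation exactly as in Theorem \ref{lower bound moment cusp} via Laplace transforms: since $\int_0^{+\infty}e^{-\rho t}t^{-d_h/d_w}\,dt=c_3\rho^{-1+d_h/d_w}$ (the integral converges because $d_h/d_w<1$ by Assumption \ref{hyp:dirichlet-form}(v)), one gets $\hat\zeta(\rho)=c_1\big(\rho^{d_h/d_w}(\rho^{1-d_h/d_w}-\beta^2 c_2 c_3)\big)^{-1}$, whose positive pole is $\rho_c=c_4\beta^{\frac{2d_w}{d_w-d_h}}$. The Wiener-Ikehara theorem then gives $\liminf_{t\to+\infty}\frac{\ln\zeta(t)}{t}=\rho_c$, and since $\phi(t)\ge\zeta(t)$ and the Neumann semigroup satisfies $P_t^{N,U}1=1$ (so there is no $-2\lambda_1$ correction term as in the Dirichlet case), we conclude $\liminf_{t\to+\infty}\frac{\ln\mathbf E(u(t,x)^2)}{t}\ge c\beta^{\frac{2d_w}{d_w-d_h}}$ for all $x\in U$. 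The main obstacle, such as it is, is the bookkeeping around the non-positive kernel: one has to be careful that introducing the shift $C_U$ into the renewal inequality only changes multiplicative constants and does not affect the exponent $\frac{2d_w}{d_w-d_h}$; since the shift merely replaces $G_{2\alpha}^{N,U}$ by a comparable nonnegative kernel on the bounded set $A$, this is routine, and the rest follows the Dirichlet template line by line. Details are left to the reader.
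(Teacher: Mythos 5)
Your proposal is correct and follows essentially the same route as the paper, which proves this theorem by repeating the renewal-equation/Laplace-transform argument of Theorem \ref{lower bound moment cusp} with the Dirichlet kernel replaced by the Neumann one (so that the $e^{-\lambda_1 t}$ factors and the $-2\lambda_1$ correction disappear), leading to the same pole $\rho_c=c\,\beta^{\frac{2d_w}{d_w-d_h}}$. Two small simplifications you could make: since $\alpha=0$ the driving noise is the plain space-time white noise of Definition \ref{def:st white}, so no $C_U$ shift enters the It\^o isometry and the factor $(1+C_U')$ should simply be dropped (it is not part of the identity), and by symmetry and the semigroup property $\int_U p_t^{N,U}(y,z)^2\,d\mu(z)=p_{2t}^{N,U}(y,y)\ge c\,t^{-d_h/d_w}$ directly from \eqref{eq:loweGauss-upperN}, so one does not even need to restrict to a subdomain $A$ satisfying \eqref{no cusp}.
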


\subsection{Scaling invariance in the Sierpi\'nski gasket}\label{sec: other properties-Neumann}

In this section we go back to the setting of Section \ref{sec: scaling invariance}  and we establish scaling factors in the Sierpinski gasket $K$. Our findings are summarized below.

%In this section, we assume that $X$ is the Sierpinksi gasket $K$ and that $U=K \setminus V_0$. The following result can be proved exactly as Theorem \ref{scaling Sierpinski}.

\begin{thm}\label{scaling Sierpinski neumann}
Let the notation and assumptions of Theorem \ref{scaling Sierpinski} prevail. On the uniform domain $U=K\setminus V_0$, we consider the unique solution $u$ to 
% Let $w=(i_1, \cdots, i_n) \in \{ 1,2,3 \}^n $.   Let $\alpha \ge 0$, $\beta \ge 0$. Let $u(t,x)$, $t \ge 0, x \in U$, be the unique solution of  
\begin{equation*}
\partial_{t} u(t,x) = \Delta_{N,U} u(t,x) + \beta u(t,x) \, \dot{W}_\alpha^{N,U}(t,x),\quad \mathrm{for}\ t\geq0, x\in U,
\end{equation*}
with initial condition $u(0,\cdot)=1$. For the set $U_w$ defined by \eqref{eq-U-w}, we also consider the solution $v$ to
\begin{equation}\label{eq: v}
\partial_{t} v(t,x) = \Delta_{N,U_w} v(t,x) + \beta \left( \frac{5^{\alpha +1/2 }}{3} \right)^n v(t,x) \, \dot{W}_\alpha^{N,U_w}(t,x),\quad \mathrm{for}\ t\geq0, x\in U_w,
\end{equation}
with initial condition $v(0,\cdot)=1$. 
Then the following identity in the sense of finite dimensional distribution  holds true:
\begin{align*}
    (v(t,x))_{t \ge 0, x \in U_w }\stackrel{(f\!dd)}{=} (u(5^n t,\mathfrak f^{-1}_w (x)))_{t \ge 0, x \in U_w}.
\end{align*}
\end{thm}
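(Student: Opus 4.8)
The plan is to mirror the proof of Theorem~\ref{scaling Sierpinski} for the Dirichlet case, replacing every Dirichlet object by its Neumann counterpart and tracking how the scaling maps $\mathfrak f_w$ transform the Neumann Laplacian, its heat kernel, its fractional powers, and the associated noise. As in the Dirichlet argument, I would establish the identity for a single pair $(t,x)$; the finite-dimensional distributions case follows verbatim by running the same manipulations jointly at finitely many points, so I would relegate it to the reader. First I would recall from Theorem~\ref{Laplacian gasket} and the approximation scheme \eqref{discrete laplacian}--\eqref{eq-domain} that the Neumann Laplacian $\Delta_{N,U}$ also arises as an extension of the Kigami Laplacian, and hence the same self-similarity that gave \eqref{eq-Del-w-n} holds for it:
\[
(\Delta_{N,U_w} g)\circ \mathfrak f_w = 5^n \Delta_{N,U}(g\circ \mathfrak f_w).
\]
From the spectral expansion \eqref{eq:HKexpansion-Neum} this propagates to the heat semigroups and kernels exactly as in \eqref{eq-Pt-U-w}, namely $(P_t^{N,U_w}g)\circ\mathfrak f_w = P_{5^n t}^{N,U}(g\circ\mathfrak f_w)$ and $p_t^{N,U_w}(\mathfrak f_w(x),\mathfrak f_w(y)) = 3^n p_{5^n t}^{N,U}(x,y)$.

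The next step is to transfer the scaling to the fractional powers and the Sobolev norm. Using Definition~\ref{FLaplacian-N} (or equivalently the kernel representation \eqref{kernel frac Lapl N}, taking care that the Neumann kernel subtracts the constant $1/\mu(U)$, which is itself invariant up to the factor coming from the measure rescaling $\int_{U_w} f\circ\mathfrak f_w^{-1}\,d\mu = 3^{-n}\int_U f\,d\mu$), I would obtain $((-\Delta_{N,U_w})^{-\alpha} g)\circ\mathfrak f_w = 5^{-n\alpha}(-\Delta_{N,U})^{-\alpha}(g\circ\mathfrak f_w)$. Combined with the Hausdorff measure scaling, this yields, exactly as in \eqref{eq-int-U-w}--\eqref{iso cell Sierpinski}, the isometry
\[
\|h\circ\mathfrak f_w^{-1}\|^2_{\mathcal W_N^{-\alpha}(U_w)} = 3^{-n}5^{-2n\alpha}\|h\|^2_{\mathcal W_N^{-\alpha}(U)},
\]
which one should verify is compatible with the extra constant $C_U$ in the norm \eqref{Sobo_norm_N}: since $\int_U fd\mu$ transforms by a factor $3^{-n}$ under $\mathfrak f_w^{-1}$ and appears squared in the Neumann norm, the $C_U$-term also picks up exactly the factor $3^{-n}$, so the identity holds with the same $C_U$ on both sides. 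This identity translates into the equality in distribution for processes
\[
\{W_\alpha^{N,U_w}(g\circ\mathfrak f_w^{-1}); g\in \mathcal W_N^{-\alpha}(U)\} \stackrel{(d)}{=} \{3^{-n/2}5^{-n\alpha} W_\alpha^{N,U}(g); g\in \mathcal W_N^{-\alpha}(U)\},
\]
just as in \eqref{eq-cov-W-D-U-w}.

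With these ingredients I would plug $u(5^n t, \mathfrak f_w^{-1}(x))$ into the mild formulation of the Neumann equation, perform a Brownian scaling in time (which rescales the white-in-time noise by $5^{n/2}$), use $P_t^{N,U_w}1 = 1$ together with the semigroup scaling $p^{N,U}_{5^n t - 5^n s}(\mathfrak f_w^{-1}(x),y) = 3^{-n} p^{N,U_w}_{t-s}(x,\mathfrak f_w(y))$, and finally invoke the noise identity in distribution to replace $\tilde W^{N,U}_\alpha$ by a field $\tilde W^{N,U_w}_\alpha \stackrel{(d)}{=} W^{N,U_w}_\alpha$, collecting the cumulative factor $\beta\cdot 5^{n/2}\cdot 3^{-n} \cdot 5^{n\alpha} = \beta(5^{\alpha+1/2}/3)^n$ in front of the stochastic integral. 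This shows $u(5^n t,\mathfrak f_w^{-1}(x))$ solves \eqref{eq: v}, and uniqueness of the solution (Theorem~\ref{existence unique neuma}) gives the claimed identity. The only point requiring genuine care---and the main obstacle---is the bookkeeping for the constant $C_U$ in the Neumann Sobolev norm and the $1/\mu(U)$ subtraction in \eqref{kernel frac Lapl N}: one must check that these nonhomogeneous pieces scale consistently with the rest, which they do because $\mu(U_w) = 3^{-n}\mu(U)$ makes everything homogeneous of the right degree. Beyond that, the argument is a direct transcription of the Dirichlet proof, so I would keep the write-up terse and refer to Theorem~\ref{scaling Sierpinski} for the parallel computations.
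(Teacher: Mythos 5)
Your overall strategy is exactly the paper's: transcribe the Dirichlet proof of Theorem \ref{scaling Sierpinski}, using the Neumann analogues of \eqref{eq-Del-w-n}, \eqref{eq-Pt-U-w}, \eqref{eq-Del-U-U-w}, the measure scaling, the noise identity in law, and the mild formulation with $P_t^{N,U_w}1=1$. However, the one point you single out as "requiring genuine care" is precisely where your bookkeeping goes wrong. You claim that since $\int_U f\,d\mu$ transforms by a factor $3^{-n}$ and "appears squared in the Neumann norm, the $C_U$-term also picks up exactly the factor $3^{-n}$, so the identity holds with the same $C_U$ on both sides." This is incorrect twice over: the squared mean picks up the factor $3^{-2n}$, not $3^{-n}$, and in any case it must match the factor $3^{-n}5^{-2n\alpha}$ carried by the operator term. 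Concretely, writing the norm as in the remark after Definition \ref{sobo_def_N},
\begin{align}
\| h\circ \mathfrak f_w^{-1}\|^2_{\mathcal W_N^{-\alpha}(U_w)}
= 3^{-n}5^{-2n\alpha}\int_U \left[(-\Delta_{N,U})^{-\alpha}h\right]^2 d\mu
+ C_{U_w}\, 3^{-2n}\Bigl(\int_U h\, d\mu\Bigr)^2,
\end{align}
so the scaling isometry $\| h\circ \mathfrak f_w^{-1}\|^2_{\mathcal W_N^{-\alpha}(U_w)}=3^{-n}5^{-2n\alpha}\|h\|^2_{\mathcal W_N^{-\alpha}(U)}$ — and hence the equality in law of the noises that drives the whole argument — holds only if one chooses $C_{U_w}=3^{n}5^{-2n\alpha}C_U$, not $C_{U_w}=C_U$. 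This is exactly the "subtle change" the paper's proof highlights, and it is legitimate because the constant in Definition \ref{sobo_def_N} is arbitrary (Remark \ref{rmk regarding C_U}); but as written, your assertion is false, and with the same constant on both domains the claimed fdd identity would not follow. Fixing this single point (and adjusting the statement of the isometry accordingly) makes your proof agree with the paper's.
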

\begin{proof}
The proof goes along the same lines as for Theorem \ref{scaling Sierpinski}. Details are spared for sake of conciseness. Let us just highlight a subtle change in the definition of the kernel $G_{2\alpha}^{N,U_w}$ related to \eqref{eq: v}. Indeed, in the current situation we have
\begin{align*}
\int_{U_w} (-\Delta_{N,U_w})^{-\alpha} ( g \circ \mathfrak f^{-1}_w)^2 d\mu =3^{-n}5^{-2n\alpha} \int_U ((-\Delta_{N,U})^{-\alpha}g )^2 d\mu.
\end{align*}
Hence if one wishes \eqref{eq-int-U-w} to hold in our context with the operator $\left(-\Delta_{N,U_w}\right)^{-\alpha}$ replaced by 
$\left(-\Delta_{N,U_w}\right)^{-\alpha}+C_{U_w}$, we need to choose
$
C_{U_w}=3^n5^{-2n\alpha}C_U.
$  
\end{proof}

%\begin{remark}
%The constant $C_U$ that appears in Definition \ref{sobo_def_N} has to be adjusted so that the above scaling invariance holds. More precisely, one sees that for any $g \in L^2(U,\mu)$ and $\alpha \ge 0$
%\begin{align*}
%\int_{U_w} (-\Delta_{N,U_w})^{-\alpha} ( g \circ \mathfrak f^{-1}_w)^2 d\mu =3^{-n}5^{-2n\alpha} \int_U ((-\Delta_{N,U})^{-\alpha}g )^2 d\mu.
%\end{align*}
%This implies that for every $g \in \mathcal{W}_N^{-\alpha}(U)$
%\begin{align*}
%\| g \circ \mathfrak f^{-1}_w \|^2_{\mathcal{W}_N^{-\alpha}(U_w)}=3^{-n}5^{-2n \alpha} \|g  \|^2_{\mathcal{W}_N^{-\alpha}(U)}
%\end{align*}
%provided that we choose
%\[
%C_{U_w}=3^n5^{-2n\alpha}C_U.
%\]   
%\end{remark}

\bibliographystyle{abbrv}
\bibliography{bibfile}

\begin{thebibliography}{10}

\bibitem{balsam2023density}
H.~{Balsam}, K.~{Kaleta}, M.~{Olszewski}, and K.~{Pietruska-Pa{\l}uba}.
\newblock {Density of states for the Anderson model on nested fractals}.
\newblock {\em arXiv:2303.05980}, 2023.

\bibitem{Barlow}
M.~T. Barlow.
\newblock Diffusions on fractals.
\newblock In {\em Lectures on probability theory and statistics
  ({S}aint-{F}lour, 1995)}, volume 1690 of {\em Lecture Notes in Math.}, pages
  1--121. Springer, Berlin, 1998.

\bibitem{BaudoinChen}
F.~Baudoin and L.~Chen.
\newblock Dirichlet fractional {G}aussian fields on the {S}ierpinski gasket and
  their discrete graph approximations.
\newblock {\em Stochastic Process. Appl.}, 162:593--616, 2023.

\bibitem{BCHOTW}
F.~Baudoin, L.~Chen, C.-H. Huang, C.~Ouyang, S.~Tindel, and J.~Wang.
\newblock Parabolic {A}nderson model on {H}eisenberg groups: weighted {B}esov
  spaces and {S}tratonovich setting.
\newblock {\em preprint}, 2023.

\bibitem{BK}
F.~Baudoin and D.~J. Kelleher.
\newblock Differential one-forms on {D}irichlet spaces and {B}akry-\'{E}mery
  estimates on metric graphs.
\newblock {\em Trans. Amer. Math. Soc.}, 371(5):3145--3178, 2019.

\bibitem{BaudoinLacaux}
F.~Baudoin and C.~Lacaux.
\newblock Fractional {G}aussian fields on the {S}ierpi\'{n}ski gasket and
  related fractals.
\newblock {\em J. Anal. Math.}, 146(2):719--739, 2022.

\bibitem{BOTW}
F.~Baudoin, C.~Ouyang, S.~Tindel, and J.~Wang.
\newblock Parabolic {A}nderson model on {H}eisenberg groups: the {I}t\^{o}
  setting.
\newblock {\em J. Funct. Anal.}, 285(1):Paper No. 109920, 44, 2023.

\bibitem{Chen-Dirichlet}
Z.~Q. Chen.
\newblock On reflected {D}irichlet spaces.
\newblock {\em Probab. Theory Related Fields}, 94(2):135--162, 1992.

\bibitem{CY}
F.~Comets and N.~Yoshida.
\newblock Directed polymers in random environment are diffusive at weak
  disorder.
\newblock {\em The Annals of Probability}, 34(5):1746--1770, 2006.

\bibitem{CSZ}
C.~Cosco, I.~Seroussi, and O.~Zeitouni.
\newblock Directed polymers on infinite graphs.
\newblock {\em Communications in mathematical physics}, 386(1):395--432, 2021.

\bibitem{Da}
R.~Dalang.
\newblock Extending the martingale measure stochastic integral with
  applications to spatially homogeneous spde's.
\newblock {\em Electronic Journal of Probability}, 4:1--29, 1999.

\bibitem{Dalang-Quer}
R.~C. Dalang and L.~Quer-Sardanyons.
\newblock Stochastic integrals for spde's: a comparison.
\newblock {\em Expositiones Mathematicae}, 29(1):67--109, 2011.

\bibitem{MR0793651}
L.~de~Haan and U.~Stadtm\"{u}ller.
\newblock Dominated variation and related concepts and {T}auberian theorems for
  {L}aplace transforms.
\newblock {\em J. Math. Anal. Appl.}, 108(2):344--365, 1985.

\bibitem{MR2480553}
M.~Foondun and D.~Khoshnevisan.
\newblock Intermittence and nonlinear parabolic stochastic partial differential
  equations.
\newblock {\em Electron. J. Probab.}, 14:no. 21, 548--568, 2009.

\bibitem{FoondunNualart}
M.~Foondun and E.~Nualart.
\newblock On the behaviour of stochastic heat equations on bounded domains.
\newblock {\em ALEA Lat. Am. J. Probab. Math. Stat.}, 12(2):551--571, 2015.

\bibitem{FOT}
M.~Fukushima, Y.~\={O}shima, and M.~Takeda.
\newblock {\em Dirichlet forms and symmetric {M}arkov processes}, volume~19 of
  {\em De Gruyter Studies in Mathematics}.
\newblock Walter de Gruyter \& Co., Berlin, 1994.

\bibitem{MR2807275}
P.~Gyrya and L.~Saloff-Coste.
\newblock Neumann and {D}irichlet heat kernels in inner uniform domains.
\newblock {\em Ast\'{e}risque}, (336):viii+144, 2011.

\bibitem{haeseler2011heat}
S.~Haeseler.
\newblock Heat kernel estimates and related inequalities on metric graphs.
\newblock {\em arXiv:1101.3010}, 2011.

\bibitem{hairer2023regularity}
M.~Hairer and H.~Singh.
\newblock Regularity structures on manifolds and vector bundles, 2023.

\bibitem{Hu15}
Y.~Hu, J.~Huang, D.~Nualart, and S.~Tindel.
\newblock Stochastic heat equations with general multiplicative {G}aussian
  noises: {H}\"{o}lder continuity and intermittency.
\newblock {\em Electron. J. Probab.}, 20:no. 55, 50, 2015.

\bibitem{HuNualart}
Y.~Hu and D.~Nualart.
\newblock Stochastic heat equation driven by fractional noise and local time.
\newblock {\em Probab. Theory Related Fields}, 143(1-2):285--328, 2009.

\bibitem{MR3359595}
D.~Khoshnevisan and K.~Kim.
\newblock Non-linear noise excitation and intermittency under high disorder.
\newblock {\em Proc. Amer. Math. Soc.}, 143(9):4073--4083, 2015.

\bibitem{KK2015}
D.~Khoshnevisan and K.~Kim.
\newblock Non-linear noise excitation of intermittent stochastic pdes and the
  topology of lca groups.
\newblock {\em Annals of Probability}, 43(4):1944--1991, 2015.

\bibitem{Kigami1993}
J.~Kigami.
\newblock Harmonic calculus on p.c.f. self-similar sets.
\newblock {\em Trans. Amer. Math. Soc.}, 335(2):721--755, 1993.

\bibitem{Kigami}
J.~Kigami.
\newblock {\em Analysis on fractals}, volume 143 of {\em Cambridge Tracts in
  Mathematics}.
\newblock Cambridge University Press, Cambridge, 2001.

\bibitem{Korevaar}
J.~Korevaar.
\newblock A century of complex {T}auberian theory.
\newblock {\em Bull. Amer. Math. Soc. (N.S.)}, 39(4):475--531, 2002.

\bibitem{La10}
H.~Lacoin.
\newblock New bounds for the free energy of directed polymers in dimension 1+ 1
  and 1+ 2.
\newblock {\em Communications in Mathematical Physics}, 294(2):471--503, 2010.

\bibitem{Lierl}
J.~Lierl.
\newblock The {D}irichlet heat kernel in inner uniform domains in fractal-type
  spaces.
\newblock {\em Potential Anal.}, 57(4):521--543, 2022.

\bibitem{MR3170207}
J.~Lierl and L.~Saloff-Coste.
\newblock The {D}irichlet heat kernel in inner uniform domains: local results,
  compact domains and non-symmetric forms.
\newblock {\em J. Funct. Anal.}, 266(7):4189--4235, 2014.

\bibitem{Lindstrom}
T.~Lindstr{\o}m.
\newblock Brownian motion on nested fractals.
\newblock {\em Mem. Amer. Math. Soc.}, 83(420):iv+128, 1990.

\bibitem{lodhia2016fractional}
A.~Lodhia, S.~Sheffield, X.~Sun, and S.~S. Watson.
\newblock Fractional {G}aussian fields: a survey.
\newblock {\em Probab. Surv.}, 13:1--56, 2016.

\bibitem{mariano2022spectral}
P.~Mariano and J.~Wang.
\newblock Spectral bounds for exit times on metric measure dirichlet spaces and
  applications.
\newblock {\em arXiv preprint arXiv:2211.05894}, 2022.

\bibitem{MRT}
D.~M\'{a}rquez-Carreras, C.~Rovira, and S.~Tindel.
\newblock A model of continuous time polymer on the lattice.
\newblock {\em Commun. Stoch. Anal.}, 5(1):103--120, 2011.

\bibitem{murugan2023heat}
M.~Murugan.
\newblock Heat kernel for reflected diffusion and extension property on uniform
  domains, 2023.

\bibitem{Nualart}
D.~Nualart.
\newblock {\em The {M}alliavin calculus and related topics}.
\newblock Probability and its Applications (New York). Springer-Verlag, Berlin,
  second edition, 2006.

\bibitem{Pos12}
O.~Post.
\newblock {\em Spectral analysis on graph-like spaces}, volume 2039 of {\em
  Lecture Notes in Mathematics}.
\newblock Springer, Heidelberg, 2012.

\bibitem{RT}
C.~Rovira and S.~Tindel.
\newblock On the {B}rownian-directed polymer in a {G}aussian random
  environment.
\newblock {\em Journal of Functional Analysis}, 222(1):178--201, 2005.

\bibitem{Walsh}
J.~B. Walsh.
\newblock An introduction to stochastic partial differential equations.
\newblock In {\em \'{E}cole d'\'{e}t\'{e} de probabilit\'{e}s de
  {S}aint-{F}lour, {XIV}---1984}, volume 1180 of {\em Lecture Notes in Math.},
  pages 265--439. Springer, Berlin, 1986.

\end{thebibliography}

\small{
\noindent
\textbf{Fabrice Baudoin}: \url{fbaudoin@math.au.dk}\\
Department of Mathematics,
Aarhus University,
Denmark

\noindent \textbf{Li Chen}: \url{lichen@lsu.edu, lchen@math.au.dk}\\
Department of Mathematics, Louisiana State University, USA \&
Department of Mathematics,
Aarhus University, Denmark

\noindent \textbf{Che-Hung Huang}: \url{huan1160@purdue.edu} \\
Department of Mathematics, Purdue University, USA

\noindent \textbf{Cheng Ouyang}: \url{couyang@uic.edu}\\
Department of Mathematics, Statistics, and Computer Science, University of Illinois at Chicago, USA

\noindent
\textbf{Samy Tindel}:\url{stindel@purdue.edu} \\
Department of Mathematics, Purdue University, USA

\noindent
\textbf{Jing Wang}:\url{jingwang@purdue.edu} \\
Department of Mathematics, Purdue University, USA

}

\end{document}